\author{Yakov Berchenko-Kogan}
\title{Yang--Mills Replacement}
\newtheorem{theorem}{Theorem}[section]
\newtheorem{proposition}[theorem]{Proposition}
\newtheorem{lemma}[theorem]{Lemma}
\newtheorem{corollary}[theorem]{Corollary}
\theoremstyle{definition}
\newtheorem{definition}[theorem]{Definition}
\renewcommand{\epsilon}{\varepsilon}
\newcommand{\forms}[1]{{\textstyle\bigwedge^{#1}}}
\newcommand{\ext}{\forms{*}}
\DeclareMathOperator{\range}{range}
\DeclareMathOperator{\Ad}{Ad}
\DeclareMathOperator{\ad}{ad}
\DeclareMathOperator{\vol}{vol}
\newcommand{\Dir}{\mathrm n}
\newcommand{\Neu}{\mathrm t}
\newcommand{\loc}{\mathrm{loc}}
\newcommand{\Id}{\mathrm{Id}}
\newcommand{\mc}{\mathcal}
\newcommand{\mb}{\mathbb}
\newcommand{\mf}{\mathfrak}
\newcommand{\Lie}{\mf L}
\newcommand{\pB}{{\partial B^4}}
\newcommand{\pX}{{\partial X}}
\newcommand{\Ap}{{A_\partial}}
\newcommand{\ap}{{a_\partial}}
\newcommand{\tA}{{\tilde A}}
\newcommand{\ta}{{\tilde a}}
\newcommand{\tB}{{\tilde B}}
\newcommand{\tb}{{\raisebox{0pt}[0pt]{$\tilde b$}}}
\newcommand{\abs}[1]{\left\lvert{#1}\right\rvert}
\newcommand{\norm}[1]{\left\lVert{#1}\right\rVert}
\newcommand{\pair}[2]{\left\langle{#1},{#2}\right\rangle}
\newcommand{\LL}[3]{{L^{#1}_{#2}(#3)}}
\newcommand{\Le}[3]{{\LL{#1}{#2}{{#3};\ext T^*{#3}}}}
\newcommand{\LEe}[3]{{\LL{#1}{#2}{{#3};M_N\otimes\ext T^*{#3}}}}
\newcommand{\La}[3]{{\LL{#1}{#2}{{#3};\mf g\otimes T^*{#3}}}}
\newcommand{\Laa}[3]{{\LL{#1}{#2}{{#3};\ad P\otimes T^*{#3}}}}
\newcommand{\LF}[3]{{\LL{#1}{#2}{{#3};\mf g\otimes\forms2T^*{#3}}}}
\newcommand{\LFF}[3]{{\LL{#1}{#2}{{#3};\ad P\otimes\forms2T^*{#3}}}}
\newcommand{\Lna}[3]{{\LL{#1}{#2}{{#3};M_N\otimes T^*{#3}\otimes T^*{#3}}}}
\newcommand{\Lg}[3]{{\LL{#1}{#2}{{#3};\mf g}}}
\newcommand{\LEa}[3]{{\LL{#1}{#2}{{#3};M_N\otimes T^*{#3}}}}
\newcommand{\LE}[3]{{\LL{#1}{#2}{{#3};M_N}}}
\newcommand{\LG}[3]{{\LL{#1}{#2}{{#3};G}}}
\newcommand{\LGG}[3]{{\LL{#1}{#2}{{#3};\Ad P}}}
\newcommand{\LB}[2]{{\LL{#1}{#2}{B^4}}}
\newcommand{\LeB}[2]{{\Le{#1}{#2}{B^4}}}
\newcommand{\LaB}[2]{{\La{#1}{#2}{B^4}}}
\newcommand{\LFB}[2]{{\LF{#1}{#2}{B^4}}}
\newcommand{\LgB}[2]{{\Lg{#1}{#2}{B^4}}}
\newcommand{\LEaB}[2]{{\LEa{#1}{#2}{B^4}}}
\newcommand{\LEB}[2]{{\LE{#1}{#2}{B^4}}}
\newcommand{\LGB}[2]{{\LG{#1}{#2}{B^4}}}
\newcommand{\LnaB}[2]{{\Lna{#1}{#2}{B^4}}}
\newcommand{\std}{\mathrm{std}}
\newcommand{\Bs}{B^4_\std}
\newcommand{\LBs}[2]{{\LL{#1}{#2}{\Bs}}}
\newcommand{\LaBs}[2]{{\La{#1}{#2}{\Bs}}}
\newcommand{\LpB}[2]{{\LL{#1}{#2}\pB}}
\newcommand{\LepB}[2]{{\Le{#1}{#2}\pB}}
\newcommand{\LapB}[2]{{\La{#1}{#2}\pB}}
\newcommand{\LgpB}[2]{{\Lg{#1}{#2}\pB}}
\newcommand{\LGpB}[2]{{\LG{#1}{#2}{\pB}}}
\newcommand{\LX}[2]{{\LL{#1}{#2}X}}
\newcommand{\LeX}[2]{{\Le{#1}{#2}X}}
\newcommand{\LaX}[2]{{\Laa{#1}{#2}X}}
\newcommand{\LFX}[2]{{\LFF{#1}{#2}X}}
\newcommand{\LpX}[2]{{\LL{#1}{#2}\pX}}
\newcommand{\LepX}[2]{{\Le{#1}{#2}\pX}}
\newcommand{\dd}[2][{}]{{\frac{d{#1}}{d{#2}}}}
\newcommand{\tdd}[2][{}]{{\tfrac{d{#1}}{d{#2}}}}
\newcommand{\normlb}[3]{\norm{#1}_{\LB{#2}{#3}}}
\newcommand{\normlbs}[3]{\norm{#1}_{\LBs{#2}{#3}}}
\newcommand{\normlx}[3]{\norm{#1}_{\LX{#2}{#3}}}
\newcommand{\normlpb}[3]{\norm{#1}_{\LpB{#2}{#3}}}
\newcommand{\normlpx}[3]{\norm{#1}_{\LpX{#2}{#3}}}
\newcommand{\pairlb}[2]{\pair{#1}{#2}_{\LB2{}}}
\newcommand{\pairlpb}[2]{\pair{#1}{#2}_{\LpB2{}}}
\newcommand{\pairlx}[2]{\pair{#1}{#2}_{\LX2{}}}
\begin{document}




\begin{abstract}
  We develop an analog of harmonic replacement in the gauge theory context. The idea behind harmonic replacement dates back to Schwarz and Perron. The technique, as introduced by Jost and further developed by Colding and Minicozzi, involves taking a map $v\colon\Sigma\to M$ defined on a surface $\Sigma$ and replacing its values on a small ball $B^2\subset\Sigma$ with a harmonic map $u$ that has the same values as $v$ on the boundary $\partial B^2$. The resulting map on $\Sigma$ has lower energy, and repeating this process on balls covering $\Sigma$, one can obtain a global harmonic map in the limit. We develop the analogous procedure in the gauge theory context. We take a connection $B$ on a bundle over a four-manifold $X$, and replace it on a small ball $B^4\subset X$ with a Yang--Mills connection $A$ that has the same restriction to the boundary $\pB$ as $B$. As in the harmonic replacement results of Colding and Minicozzi, we have bounds on the difference $\normlb{B-A}21^2$ in terms of the drop in energy, and we only require that the connection $B$ have small energy on the ball, rather than small $C^0$ oscillation. Throughout, we work with connections of the lowest possible regularity $\LX21$, the natural choice for this context, and so our gauge transformations are in $\LX22$ and therefore almost but not quite continuous, leading to more delicate arguments than in higher regularity.
\end{abstract}

\maketitle

\section{Introduction}
The goal of this article is to adapt Jost's harmonic replacement technique \cite{j91} to the gauge theory context. In the classical harmonic replacement techniques of Schwarz \cite{s70} and Perron \cite{p23}, given a real-valued function $v$ on a domain $\Omega$ and a ball $B^n\subset\Omega$, a function $u$ on $\Omega$ is constructed by replacing $v$ on $B^n$ with a harmonic function with the same values on the boundary of $B^n$. In other words, outside of $B^n$, $u$ is equal to $v$, and on $B^n$, $u$ is the solution to the Dirichlet problem
\begin{align*}
  \Delta u&=0\text{ on }B^n,&u\vert_{\partial B^n}&=v\vert_{\partial B^n}\text{ on }\partial B^n.
\end{align*}
This procedure decreases energy, and, repeating this process for balls covering $\Omega$, one can obtain a harmonic function on all of $\Omega$. In \cite{j91}, Jost adapts this technique to the nonlinear context of maps $v\colon\Sigma\to M$ from a two-dimensional surface $\Sigma$ to a manifold $M$, where he replaces $v$ on a small ball $B^2\subset\Sigma$ with a harmonic map $u$. In our main result, Theorem \ref{globalymreplacementtheorem}, we do the analogous construction for connections on a principal $G$-bundle over a compact four-manifold $X$, where the Lie group $G$ is compact. That is, given such a connection $B$ and a $4$-ball $B^4\subset X$, we construct a connection $A$ by replacing $B$ on $B^4$ with a Yang--Mills connection whose restriction to the boundary $\pB$ is equal to that of $B$. We also show that there is an energy-monotone homotopy between the original connection $A$ and the Yang--Mills replacement $B$.

Like in Colding and Minicozzi's harmonic replacement results \cite{cm08}, we only require that the connection $B$ have small energy on the ball, rather than small $C^0$ oscillation as in Jost's original work. Our result applies to connections in the Sobolev space $\LX21$, which is the natural choice in four dimensions, but leads to more delicate arguments than for smooth connections. In particular, our gauge transformations are in the borderline Sobolev space $\LX22$, so we do not have a Sobolev embedding $\LX22\not\hookrightarrow C^0(X)$, as a result of which the group of $\LX22$ gauge transformations is not a Hilbert Lie group. However, working with smooth connections would be insufficient for our purposes, because, after replacing a smooth connection with a Yang--Mills connection on a ball $B^4\subset X$, the resulting connection is not smooth across the boundary $\pB$. 

In Theorem \ref{familyymreplacementtheorem}, we reframe our main result Theorem \ref{globalymreplacementtheorem} for compact families of connections. Namely, we show that we can homotope a compact family of connections to a family of connections that is Yang--Mills on a ball $B^4\subset X$, again with energy monotonicity. One should think of the compact family as representing a homology or homotopy class. Applying harmonic replacement to a compact family of maps is a key step in \cite{cm08}, where Colding and Minicozzi apply harmonic replacement to a one-parameter family of maps $v_t\colon\Sigma^2\to M^3$ representing a sweep-out of $M^3$ in order to prove finite extinction time of Ricci flow on homotopy $3$-spheres. In the gauge theory context, mapping compact families of connections to compact families of Yang--Mills connections is a way to relate the topology of the moduli space of anti-self-dual Yang--Mills connections to the much better understood space of all connections modulo gauge, as seen in work of Taubes \cite{t84,t88,t89} and Donaldson \cite{d93}. In turn, the topology of the moduli space of anti-self-dual Yang--Mills connections gives rise to Donaldson invariants, which have myriad applications and have been used to show that a topological manifold has no smooth structures \cite{d83} or infinitely many smooth structures \cite{fm88}. In addition, recent work of Feehan and Leness \cite{fl14b} relates Donaldson invariants to Seiberg-Witten invariants \cite{w94}.

Another potential source of applications of Yang--Mills replacement arises from its similarity to Yang--Mills gradient flow. Both Yang--Mills replacement and Yang--Mills gradient flow give energy-decreasing paths of connections. Yang--Mills gradient flow has been extensively studied recently \cite{f14a,s94,w16}, and perhaps the local nature of each replacement step and the greater control afforded by the choice of the balls $B^4\subset X$ will lead to the use of Yang--Mills replacement as an alternative to or in conjunction with Yang--Mills gradient flow.

In this article, we perform Yang--Mills replacement on a single ball $B^4\subset X$, but we can repeat this process on balls covering the compact manifold $X$. Repeating this process indefinitely, one would like to pass to the limit to obtain a global Yang--Mills connection, but doing so is a delicate matter and is the natural direction in which to continue this work. The difficulty arises because, although energy is decreasing, it may concentrate around finitely many points. Because the Yang--Mills replacement theorems above require there to be small energy on the ball $B^4$, to continue the replacement process indefinitely, we would need to choose balls whose radii shrink to zero. This bubbling behavior is a common feature of all of the nonlinear contexts discussed above and has been studied for sequences of maps on surfaces \cite{su81}, for general minimizing sequences of connections \cite{s82,m92}, and for Yang--Mills gradient flow \cite{f14a,w16}. Based on Sedlacek's work \cite{s82} for closed manifolds $X$ and Marini's work \cite{m92} for manifolds with boundary, one expects a weak limit of the sequence of connections to exist, but potentially on a different bundle. It is natural to ask if one can say more about the bubbling behavior for Yang--Mills replacement.

This article is structured as follows. We present preliminaries in Section \ref{preliminaries}. The key propositions that lead to our main results are in Section \ref{ymreplacementsection}. Supporting results on gauge fixing are in Section \ref{gaugefixingchapter}. The proofs of our main results Theorems \ref{globalymreplacementtheorem} and \ref{familyymreplacementtheorem} have a local and a global component. The local component, where we restrict our attention to the ball $B^4$, is Theorems \ref{ymreplacementlowenergy} and \ref{interpolationthm}, in which we construct the Yang--Mills replacement $B$ for a connection $A$ and the energy-monotone homotopy between them, respectively. However, when we consider the ball as a subset of a larger manifold $X$, there is a regularity issue across $\pB$ when we replace $B$ with $A$. This regularity issue is resolved with a global result, Theorem \ref{l2disl21}.

In the remainder of the introduction, we summarize the results contained in this article. We begin by showing the existence of a Yang--Mills connection $A$ on the ball $B^4$ with prescribed small boundary values $\Ap$ in Theorem \ref{dirichletproblem}, a key component of Theorem \ref{ymreplacementlowenergy}. For our work, the restriction to the boundary of the Yang--Mills connection $A$ must be equal to $\Ap$. This problem is different from the one solved by Marini \cite{m92}, where the restriction to the boundary of the Yang-Mills connection is required to be gauge equivalent to $\Ap$. The freedom gained from allowing homotopically nontrivial gauge transformations on $\partial B^4$ leads to a rich family of solutions found by Isobe and Marini \cite{im97,im12a,im12b,im10}. However, for the purposes of Yang--Mills replacement, we must require the stronger condition that the restriction of $A$ to the boundary is equal to $\Ap$. Viewing the ball $B^4$ as a subset of a manifold $X$, allowing a homotopically nontrivial gauge transformation on $\partial B^4$ would let us change the topology of the bundle over $X$. Requiring equality, on the other hand, ensures that the connection on $X$ resulting from Yang--Mills replacement is on the same bundle $P\to X$ as the original connection.

Both the work of Marini and Isobe \cite{im12a,im12b,im10} and this article deal with solving the Yang--Mills problem with prescribed small boundary values; however, the notion of ``small'' is different. In \cite{im12a}, the authors show that, for any smooth boundary value $\Ap=d+\ap$, there is an $\epsilon$ depending on $\Ap$ such that there exists a solution with boundary value $d+\epsilon'\cdot\ap$ for any $\epsilon'<\epsilon$. We prove the stronger claim that there is a uniform $\epsilon$ such that a solution with boundary value $\Ap$ exists whenever $\normlpb\ap2{1/2}<\epsilon$. Requiring only a bound on the $\LpB2{1/2}$ norm of the connection is needed for Theorem \ref{ymreplacementlowenergy}, where we improve Theorem \ref{dirichletproblem} to only require an energy bound. This energy bound gives us the energy bound of our main result, Theorem \ref{globalymreplacementtheorem}.

The claim of Theorem \ref{dirichletproblem} has been discussed by Rivi\`ere \cite{r14}, but instead of a direct energy minimization method, we use the inverse function theorem, allowing us to conclude smooth dependence of the solution $A$ on the boundary value $\Ap$. One of the key ideas in the inverse function theorem argument is an appropriate choice of codomain, motivating the definition of $\LeB{2,n}{-1}$ in Section \ref{subsectionl2dir} as the dual of those forms in $\LeB21$ that are normal to the boundary $\pB$. Using the gauge fixing results of Section \ref{gaugefixingchapter}, in Theorems \ref{ymreplacementlowenergy} and \ref{ymuniqueness}, we broaden the hypotheses of Theorem \ref{dirichletproblem} and prove a uniqueness result for the solutions. Along the way, in Proposition \ref{ymreplacementinequality}, we prove an energy convexity result similar to Colding and Minicozzi's result in the harmonic map context \cite{cm08}. Next, in Theorem \ref{interpolationthm}, we construct the local version of the energy-monotone homotopy of Theorem \ref{globalymreplacementtheorem}.

As we proceed to the global question of Yang--Mills replacement on a ball in a larger manifold, we run into regularity issues across the boundary of the ball that are not present in the harmonic map setting. In Theorem \ref{dirichletproblem}, we are able to prescribe the tangential component $i^*A$ of $A$ on the boundary $\pB$, but not the normal component. As a result, when we take a global $\LX21$ connection $B$ and construct a connection $\hat A$ that is a Yang--Mills connection $A$ on the ball $B^4$ and equal to $B$ outside the ball, the tangential components of $A$ and $B$ match on the boundary $\pB$, but the normal components might not. Thus, the resulting piecewise-defined global connection $\hat A$ is not in $\LX21$, but it is nonetheless in a space we call $\LeX2d$, defined in Section \ref{l2dsection} as those forms $\alpha$ such that $\alpha\in\LeX4{}$ and $d\alpha\in\LeX2{}$. However, losing regularity after a Yang--Mills replacement step is unsatisfactory because it prevents us from repeating the Yang--Mills replacement process on an overlapping ball. Because $\LX2d$ connections have well-defined $\LX2{}$ curvatures, one might ask if a gauge fixing argument could show that they are gauge equivalent to $\LX21$ connections. We answer this question in the affirmative in Corollary \ref{homeomorphicconfigurationspaces}, showing that, furthermore, the corresponding spaces of connections modulo gauge transformations are homeomorphic and Hausdorff. The argument is delicate because the gauge transformations involved are in the borderline Sobolev space $\LX41$ and not continuous. The proofs of Theorems \ref{globalymreplacementtheorem} and \ref{familyymreplacementtheorem} follow in Section \ref{mainproofssection}.

Finally, in Section \ref{gaugefixingchapter}, we develop the gauge fixing machinery that powers the results in Section \ref{ymreplacementsection}, building off of results by Uhlenbeck \cite{u82,u82b} and Marini \cite{m92}. In gauge fixing, we start with an $\LB21$ connection $A$ on a ball $B^4$ with small energy $\normlb{F_A}2{}<\epsilon$, and we find an $\LGB22$ gauge transformation $g$ that sends $A$ to a connection $\tA=d+\ta$ satisfying the Coulomb condition $d^*\ta=0$ and a bound on $\normlb{\ta}21$. However, there are two natural boundary conditions to impose on the connection, either the Neumann conditions $i^*{*\ta}=0$ or the Dirichlet conditions $d^*_\pB i^*\ta$, where $i^*$ is the restriction to the boundary. Uhlenbeck \cite{u82} provides a full treatment of the gauge fixing problem with Neumann boundary conditions, but her treatment of the problem with Dirichlet boundary conditions in \cite[Theorem 2.7]{u82b} and the later improvement by Marini \cite[Theorem 3.2]{m92} have additional regularity assumptions on $A$. We prove the result without these assumptions in Theorem \ref{dirichletcoulombfixing}. Along the way, in Theorem \ref{l2dgaugefixing} we extend Uhlenbeck's gauge fixing result to $\LB2d$ connections, in Proposition \ref{gaugefixingcontinuous} we improve the weak $\LaB21$ convergence of the Coulomb gauge representatives in \cite{u82} to strong $\LaB21$ convergence, and, in Proposition \ref{gaugefixingconstantboundary}, we prove the Coulomb gauge fixing result where we impose Dirichlet boundary conditions on the gauge transformation instead of on the connection.

\section{Preliminaries}\label{preliminaries}
In Section \ref{connectionsprelim}, we review connections and prove basic properties of the borderline regularity gauge group. Next, in Section \ref{hodgeprelim}, we review the Hodge decomposition theorem for manifolds with boundary. Finally, in Sections \ref{subsectionl2dir} and \ref{l2dsection}, we define and prove basic properties of the Banach spaces $\LeX{2,\Dir}{-1}$ and $\LeX2d$, where $d$ denotes the exterior derivative.

\subsection{Yang--Mills connections}\label{connectionsprelim}
Following the standard references \cite{kn63,dk90}, we introduce the notation we will use for principal $G$-bundles and connections. Let $G$ be a compact Lie group. We fix a unitary representation $G\hookrightarrow U_N\subset M_N$, where $M_N$ denotes the vector space of $N$ by $N$ complex matrices.
\begin{definition}
  Let $P\to X$ be a principal $G$-bundle over a compact manifold $X$, and let $\ad P$ denote the associated bundle $P\times_G\mf g$. Let $A$ be an $\LX21$ connection, and let $F_A\in\LL2{}{X;\ad P\otimes\forms2T^*X}$ be its curvature. The \emph{energy} of $A$ is
  \begin{equation*}
    \mc E(A)=\tfrac12\int_X\abs{F_A}^2=\tfrac12\normlx{F_A}2{}^2.
  \end{equation*}
\end{definition}

\begin{definition}
  A \emph{Yang--Mills connection} $A$ is a critical point of the functional $\mc E$. If $X$ has boundary, then we require $A$ to be a critical point with respect to variations $A_t$ such that $i^*A_t$ is gauge equivalent to $i^*A$ on the boundary, where $i\colon\pX\hookrightarrow X$ is the inclusion.
\end{definition}

Using variations that are fixed on the boundary, we see that a Yang--Mills connection $A$ satisfies the \emph{Yang--Mills equations}
\begin{equation*}
  \pairlx{F_A}{d_Ac}=0
\end{equation*}
for all $c\in\LL21{X;\ad P\otimes T^*X}$ with $i^*c=0$ on $\pX$. When we are working over a local trivialization $d$ of $P$ over $B^4\subset X$, we will also make use of the \emph{projected Yang--Mills equations}, where we only require that $\pairlb{F_A}{d_Ac}=0$ for $c\in\LaB21$ satisfying $d^*c=0$ on $B^4$ in addition to $i^*c=0$ on $\pB$.

We now review gauge transformations and their action on connections.

\begin{definition}
  A \emph{gauge transformation} is an automorphism of $P$. A gauge transformation can be represented by a section of the associated bundle $\Ad P=P\times_G G\subset P\times_G M_N$ with the conjugation action of $G$ on $G\subset M_N$. By an $L^p_k$ gauge transformation we mean an $L^p_k$ section $g$ of the vector bundle $P\times_G M_N$ such that $g(x)\in\Ad P$ a.e.\ on $X$.
\end{definition}

With respect to a local trivialization of $P$ over $B^4\subset X$, a gauge transformation is a $G$-valued function, and we can write down how explicitly how it acts on a connection $A$ expressed in this trivialization as $d+a$, where $a$ is a $\mf g$-valued one-form. We have
\begin{equation*}
  g(A)=d+gag^{-1}-(dg)g^{-1}.
\end{equation*}
Writing $g(A)=B=d+b$, we can rewrite the above equation as
\begin{equation*}
  dg=ga-bg,
\end{equation*}
where the terms in the equation are $M_N$-valued one-forms.

When $(k+1)p>\dim X$, it is well-known \cite{fu84,u82} that the group of $L^p_{k+1}$ gauge transformations has smooth multiplication and inversion and acts smoothly on $L^p_k$ connections, using the multiplication map $L^p_{k+1}\times L^p_{k+1}\to L^p_{k+1}$ and the Sobolev embedding $L^p_{k+1}\hookrightarrow C^0$.

In the borderline case $(k+1)p=\dim X$, the matter is more delicate. Because gauge transformations are $G$-valued and $G$ is compact, they are uniformly bounded in $L^\infty(X;P\times_GM_N)$. As a result, multiplication of borderline $L^p_{k+1}$ gauge transformations is still well-defined, as is their action on $L^p_k$ connections. However, these maps are only smooth with the $L^p_{k+1}\cap L^\infty$ topology on gauge transformations. With just the $L^p_{k+1}$ topology, the situation is more subtle. With this topology, multiplication of gauge transformations and the action of gauge transformations on connections are no longer smooth maps, but nonetheless they are continuous. We will prove this claim for $L^2_2$ gauge transformations on a $4$-manifold, but the argument works for general borderline regularity gauge groups. The key idea that gives us just enough power to prove continuity is that $G$-valued functions act as isometries on $L^p$ spaces. We begin with a lemma.

\begin{lemma}\label{weaktostronglemma}
  Let $P$ be a principal $G$-bundle over a compact $4$-manifold with compact gauge group $G\hookrightarrow U_N\subset M_N$. Let $1<p<\infty$, and let $f_i$ be a general matrix-valued sequence in $L^p(X;P\times_GM_N)$ that converges in $L^p(X;P\times_GM_N)$ to $f$. Let $\frac1q+\tfrac1p=1$, and let $g_i$ be a sequence of gauge transformations that converges in $L^q(X;\Ad P)$ to a gauge transformation $g$. Then $g_if_i$ and $f_ig_i$ converge in $L^p(X;P\times_GM_N)$ to $gf$ and $fg$, respectively.
  \begin{proof}
    We know immediately that $g_if_i$ converges to $gf$ in $L^1(X;P\times_GM_N)$. In addition, because $G$ is compact, we know that $g_i\in L^\infty(X;P\times_GM_N)$, so $g_if_i\in L^p(X;P\times_GM_N)$. However, because the $g_i$ do not in general converge in $L^\infty(X;P\times_GM_N)$, it is not immediately obvious that $g_if_i$ converges in $L^p(X;P\times_GM_N)$.

    We show that the $g_if_i$ converges in $L^p(X;P\times_GM_N)$ to $gf$ by showing that every subsequence of the $g_if_i$ has a further subsequence that converges to $gf$. We begin by passing to a subsequence of the $g_if_i$. The $g_i$, being $\Ad P$-valued a.e., are uniformly bounded in $L^\infty(X;P\times_GM_N)$. As a result, the $g_if_i$ are uniformly bounded in $L^p(X;P\times_GM_N)$, and hence after passing to a further subsequence the $g_if_i$ converge weakly in $L^p(X;P\times_GM_N)$. This weak limit must be $gf$ because we know that $g_if_i$ converges to $gf$ in $L^1(X;P\times_GM_N)$. To upgrade this weak convergence to strong convergence, we note that multiplication by an element of $G$ is an isometry of $M_N$, and hence
    \begin{equation*}
      \normlx{g_if_i}p{}=\normlx{f_i}p{}\to\normlx{f}p{}=\normlx{gf}p{}
    \end{equation*}
    For $L^p$ spaces with $1<p<\infty$ \cite{r13}, weak convergence along with convergence of the sequence of norms to the norm of the limit implies strong convergence. We conclude then that this further subsequence of the $g_if_i$ converges strongly in $L^p(X;P\times_GM_N)$ to $gf$, and hence so does the original sequence. The argument for $f_ig_i$ is analogous.
  \end{proof}
\end{lemma}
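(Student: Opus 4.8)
The plan is to reduce the statement to the elementary Banach-space fact (the Radon--Riesz property) that in a uniformly convex space, and in particular in $L^p$ for $1<p<\infty$, weak convergence together with convergence of the norms to the norm of the limit implies strong convergence. First I would record two elementary observations. Since $G$ is compact, the gauge transformations $g_i$ are uniformly bounded in $L^\infty(X;P\times_GM_N)$, with a bound independent of $i$. And since $G\hookrightarrow U_N$ acts by unitary matrices, pointwise multiplication by $g_i(x)$ is an isometry of $M_N$ for the Hilbert--Schmidt norm, so $\abs{g_if_i}=\abs{f_i}$ pointwise and hence $\normlx{g_if_i}p{}=\normlx{f_i}p{}$ for every $i$ (and likewise $\normlx{gf}p{}=\normlx{f}p{}$).

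Next I would identify the weak limit. Writing $g_if_i-gf=g_i(f_i-f)+(g_i-g)f$, H\"older's inequality together with the uniform $L^\infty$ bound on the $g_i$ gives $\normlx{g_i(f_i-f)}1{}\to0$, while $\normlx{(g_i-g)f}1{}\le\normlx{g_i-g}q{}\normlx{f}p{}\to0$; hence $g_if_i\to gf$ in $L^1(X;P\times_GM_N)$. Because the $g_if_i$ are uniformly bounded in $L^p$ (from the two observations above and $f_i\to f$ in $L^p$), any subsequence of $(g_if_i)$ has a further subsequence converging weakly in $L^p(X;P\times_GM_N)$, and the $L^1$ convergence forces this weak limit to be $gf$.

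Then I would upgrade weak convergence to strong convergence along that further subsequence. We have $\normlx{g_if_i}p{}=\normlx{f_i}p{}\to\normlx{f}p{}=\normlx{gf}p{}$ by the isometry observation. Since $L^p$ with $1<p<\infty$ is uniformly convex, weak convergence together with this convergence of norms yields strong convergence, so the further subsequence converges to $gf$ in $L^p(X;P\times_GM_N)$. As every subsequence of $(g_if_i)$ thus has a further subsequence converging in $L^p$ to the same limit $gf$, the whole sequence $(g_if_i)$ converges to $gf$ in $L^p(X;P\times_GM_N)$.

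Finally, the argument for $f_ig_i$ is identical, the only extra point being that right multiplication by $g_i(x)\in U_N$ is also a Hilbert--Schmidt isometry of $M_N$ (indeed $\abs{fg}^2=\operatorname{tr}(g^*f^*fg)=\operatorname{tr}(f^*f)=\abs{f}^2$ for $g\in U_N$), so $\normlx{f_ig_i}p{}=\normlx{f_i}p{}$ and the same subsequence-plus-norm-convergence scheme applies. I expect the one genuinely non-routine step to be this upgrade from $L^1$ to $L^p$ convergence: because the $g_i$ converge only in $L^q$ and not in $L^\infty$, a direct H\"older estimate cannot reach $L^p$, so the combination of the isometry property of $G$-valued multipliers with the uniform convexity of $L^p$ is exactly what supplies the missing strength.
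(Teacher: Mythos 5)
Your proof is correct and takes essentially the same route as the paper: uniform $L^\infty$ bound on the $g_i$, $L^1$ convergence of the products, weak $L^p$ convergence along a subsequence, and the isometry of $G$-multiplication to get convergence of norms, upgraded to strong convergence via the Radon--Riesz property. The only differences are cosmetic elaborations (the explicit decomposition $g_if_i-gf=g_i(f_i-f)+(g_i-g)f$ and the trace computation for right multiplication), which the paper leaves implicit.
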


We can now prove the continuity of the gauge group and its action on connections. See also \cite[Lemma 2.1]{s02} and \cite[Corollary 6.1]{i09}.

\begin{proposition}\label{borderlinecontinuity}
  Let $P$ be a principal $G$-bundle over a compact $4$-manifold $X$ with compact group $G\hookrightarrow U_N\subset M_N$. The group of $\LX22$ gauge transformations has continuous multiplication and inversion maps, and $\LX22$ gauge transformations act continuously on $\LX21$ connections.
  \begin{proof}
    The question is local, so for simplicity we work over a closed ball in a local trivialization $B^4\subset X$. Consider a sequence of $\LGB22$ gauge transformations $g_i$ and $h_i$ converging in $\LGB22$ to $g$ and $h$, respectively. We aim to show that $g_ih_i$ converges to $gh$ in $\LGB22$.

    We compute that
    \begin{equation*}
      \nabla^2(g_ih_i)=(\nabla^2g_i)h_i+2(\nabla g_i)(\nabla h_i)+g_i(\nabla^2h_i).
    \end{equation*}
    For the middle term, we know that the sequences $\nabla g_i$ and $\nabla h_i$ converge in $\LEaB21$, and we have a Sobolev multiplication map $L^2_1\times L^2_1\to L^2$. For the other two terms, we apply Lemma \ref{weaktostronglemma}. We know that $g_i$ converges to $g$ in $\LGB22$ and hence in $\LGB2{}$, and we know that $\nabla^2h_i$ converges in $\LnaB2{}$ to $\nabla^2h$, so Lemma \ref{weaktostronglemma} tells us that $g_i(\nabla^2h_i)$ converges in $\LnaB2{}$ to $g(\nabla^2h)$. Likewise, $(\nabla^2 g_i)h_i$ converges in $\LnaB2{}$ to $(\nabla^2 g)h$.

    Inversion is continuous by a simpler argument. Because we have chosen a representation $G\hookrightarrow U_N$, inversion is the same as the conjugate transpose, which is a linear, and hence smooth, map $\LEB22\to\LEB22$. Using this fact, we can show that the action of gauge transformations on connections is continuous.

    We would like to show that the map $g(a)$ defined by
    \begin{align*}
      \LGB22\times\LaB21&\to\LaB21\\
      (g,a)&\mapsto gag^{-1}-(dg)g^{-1}
    \end{align*}
    is continuous. Again, we choose sequences $g_i$ and $a_i$ that converge in $\LGB22$ and $\LaB21$ to $g$ and $a$, respectively, and we aim to show that the $g_i(a_i)$ converge to $g(a)$ in $\LaB21$. We compute
    \begin{equation*}
      \nabla(g_i(a_i))=(\nabla g_i)a_ig_i^{-1}+g_i(\nabla a_i)g_i^{-1}+g_ia_i(\nabla g_i^{-1})-(\nabla dg_i)g_i^{-1}-dg_i(\nabla g_i^{-1}).
    \end{equation*}
    Using the Sobolev multiplication theorems, we know that $(\nabla g_i)a_i$, $\nabla a_i$, $a_i\nabla g_i^{-1}$, $\nabla dg_i$, and $dg_i(\nabla g_i^{-1})$ all converge in $\LnaB2{}$ to the expected limits. As a result, we can prove convergence of the $\nabla(g_i(a_i))$ in $\LnaB2{}$ by several applications of Lemma \ref{weaktostronglemma}.
  \end{proof}
\end{proposition}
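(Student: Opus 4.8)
The plan is to localize: the assertion is local on $X$, so I would work over a closed ball $B^4$ in a trivialization, where gauge transformations become $G$-valued functions on $B^4$ and all bundle structure disappears. The three claims---continuity of multiplication, of inversion, and of the action on connections---then reduce to combining the four-dimensional Sobolev multiplication theorems with Lemma \ref{weaktostronglemma}. The organizing principle is that in every product appearing in the relevant expansions, a factor that converges only in $\LnaB2{}$ or $\LaB2{}$ (rather than in a space that embeds into $L^\infty$) is always one of the $G$-valued gauge transformations, so Lemma \ref{weaktostronglemma} is precisely the tool that compensates for the failure of the Sobolev embedding into $C^0$.

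First I would treat multiplication. Given $g_i\to g$ and $h_i\to h$ in $\LGB22$, I would expand $\nabla^2(g_ih_i)=(\nabla^2g_i)h_i+2(\nabla g_i)(\nabla h_i)+g_i(\nabla^2h_i)$. The middle term is handled by the continuous Sobolev multiplication $L^2_1\times L^2_1\to L^2$ (valid since $L^2_1\hookrightarrow L^4$ in four dimensions), applied to $\nabla g_i\to\nabla g$ and $\nabla h_i\to\nabla h$ in $\LEaB21$. For the first and last terms the second-derivative factor converges only in $\LnaB2{}$, so I would invoke Lemma \ref{weaktostronglemma} with $p=q=2$, using also $g_i\to g$, $h_i\to h$ in $\LGB2{}$; the lower-order pieces $\nabla(g_ih_i)$ and $g_ih_i$ converge by the same mechanism or more easily. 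Inversion is simpler: because $G\hookrightarrow U_N$, inverting a gauge transformation agrees with taking the conjugate transpose of the corresponding $M_N$-valued function, which is an $\mathbb{R}$-linear, hence continuous, map on $\LEB22$; in particular $g_i^{-1}\to g^{-1}$ in $\LEB22$, and so also in each weaker norm I will need below.

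For the action on connections I would consider $g(a)=gag^{-1}-(dg)g^{-1}$, take $g_i\to g$ in $\LGB22$ and $a_i\to a$ in $\LaB21$, and expand $\nabla(g_i(a_i))$ by the product rule into the five terms $(\nabla g_i)a_ig_i^{-1}$, $g_i(\nabla a_i)g_i^{-1}$, $g_ia_i(\nabla g_i^{-1})$, $(\nabla dg_i)g_i^{-1}$, $dg_i(\nabla g_i^{-1})$. In each term, Sobolev multiplication pairs the $L^2_1\hookrightarrow L^4$ factors to land in $\LnaB2{}$ (this handles $(\nabla g_i)a_i$, $a_i\nabla g_i^{-1}$, $dg_i(\nabla g_i^{-1})$, and gives $L^2$-convergence of $\nabla a_i$, $\nabla^2 g_i$ directly), and then Lemma \ref{weaktostronglemma} pushes the remaining $G$-valued factor past an $L^2$-convergent factor to finish each multiplication; $\LaB2{}$-convergence of $g_i(a_i)$ itself follows the same way. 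The one point needing care is that Lemma \ref{weaktostronglemma} multiplies by a single gauge transformation on one side, whereas terms such as $g_ia_ig_i^{-1}$ carry gauge factors on both sides; I would dispatch these by applying the lemma twice in succession---first absorbing $g_i^{-1}$ on the right, then $g_i$ on the left---which is legitimate because one-sided multiplication by a $U_N$-valued function is a pointwise isometry of $M_N$ in the Frobenius norm, so the norm-convergence hypothesis driving the lemma's proof is automatically available at each stage.

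The main obstacle is exactly the failure of $\LX22\hookrightarrow C^0(X)$: without it one cannot treat a $G$-valued factor as a bounded multiplier, so the naive product estimates collapse in the top-order terms. Lemma \ref{weaktostronglemma} is the device that circumvents this---trading the missing uniform bound for the pointwise bound coming from compactness of $G$ together with the uniform convexity of $L^p$ for $1<p<\infty$---and once it is in hand the remainder of the argument is bookkeeping with Sobolev multiplication.
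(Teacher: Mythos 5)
Your proposal is correct and follows essentially the same route as the paper: local reduction, the same expansion of $\nabla^2(g_ih_i)$ and of $\nabla(g_i(a_i))$, Sobolev multiplication for the $L^2_1\times L^2_1\to L^2$ factors, and Lemma \ref{weaktostronglemma} to handle the $G$-valued factors that fail to converge in $L^\infty$. Your explicit remark on applying the lemma twice for two-sided products such as $g_i(\nabla a_i)g_i^{-1}$ fills in what the paper compresses into ``several applications of Lemma \ref{weaktostronglemma}''; no gap.
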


\subsection{The Hodge decomposition theorem on manifolds with boundary}\label{hodgeprelim}

In this section, we summarize the treatment in \cite[Section 5.9]{t96} of appropriate boundary conditions for the Hodge Laplacian. Let $X$ be a smooth manifold with boundary $\pX$, and let $i\colon\pX\to X$ be the inclusion.

\begin{definition}\label{l21dir}
  Let $\LL{2,\Dir}1{X;\ext T^*X}$ denote the $\LX21$ differential forms $\alpha$ that are normal to the boundary, that is, they satisfy the Dirichlet boundary condition $i^*\alpha=0$. Likewise, let $\LL{2,\Neu}1{X;\ext T^*X}$ denote the $\LX21$ differential forms $\alpha$ that are tangent to the boundary, that is, they satisfy the Neumann boundary condition $i^*{*\alpha}=0$, where $*$ is Hodge star operator.
\end{definition}

\begin{definition}
  Let $\mc H^\Dir$ denote the harmonic forms in $\LeX{2,\Dir}1$. That is, $\mc H^\Dir$ contains those $\LeX21$ forms $\alpha$ such that $i^*\alpha=0$, $d\alpha=0$, and $d^*\alpha=0$. Likewise, let $\mc H^\Neu$ denote those $\LeX21$ forms $\alpha$ such that $i^*{*\alpha}=0$, $d\alpha=0$, and $d^*\alpha=0$.
\end{definition}

\begin{proposition}[{\cite[5.9.36, 5.9.38]{t96}}]
  The forms in $\mc H^\Dir$ and $\mc H^\Neu$ are smooth.
\end{proposition}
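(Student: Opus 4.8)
The plan is to show that any $\alpha \in \mc H^\Dir$ or $\alpha \in \mc H^\Neu$ satisfies an elliptic system with elliptic (complementing) boundary conditions, and then to invoke standard elliptic regularity theory, bootstrapping from $\LeX21$ to $C^\infty$. First I would observe that the three conditions defining $\mc H^\Dir$, namely $d\alpha = 0$ and $d^*\alpha = 0$ together with $i^*\alpha = 0$, imply in particular that $\alpha$ is a weak solution of the Hodge Laplace equation $\Delta\alpha = (dd^* + d^*d)\alpha = 0$. The point is that $\Delta$ is a second-order elliptic operator (its principal symbol is $|\xi|^2$ times the identity on $\ext T^*X$), so interior regularity is immediate: $\alpha$ is smooth in the interior of $X$. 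The work is entirely at the boundary $\pX$.

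The key step is to check that the relevant boundary conditions form an elliptic boundary value problem in the sense of Agmon–Douglis–Nirenberg (or, in the language of \cite[Section 5.9]{t96}, that the pair $(d + d^*, \text{absolute/relative boundary conditions})$ is elliptic). For $\mc H^\Dir$ the natural boundary conditions to couple with $\Delta$ are the relative (Dirichlet) conditions $i^*\alpha = 0$ and $i^*(d^*\alpha)$ — or equivalently $i^*\alpha = 0$ and $i^*(*d\alpha) = 0$ — which is exactly what one gets from integrating by parts in the Dirichlet form $\pair{d\alpha}{d\beta} + \pair{d^*\alpha}{d^*\beta}$ against test forms $\beta$ that are normal to the boundary; since our $\alpha$ has $d\alpha = 0$ and $d^*\alpha = 0$ identically, these conditions hold trivially. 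This is precisely the content of the cited results \cite[5.9.36, 5.9.38]{t96}, so in practice I would simply quote the elliptic estimate for this boundary value problem: a form $\alpha \in \LeX21$ with $\Delta\alpha \in \LeX2{}$ satisfying the relative boundary conditions lies in $\LeX22$, and then, since $\Delta\alpha = 0 \in \LeX2k$ for every $k$, the same estimate bootstraps to give $\alpha \in \LeX2k$ for all $k$, hence $\alpha \in C^\infty$ by Sobolev embedding. The argument for $\mc H^\Neu$ is identical with the absolute (Neumann) boundary conditions $i^*{*\alpha} = 0$ and $i^*{*d\alpha} = 0$ in place of the relative ones.

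The main obstacle — really the only substantive one — is verifying the Lopatinski–Shapiro complementing condition for the Hodge Laplacian with these mixed boundary conditions, so as to license the elliptic estimate at the boundary. Fortunately this is classical and is exactly what \cite{t96} establishes in the sections being cited, so I would not reprove it; the proposition as stated is essentially a corollary of the regularity half of the Hodge decomposition theorem for manifolds with boundary. Thus the proof reduces to: (i) note $\Delta\alpha = 0$ weakly, (ii) cite interior elliptic regularity, (iii) cite the boundary elliptic estimate for the relative/absolute boundary value problem from \cite{t96}, and (iv) bootstrap. The only mild subtlety worth a sentence is making sure the weak formulation used to define $\mc H^\Dir$ and $\mc H^\Neu$ matches the weak formulation of the boundary value problem in \cite{t96}, which it does because the conditions $d\alpha = 0$, $d^*\alpha = 0$ are imposed pointwise (in $L^2$) rather than only weakly against constrained test forms.
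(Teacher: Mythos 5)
The paper offers no proof of this proposition at all; it simply cites Taylor \cite[5.9.36, 5.9.38]{t96}, so there is no in-paper argument to compare yours against. Your outline is a correct description of the standard elliptic-regularity argument underlying those cited results: observe that $\Delta\alpha=0$ weakly, note that the second boundary condition of the relative (resp.\ absolute) elliptic boundary value problem is satisfied trivially because $d\alpha=d^*\alpha=0$ pointwise, invoke the Lopatinski--Shapiro elliptic estimate at the boundary, and bootstrap.

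One small slip worth correcting: your parenthetical ``or equivalently $i^*\alpha=0$ and $i^*(*d\alpha)=0$'' is not accurate. The relative boundary conditions for the Hodge Laplacian are $i^*\alpha=0$ together with $i^*(d^*\alpha)=0$, while the pair $i^*(*\alpha)=0$, $i^*(*d\alpha)=0$ constitutes the \emph{absolute} conditions; $i^*(d^*\alpha)=0$ and $i^*(*d\alpha)=0$ are not equivalent in general. This happens not to affect your argument, since for $\alpha\in\mc H^\Dir$ both $d\alpha$ and $d^*\alpha$ vanish identically and so both conditions hold regardless, but the claimed equivalence should be dropped.
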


\begin{proposition}[{\cite[5.9.9]{t96}}]\label{cohomology}
  The natural map from the Dirichlet harmonic forms into the cohomology of $X$ rel boundary is an isomorphism. That is, $\mc H^\Dir\cong H^*(X,\pX)$. Likewise, $\mc H^\Neu\cong H^*(X)$.
\end{proposition}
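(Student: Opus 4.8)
The plan is to write down the natural map $\mc H^\Dir\to H^*(X,\pX)$ explicitly via the de Rham model of relative cohomology, and then verify separately that it is well defined, injective, and surjective. Everything will reduce to two ingredients: the integration-by-parts identity
\[
  \pairlx{d\beta}{\alpha}=\pairlx{\beta}{d^*\alpha}+\int_\pX i^*\beta\wedge i^*{*\alpha}
\]
together with its $d^*$-analogue (the boundary term is always a top form on $\pX$); and the fact that the Hodge Laplacian $\Delta=dd^*+d^*d$ with the \emph{Dirichlet boundary conditions} $i^*\alpha=0$, $i^*(d^*\alpha)=0$ defines a nonnegative self-adjoint Fredholm operator $\Delta_\Dir$ on $\LeX2{}$ whose kernel is exactly $\mc H^\Dir$, so that it has closed range and yields the $L^2$-orthogonal splitting $\LeX2{}=\range(\Delta_\Dir)\oplus\mc H^\Dir$. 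This second ingredient is precisely the elliptic boundary-value theory summarized in \cite[Section 5.9]{t96}, so I would cite it rather than reprove it; the Neumann statement will then follow from the Dirichlet one by Hodge duality.

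To check well-definedness: a form $\alpha\in\mc H^\Dir$ is smooth by the preceding proposition, is closed, and satisfies $i^*\alpha=0$, so it is a cocycle in the de Rham complex of smooth forms on $X$ with vanishing pullback to $\pX$, whose cohomology is $H^*(X,\pX;\mb R)$ by the usual comparison with singular cohomology; assigning to $\alpha$ its class gives the (clearly linear) map. For injectivity: if $[\alpha]=0$ in $H^*(X,\pX)$, then $\alpha=d\beta$ for some smooth $\beta$ with $i^*\beta=0$, and hence
\[
  \normlx\alpha2{}^2=\pairlx{d\beta}{\alpha}=\pairlx{\beta}{d^*\alpha}+\int_\pX i^*\beta\wedge i^*{*\alpha}=0,
\]
since $d^*\alpha=0$ and $i^*\beta=0$, so $\alpha=0$.

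For surjectivity: represent a given class in $H^*(X,\pX)$ by a smooth closed form $\rho$ with $i^*\rho=0$, and use the splitting to write $\rho=\Delta_\Dir\psi+h$ with $h\in\mc H^\Dir$ and $\psi$ in the domain of $\Delta_\Dir$; then $\psi$ is smooth by boundary elliptic regularity and satisfies $i^*\psi=0$, $i^*(d^*\psi)=0$. Setting $\phi=d^*\psi$ gives $i^*\phi=0$ and $\xi:=\rho-h-d\phi=d^*d\psi$. Since $\rho$ and $h$ are closed, $d\xi=0$; since $i^*\rho=i^*h=i^*\phi=0$, also $i^*\xi=i^*\rho-i^*h-d_\pX(i^*\phi)=0$. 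Then the $d^*$ integration-by-parts identity gives
\[
  \normlx\xi2{}^2=\pairlx{d^*d\psi}{\xi}=\pairlx{d\psi}{d\xi}-\int_\pX i^*\xi\wedge i^*{*d\psi}=0,
\]
so $\xi=0$ and $\rho-h=d\phi$ with $i^*\phi=0$, i.e.\ $[\rho]=[h]$; thus every relative class is represented in $\mc H^\Dir$ and the map is onto. For the Neumann statement, I would observe that the Hodge star $*$ carries $\mc H^\Neu$ bijectively onto $\mc H^\Dir$, since it preserves the property of being closed and coclosed and interchanges $i^*{*\alpha}=0$ with $i^*\alpha=0$; this bijection realizes Poincar\'e--Lefschetz duality $H^k(X)\cong H^{n-k}(X,\pX)$, so combined with the Dirichlet case it yields $\mc H^\Neu\cong H^*(X)$. (Alternatively, one runs the argument above with the Neumann realization $\Delta_\Neu$ in place of $\Delta_\Dir$.)

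The genuinely substantive point, and the one I would invoke rather than redo, is the analytic input used above: that $\Delta_\Dir$ and $\Delta_\Neu$ are Fredholm with the asserted kernels and closed range. Establishing this amounts to verifying that these boundary conditions satisfy the Lopatinski--Shapiro complementing condition for $\Delta$, and then running the standard package --- coercive (G\aa rding) estimates, elliptic regularity up to $\pX$, and Rellich compactness of the resolvent. Since this section is precisely summarizing that development in \cite{t96}, I would cite it; the topological bookkeeping above is then routine.
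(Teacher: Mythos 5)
The paper does not prove this proposition; it states it as a citation to Taylor \cite[5.9.9]{t96}, whose Section 5.9 is exactly the elliptic boundary-value-problem package you identify as the analytic input. So there is no ``paper's own proof'' to compare against. Your reconstruction is correct and is in substance the argument that reference develops: the key steps (well-definedness into the de Rham model of $H^*(X,\pX)$ via forms with $i^*\omega=0$; injectivity by pairing $\alpha=d\beta$ against $\alpha$ and killing both the interior term via $d^*\alpha=0$ and the boundary term via $i^*\beta=0$; surjectivity by decomposing $\rho=\Delta_\Dir\psi+h$ and showing the $d^*d\psi$ piece vanishes via the same integration-by-parts trick) are all sound, and you correctly defer the Fredholm/regularity/orthogonal-splitting input to the cited source rather than reproving it.

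One small imprecision: your primary route to the Neumann statement, via $*\colon\mc H^\Neu_k\xrightarrow{\ \sim\ }\mc H^\Dir_{n-k}$ composed with the Dirichlet case and Poincar\'e--Lefschetz duality, produces an \emph{abstract} isomorphism $\mc H^\Neu\cong H^*(X)$ but does not by itself show that the \emph{natural} map $\alpha\mapsto[\alpha]$ is the isomorphism (indeed, saying the $*$-bijection ``realizes'' duality already presupposes the Neumann identification). The alternative you mention --- rerunning the argument with $\Delta_\Neu$ and boundary conditions $i^*{*\psi}=0$, $i^*{*d\psi}=0$ --- is the clean way to do it and works verbatim; equivalently, prove injectivity of the natural map directly (if $\alpha=d\beta$ with $\beta$ smooth and no boundary condition, then $\normlx\alpha2{}^2=\pairlx\beta{d^*\alpha}+\int_\pX i^*\beta\wedge i^*{*\alpha}=0$ since $d^*\alpha=0$ and $i^*{*\alpha}=0$) and conclude surjectivity from the dimension count your duality argument provides.
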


\begin{proposition}[{\cite[5.9.8]{t96}}]\label{hodgedecomposition}
  There exists Green's functions
  \begin{align*}
    G^\Dir&\colon\LeX2{}\to\LeX22,\text{ and}\\
    G^\Neu&\colon\LeX2{}\to\LeX22
  \end{align*}
  such that:
  \begin{enumerate}
  \item For all $\LeX2{}$ differential forms $\alpha$,
    \begin{align*}
      \alpha&=dd^*G^\Dir\alpha+d^*dG^\Dir\alpha+\pi_{\mc H}^\Dir\alpha,\\
      \alpha&=dd^*G^\Neu\alpha+d^*dG^\Neu\alpha+\pi_{\mc H}^\Neu\alpha,
    \end{align*}
    where $\pi_{\mc H}^\Dir$ and $\pi_{\mc H}^\Neu$ denote the $\LX2{}$ projections to the finite-dimensional spaces $\mc H^\Dir$ and $\mc H^\Neu$, respectively.
  \item The operators $dd^*G^\Dir$, $d^*dG^\Dir$, and $\pi_{\mc H}^\Dir$ are $\LX2{}$-projections whose ranges are $\LX2{}$-orthogonal to each other. Likewise, the operators $dd^*G^\Neu$, $d^*dG^\Neu$, and $\pi_{\mc H}^\Neu$ are $\LX2{}$-projections whose ranges are $\LX2{}$-orthogonal to each other.
  \item The range of $G^\Dir$ satisfies the boundary conditions $i^*G^\Dir\alpha=0$ and $i^*d^*G^\Dir\alpha=0$.
  \item The range of $G^\Neu$ satisfies the boundary conditions $i^*{*G^\Neu\alpha}=0$ and $i^*d^*{*G^\Neu\alpha}=0$.
  \item For any $k\ge0$, $G^\Dir,G^\Neu\colon\LeX2k\to\LeX2{k+2}$.
  \end{enumerate}
\end{proposition}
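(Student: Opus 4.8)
This is the standard Hodge-theory package for a compact manifold with boundary, and the plan is to assemble it from the elliptic theory of the Hodge Laplacian rather than to reprove that theory; the statement as given is \cite[5.9.8]{t96}, so we only indicate how the five items arise. The starting point is that the Hodge Laplacian $\Delta=dd^*+d^*d$, restricted to forms satisfying the relative (``Dirichlet'') boundary conditions $i^*\beta=0$ and $i^*d^*\beta=0$, or the absolute (``Neumann'') conditions $i^*{*\beta}=0$ and $i^*d^*{*\beta}=0$, is a self-adjoint elliptic boundary value problem on $X$. Self-adjointness amounts to the vanishing of the boundary term in Green's formula, which holds because on each domain the pairing $\pairlx{d\gamma}{\eta}=\pairlx{\gamma}{d^*\eta}$ has no boundary contribution --- for the relative conditions because the boundary integrand involves $i^*\gamma$, for the absolute conditions because it involves $i^*{*\eta}$. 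Ellipticity (the Lopatinski--Shapiro condition) is checked as in \cite[Section 5.9]{t96}. Consequently the kernel is finite-dimensional and, by the definitions preceding the proposition, equals $\mc H^\Dir$ (resp.\ $\mc H^\Neu$); the range is closed; and one defines $G^\Dir$ (resp.\ $G^\Neu$) to be the inverse of $\Delta$ on the $\LX2{}$-orthogonal complement of the kernel, extended by zero on the kernel. Item~(5), the mapping property $\LeX2k\to\LeX2{k+2}$, is then elliptic regularity up to the boundary for this boundary value problem.

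Granting this, the decomposition in item~(1) is just the identity $\Delta G=\Id-\pi_{\mc H}^\Dir$ (resp.\ $\Delta G=\Id-\pi_{\mc H}^\Neu$) with $\Delta$ expanded as $dd^*+d^*d$. For item~(2), that $dd^*G$, $d^*dG$, and $\pi_{\mc H}$ are mutually $\LX2{}$-orthogonal projections: the ranges are orthogonal because exact forms, coexact forms, and harmonic forms are pairwise $\LX2{}$-orthogonal on each domain (again by the vanishing boundary term, e.g.\ $\pairlx{d\mu}{d^*\nu}=\pairlx{\mu}{d^*d^*\nu}+\text{bdry}=0$ since $d^*d^*=0$, and $\pairlx{d\mu}{h}=\pairlx{\mu}{d^*h}=0$ for $h$ harmonic), and each is idempotent because $G$ is self-adjoint and commutes with $d$ and $d^*$ on the relevant subspaces, so that for instance $d^*dG(d^*dG\alpha)=d^*dG\alpha$. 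Items~(3) and~(4) are then immediate from the construction: $G^\Dir\alpha$ lies in the domain of the relative Laplacian, which is precisely the pair of conditions $i^*G^\Dir\alpha=0$ and $i^*d^*G^\Dir\alpha=0$, and symmetrically $G^\Neu\alpha$ satisfies the absolute conditions $i^*{*G^\Neu\alpha}=0$ and $i^*d^*{*G^\Neu\alpha}=0$.

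The only genuine subtleties are bookkeeping ones. First, one must match conventions: checking that the relative and absolute boundary conditions on the operator $\Delta$ correspond to the forms-normal-to-$\pX$ and forms-tangent-to-$\pX$ descriptions of $\mc H^\Dir$ and $\mc H^\Neu$ used in the excerpt. Second, all the Green's-formula manipulations above must be justified at the stated regularity --- for a form in $\LeX22$ the traces $i^*\beta$ and $i^*d^*\beta$ lie in the appropriate fractional Sobolev spaces on $\pX$, and the integration-by-parts identities extend from smooth forms by density. I expect the main obstacle, were one to write this out in full rather than cite it, to be precisely this trace-and-density argument underlying self-adjointness and ellipticity; everything downstream of it is formal Hodge theory. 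Since the proposition is quoted verbatim from \cite[5.9.8]{t96}, we take it as known.
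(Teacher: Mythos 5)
The paper gives no proof of this proposition; it is stated as a cited result from \cite[5.9.8]{t96}, and your proposal likewise defers to that reference while providing an informative (and essentially correct) sketch of why it holds. So there is nothing to compare against in the paper itself; your outline of the elliptic-BVP-plus-Green's-formula argument matches the standard proof in the cited source and is consistent with how the paper uses the result.
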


\begin{corollary}\label{Di}
  Let $X$ be a smooth manifold with smooth boundary, and let $k\ge0$. Let $\LeX{2,\Dir}{k+1}$ denote those $\LeX 2{k+1}$ forms $\alpha$ such that $i^*\alpha=0$ on $\pX$. Then
  \begin{equation*}
    d+d^*\colon L^{2,\Dir}_{k+1}(X;\ext T^*X)\to L^2_k(X;\ext T^*X)
  \end{equation*}
  has kernel and cokernel $\mc H^\Dir$.

  Likewise, let $\LeX{2,\Neu}{k+1}$ denote those $\LeX2{k+1}$ forms $\alpha$ such that $i^*{*\alpha}=0$ on $\pX$. Then
  \begin{equation*}
    d+d^*\colon L^{2,\Neu}_{k+1}(X;\ext T^*X)\to L^2_k(X;\ext T^*X)
  \end{equation*}
  has kernel and cokernel $\mc H^\Neu$.

  \begin{proof}
    For smooth $\alpha$, the condition $i^*\alpha=0$ implies that 
    \begin{equation*}
      \pairlx{d\alpha}{d^*\alpha}=\pairlx{\alpha}{d^*d^*\alpha}+\int_\pX\alpha\wedge*d^*\alpha=0.
    \end{equation*}
    Smooth forms satisfying $i^*\alpha=0$ are dense in $\LeX{2,\Dir}{k+1}$ as a consequence of \cite[7.54]{a75}, so $\pairlx{d\alpha}{d^*\alpha}=0$ holds for general forms in $\LeX{2,\Dir}{k+1}$. Hence, $(d+d^*)\alpha=0$ implies $d\alpha=0$ and $d^*\alpha=0$, so $\alpha\in\mc H^\Dir$.
    
    Next, we show that the range has trivial intersection with $\mc H^\Dir$. Let $(d+d^*)\alpha=\phi$ and $i^*\alpha=0$, where $\phi\in\mc H^\Dir$. Then
    \begin{multline*}
      \normlx\phi2{}^2=\pairlx{d\alpha}\phi+\pairlx{d^*\alpha}\phi\\
      =\pairlx\alpha{d^*\phi}+\pairlx\alpha{d\phi}+\int_\pX\alpha\wedge*\phi-\phi\wedge*\alpha=0.
    \end{multline*}

    Finally, we show that the range of $d+d^*$ contains all $\beta\in\LeX2k$ where $\beta$ is $L^2$-orthogonal to $\mc H^\Dir$. Proposition \ref{hodgedecomposition} gives us $G^\Dir\colon\LeX2k\to\LeX2{k+2}$ such that $\Delta G^\Dir\beta=\beta$ if $\beta$ is orthogonal to $\mc H^\Dir$. Hence, our desired preimage is $\alpha=(d+d^*)G^\Dir\beta$. By Proposition \ref{hodgedecomposition}, we have boundary conditions $i^*G^\Dir\beta=0$ and $i^*d^*G^\Dir\beta=0$. Hence,
    \begin{equation*}
      i^*\alpha=i^*dG^\Dir\beta+i^*d^*G^\Dir\beta=di^*G^\Dir\beta=0,
    \end{equation*}
    so $\alpha\in\LeX{2,\Dir}{k+1}$, as desired.

    The second claim is analogous, or, alternatively, it follows from the identity
    \begin{equation*}
      d+d^*=(-1)^{n(p-1)+1}{*}(d+d^*){*},
    \end{equation*}
    where $(-1)^{n(p-1)+1}$ acts on $\ext T^*X$ by $(-1)^{n(p-1)+1}$ on the degree $p$ component of the exterior algebra, along with the facts that the isometry $*$ sends $\LeX{2,\Dir}{k+1}$ to $\LeX{2,\Neu}{k+1}$ and vice versa, and $*$ sends $\mc H^\Dir$ to $\mc H^\Neu$ and vice versa.
  \end{proof}
\end{corollary}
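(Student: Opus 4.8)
The plan is to prove the Dirichlet statement directly and then obtain the Neumann statement by conjugating with the Hodge star. Here $d+d^*$ is the Hodge--Dirac operator on the full exterior algebra $\ext T^*X$, with square the Hodge Laplacian $\Delta=dd^*+d^*d$. For the Dirichlet case I would establish three things: (i) $\ker(d+d^*)=\mc H^\Dir$; (ii) the range of $d+d^*$ meets $\mc H^\Dir$ only in $0$; and (iii) the range contains every $\beta\in\LeX2k$ that is $\LX2{}$-orthogonal to $\mc H^\Dir$. Since $\mc H^\Dir$ is finite-dimensional and consists of smooth forms, $\LeX2k$ splits $\LX2{}$-orthogonally as $\mc H^\Dir\oplus(\mc H^\Dir)^\perp$, so (ii) and (iii) together say exactly that the composite $\mc H^\Dir\hookrightarrow\LeX2k\to\LeX2k/\range$ is an isomorphism, i.e.\ the cokernel is $\mc H^\Dir$.

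For (i), the inclusion $\mc H^\Dir\subseteq\ker(d+d^*)$ is immediate. For the reverse, I would use that for smooth $\alpha$ with $i^*\alpha=0$ the boundary term in
\[
  \pairlx{d\alpha}{d^*\alpha}=\pairlx\alpha{d^*d^*\alpha}+\int_\pX\alpha\wedge{*}d^*\alpha
\]
vanishes, and since $(d^*)^2=0$ as well we get $\pairlx{d\alpha}{d^*\alpha}=0$; then expanding $0=\normlx{(d+d^*)\alpha}2{}^2=\normlx{d\alpha}2{}^2+2\pairlx{d\alpha}{d^*\alpha}+\normlx{d^*\alpha}2{}^2$ forces $d\alpha=d^*\alpha=0$, hence $\alpha\in\mc H^\Dir$. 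To treat a general $\alpha\in\LeX{2,\Dir}{k+1}$, approximate it in $\LeX2{k+1}$ by smooth forms $\alpha_j$ still satisfying $i^*\alpha_j=0$, using the density result \cite[7.54]{a75}; for each $\alpha_j$ the boundary integral is literally $0$, so letting $j\to\infty$ and using continuity of $d,d^*\colon\LeX2{k+1}\to\LeX2k$ yields $\pairlx{d\alpha}{d^*\alpha}=0$ and the conclusion, without ever having to interpret the boundary integral for a form that is merely in $\LeX21$.

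For (ii), if $(d+d^*)\alpha=\phi$ with $\phi\in\mc H^\Dir$ and $\alpha\in\LeX{2,\Dir}{k+1}$, then pairing with $\phi$ and integrating by parts kills the bulk term (because $d\phi=d^*\phi=0$) and both boundary terms (because $i^*\alpha=0$ and $i^*\phi=0$), so $\normlx\phi2{}^2=0$ and $\phi=0$. For (iii), given $\beta\in\LeX2k$ orthogonal to $\mc H^\Dir$, Proposition~\ref{hodgedecomposition} supplies $G^\Dir\beta\in\LeX2{k+2}$ with $\Delta G^\Dir\beta=\beta$ and with $i^*G^\Dir\beta=0$ and $i^*d^*G^\Dir\beta=0$. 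Then $\alpha:=(d+d^*)G^\Dir\beta\in\LeX2{k+1}$ satisfies $(d+d^*)\alpha=\Delta G^\Dir\beta=\beta$, and
\[
  i^*\alpha=i^*dG^\Dir\beta+i^*d^*G^\Dir\beta=d\,i^*G^\Dir\beta=0,
\]
so $\alpha\in\LeX{2,\Dir}{k+1}$ is the required preimage.

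Finally, the Neumann statement follows by transport under the Hodge star: $*$ is an $\LX2{}$-isometry taking $\LeX{2,\Dir}{k+1}$ onto $\LeX{2,\Neu}{k+1}$ and $\mc H^\Dir$ onto $\mc H^\Neu$, and it conjugates $d+d^*$ into $\pm(d+d^*)$ via the identity $d+d^*=(-1)^{n(p-1)+1}{*}(d+d^*){*}$ (the sign acting by degree on $\ext T^*X$), so the Dirichlet conclusion transfers verbatim. I expect the only genuine subtlety to be the low-regularity bookkeeping in step (i): at regularity $\LeX{2,\Dir}{k+1}$---and in particular in the borderline case $k=0$ that is used elsewhere in the paper---the boundary integrals above are not a priori defined, so the integration by parts must be run through the smooth, $i^*$-vanishing approximants furnished by \cite[7.54]{a75}; the rest is a routine consequence of the Hodge decomposition of Proposition~\ref{hodgedecomposition}.
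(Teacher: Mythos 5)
Your proposal is correct and follows the paper's own argument essentially verbatim: the same computation $\pairlx{d\alpha}{d^*\alpha}=0$ via the boundary term, the same density appeal to \cite[7.54]{a75}, the same pairing with $\phi\in\mc H^\Dir$ to show the range misses the harmonic forms, the same use of $G^\Dir$ from Proposition~\ref{hodgedecomposition} for surjectivity, and the same Hodge-star conjugation for the Neumann case. The only cosmetic difference is that you spell out the expansion of $\normlx{(d+d^*)\alpha}2{}^2$, which the paper leaves implicit.
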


\subsection{The space $\LeX{2,\Dir}{-1}$}\label{subsectionl2dir}
The Yang--Mills operator $A\mapsto d_A^*F_A^{}$ is a second-order operator, so if our connection $A$ is in $\LaX21$, then $d_A^*F_A^{}\in\LaX2{-1}$. However, the space $\LX2{-1}$ ends up being insufficient for our purposes. By definition, $\LL2{-1}{X;\ext T^*X}$ is the dual of $\LL21{X;\ext T^*X}_0$, that is, $\LeX21$ forms that vanish on the boundary $\pX$. We need to instead define a larger space, $\LL{2,\Dir}{-1}{X;\ext T^*X}$, as the dual of $\LL{2,\Dir}1{X;\ext T^*X}$, that is, $\LeX21$ forms whose \emph{tangential} components vanish on the boundary $\pX$. In the remainder of the section, we prove basic results about the space $\LeX{2,\Dir}{-1}$ needed to show that $d_A^*F_A^{}$ and $\pi_{d^*}d_A^*F_A^{}$ remain well-defined when their target is $\LaX{2,\Dir}{-1}$ instead of $\LaX2{-1}$.

\begin{definition}
  Let $\LeX{2,\Dir}{-1}$ denote the dual Hilbert space of $\LeX{2,\Dir}1$. (See Definition \ref{l21dir}.)
\end{definition}

\begin{proposition}
  The space $\LeX{2,\Dir}1$ is reflexive, and smooth functions are dense in $\LeX{2,\Dir}{-1}$.
  \begin{proof}
    Since $\LeX{2,\Dir}1$ is a closed subspace of $\LeX21$, the reflexivity of $\LeX{2,\Dir}1$ follows from \cite[1.21]{a75} and the reflexivity of $\LeX21$ \cite[3.5]{a75}. Meanwhile, using the reflexivity of $\LeX{2,\Dir}1$, an argument like in \cite[3.12]{a75} shows that $\LeX2{}$ is dense in $\LeX{2,\Dir}{-1}$, and so $C^\infty(X)$ is dense in $\LeX{2,\Dir}{-1}$ also.
  \end{proof}
\end{proposition}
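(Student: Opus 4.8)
The plan is to get reflexivity for free from the ambient Sobolev space, and to get the density statement from a Hahn--Banach argument that crucially uses this reflexivity, following the template of \cite[3.12]{a75}.

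For reflexivity, I would first recall that $\LeX21$ is reflexive, being a Hilbert--Sobolev space \cite[3.5]{a75}. The tangential trace $i^*\colon\LeX21\to L^2_{1/2}(\pX;\ext T^*\pX)$ is a bounded linear operator, so $\LeX{2,\Dir}1=\ker i^*$ is a closed subspace of $\LeX21$. A closed subspace of a reflexive Banach space is reflexive \cite[1.21]{a75}, which gives the first claim.

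For the density statement, the first step is to produce a canonical continuous injection $\LeX2{}\hookrightarrow\LeX{2,\Dir}{-1}$. Since smooth forms compactly supported in the interior of $X$ already lie in $\LeX{2,\Dir}1$ and are dense in $\LeX2{}$, the inclusion $\LeX{2,\Dir}1\hookrightarrow\LeX2{}$ is continuous with dense image; dualizing it and using the Riesz identification $\LeX2{}\cong(\LeX2{})^*$ realizes each $\beta\in\LeX2{}$ as the functional $\alpha\mapsto\pair{\alpha}{\beta}$ on $\LeX{2,\Dir}1$, where $\pair{\cdot}{\cdot}$ is the $L^2$ inner product, and this map $\LeX2{}\to\LeX{2,\Dir}{-1}$ is an injective contraction. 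I would then show this copy of $\LeX2{}$ is dense in $\LeX{2,\Dir}{-1}$: if not, Hahn--Banach supplies a nonzero $\Lambda\in(\LeX{2,\Dir}{-1})^*$ annihilating it; by reflexivity of $\LeX{2,\Dir}1$ we identify $(\LeX{2,\Dir}{-1})^*=(\LeX{2,\Dir}1)^{**}=\LeX{2,\Dir}1$, so $\Lambda$ is represented by some $\alpha\in\LeX{2,\Dir}1$, and the hypothesis on $\Lambda$ says $\pair{\alpha}{\beta}=0$ for every $\beta\in\LeX2{}$. Taking $\beta=\alpha$, which is legitimate because $\LeX{2,\Dir}1\subset\LeX2{}$, forces $\alpha=0$ and hence $\Lambda=0$, a contradiction. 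Finally, since $C^\infty(X;\ext T^*X)$ is dense in $\LeX2{}$ and the inclusion $\LeX2{}\hookrightarrow\LeX{2,\Dir}{-1}$ is continuous, $C^\infty(X;\ext T^*X)$ is dense in $\LeX{2,\Dir}{-1}$.

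The step most deserving of care is the duality identification $(\LeX{2,\Dir}{-1})^*\cong\LeX{2,\Dir}1$, which is exactly where reflexivity of $\LeX{2,\Dir}1$ is needed; the closedness of $\ker i^*$ and the density of compactly supported smooth forms in $L^2$ are standard, so I do not anticipate a substantive obstacle beyond bookkeeping the various natural maps correctly.
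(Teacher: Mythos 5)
Your proposal is correct and follows the paper's approach exactly: reflexivity of $\LeX{2,\Dir}1$ from its being a closed subspace of the reflexive space $\LeX21$, then density of $\LeX2{}$ (and hence of smooth forms) in $\LeX{2,\Dir}{-1}$ via the Hahn--Banach plus reflexivity argument, which is precisely what the paper invokes by citing \cite[3.12]{a75}. You have simply unpacked the cited Adams argument, and your care with the duality identification $(\LeX{2,\Dir}{-1})^*\cong\LeX{2,\Dir}1$ and with verifying that $\alpha$ may legitimately be used as the test element $\beta$ is exactly the right bookkeeping.
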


\begin{lemma}\label{dclosedrange}
  The operators
  \begin{align*}
    d&\colon\LeX21\to\LeX2{},\text{ and}\\
    d^*&\colon\LeX21\to\LeX2{}
  \end{align*}
  have closed ranges.
  \begin{proof}
    The operators
    \begin{align*}
      dd^*G^\Neu&\colon\LeX2{}\to\LeX2{},\text{ and}\\
      d^*dG^\Dir&\colon\LeX2{}\to\LeX2{}
    \end{align*}
    are projections and hence have closed ranges, so we proceed by showing that $\range(d)=\range(dd^*G^\Neu)$ and $\range(d^*)=\range(d^*dG^\Dir)$.

    To show that $\range(d)\subseteq\range(dd^*G^\Neu)$, consider $d\alpha$ for $\alpha\in\LeX21$. By Proposition \ref{hodgedecomposition}, since $d\alpha\in\LeX2{}$, we have an orthogonal decomposition $d\alpha=dd^*G^\Neu d\alpha+d^*dG^\Neu d\alpha+\pi_{\mc H}^\Neu\alpha$. We claim that, in fact, $d\alpha=dd^*G^\Neu d\alpha$. Because the decomposition is orthogonal, we simply check that
    \begin{align*}
      \pair{d\alpha}{d^*dG^\Neu d\alpha}_{\LeX2{}}&=\pair{dd\alpha}{dG^\Neu d\alpha}_{\LeX2{}}-\int_\pX d\alpha\wedge*dG^\Neu d\alpha=0,\\
      \pair{d\alpha}{\pi_{\mc H}^\Neu d\alpha}_{\LeX2{}}&=\pair\alpha{d^*\pi_{\mc H}^\Neu d\alpha}_{\LeX2{}}+\int_\pX\alpha\wedge*\pi_{\mc H}^\Neu d\alpha=0.
    \end{align*}
     Here, we used the boundary conditions $i^*{*dG^\Neu d\alpha}=\pm i^*d^*{*G^\Neu d\alpha}=0$ and $i^*{*\pi_{\mc H}^\Neu d\alpha}=0$, along with the fact that $i^*(d\alpha)$ is well-defined in $\LepX2{-1/2}$ even though $d\alpha\in\LeX2{}$ because of the identity $i^*d\alpha=di^*\alpha$. Hence, $\range(d)=\range(dd^*G^\Neu )$. The argument for $d^*$ is analogous.
  \end{proof}
\end{lemma}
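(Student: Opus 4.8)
The plan is to reduce everything to Proposition~\ref{hodgedecomposition}: the operators $dd^*G^\Neu$ and $d^*dG^\Dir$ on $\LeX2{}$ are $L^2$-orthogonal projections by part~(2) of that proposition, hence have closed range, so it suffices to prove $\range(d)=\range(dd^*G^\Neu)$ and $\range(d^*)=\range(d^*dG^\Dir)$. I would carry out the argument for $d$; the case of $d^*$ is the same after interchanging the roles of $G^\Neu$ and $G^\Dir$ and replacing the Neumann boundary conditions by the Dirichlet ones $i^*G^\Dir\beta=0$, $i^*d^*G^\Dir\beta=0$.

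First I would note the easy inclusion: since $G^\Neu\colon\LeX2{}\to\LeX22$ by Proposition~\ref{hodgedecomposition}(5) with $k=0$, we have $d^*G^\Neu\beta\in\LeX21$ for all $\beta\in\LeX2{}$, so $dd^*G^\Neu\beta=d(d^*G^\Neu\beta)\in\range(d)$. For the reverse inclusion, take $\alpha\in\LeX21$ and apply Proposition~\ref{hodgedecomposition}(1) to $d\alpha\in\LeX2{}$, giving the $L^2$-orthogonal splitting $d\alpha=dd^*G^\Neu d\alpha+d^*dG^\Neu d\alpha+\pi_{\mc H}^\Neu d\alpha$. By orthogonality it is enough to show $d\alpha$ is $L^2$-orthogonal to the two trailing terms. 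Pairing $d\alpha$ with $d^*dG^\Neu d\alpha$ and integrating by parts gives $\pair{dd\alpha}{dG^\Neu d\alpha}_{\LeX2{}}$, which is $0$ since $d^2=0$, plus a boundary integral that vanishes because $i^*{*dG^\Neu d\alpha}=\pm i^*d^*{*G^\Neu d\alpha}=0$ by Proposition~\ref{hodgedecomposition}(4); pairing $d\alpha$ with $\phi:=\pi_{\mc H}^\Neu d\alpha$ and integrating by parts gives $\pair\alpha{d^*\phi}_{\LeX2{}}$, which is $0$ since $\phi$ is harmonic, plus a boundary integral $\int_\pX\alpha\wedge*\phi$ that vanishes because $i^*{*\phi}=0$. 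Hence $d\alpha=dd^*G^\Neu d\alpha$, so $\range(d)\subseteq\range(dd^*G^\Neu)$, and the two ranges coincide.

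The one step that needs care---and the main obstacle---is justifying those integrations by parts, since for $\alpha\in\LeX21$ the form $d\alpha$ lies only in $\LeX2{}$ and has no boundary trace in the ordinary sense. The remedy is to use $i^*d\alpha=d\,i^*\alpha$, which is well-defined in $\LepX2{-1/2}$ because $i^*\alpha\in\LepX2{1/2}$, while the forms paired against it on $\pX$---namely $*dG^\Neu d\alpha$ and $*\phi$---are traces of $\LeX21$ forms and so lie in $\LepX2{1/2}$, making the boundary pairings legitimate by duality; the Green's formulas themselves then follow by density of smooth forms in $\LeX21$ together with continuity of both sides. I would also remark that the regularity $G^\Neu d\alpha\in\LeX22$ from Proposition~\ref{hodgedecomposition}(5) is precisely what makes $dG^\Neu d\alpha$ have a trace and what lets the boundary conditions of Proposition~\ref{hodgedecomposition} be interpreted in the trace sense.
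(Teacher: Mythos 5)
Your proposal follows essentially the same route as the paper: identify $\range(d)$ with the range of the closed-range projection $dd^*G^\Neu$ (and $\range(d^*)$ with that of $d^*dG^\Dir$), verify the nontrivial inclusion by applying Proposition~\ref{hodgedecomposition} to $d\alpha\in\LeX2{}$ and showing by integration by parts and the Neumann boundary conditions that the components $d^*dG^\Neu d\alpha$ and $\pi_{\mc H}^\Neu d\alpha$ drop out. The only differences are cosmetic and favorable to you: you make the easy inclusion $\range(dd^*G^\Neu)\subseteq\range(d)$ explicit, and you spell out more carefully than the paper why the boundary pairings are legitimate when $d\alpha$ only lies in $\LeX2{}$ (using $i^*d\alpha=d\,i^*\alpha\in\LepX2{-1/2}$ paired against $\LepX2{1/2}$ traces, with a density argument). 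The proof is correct.
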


\begin{proposition}\label{dsbounded}
  The operator $d^*\colon C^\infty(X;\ext T^*X)\to\LL{2,\Dir}{-1}{X;\ext T^*X}$ extends to a bounded operator $d^*\colon\LeX2{}\to\LeX{2,\Dir}{-1}$ with closed range.
  \begin{proof}
    Let $f\in C^\infty(X;\ext T^*X)$, and let $\phi\in\LeX{2,\Dir}1$. Because $i^*\phi=0$, we have
    \begin{equation*}
      \pair{d^*f}\phi_{\LX2{}}=\pair f{d\phi}_{\LX2{}}-\int_\pX\phi\wedge*f=\pair f{d\phi}_{\LX2{}}.
    \end{equation*}
    Hence,
    \begin{equation*}
      \abs{\pairlx{d^*f}\phi}\le\norm f_{\LX2{}}\norm\phi_{\LX21},
    \end{equation*}
    so $\norm{d^*f}_{\LX{2,\Dir}{-1}}\le\norm f_{\LX2{}}$. Because $C^\infty$ is dense in $\LeX2{}$, we conclude that $d^*$ extends to a bounded operator $d^*\colon\LeX2{}\to\LeX{2,\Dir}{-1}$, defined by the equation
    \begin{equation*}
      \pairlx{d^*f}\phi=\pair f{d\phi}_{\LX2{}}
    \end{equation*}
    for $f\in\LeX2{}$ and $\phi\in\LeX{2,\Dir}1$.

    By the closed range theorem, $d^*\colon \LeX2{}\to\LeX{2,\Dir}{-1}$ having closed range is equivalent to its transpose $d\colon\LeX{2,\Dir}1\to\LeX2{}$ having closed range. On a larger domain, we know that $d\colon\LeX21\to\LeX2{}$ has closed range by Lemma \ref{dclosedrange}. Because $\LeX21$ is a Hilbert space, $\ker d$ has a closed complement $(\ker d)^\perp$, so $d\colon(\ker d)^\perp\to\range(d)$ is an isomorphism of Banach spaces. Therefore, the image under $d$ of the closed space $(\ker d)^\perp\cap\ker i^*$ is closed. Summing with $\ker d\cap\ker i^*$, we see that $d(\ker i^*)=d((\ker d)^\perp\cap\ker i^*)$, so $d$ also has closed range as an operator $\LeX{2,\Dir}1\to\LeX2{}$. Therefore, by the closed range theorem, $d^*\colon\LeX2{}\to\LeX{2,\Dir}{-1}$ has closed range, as desired.
  \end{proof}
\end{proposition}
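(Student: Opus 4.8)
The plan is to treat boundedness first and closed range second. For boundedness, I would begin from Green's formula for the codifferential: for $f\in C^\infty(X;\ext T^*X)$ and $\phi\in\LeX{2,\Dir}1$,
\[
  \pairlx{d^*f}\phi=\pairlx f{d\phi}-\int_\pX\phi\wedge*f,
\]
in which the Dirichlet condition $i^*\phi=0$ makes the boundary integral vanish. Thus $\pairlx{d^*f}\phi=\pairlx f{d\phi}$, so $\abs{\pairlx{d^*f}\phi}\le\norm f_{\LX2{}}\,\norm\phi_{\LX21}$, which is precisely the estimate $\norm{d^*f}_{\LX{2,\Dir}{-1}}\le\norm f_{\LX2{}}$. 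Since $C^\infty(X;\ext T^*X)$ is dense in $\LeX2{}$, the map $d^*$ extends by continuity to a bounded operator $\LeX2{}\to\LeX{2,\Dir}{-1}$, still characterized by $\pairlx{d^*f}\phi=\pairlx f{d\phi}$ for all $f\in\LeX2{}$ and $\phi\in\LeX{2,\Dir}1$.

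For the closed range assertion I would invoke the closed range theorem: a bounded map of Banach spaces has closed range if and only if its transpose does. Using that $\LeX{2,\Dir}1$ is reflexive (established above) and that $\LeX2{}$ is Hilbert, hence self-dual, the relation $\pairlx{d^*f}\phi=\pairlx f{d\phi}$ identifies the transpose of $d^*\colon\LeX2{}\to\LeX{2,\Dir}{-1}$ with the operator $d\colon\LeX{2,\Dir}1\to\LeX2{}$. So it suffices to show that $d$ has closed range when restricted to the closed subspace $\LeX{2,\Dir}1=\ker i^*$ of $\LeX21$.

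I would prove this exactly as Lemma \ref{dclosedrange} is proved, but with the Dirichlet Green's operator $G^\Dir$ replacing $G^\Neu$, by showing $d(\LeX{2,\Dir}1)=\range(dd^*G^\Dir)$, which is closed because $dd^*G^\Dir$ is an $\LX2{}$-projection by Proposition \ref{hodgedecomposition}. For $\subseteq$: if $\alpha\in\LeX21$ satisfies $i^*\alpha=0$, decompose $d\alpha=dd^*G^\Dir d\alpha+d^*dG^\Dir d\alpha+\pi_{\mc H}^\Dir d\alpha$; pairing $d\alpha$ against the second and third terms and integrating by parts (treating the regularity of $d\alpha\in\LeX2{}$ via $i^*d\alpha=di^*\alpha$, as in Lemma \ref{dclosedrange}) leaves only the boundary quantity $i^*(d\alpha)=d(i^*\alpha)=0$, together with $d^*\pi_{\mc H}^\Dir d\alpha=0$ and $i^*\alpha=0$ for the harmonic piece, so by orthogonality $d\alpha=dd^*G^\Dir d\alpha$. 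For $\supseteq$: for any $\omega\in\LeX2{}$ one has $dd^*G^\Dir\omega=d(d^*G^\Dir\omega)$, where $d^*G^\Dir\omega\in\LeX21$ and $i^*d^*G^\Dir\omega=0$ by the boundary conditions on $G^\Dir$ in Proposition \ref{hodgedecomposition}, so $d^*G^\Dir\omega\in\LeX{2,\Dir}1$. The closed range theorem then gives that $d^*\colon\LeX2{}\to\LeX{2,\Dir}{-1}$ has closed range. The step that needs care — and the main obstacle — is this last one: closed range is not automatically inherited by restricting an operator to a closed subspace of its domain, so one genuinely needs the Hodge/Green's-operator machinery, and in particular the boundary identity $i^*d^*G^\Dir\alpha=0$, to see that the tangential constraint $i^*\alpha=0$ is compatible with the $G^\Dir$-Hodge decomposition.
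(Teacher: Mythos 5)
Your boundedness step matches the paper's exactly. The closed-range step, however, takes a genuinely different (though correct) route. The paper reduces, via the closed range theorem, to showing $d\colon\LeX{2,\Dir}1\to\LeX2{}$ has closed range, and then argues abstractly: Lemma \ref{dclosedrange} gives closed range for $d$ on all of $\LeX21$, so $d$ restricts to a Banach isomorphism on $(\ker d)^\perp$, hence $d((\ker d)^\perp\cap\ker i^*)$ is closed, and the paper asserts this equals $d(\ker i^*)$. You instead redo the Hodge computation of Lemma \ref{dclosedrange} from scratch with the Dirichlet Green's operator $G^\Dir$ in place of $G^\Neu$, showing directly that $d(\LeX{2,\Dir}1)=\range(dd^*G^\Dir)$. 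This works precisely because the boundary conditions of $G^\Dir$ from Proposition \ref{hodgedecomposition}, namely $i^*G^\Dir=0$ and $i^*d^*G^\Dir=0$, are the ones compatible with the Dirichlet constraint on $\alpha$: the cross-terms vanish using $i^*(d\alpha)=d(i^*\alpha)=0$ and $i^*\alpha=0$, and $d^*G^\Dir\omega$ lands in $\LeX{2,\Dir}1$ because $i^*d^*G^\Dir\omega=0$. Your version is arguably more self-contained: the paper's ``summing with $\ker d\cap\ker i^*$'' step implicitly uses that $\ker i^*$ splits as $(\ker d\cap\ker i^*)\oplus\bigl((\ker d)^\perp\cap\ker i^*\bigr)$, a decomposition that does not hold for an arbitrary pair of closed subspaces of a Hilbert space and so deserves an extra word, whereas you exhibit $d(\LeX{2,\Dir}1)$ directly as the range of a bounded $\LeX2{}$-projection, which is manifestly closed. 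The one thing to keep in mind, which you already flag, is the regularity of the boundary pairings when $d\alpha$ is only in $\LeX2{}$; the identity $i^*d\alpha=di^*\alpha\in\LepX2{-1/2}$ handles this exactly as in Lemma \ref{dclosedrange}.
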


\begin{proposition}\label{pibounded}
  The operator
  \begin{equation*}
    \pi_{d^*}=d^*dG^\Dir\colon L^2(X;\ext T^*X)\to L^2(X;\ext T^*X)\to\LL{2,\Dir}{-1}{X;\ext T^*X}
  \end{equation*}
  extends to a bounded operator $\pi_{d^*}\colon\LL{2,\Dir}{-1}{X;\ext T^*X}\to\LL{2,\Dir}{-1}{X;\ext T^*X}$, and this operator is a projection to the range of $d^*\colon\LeX2{}\to\LeX{2,\Dir}{-1}$.
  \begin{proof}
    Let $y\in\LeX2{}$, and let $\phi\in\LeX{2,\Dir}1$. Because $\pi_{d^*}$ is a projection operator to a factor in the Hodge decomposition, we have
    \begin{equation*}
      \pair{\pi_{d^*}y}\phi_{\LX2{}}=\pairlx y{\pi_{d^*}\phi}.
    \end{equation*}
    By Proposition \ref{hodgedecomposition}, $\phi\in\LeX21$ implies $\pi_{d^*}\phi\in\LeX21$. Furthermore, we claim that $\phi\in\LeX{2,\Dir}1$ implies $\pi_{d^*}\phi\in\LeX{2,\Dir}1$. Indeed,
    \begin{equation*}
      0=i^*\phi=i^*(dd^*G^\Dir\phi+d^*dG^\Dir\phi+\pi_{\mc H^\Dir}\phi)=di^*d^*G^\Dir\phi+i^*\pi_{d^*}\phi=i^*\pi_{d^*}\phi
    \end{equation*}
    because $i^*(\mc H^\Dir)=0$ by definition and $i^*d^*G^\Dir=0$ by Proposition \ref{hodgedecomposition}. Hence,
    \begin{equation*}
      \abs{\pair{\pi_{d^*}y}\phi_{\LX2{}}}\le\norm y_{\LX{2,\Dir}{-1}}\norm{\pi_{d^*}\phi}_{\LX21}\le C\norm y_{\LX{2,\Dir}{-1}}\norm\phi_{\LX21},
    \end{equation*}
    for some constant $C$, so
    \begin{equation*}
      \norm{\pi_{d^*}y}_{\LX{2,\Dir}{-1}}\le C\norm y_{\LX{2,\Dir}{-1}}.
    \end{equation*}
    Since $\LeX2{}$ is dense in $\LeX{2,\Dir}{-1}$, we see that $\pi_{d^*}$ extends to a bounded operator $\LeX{2,\Dir}{-1}\to\LeX{2,\Dir}{-1}$, defined by the equation
    \begin{equation*}
      \pairlx{\pi_{d^*}y}\phi=\pairlx y{\pi_{d^*}\phi}.
    \end{equation*}

    To show that the operator $\pi_{d^*}\colon\LeX{2,\Dir}{-1}\to\LeX{2,\Dir}{-1}$ is a projection to the range of $d^*\colon\LeX2{}\to\LeX{2,\Dir}{-1}$, first recall that in the proof of Lemma \ref{dclosedrange} we showed that the range of $d^*\colon\LeX21\to\LeX2{}$ is equal to the range of the projection $\pi_{d^*}\colon\LeX2{}\to\LeX2{}$. Hence, if $y\in d^*(\LeX21)$, then $y=\pi_{d^*}y$. Since $\LeX21$ is dense in $\LeX2{}$ and $d^*\colon\LeX2{}\to\LeX{2,\Dir}{-1}$ is bounded, we know that $d^*(\LeX21)$ is dense in $d^*(\LeX2{})\subset\LeX{2,\Dir}{-1}$. Because $\pi_{d^*}\colon\LeX{2,\Dir}{-1}\to\LeX{2,\Dir}{-1}$ is continuous, the equation $y=\pi_{d^*}y$ remains true for all $y\in d^*(\LeX2{})$. Thus, $d^*(\LeX2{})\subseteq\pi_{d^*}\left(\LeX{2,\Dir}{-1}\right)$.

    Conversely, for all $y\in\LeX2{}$, we know that $\pi_{d^*}y$ is in $d^*(\LeX21{})$ and hence in $d^*(\LeX2{})$. We know that $\LeX2{}$ is dense in $\LeX{2,\Dir}{-1}$, that our operator $\pi_{d^*}\colon\LeX{2,\Dir}{-1}\to\LeX{2,\Dir}{-1}$ is continuous, and that $d^*(\LeX2{})$ is closed in $\LeX{2,\Dir}{-1}$ by Lemma \ref{dsbounded}, so $\pi_{d^*}y\in d^*(\LeX2{})$ remains true for all $y\in\LeX{2,\Dir}{-1}$. Hence, $\pi_{d^*}\left(\LeX{2,\Dir}{-1}\right)=d^*(\LeX2{})$, and the above fact that $y=\pi_{d^*}y$ for all $y\in d^*(\LeX2{})$ now implies that $\pi_{d^*}$ is a projection, as desired.
  \end{proof}
\end{proposition}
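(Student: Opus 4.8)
The plan is to exploit that $\pi_{d^*}=d^*dG^\Dir$ is, by part (2) of Proposition \ref{hodgedecomposition}, an $\LX2{}$-orthogonal projection and hence self-adjoint on $\LeX2{}$, so that its action on the enlarged space $\LeX{2,\Dir}{-1}$ can be defined by duality against test forms in $\LeX{2,\Dir}1$. Concretely, for $y\in\LeX2{}$, viewed inside $\LeX{2,\Dir}{-1}$, and $\phi\in\LeX{2,\Dir}1$, self-adjointness gives $\pairlx{\pi_{d^*}y}\phi=\pairlx y{\pi_{d^*}\phi}$. To bound the right-hand side by $\norm y_{\LeX{2,\Dir}{-1}}$, I need $\pi_{d^*}\phi$ to lie again in $\LeX{2,\Dir}1$ and to be controlled there by $\norm\phi_{\LeX21}$; the latter is immediate from part (5) of Proposition \ref{hodgedecomposition}, since $G^\Dir$ gains two derivatives while $d^*d$ loses two. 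So the crux of the boundedness half is showing that $\pi_{d^*}$ preserves the tangential boundary condition.

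For that, I would apply $i^*$ to the Hodge decomposition $\phi=dd^*G^\Dir\phi+d^*dG^\Dir\phi+\pi_{\mc H}^\Dir\phi$. The left-hand side vanishes since $\phi\in\LeX{2,\Dir}1$; the harmonic term vanishes because $i^*$ annihilates $\mc H^\Dir$ by definition; and the first term vanishes because $i^*dd^*G^\Dir\phi=di^*d^*G^\Dir\phi=0$, using the boundary condition $i^*d^*G^\Dir=0$ from part (3) of Proposition \ref{hodgedecomposition} together with $i^*d=di^*$. Hence $i^*\pi_{d^*}\phi=0$, and therefore $\abs{\pairlx{\pi_{d^*}y}\phi}\le\norm y_{\LeX{2,\Dir}{-1}}\norm{\pi_{d^*}\phi}_{\LeX21}\le C\norm y_{\LeX{2,\Dir}{-1}}\norm\phi_{\LeX21}$. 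This shows $\pi_{d^*}$ is bounded from $\LeX2{}$, with the $\LeX{2,\Dir}{-1}$ norm, into $\LeX{2,\Dir}{-1}$, and since $\LeX2{}$ is dense in $\LeX{2,\Dir}{-1}$ it extends boundedly, the defining formula $\pairlx{\pi_{d^*}y}\phi=\pairlx y{\pi_{d^*}\phi}$ persisting by continuity.

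To identify the range and prove idempotence, I would start from the fact established inside the proof of Lemma \ref{dclosedrange} that on $\LeX21$ the range of $\pi_{d^*}$ equals $\range(d^*\colon\LeX21\to\LeX2{})$ and that $\pi_{d^*}y=y$ exactly on this range. Since $\LeX21$ is dense in $\LeX2{}$ and $d^*\colon\LeX2{}\to\LeX{2,\Dir}{-1}$ is bounded (Proposition \ref{dsbounded}), $d^*(\LeX21)$ is dense in $d^*(\LeX2{})$, and continuity of the extended $\pi_{d^*}$ propagates the identity $\pi_{d^*}y=y$ to all of $d^*(\LeX2{})$, giving $d^*(\LeX2{})\subseteq\pi_{d^*}(\LeX{2,\Dir}{-1})$. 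Conversely $\pi_{d^*}$ maps $\LeX2{}$ into $d^*(\LeX21)\subseteq d^*(\LeX2{})$, and since $\LeX2{}$ is dense in $\LeX{2,\Dir}{-1}$ while $d^*(\LeX2{})$ is closed there (Proposition \ref{dsbounded}), continuity gives $\pi_{d^*}(\LeX{2,\Dir}{-1})\subseteq d^*(\LeX2{})$. The two inclusions yield $\pi_{d^*}(\LeX{2,\Dir}{-1})=d^*(\LeX2{})$, and combined with $\pi_{d^*}y=y$ on this range this is precisely the assertion that $\pi_{d^*}$ is a projection onto $\range(d^*\colon\LeX2{}\to\LeX{2,\Dir}{-1})$.

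I expect the main obstacle to be the step showing $\pi_{d^*}$ preserves the condition $i^*(\cdot)=0$: everything downstream is soft density-and-continuity bookkeeping, but this is where the specific boundary behavior of $G^\Dir$ (parts (3) and (5) of Proposition \ref{hodgedecomposition}) and the commutation $i^*d=di^*$ genuinely enter, and it is also the reason the codomain must be the enlarged dual space $\LeX{2,\Dir}{-1}$ rather than $\LeX2{-1}$.
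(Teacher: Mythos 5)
Your proposal is correct and follows essentially the same route as the paper's proof: self-adjointness of $\pi_{d^*}$ on $\LeX2{}$ reduces boundedness to showing $\pi_{d^*}$ preserves the Dirichlet condition $i^*(\cdot)=0$, which you verify from parts (3) and (5) of Proposition \ref{hodgedecomposition} exactly as the paper does, and the projection/range identification proceeds by the same density-and-closedness argument via Lemma \ref{dclosedrange} and Proposition \ref{dsbounded}. Nothing is missing.
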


\begin{corollary}\label{pymsmooth}
  The operator $A\mapsto\pi_{d^*}d_A^*F_A^{}$ is well-defined and smooth as an operator
  \begin{equation*}
    \LaX21\to\LaX{2,\Dir}{-1}\cap\range(d^*).
  \end{equation*}

  \begin{proof}
    We know that $A\mapsto F_A$ is smooth as an operator
    \begin{equation*}
      \LaX21\to\LFX2{}.
    \end{equation*}
    By Proposition \ref{dsbounded}, $d^*\colon\LFX2{}\to\LaX{2,\Dir}{-1}$ is a bounded linear operator, and hence smooth. The multiplication map $\LX21\times\LX2{}\times\LX{2,\Dir}1\to\LX1{}\to\mb R$ is bounded, and hence by duality so is the bilinear multiplication map $\LX21\times\LX2{}\to\LX{2,\Dir}{-1}$, so $A\mapsto[a\wedge]^*F_A$ is smooth as a map $\LaX21\to\LaX{2,\Dir}{-1}$. Thus $A\mapsto d_A^*F_A^{}=d^*F_A+[a\wedge]^*F_A$ is smooth as a map $\LaX21\to\LaX{2,\Dir}{-1}$. Finally, by Proposition \ref{pibounded}, $\pi_{d^*}\colon\LaX{2,\Dir}{-1}\to\LaX{2,\Dir}{-1}$ is a bounded linear operator, and hence smooth, and its range is $\range(d^*)$.
  \end{proof}
\end{corollary}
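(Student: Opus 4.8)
The plan is to exhibit $A\mapsto\pi_{d^*}d_A^*F_A^{}$ as a composition of maps between Banach spaces, each of which is manifestly smooth: either a bounded linear (hence $C^\infty$) operator, or a bounded multilinear map coming from a Sobolev multiplication theorem. The boundedness of the two linear operators that do the real work, namely $d^*$ into $\LaX{2,\Dir}{-1}$ and $\pi_{d^*}$ on $\LaX{2,\Dir}{-1}$, has already been established in Propositions \ref{dsbounded} and \ref{pibounded}, so the corollary should follow by assembling these pieces together with the standard multiplication estimates.

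First I would recall that, in a local trivialization, the curvature of $A=d+a$ is a polynomial of degree two in $a$ whose only nonlinear term is quadratic; since $X$ is four-dimensional, the Sobolev multiplication $L^2_1\times L^2_1\to L^2$ is bounded, so $A\mapsto F_A$ is smooth as a map $\LaX21\to\LFX2{}$. Next, $d^*\colon\LFX2{}\to\LaX{2,\Dir}{-1}$ is bounded and linear by Proposition \ref{dsbounded}, hence smooth; this is the step for which the enlarged codomain is essential, since $d^*F_A$ need not lie in $\LaX2{}$ nor even admit a trace on $\pX$, but it does pair continuously with test forms whose tangential part vanishes on $\pX$, the boundary term in the integration by parts defining $d^*$ dropping out under that condition. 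Composing, $A\mapsto d^*F_A$ is smooth $\LaX21\to\LaX{2,\Dir}{-1}$.

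For the remaining contribution I would use duality. Writing $d_A^*F_A^{}=d^*F_A+[a\wedge]^*F_A$, the trilinear multiplication $\LX21\times\LX2{}\times\LeX{2,\Dir}1\to\LX1{}\to\mb R$ is bounded, again by the four-dimensional Sobolev multiplication theorem, so by duality the bilinear map $\LX21\times\LX2{}\to\LX{2,\Dir}{-1}$, $(a,\Phi)\mapsto[a\wedge]^*\Phi$, is bounded, hence smooth; precomposing with the smooth map $A\mapsto(a,F_A)$ shows $A\mapsto[a\wedge]^*F_A$ is smooth $\LaX21\to\LaX{2,\Dir}{-1}$. Adding the two contributions, $A\mapsto d_A^*F_A^{}$ is smooth $\LaX21\to\LaX{2,\Dir}{-1}$. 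Finally, $\pi_{d^*}\colon\LaX{2,\Dir}{-1}\to\LaX{2,\Dir}{-1}$ is bounded and linear by Proposition \ref{pibounded}, hence smooth, and it is the projection onto the range of $d^*\colon\LeX2{}\to\LeX{2,\Dir}{-1}$, so the composite $A\mapsto\pi_{d^*}d_A^*F_A^{}$ is smooth and takes values in $\LaX{2,\Dir}{-1}\cap\range(d^*)$. I do not expect a genuine obstacle here: the only real analytic subtlety, that one must pass from $\LaX2{-1}$ to the dual $\LaX{2,\Dir}{-1}$ of tangentially-vanishing forms in order that $d^*F_A$ be defined at all, has already been handled in Propositions \ref{dsbounded} and \ref{pibounded}, and what is left is bookkeeping with compositions of smooth Banach-space maps.
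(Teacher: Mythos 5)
Your proof is correct and follows essentially the same route as the paper's: factor $A\mapsto\pi_{d^*}d_A^*F_A^{}$ through the smooth curvature map, the bounded linear $d^*\colon\LFX2{}\to\LaX{2,\Dir}{-1}$ from Proposition \ref{dsbounded}, the bilinear map obtained by dualizing the trilinear Sobolev multiplication, and the bounded projection $\pi_{d^*}$ from Proposition \ref{pibounded}. The only difference is that you spell out a couple of steps (why $A\mapsto F_A$ is smooth, and why the enlarged codomain is necessary) that the paper takes as read.
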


\subsection{The space $\LeX2d$}\label{l2dsection}
Given an $\LaX21$ connection $B$ on $X$ and a ball $B^4$ in $X$, we will replace $B$ with a $\LaB21$ Yang--Mills connection $A$ on $B^4$ whose tangential components match $B$ on $\pB$. The resulting piecewise-defined global connection $\hat A$ is in $\LaX4{}$, but because the normal component of $B$ does not match that of $A$ on $\pB$, the new connection $\hat A$ is not in $\LaX21$. However, the fact that the tangential components match still gives us more regularity than $\LaX4{}$, In fact, $\hat A$ still has enough regularity to define curvature $F_{\hat A}\in\LFX2{}$. This leads us to define a space inbetween $\LaX4{}$ and $\LaX21$ which we call $\LaX2d$, where the $d$ subscript refers to the exterior derivative.
\begin{definition}
  Let $X$ be a compact smooth $4$-manifold, and let $\LeX2d$ be the completion of smooth forms $\alpha$ under the norm
  \begin{equation*}
    \normlx\alpha4{}+\normlx{d\alpha}2{}.
  \end{equation*}
\end{definition}

Such a definition has previously appeared context of gauge theory \cite{s02}, and a similar definition has appeared in the context of numerical analysis, namely, the $H(\operatorname{curl})$ spaces of Girault and Raviart \cite[Section 2.2]{gr86}. We now show that a form defined piecewise on $X$ with matching tangential components is in $\Le2dX$. For an analogous result in the numerical analysis context, see \cite[Proposition III.1.2]{bf91}. For our purposes, in the following proposition, $Y$ will be a $4$-ball $B^4$ contained in $X$, $Z$ will be the closure of its complement, and $S$ will be the shared boundary $\pB$. Intuitively, although we have a discontinuity in the normal component as we cross $S$, when taking $d$ we never take the normal derivative of the normal component, so we never see this discontinuity.

\begin{proposition}\label{l2dpatching}
  Let $X$ be a compact smooth $4$-manifold. We split $X$ into two pieces $Y$ and $Z$ by a smooth internal boundary $S$. Let $i\colon S\hookrightarrow X$ be the inclusion. Let $\beta\in\Le21Y$ and $\gamma\in\Le21Z$ be such that $i^*\beta=i^*\gamma$. Let $\alpha$ be the $L^4$ form on $X$ defined piecewise by $\beta$ and $\gamma$. Then $\alpha\in\Le2dX$.
  \begin{proof}
    We must show that $d\alpha\in\Le2{}X$. Let $\phi$ be a smooth differential form on $X$ compactly supported away from $\pX$ of the same degree as $d\alpha$. By the definition of the weak derivative, we have that $\pairlx{d\alpha}\phi=\pairlx\alpha{d^*\phi}$, and our goal is to show that this pairing gives a bounded map in $\phi$ with respect to the $\LX2{}$ norm. We compute that
    \begin{multline*}
      \pairlx{d\alpha}\phi=\pairlx\alpha{d^*\phi}=\pair\beta{d^*\phi}_{L^2(Y)}+\pair\gamma{d^*\phi}_{L^2(Z)}\\
      =\pair{d\beta}\phi_{L^2(Y)}+\pair{d\gamma}\phi_{L^2(Z)}-\int_{\partial Y}\beta\wedge*\phi-\int_{\partial Z}\gamma\wedge*\phi.
    \end{multline*}
    I claim that the two boundary integral terms cancel. Indeed, for the parts of $\partial Y$ and $\partial Z$ that are on $\partial X$, we have $\phi=0$ by assumption. The remaining parts of $\partial Y$ and $\partial Z$ are on the shared boundary $S$, where we have $i^*\beta=i^*\gamma$. Since $\partial Y$ and $\partial Z$ have opposite orientations on $S$, the two terms cancel. Hence,
    \begin{equation*}
      \pairlx{d\alpha}\phi=\pair{d\beta}\phi_{L^2(Y)}+\pair{d\gamma}\phi_{L^2(Z)}\le\norm\beta_{L^2_1(Y)}\normlx\phi2{}+\norm\gamma_{L^2_1(Z)}\normlx\phi2{}.
    \end{equation*}
    Since smooth forms compactly supported away from $\partial X$ are dense in $\Le2{}X$, we see that $\phi\mapsto\pairlx{d\alpha}\phi$ is a bounded functional on $\Le2{}X$, so $d\alpha\in\Le2{}X$, as desired.
  \end{proof}
\end{proposition}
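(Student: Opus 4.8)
The plan is to show that the distributional exterior derivative of the patched form $\alpha$ is represented by an $L^2$ form on $X$; since $\alpha$ is already given in $\Le4{}X$, this will place $\alpha$ in $\Le2dX$. Concretely, I would fix a smooth test form $\phi$ on $X$, of the same degree as $d\alpha$ and compactly supported away from $\pX$, and use the definition of the weak derivative to write $\pairlx{d\alpha}\phi=\pairlx\alpha{d^*\phi}$. Splitting the $L^2(X)$ pairing into the pieces over $Y$ and over $Z$ yields $\pair\beta{d^*\phi}_{L^2(Y)}+\pair\gamma{d^*\phi}_{L^2(Z)}$, and since $\beta\in\Le21Y$ and $\gamma\in\Le21Z$ are regular enough for Stokes' theorem on each piece---their traces to the interface $S$ being well-defined in $L^2_{1/2}(S)$, which is exactly what makes the hypothesis $i^*\beta=i^*\gamma$ meaningful---integration by parts turns this into $\pair{d\beta}\phi_{L^2(Y)}+\pair{d\gamma}\phi_{L^2(Z)}$ plus a boundary integral over $\partial Y$ and one over $\partial Z$.

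The crux is that these two boundary integrals cancel. Each of $\partial Y$ and $\partial Z$ decomposes into a portion lying in $\pX$, where the integrand vanishes because $\phi$ is compactly supported in the interior, and the internal interface $S$. On $S$, the two boundary integrands are built respectively from $i^*\beta$ and $i^*\gamma$, together with a common factor pulled back from $\phi$; since $i^*\beta=i^*\gamma$ by hypothesis and $S$ inherits opposite orientations as a face of $\partial Y$ versus a face of $\partial Z$, the two contributions are negatives of each other. What survives is $\pairlx{d\alpha}\phi=\pair{d\beta}\phi_{L^2(Y)}+\pair{d\gamma}\phi_{L^2(Z)}$, which is bounded by $\bigl(\norm\beta_{L^2_1(Y)}+\norm\gamma_{L^2_1(Z)}\bigr)\normlx\phi2{}$. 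Because the test forms used are dense in $\Le2{}X$, the functional $\phi\mapsto\pairlx{d\alpha}\phi$ extends to a bounded functional on $\Le2{}X$, hence is represented by the $L^2$ form equal to $d\beta$ on $Y$ and $d\gamma$ on $Z$; thus $d\alpha\in\Le2{}X$ and $\alpha\in\Le2dX$.

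I expect the only genuine delicacy to be the boundary-term bookkeeping: one must confirm that the integration-by-parts identity really holds at regularity $\Le21$ on each half of $X$---this is where the trace of an $\Le21$ form onto the hypersurface $S$ enters and gives content to the equality $i^*\beta=i^*\gamma$---and one must keep track of orientations so that the two interface terms cancel rather than reinforce. A secondary remark, should the definition of $\Le2dX$ as a completion of smooth forms be taken literally, is that the space of $L^4$ forms with $L^2$ distributional exterior derivative, under the graph norm, coincides with $\Le2dX$; this is a Meyers--Serrin (``$H=W$'') statement, established by mollification with a bit of extra attention near $\pX$.
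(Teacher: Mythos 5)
Your proof is correct and follows essentially the same approach as the paper: split the weak-derivative pairing over $Y$ and $Z$, integrate by parts on each half, observe the interface contributions over $S$ cancel because $i^*\beta=i^*\gamma$ and the two induced orientations are opposite, and conclude by density of test forms in $\Le2{}X$. Your closing remark---that since $\Le2dX$ is defined as a completion of smooth forms, one should in principle verify a Meyers--Serrin-type ``$H=W$'' equivalence with the space of $L^4$ forms having $L^2$ distributional exterior derivative---flags a small gap that the paper's own proof passes over silently, and is a worthwhile point to keep in mind.
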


\begin{proposition}
  Let $X$ be a compact smooth manifold with a principal $G$-bundle $P$, and let $A$ be an $L^2_d$ connection on $P$. Then $F_A$ is in $\LL2{}{X;\ad P\times\forms2T^*X}$. Moreover, if $g$ is an $\LL41{X;\Ad P}$ gauge transformation, then $g(A)$ is once again a $L^2_d$ connection.
  \begin{proof}
    The question is local, so we work on a compact subset $K$ of a trivialization. Let $A=d+a$, so $F_A=da+\frac12[a\wedge a]$. Since $a\in\La2dK$, we know that $da\in\LF2{}K$, and $a\in\La4{}K$, so $\frac12[a\wedge a]\in\LF2{}K$. Thus $F_A\in\LF2{}K$, as desired. Likewise, for $g\in\LG41K$, let $g(A)=B=d+b$. Then
    \begin{equation*}
      b=gag^{-1}-(dg)g^{-1}.
    \end{equation*}
    We have $a\in\La4{}K$, $g\in\LG41K$, and, since $g$ is a gauge transformation, $g\in\LE\infty{}K$. We can compute that then $b\in\La4{}K$. It remains to show that $db\in\LF2{}K$. We compute
    \begin{equation*}
      db=F_B-\tfrac12[b\wedge b]=gF_Ag^{-1}-\tfrac12[b\wedge b].
    \end{equation*}
    Since $F_A\in\LF2{}K$ and $g\in\LE\infty{}K$, we know that $gF_Ag^{-1}$ is in $\LF2{}K$. Likewise, we showed that $b\in\La4{}K$, so $\tfrac12[b\wedge b]\in\LF2{}K$, and so $db\in\LF2{}K$, as desired.
  \end{proof}
\end{proposition}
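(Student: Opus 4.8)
The plan is to reduce both assertions to a computation over a local trivialization. Since the statements are local, I would fix a finite cover of $X$ by charts over which $P$ is trivial, work over a compact piece $K$ of one such chart, and write $A = d + a$ with $a \in \La2dK$; the global conclusions follow by summing over the cover with a partition of unity (or simply from the fact that the $\LeX2d$ norm is, up to constants, the sum of the local norms). Over $K$ the curvature is $F_A = da + \tfrac12[a\wedge a]$, where $da$ is the weak exterior derivative. By the definition of $\La2dK$ we have $da \in \LF2{}K$ and $a \in \La4{}K$; since the pointwise bracket of two $L^4$ one-forms lies in $L^2$ by Cauchy--Schwarz, $\tfrac12[a\wedge a] \in \LF2{}K$, and therefore $F_A \in \LF2{}K$. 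Patching over the cover gives $F_A \in \LFX2{}$.

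For the gauge transformation, let $g \in \LG41K$ and set $g(A) = B = d + b$ with $b = gag^{-1} - (dg)g^{-1}$. The feature that makes the low regularity workable is that $g$ is valued in the compact group $G \subset U_N$, so $g, g^{-1} \in \LE\infty{}K$, and since inversion is the conjugate transpose — a linear operation on matrix entries — we also have $g^{-1} \in \LG41K$. First I would check $b \in \La4{}K$: the term $gag^{-1}$ is a product of factors in $L^\infty$, $L^4$, and $L^\infty$, hence lies in $L^4$, and $(dg)g^{-1}$ is an $L^4$ form times an $L^\infty$ function, hence also in $L^4$. It then remains to show $db \in \LF2{}K$. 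Rather than differentiate $b$ directly, which would force me to pair $dg$ against $a$ and against $dg^{-1}$, I would use the transformation law $F_B = gF_Ag^{-1}$: from $db = F_B - \tfrac12[b\wedge b] = gF_Ag^{-1} - \tfrac12[b\wedge b]$ we get $gF_Ag^{-1} \in \LF2{}K$, since $F_A \in \LF2{}K$ by the first part and $g, g^{-1} \in \LE\infty{}K$, and $\tfrac12[b\wedge b] \in \LF2{}K$, since $b \in \La4{}K$. Hence $db \in \LF2{}K$, so $b \in \La2dK$ and $g(A)$ is again an $L^2_d$ connection.

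The one genuinely delicate point — \emph{the main obstacle} — is that the identities $F_A = da + \tfrac12[a\wedge a]$, $b = gag^{-1} - (dg)g^{-1}$, and especially $F_B = gF_Ag^{-1}$ must be justified at this regularity, where neither $a$ nor $g$ is continuous and there is no Sobolev embedding into $C^0$. I would handle this by approximation: smooth forms are dense in $\La2dK$ by definition and smooth gauge transformations can be used to approximate $g$ in $\LG41K$, while the bilinear multiplications $L^\infty\times L^4 \to L^4$, $L^4\times L^4 \to L^2$, and $L^\infty\times L^2 \to L^2$ are bounded, so the identities — which hold classically in the smooth case — pass to the limit. Once this bookkeeping is in place, everything above is a sequence of routine Hölder estimates.
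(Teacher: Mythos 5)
Your proof is correct and follows essentially the same route as the paper: the same local reduction to a compact piece $K$ of a trivialization, the same H\"older estimates, and—crucially—the same trick of computing $db$ via $db = gF_Ag^{-1} - \tfrac12[b\wedge b]$ rather than differentiating $b$ directly. Your closing remark about justifying the identities $F_A = da + \tfrac12[a\wedge a]$ and $F_B = gF_Ag^{-1}$ by density and boundedness of the relevant multiplications is a sound precaution that the paper leaves implicit, but it does not alter the argument.
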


\section{Yang--Mills Replacement}\label{ymreplacementsection}
In this section, we build up to the proofs of Theorems \ref{globalymreplacementtheorem} and \ref{familyymreplacementtheorem} in Section \ref{mainproofssection}, using the gauge fixing results in Section \ref{gaugefixingchapter}. In Section \ref{dirichletproblemsection}, we solve the Yang--Mills equation on $B^4$ with prescribed small boundary data in $\LapB2{1/2}$ using the inverse function theorem. In Section \ref{localreplacementsection} we use gauge fixing to extend this result to a more general class of boundary values in Theorem \ref{ymreplacementlowenergy}. In addition, the inverse function theorem gives us local uniqueness of the solution, which we strengthen in Theorem \ref{ymuniqueness}. Along the way, in Proposition \ref{ymreplacementinequality}, we prove strict convexity in Coulomb gauge of the energy functional near small-energy Yang--Mills connections. In Section \ref{linearinterpolation} we show energy monotonicity of the linear path between an a connection and the Yang--Mills replacement that matches it on the boundary. Finally, in Section \ref{globalreplacementchapter}, we move the global question of Yang--Mills replacement on a ball in a larger manifold $X$, and we address the irregularity across the boundary of the ball by showing that every $L^2_d(X)$ connection is gauge equivalent to an $L^2_1(X)$ connection.

\subsection{The Dirichlet problem with small $L^2_{1/2}$ boundary data}\label{dirichletproblemsection}
In this section, we prove the the existence of Yang--Mills connections on the ball with prescribed small boundary data in $\LapB2{1/2}$. As discussed in more detail in the introduction, there is substantial work by Marini and Isobe \cite{m92,im97,im12a,im12b,im10} on related problems where the boundary data is smooth and prescribed up to gauge equivalence, or where the boundary data is required to be a sufficiently small multiple of a fixed smooth connection on $\pB$ rather than small in the $\LpB2{1/2}$ norm. The specific problem of Theorem \ref{dirichletproblem} has also been addressed in lecture notes of Rivi\`ere \cite{r14} using direct minimzation methods of Sedlacek \cite{s82}. Here we instead prove this result using the inverse function theorem, as a consequence of which we know that the Yang--Mills connection depends smoothly on the boundary data.
\begin{theorem}\label{dirichletproblem}
  Let $B^4$ be a smooth $4$-ball with arbitrary metric, let $i\colon\pB\to B^4$ be the inclusion, and let $P\to B^4$ be a principal $G$-bundle with trivializing connection $d$. There exist an $\epsilon>0$ and $\delta>0$ such that if $\Ap=d+\ap$ is an $\LapB2{1/2}$ connection with $\normlpb\ap2{1/2}<\epsilon$, then $\Ap$ extends to a \emph{unique} $\LaB21$ connection $A=d+a$ such that
  \begin{enumerate}
  \item $\normlb a21<\delta$,
  \item $A$ is Yang--Mills on $B^4$,
  \item $i^*A=\Ap$, and
  \item $A$ satisfies the Coulomb condition $d^*a=0$.
  \end{enumerate}
  Moreover, $A$ depends smoothly on $\Ap$.
\end{theorem}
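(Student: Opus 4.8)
The plan is to set up the Yang--Mills equation, in Coulomb gauge with prescribed tangential boundary data, as a zero-finding problem $\mc F(a) = 0$ for a smooth map between Banach spaces, and to apply the inverse (or implicit) function theorem. First I would fix an extension: choose a bounded linear extension operator sending $\ap \in \LapB2{1/2}$ to some $\tilde a \in \LaB21$ with $i^*\tilde a = \ap$; subtracting this off, the problem becomes finding $a_0 \in \LaB21$ with $i^*a_0 = 0$, $d^*a_0 = 0$ (after absorbing the Coulomb condition on the extension, or rather imposing it on the full $a = \tilde a + a_0$), and such that $a = \tilde a + a_0$ is Yang--Mills. The natural domain is a closed subspace of $\LaB21$ cut out by the linear conditions $i^*a = \ap$ and $d^*a = 0$; the natural codomain, as the introduction signals, is $\range(d^*) \subset \LaB{2,\Dir}{-1}$, and the operator is $a \mapsto \pi_{d^*}d_A^*F_A$, which by Corollary \ref{pymsmooth} is smooth $\LaB21 \to \LaB{2,\Dir}{-1} \cap \range(d^*)$. (One must check that for $a$ in Coulomb gauge with the given boundary conditions, the projected Yang--Mills equation $\pi_{d^*}d_A^*F_A = 0$ is equivalent to $A$ being Yang--Mills on $B^4$; this is a standard argument using that $d_A^*F_A$ is automatically $d_A$-closed by the Bianchi identity, together with the boundary conditions, so that the $d$-exact and harmonic parts of its Hodge decomposition vanish.)

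Next I would linearize at $\ap = 0$, i.e.\ at the trivial connection $A = d$. The linearization of $a \mapsto \pi_{d^*}d_A^*F_A$ at $a = 0$ is $a \mapsto \pi_{d^*}d^*da = \pi_{d^*}\Delta a$ restricted to the subspace $\{d^*a = 0,\ i^*a = 0\}$; on this subspace $d^*da = \Delta a$ and I expect it to be an isomorphism onto $\range(d^*) \subset \LaB{2,\Dir}{-1}$. This is where the Hodge theory of Section \ref{hodgeprelim} enters: the relevant boundary value problem is the Dirichlet Laplacian, $B^4$ has trivial $H^1(B^4,\pB)$ so $\mc H^\Dir = 0$, and Corollary \ref{Di} together with Propositions \ref{dsbounded} and \ref{pibounded} should give the invertibility. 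Concretely, I would show the linearized operator is bounded, injective (using $\langle \Delta a, a\rangle = \norm{da}_2^2$ when $i^*a = 0$ and $d^*a = 0$, plus a Poincaré-type inequality on the ball), and surjective onto $\range(d^*)$ via the Green's operator $G^\Dir$. Having an isomorphism of the linearization, the inverse function theorem (in the parametrized form, with $\ap$ as parameter entering smoothly through the extension operator) yields, for $\norm\ap_{\LpB2{1/2}} < \epsilon$, a unique small solution $a$ with $\norm a_{\LB21} < \delta$, depending smoothly on $\ap$; conclusions (1)--(4) are then read off.

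The main obstacle I anticipate is the correct functional-analytic bookkeeping at the boundary: making sure the domain, codomain, and the Coulomb plus Dirichlet boundary constraints fit together so that the linearized operator is genuinely an isomorphism between the chosen spaces, rather than merely Fredholm or merely injective. In particular one must verify that imposing $d^*a = 0$ on all of $B^4$ is compatible with prescribing $i^*a = \ap$ (the constraint manifold is nonempty and is a Banach submanifold), that $\pi_{d^*}$ does not kill information needed to recover the full Yang--Mills equation, and that the trace and extension maps interact correctly with $\LapB2{1/2}$ — the $1/2$ being exactly the trace space of $\LaB21$. The uniqueness claim in (1)--(4) as stated is only local uniqueness (within the ball $\norm a_{\LB21} < \delta$), which is exactly what the inverse function theorem provides; the stronger global uniqueness is deferred to Theorem \ref{ymuniqueness}. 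A secondary technical point is checking that the equivalence ``projected Yang--Mills $\Leftrightarrow$ Yang--Mills'' holds at the borderline regularity $\LaB21$, where one cannot freely integrate by parts without first justifying that the boundary terms make sense in the appropriate fractional Sobolev trace spaces.
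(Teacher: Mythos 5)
Your plan for the existence and local uniqueness part is essentially the paper's: set up a smooth map to $\range(d^*)\subset\LaB{2,\Dir}{-1}$, identify the linearization at $a=0$ as $d^*d$ with the given boundary constraints, prove it is an isomorphism via Corollary \ref{Di} and Propositions \ref{dsbounded}, \ref{pibounded}, and apply the inverse function theorem. The paper packages the boundary data into the codomain --- it inverts the full map $a\mapsto(\pi_{d^*}d_A^*F_A,\ i^*a)$ from $\LaB21\cap\ker d^*$ to $(\LaB{2,\Dir}{-1}\cap\range(d^*))\times\LapB2{1/2}$ --- rather than subtracting off an extension and parametrizing the constraint set, but these are interchangeable; yours is a legitimate variant and buys nothing different.

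The genuine gap is in the step you call a ``secondary technical point.'' You propose to show that the projected Yang--Mills equation implies the full Yang--Mills equation via the ``standard argument'' using the Bianchi identity and the Hodge decomposition of $d_A^*F_A$. The paper explicitly states that this argument is unavailable here: for $\LaB21$ connections, $d_A^*F_A$ lands only in the distributional space $\LaB{2,\Dir}{-1}$, and one cannot integrate by parts or take meaningful traces to kill the remaining pieces of its decomposition. Instead the paper replaces this step with a substantial independent argument (Proposition \ref{pymimpliesym}): it shows directly that the projected solution is a local energy minimizer among connections with the same boundary restriction, via an energy convexity inequality in Coulomb gauge (Proposition \ref{ymreplacementinequality}), and then reduces the general comparison to the Coulomb-gauge case using the Dirichlet Coulomb gauge fixing result Theorem \ref{dirichletcoulombfixing} and the boundary uniqueness result Proposition \ref{uniqueboundaryfixing} from Section \ref{gaugefixingchapter}. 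In other words, what you flagged as a secondary concern is in fact the main technical obstacle at this regularity, and the route you sketched for it does not go through; a different, nonlinear, gauge-theoretic argument is required.
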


We first prove the theorem replacing the Yang--Mills condition with the weaker projected Yang--Mills equation $\pi_{d^*}d_A^*F_A^{}=0$ (see Sections \ref{connectionsprelim} and \ref{subsectionl2dir}), and then prove that for small $a$ the weaker equation $\pi_{d^*}d_A^*F_A^{}=0$ actually implies that $A$ is Yang--Mills.

\begin{proposition}\label{dirichletproblemweak}
  There exist an $\epsilon>0$ and $\delta>0$ such that if $\Ap=d+\ap$ is an $\LapB2{1/2}$ connection with $\normlpb\ap2{1/2}<\epsilon$, then $\Ap$ extends to a \emph{unique} $\LaB21$ connection $A=d+a$ such that
  \begin{enumerate}
  \item $\normlb a21<\delta$,
  \item $A$ satisfies the projected Yang--Mills equation $\pi_{d^*}d_A^*F_A^{}=0$ on $B^4$,
  \item $i^*A=\Ap$, and
  \item $A$ satisfies the Coulomb condition $d^*a=0$.
  \end{enumerate}
  Moreover, $A$ depends smoothly on $\Ap$.
  \begin{proof}
    We consider the projected Yang--Mills operator
    \begin{equation*}
      pYM\colon\LaB21\cap\ker d^*
      \to\LaB{2,\Dir}{-1}\cap\range(d^*)\times\LapB2{1/2}
    \end{equation*}
    defined by
    \begin{equation*}
        pYM(a)=(\pi_{d^*}d_A^*F_A^{},i^*a).
    \end{equation*}
    We have that $A\mapsto\pi_{d^*}d_A^*F_A^{}$ is smooth by Corollary \ref{pymsmooth} and $i^*\colon\LaB21\to\LapB2{1/2}$ is a bounded linear operator and hence smooth.

    Proving the proposition amounts to showing that $pYM$ is an isomorphism on a neighborhood of $a=0$. Our desired $\LaB21$ extension $a$ is then the inverse image of $(0,\ap)$. We do so using the inverse function theorem. The linearization of $pYM$ at $a=0$ is
    \begin{equation*}
      (d^*d,i^*)\colon\LaB21\cap\ker d^*
      \to\LaB{2,\Dir}{-1}\cap\range(d^*)\times\LapB2{1/2}
    \end{equation*}
    It remains to show that this operator is an isomorphism of Banach spaces.

    Let $a\in\LaB21\cap\ker d^*$, and assume that $d^*da=0$ and $i^*a=0$. Thus $a\in\LaB{2,\Dir}1$, and so Proposition \ref{dsbounded} tells us that
    \begin{equation*}
      0=\pair{d^*da}a_{\LX2{}}=\pair{da}{da}_{\LX2{}}.
    \end{equation*}
    Thus, $da=0$. In addition, $d^*a=0$ and $i^*a=0$, so $a\in\mc H^\Dir$. But $H^1(B^4,\pB)=0$, so $a=0$. Hence $(d^*d,i^*)$ is injective.

    We first prove surjectivity onto $\LaB{2,\Dir}{-1}\cap\range(d^*)\times0$. Let $y=d^*f$ for $f\in\LFB2{}$. Then
    \begin{equation*}
      y=d^*f=d^*(dd^*G^\Dir f+d^*dG^\Dir f+\pi_{\mc H^\Dir}f)=d^*d(d^*G^\Dir f).
    \end{equation*}
    Clearly, $d^*G^\Dir f\in\ker d^*$. Moreover, $i^*(d^*G^\Dir f)=0$ by Proposition \ref{hodgedecomposition}. Hence, $d^*G^\Dir f$ is our desired preimage of $(y,0)$ under the map $(d^*d,i^*)$.

    Now, given $\ap\in\LapB2{1/2}$, the inverse trace map \cite[Theorem 7.53]{a75} gives us an $a_1\in\LaB21$ such that $i^*a_1=\ap$. Then $d^*da_1$ is in the space $\LaB{2,\Dir}{-1}\cap\range(d^*)$, so the previous paragraph gives us an $a_2\in\LaB21\cap\ker d^*$ such that $d^*da_2=d^*da_1$ and $i^*a_2=0$. Hence $(d^*d,i^*)(a_1-a_2)=(0,\ap)$, giving us surjectivity onto the other factor $0\times\LapB2{1/2}$ also. 

    We conclude that
    \begin{equation*}
      (d^*d,i^*)\colon\LaB21\cap\ker d^*
      \to\LaB{2,\Dir}{-1}\cap\range(d^*)\times\LapB2{1/2}
    \end{equation*}
    is an isomorphism of Banach spaces. Hence, by the inverse function theorem, the projected Yang--Mills operator
    \begin{equation*}
      pYM\colon\LaB21\cap\ker d^*
      \to\LaB{2,\Dir}{-1}\cap\range(d^*)\times\LapB2{1/2}
    \end{equation*}
    is a diffeomorphism between a neighborhood of $a=0$ in $\LaB21\cap\ker d^*$ and a neighborhood of $(y,\ap)=(0,0)$ in
    \begin{equation*}
      \LaB{2,\Dir}{-1}\cap\range(d^*)\times\LapB2{1/2}.
    \end{equation*}
    In particular, for $\ap$ sufficiently small in $\LapB2{1/2}$, we can uniquely solve $pYM(a)=(0,\ap)$ for $a\in\LaB21\cap\ker d^*$, giving us our desired small $a$ depending smoothly on $\ap$ satisfying $\pi_{d^*}d_A^*F_A^{}=0$, $i^*a=\ap$, and $d^*a=0$.
  \end{proof}
\end{proposition}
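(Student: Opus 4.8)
The plan is to produce the solution $a$ as the preimage of $(0,\ap)$ under a single nonlinear map and apply the inverse function theorem, which automatically yields both the uniqueness near $a=0$ and the smooth dependence on $\Ap$. Concretely, I would introduce the \emph{projected Yang--Mills operator}
\begin{equation*}
  pYM\colon\LaB21\cap\ker d^*\to\bigl(\LaB{2,\Dir}{-1}\cap\range(d^*)\bigr)\times\LapB2{1/2},\qquad pYM(a)=\bigl(\pi_{d^*}d_A^*F_A^{},\,i^*a\bigr),
\end{equation*}
where $A=d+a$. This map is smooth: the first component is smooth as an operator into $\LaB{2,\Dir}{-1}\cap\range(d^*)$ by Corollary \ref{pymsmooth}, and $i^*$ is a bounded linear trace operator into $\LapB2{1/2}$, hence smooth. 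So the whole problem reduces to showing that the differential of $pYM$ at $a=0$, which is $(d^*d,i^*)$, is an isomorphism of Banach spaces: the inverse function theorem then gives a local diffeomorphism onto a neighborhood of $(0,0)$, and for $\normlpb\ap2{1/2}$ small enough the unique small solution of $pYM(a)=(0,\ap)$ is the connection claimed in the proposition.

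For injectivity of $(d^*d,i^*)$, I would take $a\in\LaB21\cap\ker d^*$ with $d^*da=0$ and $i^*a=0$, so that $a\in\LaB{2,\Dir}1$. Then, using the duality that defines $d^*$ into $\LaB{2,\Dir}{-1}$ (Proposition \ref{dsbounded}), $0=\pair{d^*da}a_{\LB2{}}=\pair{da}{da}_{\LB2{}}$, so $da=0$; combined with $d^*a=0$ and $i^*a=0$ this gives $a\in\mc H^\Dir\cong H^1(B^4,\pB)=0$. For surjectivity I would first hit the factor $\bigl(\LaB{2,\Dir}{-1}\cap\range(d^*)\bigr)\times\{0\}$: given $y=d^*f$ with $f\in\LFB2{}$, the Hodge decomposition of Proposition \ref{hodgedecomposition} rewrites $y=d^*d(d^*G^\Dir f)$, and $d^*G^\Dir f$ lies in $\ker d^*$ and satisfies $i^*d^*G^\Dir f=0$ by the boundary conditions of that proposition, so it is the required preimage. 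To hit $\{0\}\times\LapB2{1/2}$, I would pick $\ap\in\LapB2{1/2}$, use the inverse trace theorem to get $a_1\in\LaB21$ with $i^*a_1=\ap$, note that $d^*da_1\in\LaB{2,\Dir}{-1}\cap\range(d^*)$, solve for its preimage $a_2\in\LaB21\cap\ker d^*$ as just described, and conclude $(d^*d,i^*)(a_1-a_2)=(0,\ap)$.

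The step I expect to demand the most care is pinning down the \emph{codomain} so that $(d^*d,i^*)$ is exactly onto with trivial kernel; this is precisely why one cannot work with $\LaB2{-1}$ but must use $\LaB{2,\Dir}{-1}\cap\range(d^*)$, and why the groundwork of Section \ref{subsectionl2dir}---boundedness of $d^*$ and of $\pi_{d^*}$ on $\LaB{2,\Dir}{-1}$, together with the closed-range statement of Lemma \ref{dclosedrange}---is needed even to make $pYM$ well-defined and the integration by parts in the injectivity argument legitimate. Once the linearization is an isomorphism, the inverse function theorem gives a neighborhood of $a=0$ on which $pYM$ is a diffeomorphism onto a neighborhood of $(0,0)$; shrinking $\epsilon$ so that $(0,\ap)$ lies in the image whenever $\normlpb\ap2{1/2}<\epsilon$ and taking $\delta$ to be the radius of the corresponding ball around $a=0$, we obtain the unique $a$ with $\normlb a21<\delta$ satisfying $\pi_{d^*}d_A^*F_A^{}=0$, $i^*a=\ap$, and $d^*a=0$, depending smoothly on $\ap$, which is the assertion of the proposition.
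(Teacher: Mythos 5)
Your proposal is correct and follows essentially the same route as the paper: the same projected Yang--Mills operator $pYM$ with the codomain $\LaB{2,\Dir}{-1}\cap\range(d^*)\times\LapB2{1/2}$, the same injectivity argument via Proposition \ref{dsbounded} and $H^1(B^4,\pB)=0$, the same surjectivity argument via $d^*G^\Dir f$ and the inverse trace map, and the same application of the inverse function theorem.
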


To complete the proof of Theorem \ref{dirichletproblem}, it remains to prove the following.
\begin{proposition}\label{pymimpliesym}
  There exists an $\epsilon>0$ such that for any $\LaB21$ connection $A=d+a$, if
  \begin{enumerate}
  \item $\normlb a21<\epsilon$,
  \item $A$ satisfies the projected Yang--Mills condition $\pi_{d^*}d_A^*F_A^{}=0$, and
  \item $A$ satisfies the Coulomb condition $d^*a=0$,
  \end{enumerate}
  then $A$ is Yang--Mills on $B^4$.
\end{proposition}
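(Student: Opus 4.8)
The plan is to verify the variational characterization of Yang--Mills directly: I would show that $\pairlb{F_A}{d_Ac}=0$ for every $c\in\LaB21$ with $i^*c=0$. This is the Yang--Mills equation, and it upgrades to the full variational statement because any admissible variation, for which $i^*c$ is an infinitesimal boundary gauge transformation $d_{i^*A}\eta$, can be corrected: extending $\eta$ to a function $\tilde\eta$ on $B^4$ and replacing $c$ by $c-d_A\tilde\eta$ (which has $i^*(c-d_A\tilde\eta)=0$) changes $\pairlb{F_A}{d_Ac}$ only by $\pairlb{F_A}{d_Ad_A\tilde\eta}=\pairlb{F_A}{[F_A\wedge\tilde\eta]}$, and this vanishes pointwise a.e.\ because the inner product on $\mf g$ restricted from the matrix inner product on $M_N$ is $\Ad$-invariant, so $\pair X{[X,Y]}=0$ for all $X,Y\in\mf g$. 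The idea for the case $i^*c=0$ is the familiar one: $\pi_{d^*}d_A^*F_A^{}=0$ says $A$ is critical for $\mc E$ along the Coulomb slice, $\mc E$ is invariant along the complementary infinitesimal gauge directions, and for $\normlb a21$ small these two families of variations together span all of $\{c\in\LaB21:i^*c=0\}$.

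Concretely, I would fix $c\in\LaB21$ with $i^*c=0$ and split it as $c=c_0+d_A\xi$, where $\xi$ is a $\mf g$-valued function with $\xi\vert_\pB=0$ and $d^*c_0=0$. To produce $\xi$, solve the Dirichlet problem $d^*d_A\xi=d^*c$, $\xi\vert_\pB=0$: here $d^*c\in\LgB2{}$, and $d^*d_A=\Delta+d^*[a\wedge\,\cdot\,]$ on functions vanishing on $\pB$ is the invertible Dirichlet Laplacian on $B^4$ plus a term of operator norm $O(\normlb a21)$ as a map $\LgB22\to\LgB2{}$ (by Sobolev multiplication together with $\LgB22\hookrightarrow L^\infty(B^4)$, which controls $[a\wedge\xi]$ in $\LaB21$); so for $\epsilon$ small this operator is an isomorphism and $\xi\in\LgB22$ exists and is unique. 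Then $c_0:=c-d_A\xi\in\LaB21$ (again by Sobolev multiplication), $d^*c_0=0$ by construction, and $i^*c_0=i^*c-i^*(d_A\xi)=0$ since $i^*(d_A\xi)=d(i^*\xi)+[i^*a,i^*\xi]=0$.

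It then remains to show $\pairlb{F_A}{d_Ac_0}=0$ and $\pairlb{F_A}{d_Ad_A\xi}=0$. The second is the bracket identity already used above: $d_Ad_A\xi=[F_A\wedge\xi]$ for the $\mf g$-valued function $\xi$, and $\pairlb{F_A}{[F_A\wedge\xi]}=0$ by $\Ad$-invariance. For the first, $c_0\in\LaB{2,\Dir}1$ with $d^*c_0=0$ forces $c_0=\pi_{d^*}c_0$: in the Hodge decomposition of Proposition \ref{hodgedecomposition} the harmonic part of $c_0$ lies in $\mc H^\Dir\cong H^1(B^4,\pB)=0$, and the exact part $dd^*G^\Dir c_0=d\mu$ vanishes because $\mu:=d^*G^\Dir c_0$ is a function with $\mu\vert_\pB=0$ (Proposition \ref{hodgedecomposition}) satisfying $\Delta\mu=d^*c_0=0$, hence $\mu\equiv0$. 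Therefore, using the hypothesis $\pi_{d^*}d_A^*F_A^{}=0$ and the self-adjointness of $\pi_{d^*}$ on $\LaB{2,\Dir}{-1}$ from Proposition \ref{pibounded},
\[
  \pairlb{F_A}{d_Ac_0}=\pairlb{d_A^*F_A^{}}{c_0}=\pairlb{d_A^*F_A^{}}{\pi_{d^*}c_0}=\pairlb{\pi_{d^*}d_A^*F_A^{}}{c_0}=0,
\]
where the first equality is the definition of $d_A^*F_A^{}$ as an element of $\LaB{2,\Dir}{-1}$ (the boundary term vanishes since $i^*c_0=0$). Adding the two terms gives $\pairlb{F_A}{d_Ac}=0$, as needed.

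The main obstacle I anticipate is the bookkeeping in the splitting step. In the borderline regularity $a\in\LaB21$, $\xi\in\LgB22$ neither factor is continuous a priori, and one must check carefully—leaning on $\LgB22\hookrightarrow L^\infty(B^4)$ on the $4$-ball—that $d^*d_A$ with Dirichlet boundary condition is genuinely an isomorphism for $\epsilon$ small and that $c_0=c-d_A\xi$ lands back in $\LaB21$ with $i^*c_0=0$. The identification $c_0=\pi_{d^*}c_0$ is the other delicate point, relying on $H^1(B^4,\pB)=0$ and the precise boundary behavior of $G^\Dir$ in Proposition \ref{hodgedecomposition}.
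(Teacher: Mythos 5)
Your approach is genuinely different from the paper's. The paper proves the proposition by showing $A$ locally minimizes energy among connections whose boundary values are gauge-equivalent to $i^*A$: given such a $B$, it applies Proposition \ref{gaugefixingconstantboundary} (and Theorem \ref{dirichletcoulombfixing} with Proposition \ref{uniqueboundaryfixing}) to produce a Coulomb representative $\tilde B$ with $i^*\tilde B=i^*A$ and $d^*\tilde b=0$, and then invokes the convexity inequality of Proposition \ref{ymreplacementinequality} to conclude $\normlb{F_A}2{}\le\normlb{F_{\tilde B}}2{}=\normlb{F_B}2{}$. You instead verify the Euler--Lagrange equation $\pairlb{F_A}{d_Ac}=0$ directly by splitting each admissible variation $c$ into a Coulomb piece $c_0$ plus an infinitesimal gauge piece $d_A\xi$. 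Conceptually this is the more standard route, and several of your ingredients are sound: the identification $c_0=\pi_{d^*}c_0$ via $H^1(B^4,\pB)=0$ and the boundary behavior of $G^\Dir$, the self-adjointness of $\pi_{d^*}$ from Proposition \ref{pibounded}, and the pointwise $\Ad$-invariance argument for the bracket terms.

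There is, however, a genuine gap precisely where you flag unease. The embedding $\LgB22\hookrightarrow L^\infty(B^4)$ on which your bookkeeping leans is false: in dimension $4$ this is the critical case $kp=n$ of the Sobolev theorem, and $L^2_2\not\hookrightarrow L^\infty$ (the paper's introduction stresses exactly this point about its $L^2_2$ gauge transformations). Consequently the multiplication $\LaB21\times\LgB22\to\LaB21$ needed to put $[a,\xi]$, and hence $c_0=c-d_A\xi$, into $\LaB{2,\Dir}1$ fails: in $\nabla[a,\xi]=[\nabla a,\xi]+[a,\nabla\xi]$ one has $\nabla a\in L^2$ but $\xi$ only in $L^q$ for finite $q$, so the term $[\nabla a,\xi]$ need not be in $L^2$. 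Without $c_0\in\LaB{2,\Dir}1$, the steps $c_0=\pi_{d^*}c_0$ and the duality pairing of $c_0$ against $d_A^*F_A^{}\in\LaB{2,\Dir}{-1}$ are unavailable, and the second term $\pairlb{F_A}{d_Ad_A\xi}$ is not clearly well-defined either (its distributional identification with $\pairlb{F_A}{[F_A,\xi]}$ runs into the same $L^\infty$ issue). One piece you can salvage without $L^\infty$ is the solvability of the Dirichlet problem for $\xi$: the Coulomb condition $d^*a=0$ gives $d^*[a,\xi]=\pm{*}({*a}\wedge d\xi)$, which is controlled by a constant times $\normlb a4{}\normlb{d\xi}4{}$, hence by $\normlb a21\normlb\xi22$, so $d^*d_A$ is indeed a small Dirichlet perturbation of the Laplacian. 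But this fix does not propagate to the later steps. This regularity obstruction is exactly what motivates the paper's route through gauge fixing and the minimization inequality, which never forms the product $[a,\xi]$ and instead compares two Coulomb-gauge connections with the same boundary value directly.
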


In higher regularity spaces, this proposition can be proved using bounds on $d_A^*F_A^{}$, but for $\LaB21$ connections, we must proceed directly by showing that $A$ locally minimizes energy. We prove a convexity inequality similar to one used by Colding and Minicozzi for harmonic maps \cite[Theorem 3.1]{cm08}.

\begin{proposition}\label{ymreplacementinequality}
  Let $B^4$ be a smooth $4$-ball with arbitrary metric, let $i\colon\pB\to B^4$ be the inclusion, and let $P\to B^4$ be a principal $G$-bundle with trivializing connection $d$. There exist constants $\epsilon_4, \epsilon_F$ and $C$ with the following significance. Let $A=d+a$ and $B=d+b$ be $\LaB21$ connections such that
  \begin{enumerate}
  \item $\normlb a4{}<\epsilon_4$ and $\normlb b4{}<\epsilon_4$,
  \item $\normlb{F_A}2{}<\epsilon_F$,
  \item $A$ satisfies the projected Yang--Mills equation $\pi_{d^*}d_A^*F_A^{}=0$,
  \item $A$ and $B$ match on the boundary, that is, $i^*A=i^*B$, and
  \item we have a Coulomb condition $d^*a=d^*b$.
  \end{enumerate}
  Then
  \begin{equation}\label{convexityinequality}
    \normlb{B-A}21^2\le C\left(\normlb{F_B}2{}^2-\normlb{F_A}2{}^2\right).
  \end{equation}
  In particular,
  \begin{equation*}
    \normlb{F_A}2{}\le\normlb{F_B}2{}.
  \end{equation*}
  \begin{proof}
    The projected Yang--Mills equation $\pi_{d^*}d_A^*F_A^{}=0$ can be restated as the condition that $\pairlb{F_A}{d_Ac}=0$ for all $c\in\LaB21$ with $i^*c=0$ and $d^*c=0$. In particular, let $c=B-A=b-a$, so $i^*c=i^*B-i^*A=0$ and $d^*c=d^*b-d^*a=0$. Hence, taking the square of the $\LB2{}$ norm of the curvature equation $F_B=F_A+d_Ac+\tfrac12[c\wedge c]$, we obtain
    \begin{equation*}
      \begin{split}
        \norm{F_B}_{\LB2{}}^2&=\norm{F_A}_{\LB2{}}^2+\norm{d_A c}_{\LB2{}}^2+\norm{\tfrac12[c\wedge c]}_{\LB2{}}^2\\
        &\qquad{}+2\pair{F_A}{\tfrac12[c\wedge c]}_{\LB2{}}+2\pair{d_A c}{\tfrac12[c\wedge c]}_{\LB2{}}.
      \end{split}
    \end{equation*}
     We then have the inequality
    \begin{equation*}
      \begin{split}
        \norm{d_A c}_{\LB2{}}^2&\le\norm{F_B}_{\LB2{}}^2-\norm{F_A}_{\LB2{}}^2-\norm{\tfrac12[c\wedge c]}_{\LB2{}}^2\\
        &\qquad+2\norm{F_A}_{\LB2{}}\norm{\tfrac12[c\wedge c]}_{\LB2{}}+2\norm{d_A c}_{\LB2{}}\norm{\tfrac12[c\wedge c]}_{\LB2{}}\\
        &\le\norm{F_B}_{\LB2{}}^2-\norm{F_A}_{\LB2{}}^2-\norm{\tfrac12[c\wedge c]}_{\LB2{}}^2\\
        &\qquad+2\norm{F_A}_{\LB2{}}\norm{\tfrac12[c\wedge c]}_{\LB2{}}+\tfrac12\norm{d_A c}_{\LB2{}}^2+2\norm{\tfrac12[c\wedge c]}_{\LB2{}}^2.
      \end{split}
    \end{equation*}
    Rearranging,
    \begin{equation}\label{dbaal21}
      \begin{split}
        \norm{d_A c}_{\LB2{}}^2&\le2\left(\norm{F_B}_{\LB2{}}^2-\norm{F_A}_{\LB2{}}^2\right)\\
        &\qquad{}+\left(2\norm{F_A}_{\LB2{}}+\tfrac12\norm{[c\wedge c]}_{\LB2{}}\right)\norm{[c\wedge c]}_{\LB2{}}\\
        &\le2\left(\norm{F_B}_{\LB2{}}^2-\norm{F_A}_{\LB2{}}^2\right)\\
        &\qquad{}+\left(2\norm{F_A}_{\LB2{}}+\tfrac12 C_\Lie\norm c_{\LB4{}}^2\right)C_\Lie C_S^2\norm c_{\LB21}^2,
      \end{split}
    \end{equation}
    where $C_\Lie$ is the operator norm of the Lie bracket $\mf g\times\mf g\to\mf g$, and $C_S$ is the operator norm of the Sobolev embedding $\LB21\hookrightarrow\LB4{}$.

    The next step is to bound $\norm c_{\LB21}$ in terms of $\norm{d_A c}_{\LB2{}}$. Since $H^1(B^4,\pB)=0$, by Corollary \ref{Di}, $d+d^*\colon\LeB{2,\Dir}1\to\LeB2{}$ is a Fredholm operator with no kernel on one-forms. Thus, we have the estimate $\norm c_{\LB21}\le C_G\norm{(d+d^*)c}_{\LB2{}}$ for some constant $C_G$ independent of $c\in\LaB{2,\Dir}1$. Recalling that $d^*c=0$, we can compute
    \begin{equation*}
      \begin{split}
        \norm c_{\LB21}&\le C_G\norm{dc}_{\LB2{}}=C_G\norm{d_A c-[a\wedge c]}_{\LB2{}}\\
        &\le C_G\norm{d_A c}_{\LB2{}}+C_G C_\Lie C_S\norm{a}_{\LB4{}}\norm c_{\LB21}
      \end{split}
    \end{equation*}

    Requiring $\epsilon_4\le\frac12(C_G C_\Lie C_S)^{-1}$, we obtain $\norm c_{\LB21}\le C_G\norm{d_A c}_{\LB2{}}+\frac12\norm c_{\LB21}$. Rearranging, we obtain our desired bound
    \begin{equation*}
      \norm c_{\LB21}\le2C_G\norm{d_A c}_{\LB2{}}.
    \end{equation*}

    Combining the above inequality with \eqref{dbaal21}, we have
    \begin{multline}
      \norm c_{\LB21}^2\le 8C_G^2\left(\norm{F_A}_{\LB2{}}^2-\norm{F_B}_{\LB2{}}^2\right)\\
      {}+\left(8C_G^2C_\Lie C_S^2\norm{F_A}_{\LB2{}}+2C_G^2C_\Lie^2C_S^2\norm c_{\LB4{}}^2\right)\norm c_{\LB21}^2.
    \end{multline}
    Requiring, for example, that $\epsilon_F\le\frac14(8C_G^2C_\Lie C_S^2)^{-1}$ and $(2\epsilon_4)^2\le\frac14(2C_G^2C_\Lie^2C_S^2)^{-1}$, and noting that $\normlb c4{}<2\epsilon_4$, the above inequality becomes 
    \begin{equation*}
      \norm c_{\LB21}^2\le 8C_G^2\left(\norm{F_A}_{\LB2{}}^2-\norm{F_B}_{\LB2{}}^2\right)+\tfrac12\norm c_{\LB21}^2.
    \end{equation*}
    Rearranging, we obtain
    \begin{equation*}
      \norm c_{\LB21}^2\le16C_G^2\left(\norm{F_A}_{\LB2{}}^2-\norm{F_B}_{\LB2{}}^2\right),
    \end{equation*}
    so our desired inequality is true with $C=16C_G^2$.
  \end{proof}
\end{proposition}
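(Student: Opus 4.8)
The plan is to set $c = B - A = b - a$ and play the curvature identity $F_B = F_A + d_Ac + \tfrac12[c\wedge c]$ against the projected Yang--Mills equation. The two matching hypotheses are precisely what make $c$ an admissible test form: $i^*A = i^*B$ gives $i^*c = 0$, and $d^*a = d^*b$ gives $d^*c = 0$, so $c \in \LaB{2,\Dir}1 \cap \ker d^*$, and hence the projected Yang--Mills equation $\pi_{d^*}d_A^*F_A^{} = 0$, restated as $\pairlb{F_A}{d_Ac} = 0$ for all such $c$, kills the cross term $\pairlb{F_A}{d_Ac}$. Squaring the $\LB2{}$ norm of the curvature identity, dropping that cross term, and applying Cauchy--Schwarz and Young's inequality to the surviving terms $\pairlb{F_A}{\tfrac12[c\wedge c]}$ and $\pairlb{d_Ac}{\tfrac12[c\wedge c]}$ yields an inequality of the shape
\[
  \normlb{d_Ac}2{}^2 \le 2\bigl(\normlb{F_B}2{}^2 - \normlb{F_A}2{}^2\bigr)
  + \bigl(2\normlb{F_A}2{} + \tfrac12\normlb{[c\wedge c]}2{}\bigr)\normlb{[c\wedge c]}2{},
\]
in which every term on the right other than the energy drop is at least quadratic in $c$.

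The second ingredient is a coercivity estimate $\normlb c21 \le C_G\normlb{d_Ac}2{}$. Since $H^1(B^4,\pB) = 0$, the Dirichlet harmonic $1$-forms vanish, so Corollary \ref{Di} says $d + d^* \colon \LeB{2,\Dir}1 \to \LeB2{}$ is injective with closed range on $1$-forms; this gives $\normlb c21 \le C_G\normlb{(d+d^*)c}2{}$ for a constant $C_G$ independent of $c \in \LaB{2,\Dir}1$. Because $d^*c = 0$, the right side is $C_G\normlb{dc}2{}$, and writing $dc = d_Ac - [a\wedge c]$ and bounding $\normlb{[a\wedge c]}2{} \le C_\Lie C_S\normlb a4{}\normlb c21$ via the Sobolev embedding $\LB21 \hookrightarrow \LB4{}$, the hypothesis $\normlb a4{} < \epsilon_4$ lets us absorb that term (requiring, say, $\epsilon_4 \le \tfrac12(C_GC_\Lie C_S)^{-1}$), leaving $\normlb c21 \le 2C_G\normlb{d_Ac}2{}$.

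Finally I would combine and absorb. Using $\normlb{[c\wedge c]}2{} \le C_\Lie C_S^2\normlb c21^2$ and $\normlb{[c\wedge c]}2{} \le C_\Lie\normlb c4{}^2$ together with the coercivity bound, the first display becomes
\[
  \normlb{d_Ac}2{}^2 \le 2\bigl(\normlb{F_B}2{}^2 - \normlb{F_A}2{}^2\bigr)
  + \bigl(2\normlb{F_A}2{} + \tfrac12 C_\Lie\normlb c4{}^2\bigr)\cdot 4C_G^2 C_\Lie C_S^2\,\normlb{d_Ac}2{}^2 .
\]
Since $\normlb{F_A}2{} < \epsilon_F$ and $\normlb c4{} < 2\epsilon_4$, choosing $\epsilon_F$ and $\epsilon_4$ small enough that the coefficient of $\normlb{d_Ac}2{}^2$ on the right is at most $\tfrac12$ gives $\normlb{d_Ac}2{}^2 \le 4\bigl(\normlb{F_B}2{}^2 - \normlb{F_A}2{}^2\bigr)$, and feeding this through $\normlb c21 \le 2C_G\normlb{d_Ac}2{}$ proves \eqref{convexityinequality} with $C = 16C_G^2$. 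The asserted inequality $\normlb{F_A}2{} \le \normlb{F_B}2{}$ is then immediate because the left side of \eqref{convexityinequality} is nonnegative.

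The main obstacle is the order in which the absorptions are carried out: one must first absorb the $[a\wedge c]$ term into $\normlb c21$ using $\epsilon_4$ alone to obtain a clean coercivity bound, and only afterwards absorb the $[c\wedge c]$ terms back into $\normlb{d_Ac}2{}^2$ using both $\epsilon_F$ and $\epsilon_4$; reversing the order produces circular dependencies on $\normlb{d_Ac}2{}$. The one genuinely nontrivial analytic input is the coercivity estimate for $c$ with $i^*c = 0$ and $d^*c = 0$, which is where the topology of $B^4$ (vanishing relative $H^1$) and the Hodge theory of Corollary \ref{Di} enter; everything else is Cauchy--Schwarz, Young's inequality, and Sobolev multiplication.
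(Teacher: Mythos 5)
Your proposal is correct and follows essentially the same argument as the paper: kill the cross term with the projected Yang--Mills equation, bound the quadratic terms via Cauchy--Schwarz, Young, and Sobolev multiplication, and use the coercivity from Corollary \ref{Di} (vanishing of $H^1(B^4,\pB)$) with the same smallness absorptions, arriving at the same constant $C=16C_G^2$. The only cosmetic difference is that you absorb the $[c\wedge c]$ terms into $\normlb{d_Ac}2{}^2$ before invoking coercivity, whereas the paper substitutes the coercivity bound first and absorbs into $\normlb c21^2$; the two orderings are equivalent.
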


Under the assumptions of Propositon \ref{pymimpliesym} for an appropriate $\epsilon$, Proposition \ref{ymreplacementinequality} tells us that $A$ has smaller energy than any nearby connection $B$ that matches $A$ on the boundary and satisfies the Coulomb condition $d^*b=0$. It remains to remove this last condition, and to allow $B$ to match $A$ on the boundary only up to gauge. To do so, we use gauge fixing results that we will prove in Section \ref{gaugefixingchapter}.

\begin{proof}[Proof of Proposition \ref{pymimpliesym}]
    Let $B$ be any $\LaB21$ connection satisfying $\normlb b21<\epsilon$ and the boundary condition $i^*B=i^*A$. We will prove that $\normlb{F_A}2{}\le\normlb{F_B}2{}$.

    Let $\epsilon_U$ and $C_U$ be the constants from Proposition \ref{gaugefixingconstantboundary}. Requiring $\epsilon\le\epsilon_U$, we can apply Proposition \ref{gaugefixingconstantboundary} to $B$ to give us a gauge equivalent connection $\tB$ satisfying $d^*\tb=0$, $i^*\tB=i^*B=i^*A$, and
    \begin{equation*}
      \normlb\tb21\le C_U\left(\normlb{F_B}2{}+\normlpb{i^*b}2{1/2}\right).
    \end{equation*}

    Let $\epsilon_4$ and $\epsilon_F$ be the corresponding constants from Proposition \ref{ymreplacementinequality}. We have the continuity of the trace map $\LB21\to\LpB2{1/2}$ and Sobolev maps $\LpB2{1/2}\to\LpB3{}$ and $\LB21\to\LB4{}$. Using these along with the continuiuty of $F_A$, we can choose $\epsilon$ small enough so that $\normlb a21<\epsilon$ implies $\normlb a4{}<\epsilon_4$, $\normlb{F_A}2{}<\epsilon_F$. Likewise, the inequality $\normlb\tb21\le C_U\left(\normlb{F_B}2{}+\normlpb{i^*b}2{1/2}\right)$ and the continuity of $b\mapsto F_B$ and $b\mapsto i^*b$ let us choose $\epsilon$ small enough so that $\normlb b21<\epsilon$ implies $\normlb\tb4{}<\epsilon_4$.

    Since $i^*A=i^*\tB$, we can apply Proposition \ref{ymreplacementinequality} to $A$ and $\tB$, since $d^*a=0$ by assumption and $d^*\tb=0$ by Theorem \ref{gaugefixingconstantboundary}. We conclude that
    \begin{equation*}
      \normlb{F_A}2{}\le\normlb{F_{\tB}}2{}=\normlb{F_B}2{},
    \end{equation*}
    as desired. Hence, $A$ locally minimzes energy among connections whose restrictions to $\pB$ is $i^*A$.

    We now assume only that $i^*B$ is gauge equivalent to $i^*A$, along with $\normlb b21<\epsilon$, and show that $\normlb{F_A}2{}\le\normlb{F_B}2{}$.  They key fact here is that, unlike the projected Yang--Mills condition, the condition we proved, namely that $A$ locally minimizes energy among connections whose restriction to the boundary is equal to $i^*A$, is a gauge-invariant condition.

    We require $\epsilon$ be small enough so that $\normlb a21,\normlb b21<\epsilon$ implies that $\normlb{F_A}2{}$ and $\normlb{F_B}2{}$ are small enough so that we can apply Theorem \ref{dirichletcoulombfixing}, giving us connections $\tA$ and $\tB$ gauge equivalent to $A$ and $B$, respectively, such that $d^*\ta=d^*\tb=0$ and $d^*_\pB i^*\ta=d^*_\pB i^*\tb=0$, along with bounds on $\normlb\ta21$ and $\normlb\tb21$. Choosing $\epsilon$ small enough, these bounds imply bounds on $\normlpb{i^*\ta}3{}$ and $\normlpb{i^*\tb}3{}$ that are sufficient for applying Proposition \ref{uniqueboundaryfixing} to find that the gauge transformation $g$ sending $i^*\tA$ to $i^*\tB$ is constant. We can apply this constant gauge transformation to $\tA$ on all of $B^4$ without affecting the Dirichlet Coulomb conditions $d^*\ta=0$ and $d^*_\pB i^*\ta=0$, so we can assume without loss of generality that $g=1$ and $i^*\tA=i^*\tB$. Since $A$ locally minimzes energy among connections whose restrictions match it on the boundary, so does $\tA$, and so $\normlb{F_A}2{}=\normlb{F_{\tA}}2{}\le\normlb{F_{\tB}}2{}=\normlb{F_B}2{}$. Thus, $A$ is Yang--Mills.
  \end{proof}

  \subsection{Yang--Mills replacement for small-energy connections on a ball}\label{localreplacementsection}

  Using gauge fixing, we can strengthen Theorem \ref{dirichletproblem} to solve the Yang--Mills equation for a larger class of boundary values, namely restrictions to the boundary of small-energy connections on the ball. This result is exactly what we need for replacing a global connection on a small ball with a Yang--Mills connection.

  \begin{theorem}\label{ymreplacementlowenergy}
    There exists an $\epsilon>0$ with the following significance. Let $B$ be an $\LB21$ connection with $\normlb{F_B}2{}<\epsilon$. Then we can construct a Yang--Mills connection $A$ that depends continuously on $B$ such that $i^*A=i^*B$.
    \begin{proof}
      The idea is to apply a gauge fixing result to $B$ in order to make the boundary value small enough to apply Theorem \ref{dirichletproblem}. We will use Theorem \ref{uhlenbeckgaugefixing}, Uhlenbeck's gauge fixing result with Neumann boundary conditions, though Theorem \ref{dirichletcoulombfixing} with Dirichlet boundary conditions would work equally well. Let $\epsilon_U$ and $C$ be the constants from Theorem \ref{uhlenbeckgaugefixing}, and let $\epsilon_\partial$ be the bound on the boundary value in Theorem \ref{dirichletproblem}. Require $\epsilon\le\epsilon_U$ and $C_TC\epsilon\le\epsilon_\partial$, where $C_T$ is the norm of the trace map $i^*\colon\LB21\to\LpB2{1/2}$.

      We can thus apply Theorem \ref{uhlenbeckgaugefixing} to obtain a $\LGB22$ gauge transformation $g$ sending $B$ to $\tB=d+\tb$ in Coulomb gauge with $\normlb\tb21\le C\normlb{F_B}2{}$, so
      \begin{equation*}
        \normlpb{i^*\tb}2{1/2}\le C_T\normlb\tb21\le C_TC\normlb{F_B}2{}<\epsilon_\partial.
      \end{equation*}
      Thus we can apply Theorem \ref{dirichletproblem} to $i^*\tB$ to obtain an $\LB21$ Yang--Mills connection $\tA$ such that $i^*\tA=i^*\tB$. We let $A=g^{-1}(\tA)$, so $i^*A=i^*B$, as desired.

      It remains to show that this construction is continuous, which is made more complex by the fact that $g$ is not uniquely determined by $B$ and hence might not depend contiuously on $B$. With appropriately chosen $\epsilon$, we know that $g$ is unique up to a constant gauge transformation $c$ by \cite[Theorem 2.3.7]{dk90} or an argument analogous to Corollary \ref{dirichletcoulombuniqueness}. We show that $A$ does not depend on the choice of $g$, so let $g'=cg$, let $\tB'=g'(B)=c(\tB)$, and let $\tA'$ be the Yang--Mills connection given by applying Theorem \ref{dirichletproblem} to $i^*\tB'$. We claim that $\tA'=c(\tA)$. Indeed, $i^*(c(\tA))=i^*\tB'$, and the other conditions of Theorem \ref{dirichletproblem} are preserved under constant gauge transformations and hence are true of $c(\tA)$. Since the connection given by Theorem \ref{dirichletproblem} is unique, we conclude that $\tA'=c(\tA)$, and so $A'=(g')^{-1}(\tA')=g^{-1}(\tA)=A$, as desired.

      We can use this uniqueness to show that this construction is continuous. Indeed, let $B_i\to B$ be a sequence of connections converging in $\LaB21$. Let $A_i$ and $A$ be the corresponding Yang--Mills connections constructed above. We will show that $A_i$ converges to $A$ by showing that any subsequence of the $A_i$ has a further subsequence that converges to $A$.

      Hence, we begin by passing to a subsequence of the $B_i$. By Proposition \ref{gaugefixingcontinuous}, after passing to a further subsequence, we can have the Coulomb gauge representatives $\tB_i$ converging to a Coulomb gauge representative $\tB$ of $B$. However, $\tB$ is only determined up to a constant gauge transformation and may depend on our initial choice of subsequence. In addition, Lemma \ref{stronggaugeconvergence} gives us that, after passing to a subsequence, the gauge transformations $g_i$ sending $B_i$ to $\tB_i$ converge in $\LGB22$ to the gauge transformation $g$ sending $B$ to $\tB$. Theorem \ref{dirichletproblem} gives us that the $\tA_i$ depend smoothly on the $i^*\tB_i$, which depend linearly on the $\tB_i$, so we know that the $\tA_i$ converge to $\tA$. Finally, because the $g_i$ converge strongly to $g$ in $\LGB22$, we know by Proposition \ref{borderlinecontinuity} that the $A_i=g_i^{-1}(\tA_i)$ converge strongly to $A$ in $\LaB21$. By our previous argument, even though $\tB$ might depend up to a constant gauge transformation on our initial choice of subsequence, the limit $A$ is unique and thus is independent of the initial choice of subsequence of the $B_i$. Thus, $A$ depends continuously on $B$, as desired.
    \end{proof}
  \end{theorem}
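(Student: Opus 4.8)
The plan is to reduce to Theorem \ref{dirichletproblem} by first putting $B$ in a gauge where its restriction to $\pB$ is small in $\LpB2{1/2}$, since the hypothesis only bounds $\normlb{F_B}2{}$ and says nothing directly about the size of $i^*B$. Concretely: let $\epsilon_U$ and $C$ be the constants of Uhlenbeck's gauge fixing result with Neumann boundary conditions (Theorem \ref{uhlenbeckgaugefixing}), let $\epsilon_\partial$ be the threshold appearing in Theorem \ref{dirichletproblem}, and let $C_T$ be the operator norm of the trace $i^*\colon\LB21\to\LpB2{1/2}$. Requiring $\epsilon\le\epsilon_U$ and $C_TC\epsilon\le\epsilon_\partial$, I would apply Theorem \ref{uhlenbeckgaugefixing} to obtain a $\LGB22$ gauge transformation $g$ with $\tB=g(B)=d+\tb$ in Coulomb gauge and $\normlb\tb21\le C\normlb{F_B}2{}$, so that $\normlpb{i^*\tb}2{1/2}\le C_TC\normlb{F_B}2{}<\epsilon_\partial$. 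Theorem \ref{dirichletproblem} then produces a Yang--Mills connection $\tA$ with $i^*\tA=i^*\tB$, and I would set $A=g^{-1}(\tA)$; then $i^*A=i^*B$, and since the Yang--Mills property is gauge invariant, $A$ is a Yang--Mills connection. The Dirichlet gauge fixing result, Theorem \ref{dirichletcoulombfixing}, could be substituted for Theorem \ref{uhlenbeckgaugefixing} with no real change.

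The delicate part, which I expect to be the main obstacle, is the continuity of $B\mapsto A$, because the gauge transformation $g$ produced above is not canonically attached to $B$. I would handle this in two stages. First, I would show $A$ does not depend on the choice of $g$: for small enough $\epsilon$, the Coulomb gauge is unique up to a constant gauge transformation (\cite[Theorem 2.3.7]{dk90}, or an argument like Corollary \ref{dirichletcoulombuniqueness}), so any other admissible choice has the form $g'=cg$ with $c$ constant; then $\tB'=c(\tB)$, and since constant gauge transformations preserve all the defining conditions in Theorem \ref{dirichletproblem}, the uniqueness clause of that theorem forces the associated Yang--Mills connection to be $\tA'=c(\tA)$, whence $A'=(g')^{-1}(\tA')=g^{-1}(\tA)=A$.

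Second, with well-definedness in hand, I would prove continuity by a subsequence argument. Given a sequence $B_i\to B$ in $\LaB21$, it suffices to show that every subsequence of the $A_i$ has a further subsequence converging to $A$. After passing to a subsequence, Proposition \ref{gaugefixingcontinuous} gives Coulomb representatives $\tB_i$ converging to some Coulomb representative $\tB$ of $B$; Lemma \ref{stronggaugeconvergence} gives the corresponding gauge transformations $g_i\to g$ in $\LGB22$; the smooth-dependence clause of Theorem \ref{dirichletproblem} gives $\tA_i\to\tA$; and Proposition \ref{borderlinecontinuity} (continuity of the borderline-regularity gauge action) gives $A_i=g_i^{-1}(\tA_i)\to g^{-1}(\tA)=A$. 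The limiting Coulomb representative $\tB$ may depend on the chosen subsequence up to a constant gauge transformation, but by the first stage the output $A$ is independent of that choice, so the whole sequence $A_i$ converges to $A$. Most of the genuine work here is already packaged into Section \ref{gaugefixingchapter}: the upgrade from weak to strong $\LaB21$ convergence of Coulomb representatives in Proposition \ref{gaugefixingcontinuous}, and the continuity---though not smoothness---of the $\LX22$ gauge action in Proposition \ref{borderlinecontinuity}.
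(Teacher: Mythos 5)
Your proposal is correct and follows essentially the same route as the paper: gauge-fix via Theorem \ref{uhlenbeckgaugefixing} to make the boundary value small, apply Theorem \ref{dirichletproblem}, undo the gauge, establish well-definedness via uniqueness of Coulomb gauge up to constants, and prove continuity by the subsequence argument combining Proposition \ref{gaugefixingcontinuous}, Lemma \ref{stronggaugeconvergence}, the smooth dependence in Theorem \ref{dirichletproblem}, and Proposition \ref{borderlinecontinuity}. The only (harmless) additions are your explicit remark that the Yang--Mills property is gauge-invariant and a brief note on why the continuity step is the crux.
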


  We now use gauge fixing to prove a stronger uniqueness result for the Yang--Mills solution.
  \begin{theorem}\label{ymuniqueness}
    There exists an $\epsilon>0$ such that if $A$ and $B$ are $\LaB21$ Yang--Mills connections with
    \begin{enumerate}
    \item energy bounds $\normlb{F_A}2{},\normlb{F_B}2{}<\epsilon$, and
    \item gauge equivalent boundary values $i^*A$ and $i^*B$,
    \end{enumerate}
    then $A$ is gauge equivalent to $B$.
    \begin{proof}
      Choose $\epsilon$ small enough so that we can apply Theorem \ref{dirichletcoulombfixing}, giving us $\LaB21$ connections $\tA$ and $\tB$ gauge equivalent to $A$ and $B$, respectively, satisfying the Dirichlet Coulomb conditions $d^*\ta=d^*\tb=0$ and $d^*_\pB i^*\ta=d^*_\pB i^*\ta=0$, as well as the bounds $\normlb\ta21\le C\normlb{F_A}2{}$ and $\normlb\tb21\le C\normlb{F_B}2{}$. We have that $i^*\tA$ and $i^*\tB$ are gauge equivalent, and we would like to apply Proposition \ref{uniqueboundaryfixing}. We require that $\epsilon$ be small enough so that the bounds $\normlb\ta21,\normlb\tb21<C\epsilon$ and the Sobolev and trace maps $\LB21\xrightarrow{i^*}\LpB2{1/2}\hookrightarrow\LpB3{}$ suffice to give us the $\LapB3{}$ bounds required by Proposition \ref{uniqueboundaryfixing}, which then tells us that the gauge transformation sending $i^*\tA$ to $i^*\tB$ is a constant $c\in G$. Now viewing $c$ as a gauge transformation on all of $B^4$, apply $c$ to $\tA$, and note that $c(\tA)$ satisfies all of the properties above required of $\tA$. Hence, we may, without loss of generality replace $\tA$ by $c(\tA)$, or, in other words, assume that $c=1$, so $i^*\tA=i^*\tB$.

      Next, we require $\epsilon$ be small enough so that our bounds $\normlb{F_\tA}2{},\normlb{F_\tB}2{}<\epsilon$ and $\normlb\ta21,\normlb\tb21<C\epsilon$ suffice to give us the bounds needed for Proposition \ref{ymreplacementinequality} via the Sobolev embedding $\LB21\hookrightarrow\LB4{}$. The Yang--Mills condition is gauge-invariant, so $\tA$ is Yang--Mills, and, in particular, also satisfies the projected Yang--Mills equation. Hence, we can apply Proposition \ref{ymreplacementinequality} to $\tA$ and $\tB$ to conclude that $\normlb{F_{\tA}}2{}\le\normlb{F_{\tB}}2{}$. But by the same argument $\normlb{F_{\tB}}2{}\le\normlb{F_{\tA}}2{}$, so $\normlb{F_{\tA}}2{}=\normlb{F_{\tB}}2{}$. We then have $\tA=\tB$ by the inequality \eqref{convexityinequality} in Proposition \ref{ymreplacementinequality}, so $A$ and $B$ are gauge equivalent, as desired.
    \end{proof}
  \end{theorem}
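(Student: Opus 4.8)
The plan is to reduce the statement to the equality case of the convexity inequality \eqref{convexityinequality} by first moving $A$ and $B$ into a common gauge. First I would choose $\epsilon$ small enough to apply the Dirichlet--Coulomb gauge fixing result Theorem \ref{dirichletcoulombfixing} to $A$ and to $B$ separately, producing gauge equivalent connections $\tA=d+\ta$ and $\tB=d+\tb$ with $d^*\ta=d^*\tb=0$, $d^*_\pB i^*\ta=d^*_\pB i^*\tb=0$, and bounds $\normlb\ta21\le C\normlb{F_A}2{}$ and $\normlb\tb21\le C\normlb{F_B}2{}$. Gauge equivalence is transitive, so $i^*\tA$ and $i^*\tB$ remain gauge equivalent; shrinking $\epsilon$ so that the above $\LB21$ bounds pass through the trace and Sobolev embeddings $\LB21\hookrightarrow\LpB2{1/2}\hookrightarrow\LpB3{}$ to give the $\LapB3{}$ bounds needed by Proposition \ref{uniqueboundaryfixing}, that proposition forces the gauge transformation on $\pB$ carrying $i^*\tA$ to $i^*\tB$ to be a constant $c\in G$.

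Next I would apply $c$, regarded as a global constant gauge transformation on $B^4$, to $\tA$. A constant gauge transformation leaves the Coulomb conditions $d^*\ta=0$, $d^*_\pB i^*\ta=0$ and all the relevant norms unchanged, so without loss of generality $c=1$ and $i^*\tA=i^*\tB$. Since the Yang--Mills condition is gauge invariant, both $\tA$ and $\tB$ are Yang--Mills and in particular satisfy the projected Yang--Mills equation. Shrinking $\epsilon$ once more so that $\normlb{F_{\tA}}2{},\normlb{F_{\tB}}2{}<\epsilon$ and so that the $\LB4{}$ bounds on $\ta,\tb$ coming from $\LB21\hookrightarrow\LB4{}$ together with $\normlb\ta21\le C\normlb{F_A}2{}<C\epsilon$ meet the hypotheses of Proposition \ref{ymreplacementinequality}, I would apply that proposition to the pair $(\tA,\tB)$ --- which share a boundary value and satisfy $d^*\ta=d^*\tb$ --- to obtain $\normlb{F_{\tA}}2{}\le\normlb{F_{\tB}}2{}$, and then apply it again with the roles reversed (legitimate since $\tB$ is also Yang--Mills) to get the opposite inequality. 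Hence $\normlb{F_{\tA}}2{}=\normlb{F_{\tB}}2{}$.

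Finally I would substitute this equality into the convexity inequality \eqref{convexityinequality} of Proposition \ref{ymreplacementinequality}, which yields $\normlb{\tB-\tA}21^2\le C(\normlb{F_{\tB}}2{}^2-\normlb{F_{\tA}}2{}^2)=0$, so $\tA=\tB$ and therefore $A$ is gauge equivalent to $B$. The bulk of the work is bookkeeping: the successive shrinkings of $\epsilon$ must be compatible so that Theorem \ref{dirichletcoulombfixing}, Proposition \ref{uniqueboundaryfixing}, and Proposition \ref{ymreplacementinequality} all apply simultaneously. The one genuinely delicate point --- that the boundary gauge transformation is forced to be constant, which is precisely what lets us globalize it over $B^4$ without disturbing the Dirichlet--Coulomb conditions --- is exactly the content of Proposition \ref{uniqueboundaryfixing}, so no new argument is needed beyond invoking the results already established.
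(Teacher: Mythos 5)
Your proposal is correct and follows essentially the same route as the paper: apply Theorem \ref{dirichletcoulombfixing} to put both connections in Dirichlet--Coulomb gauge, invoke Proposition \ref{uniqueboundaryfixing} to make the boundary gauge transformation constant and thus globally extendable, then run Proposition \ref{ymreplacementinequality} in both directions and read off $\tA=\tB$ from the equality case of \eqref{convexityinequality}. The bookkeeping of the $\epsilon$-shrinkings also matches the paper's treatment, so there is nothing to add.
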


  \subsection{Linear interpolation}\label{linearinterpolation}
  We know that a small Yang--Mills $A$ connection on $B^4$ locally minimizes energy among connections $B$ with the same restriction to the boundary. We go further by showing that, in Coulomb gauge, the linear interpolation from $B$ to $A$ is an energy-decreasing path. As before, for small connections in Coulomb gauge it suffices to assume only that $A$ satisfies the projected Yang--Mills equation instead of the full Yang--Mills equaiton.
  \begin{proposition}\label{interpolationprop}
    There exist constants $\epsilon_4$ and $\epsilon_F$ with the following significance. Let $A=d+a$ and $B=d+b$ be $\LaB21$ connections such that
    \begin{enumerate}
    \item $\normlb a4{},\normlb b4{}<\epsilon_4$,
    \item $\normlb{F_A}2{}<\epsilon_F$,
    \item $A$ satisfies the projected Yang--Mills equation $\pi_{d^*}d_A^*F_A^{}=0$.
    \item $i^*A=i^*B$, and
    \item $d^*a=d^*b$.
    \end{enumerate}
    Let $B_t=(1-t)A+tB$ be the linear interpolation between $B_0=A$ and $B_1=B$. Then $\norm{F_{B_s}}_{\LB2{}}\le\norm{F_{B_t}}_{\LB2{}}$ if $s<t$, with equality only if $A=B$.
    \begin{proof}
      Since $B_t$ satisfies all of the conditions above required of $B_1=B$, in order to prove the general statement it suffices to show that $\left.\dd t\right\rvert_{t=1}\norm{F_{B_t}}_{\LB2{}}^2\ge0$.

      Let $c=B-A$. Using the equations
      \begin{align*}
        F_B&=F_A+d_A c+\tfrac12[c\wedge c],\text{ and}\\
        F_A&=F_B+d_B(-c)+\tfrac12[(-c)\wedge(-c)]=F_B-d_B c+\tfrac12[c\wedge c],
      \end{align*}
      along with the projected Yang--Mills condition $\pairlb{F_A}{d_A c}=0$, we compute
      \begin{equation*}
        \begin{split}
          \tfrac12\left.\tdd t\right\rvert_{t=1}\norm{F_{B_t}}^2_{\LB2{}}&=\pairlb{F_B}{d_B\left.\tdd t\right\rvert_{t=1}B_t}\\
          &=\pairlb{F_B}{d_B c}\\
          &=\pairlb{F_B}{F_B-F_A+\tfrac12[c\wedge c]}\\
          &=\normlb{F_B}2{}^2+\pairlb{F_B}{\tfrac12[c\wedge c]}\\
          &\qquad{}-\pairlb{F_A+d_A c+\tfrac12[c\wedge c]}{F_A}\\
          &=\normlb{F_B}2{}^2-\normlb{F_A}2{}^2+\pairlb{F_B-F_A}{\tfrac12[c\wedge c]}.\\
        \end{split}
      \end{equation*}
      We bound the last term:
      \begin{equation*}
        \begin{split}
          &\abs{\pairlb{F_B-F_A}{\tfrac12[c\wedge c]}}\\
          ={}&\abs{\pairlb{dc+\tfrac12[b\wedge b]-\tfrac12[a\wedge a]}{\tfrac12[c\wedge c]}}\\
          \le{}&\left(\normlb c21+\tfrac12C_\Lie\normlb b4{}^2+\tfrac12 C_\Lie\normlb a4{}^2\right)\tfrac12C_\Lie\norm c_{\LB4{}}^2\\
          \le{}&\left(\tfrac14 C_\Lie^2C_S^2(\normlb a4{}^2+\normlb b4{}^2)+\tfrac12 C_\Lie C_S\normlb c4{}\right)\normlb c21^2
        \end{split}
      \end{equation*}
      Since $\normlb c4{}\le\normlb a4{}+\normlb b4{}$, we can choose $\epsilon_4$ small enough so that $\normlb a4{},\normlb b4{}<\epsilon_4$ guarantees that 
      \begin{equation*}
        \abs{\pairlb{F_B-F_A}{\tfrac12[c\wedge c]}}\le\tfrac12C^{-1}\normlb c21^2,
      \end{equation*}
      where $C$ is the constant in Proposition \ref{ymreplacementinequality}. Choosing $\epsilon_4$ and $\epsilon_F$ small enough so that we can apply Proposition \ref{ymreplacementinequality}, we have $\norm c_{\LB21}^2\le C\left(\normlb{F_B}2{}^2-\normlb{F_A}2{}^2\right)$, so
      \begin{multline*}
        \tfrac12\left.\tdd t\right\vert_{t=1}\normlb{F_{B_t}}2{}^2\ge\normlb{F_B}2{}^2-\normlb{F_A}2{}^2-\tfrac12 C^{-1}\normlb c21^2\\
        \ge\tfrac12\left(\normlb{F_B}2{}^2-\normlb{F_A}2{}^2\right)\ge0,
      \end{multline*}
      with equality only if $\normlb{F_B}2{}^2-\normlb{F_A}2{}^2=0$, in which case $B=A$ by Proposition \ref{ymreplacementinequality}.
    \end{proof}
\end{proposition}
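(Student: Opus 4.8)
The plan is to reduce the monotonicity statement to a single differential inequality and then feed in the convexity estimate of Proposition \ref{ymreplacementinequality}. First I observe that every $B_t=d+\bigl((1-t)a+tb\bigr)$ again satisfies all five hypotheses imposed on $B$ relative to the \emph{same} connection $A$: convexity of the $\LB4{}$ norm keeps $\normlb{(1-t)a+tb}4{}<\epsilon_4$, and $i^*$ and $d^*$ are linear, so $i^*B_t=i^*A$ and $d^*\bigl((1-t)a+tb\bigr)=d^*a$. Moreover, for $t_0\in(0,1]$ the reparametrized path $\tau\mapsto B_{\tau t_0}$ equals $(1-\tau)A+\tau B_{t_0}$, i.e.\ it is exactly the linear interpolation from $A$ to $B_{t_0}$. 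Hence it suffices to prove that $\left.\dd t\right\rvert_{t=1}\normlb{F_{B_t}}2{}^2\ge0$ whenever $B=B_1$ satisfies the hypotheses; applying this to the pair $(A,B_{t_0})$ and using the chain rule gives $\dd{t}\normlb{F_{B_t}}2{}^2\ge0$ at every $t_0\in(0,1]$, and the equality analysis below propagates back to force $A=B$.

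For the derivative, set $c=B-A=b-a$, so $i^*c=0$ and $d^*c=0$, and $\left.\tdd t\right\rvert_{t=1}B_t=c$. Then $\tfrac12\left.\tdd t\right\rvert_{t=1}\normlb{F_{B_t}}2{}^2=\pairlb{F_B}{d_Bc}$. I would combine the two curvature identities $F_B=F_A+d_Ac+\tfrac12[c\wedge c]$ and $F_A=F_B-d_Bc+\tfrac12[c\wedge c]$ to eliminate $d_Bc$, and then use the projected Yang--Mills equation in the form $\pairlb{F_A}{d_Ac}=0$ (legitimate since $i^*c=0$ and $d^*c=0$) to cancel the cross term, arriving at
\begin{equation*}
  \tfrac12\left.\tdd t\right\rvert_{t=1}\normlb{F_{B_t}}2{}^2=\normlb{F_B}2{}^2-\normlb{F_A}2{}^2+\pairlb{F_B-F_A}{\tfrac12[c\wedge c]}.
\end{equation*}

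The remaining work is to show the cubic error term is negligible. Writing $F_B-F_A=dc+\tfrac12[b\wedge b]-\tfrac12[a\wedge a]$ and estimating with the operator norm $C_\Lie$ of the Lie bracket and the Sobolev constant $C_S$ of $\LB21\hookrightarrow\LB4{}$, I expect to bound $\abs{\pairlb{F_B-F_A}{\tfrac12[c\wedge c]}}$ by $\normlb c21^2$ times a prefactor involving only $\normlb a4{}$, $\normlb b4{}$, and $\normlb c4{}\le\normlb a4{}+\normlb b4{}$; shrinking $\epsilon_4$ makes this prefactor at most $\tfrac12C^{-1}$, where $C$ is the constant of Proposition \ref{ymreplacementinequality}. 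Invoking the convexity inequality $\normlb c21^2\le C\bigl(\normlb{F_B}2{}^2-\normlb{F_A}2{}^2\bigr)$ from that proposition (applied to $(A,B)$, for $\epsilon_4,\epsilon_F$ chosen small enough that its hypotheses hold) then yields
\begin{multline*}
  \tfrac12\left.\tdd t\right\rvert_{t=1}\normlb{F_{B_t}}2{}^2\ge\normlb{F_B}2{}^2-\normlb{F_A}2{}^2-\tfrac12C^{-1}\normlb c21^2\\
  \ge\tfrac12\bigl(\normlb{F_B}2{}^2-\normlb{F_A}2{}^2\bigr)\ge0,
\end{multline*}
the last inequality again from Proposition \ref{ymreplacementinequality}; equality forces $\normlb{F_B}2{}=\normlb{F_A}2{}$ and hence $c=0$ by \eqref{convexityinequality}.

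The main obstacle is bookkeeping rather than conceptual: one must check that Proposition \ref{ymreplacementinequality} genuinely applies to the pair $(A,B_t)$ for every $t$ — which is why those hypotheses were stated to be stable under convex combination — and one must order the choices of $\epsilon_4$ and $\epsilon_F$ so that the smallness needed to absorb the cubic term is compatible with the smallness needed to invoke the convexity estimate. Note that only $A$, and not $B_t$, is required to satisfy the projected Yang--Mills equation, so nothing beyond $\LaB21$ regularity enters anywhere.
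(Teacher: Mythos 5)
Your proposal follows essentially the same route as the paper's proof: reduce to showing the derivative at $t=1$ is nonnegative (by the reparametrization observation, which the paper phrases more tersely but identically in substance), compute via the two curvature identities and $\pairlb{F_A}{d_Ac}=0$, bound the cubic term with $C_\Lie$ and $C_S$, and close with the convexity inequality of Proposition \ref{ymreplacementinequality}. The bookkeeping you flag — verifying that each $B_t$ inherits the hypotheses, and ordering the choices of $\epsilon_4,\epsilon_F$ — is handled in the paper exactly as you anticipate, so the argument is correct and complete.
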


Again, we can use gauge fixing to prove this energy monotonicity result for a wider class of connections.

\begin{theorem}\label{interpolationthm}
  There exists a constant $\epsilon$ with the following significance. Let $B$ be an $\LB21$ connection with $\normlb{F_B}2{}<\epsilon$, and let $A$ be the Yang--Mills connection with $i^*A=i^*B$ constructed in Theorem \ref{ymreplacementlowenergy}. Let $B_t=(1-t)A+tB$ be the linear interpolation between $B_0=A$ and $B_1=B$. Then $\normlb{F_{B_s}}2{}\le\normlb{F_{B_t}}2{}$ if $s<t$, with equality only if $A=B$.
  \begin{proof}
    From the construction in Theorem \ref{ymreplacementlowenergy}, there exists a gauge transformation $g$ sending $A$ and $B$ to $\tA=d+\ta$ and $\tB=d+\tb$, respectively, such that $d^*\tb=0$, and we know that $d^*\ta=0$ because we obtained it from Theorem \ref{dirichletproblem}. We would like to apply Proposition \ref{interpolationprop}. The construction in Theorem \ref{ymreplacementlowenergy} gives us a bound $\normlb\tb21\le C\normlb{F_B}2{}<C\epsilon$. Using the Sobolev inequalities we can choose a small enough $\epsilon$ to obtain the required bounds on $\normlb\tb4{}$. Since $\tA$ depends continuously on $\tB$, our choice of $\epsilon$ gives us bounds on $\normlb\ta21$, which lets us obtain the required bounds on $\normlb{F_{\tA}}2{}$ and $\normlb\ta4{}$. Finally, $A$ is Yang--Mills, so $\tA$ is also Yang--Mills and in particular satisfies the projected Yang--Mills equation. Hence we can apply Proposition \ref{interpolationprop} to $\tA$ and $\tB$, giving us that the linear interpolation between $\tA$ and $\tB$ has monotone energy. But both energy and affine combinations are preserved after applying a gauge transformation, so the linear interpolation between $A$ and $B$ also has monotone energy, as desired.
  \end{proof}
\end{theorem}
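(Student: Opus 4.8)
The plan is to reduce this statement to Proposition~\ref{interpolationprop} by the same gauge-fixing device that takes Proposition~\ref{ymreplacementinequality} to Proposition~\ref{pymimpliesym}. First I would unwind the construction of $A$ in Theorem~\ref{ymreplacementlowenergy}: it produces an $\LGB22$ gauge transformation $g$ with $g(B)=\tB=d+\tb$ in Coulomb gauge, $d^*\tb=0$, and $\normlb\tb21\le C\normlb{F_B}2{}<C\epsilon$, while $g(A)=\tA=d+\ta$ is exactly the connection supplied by Theorem~\ref{dirichletproblem} applied to $i^*\tB$, so $d^*\ta=0$ and, by the smooth dependence in that theorem, $\normlb\ta21$ is controlled by $\normlpb{i^*\tb}2{1/2}$ and hence by $\epsilon$. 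Along the way I note $i^*\tA=i^*g(A)=i^*g(B)=i^*\tB$.

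Next I would check that $\tA$ and $\tB$ satisfy the hypotheses of Proposition~\ref{interpolationprop}. The conditions $i^*\tA=i^*\tB$ and $d^*\ta=d^*\tb$ are already in hand. For the smallness conditions $\normlb\ta4{},\normlb\tb4{}<\epsilon_4$ and $\normlb{F_\tA}2{}<\epsilon_F$, I would shrink $\epsilon$ and use the Sobolev embedding $\LB21\hookrightarrow\LB4{}$, the bounds on $\normlb\ta21$ and $\normlb\tb21$ from the previous paragraph, and the continuity of $b\mapsto F_B$. The projected Yang--Mills equation $\pi_{d^*}d_\tA^*F_\tA^{}=0$ holds because $A$ is Yang--Mills and the Yang--Mills condition is gauge invariant, so $\tA=g(A)$ is Yang--Mills. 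Proposition~\ref{interpolationprop} then gives that $t\mapsto\normlb{F_{(1-t)\tA+t\tB}}2{}$ is nondecreasing, strictly so unless $\tA=\tB$.

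Finally I would push this back through $g$. A gauge transformation acts affinely on the affine space of connections: since $g((1-t)a+tb)g^{-1}-(dg)g^{-1}=(1-t)\bigl(gag^{-1}-(dg)g^{-1}\bigr)+t\bigl(gbg^{-1}-(dg)g^{-1}\bigr)$ as $(1-t)+t=1$, we get $g(B_t)=(1-t)\tA+t\tB$; and curvature norms are gauge invariant because $F_{g(C)}=gF_Cg^{-1}$ with $g$ pointwise unitary. Hence $\normlb{F_{B_s}}2{}=\normlb{F_{g(B_s)}}2{}\le\normlb{F_{g(B_t)}}2{}=\normlb{F_{B_t}}2{}$ for $s<t$, with equality only when $\tA=\tB$, i.e.\ $A=B$. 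The only real work is bookkeeping: propagating the smallness constants along the chain Theorem~\ref{dirichletproblem} $\to$ Theorem~\ref{ymreplacementlowenergy} $\to$ Proposition~\ref{interpolationprop}. The conceptual heart, the strict energy-convexity estimate~\eqref{convexityinequality}, is already furnished by Proposition~\ref{ymreplacementinequality}, so I anticipate no genuine obstacle.
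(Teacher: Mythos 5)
Your proposal is correct and follows essentially the same route as the paper: unwind the construction of $A$ from Theorem~\ref{ymreplacementlowenergy}, verify the hypotheses of Proposition~\ref{interpolationprop} for the Coulomb representatives $\tA$ and $\tB$, and transport the conclusion back through $g$ using gauge invariance of energy and the affine action of gauge transformations on connections. The only difference is cosmetic: you spell out the computation $g(B_t)=(1-t)\tA+t\tB$ and the identity $F_{g(C)}=gF_Cg^{-1}$ explicitly, whereas the paper simply states that energy and affine combinations are preserved under gauge transformations.
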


\subsection{Yang--Mills replacement for global connections}\label{globalreplacementchapter}

In this section, we consider an arbitrary compact $4$-manifold $X$ and use the solution to the Dirichlet problem for Yang--Mills connections on the ball in order to construct an energy-decreasing map on global $\LX21$ connections modulo $\LX22$ gauge transformations. Namely, given a connection $B$ on $X$ and a ball $B^4\subset X$ on which $B$ has small energy, we will replace $B$ on $B^4$ with a Yang--Mills connection $A$ that has the same restriction to the boundary $\pB$, thereby constructing a piecewise connection $\hat A$ that is Yang--Mills on the ball and equal to $B$ outside the ball. However, only the tangential components of $A$ and $B$ match on the boundary $\pB$, and the normal components may disagree. As a result, this new connection $\hat A$ is no longer in $\LX21$, but it is still in the space $\LX2d$ defined in Section \ref{l2dsection}. However, we will show that this piecewise-defined connection is nonetheless gauge equivalent to an $\LX21$ connection.

More generally, we prove that any $\LX2d$ connection is gauge equivalent via a $\LX41$ gauge transformation to a $\LX21$ connection, and so we show that the space of $\LX2d$ connections up to $\LX41$ gauge transformations is actually homeomorphic to the space of $\LX21$ connections up to $\LX22$ gauge transformations. This space is Hausdroff by Lemma \ref{stronggaugeconvergence}. Theorem \ref{l2dgaugefixing} tells us that, locally, every $\LB2d$ connection is gauge equivalent via an $\LB41$ gauge transformation to an $\LB21$ connection. However, patching these local gauge transformations to a global transformation is a delicate matter because $\LB41$ gauge transformations need not be continuous, so na\"{\i}ve cutoff methods fail. 

A crucial lemma for dealing with this issue is due to Taubes, which tells us that a gauge transformation between two $L^2_1$ connections in Coulomb gauge is not only in $L^2_2$ as we expect but also in $C^0_\loc$.

\begin{lemma}[{\cite[Lemma A.1]{t84}}]\label{taubesregularity}
  Let $U$ be an open ball in a Riemannian $4$-manifold. Let $\mf F$ be the set of triples $(g,a,b)\in\LE22U\times\La21U\times\La21U$ such that $g$ is a gauge transformation sending $A=d+a$ to $B=d+b$ and $d^*a=d^*b=0$. Then the projection to the first factor $\mf F\to\LE22U$ sending $(g,a,b)$ to $g$ factors continuously through $C^0_\loc(U;M_N)$.
\end{lemma}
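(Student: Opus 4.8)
The plan is to show that the gauge transformation $g$ has more regularity than naively expected by exploiting the elliptic equation it satisfies. Since $d^*a = d^*b = 0$ and $g$ intertwines $d+a$ and $d+b$, the relation $dg = ga - bg$ can be massaged into a second-order elliptic equation for $g$. Concretely, applying $d^*$ to $dg = ga - bg$ and using $d^*a = d^*b = 0$ gives $\Delta g = d^*(ga) - d^*(bg) = \langle dg, a\rangle - \langle b, dg\rangle + (\text{lower order in } g)$, so that $\Delta g$ is controlled by $|dg|\,|a|$ and $|dg|\,|b|$ pointwise. Thus $g$ satisfies an equation of the schematic form $\Delta g = a * \nabla g + b * \nabla g + (a*a + b*b + \ldots)*g$, where the inhomogeneous coefficients are in $\LE2{}$ and $\La2{}$ type spaces. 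First I would write this equation out carefully, being mindful that $a, b \in \La21 U$ so $\nabla g \in \La2{} U$ and $a, b \in \La4{} U$ by Sobolev embedding, giving $a * \nabla g \in L^{4/3}_{\loc}$.

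Next I would run an elliptic bootstrap, but the borderline nature of the spaces means one cannot simply iterate $L^p$ estimates to reach $C^0$ in one clean step; the key trick, which is Uhlenbeck's and Taubes's, is to use a Wente-type or $L^\infty$-from-$L^2_1$ estimate, or to observe that on the right-hand side the worst term $a * \nabla g$ is a product of two $\LE2{}$-type objects with a specific structure. Here I expect the main obstacle: getting from the a priori regularity $g \in \LE22 U$ (hence $g \in L^p_{\loc}$ for all $p < \infty$ but not obviously $C^0$) across the borderline to continuity. The resolution is to note that $\Delta g \in L^{2}_{\loc}(U)$ would already give $g \in L^2_{2,\loc} \hookrightarrow C^0_\loc$ in dimension four — wait, $L^2_2 \not\hookrightarrow C^0$ in dimension four, this is precisely the borderline failure emphasized throughout the paper — so instead one must show $\Delta g \in L^p_{\loc}$ for some $p > 2$. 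To get this, I would use that $g$ is bounded (it is $G$-valued, hence in $L^\infty$), so the term $(a*a)*g$ lies in $L^2_{\loc}$, and for the term $a * \nabla g$ I would use the improved integrability: $a \in L^4_{\loc}$ and $\nabla g \in L^2_{\loc}$ give $a * \nabla g \in L^{4/3}_{\loc}$, then elliptic regularity gives $g \in L^{4/3}_{2,\loc} \hookrightarrow L^4_{1,\loc}$, improving $\nabla g \in L^4_{\loc}$, which combined with $a \in L^4_{\loc}$ gives $a * \nabla g \in L^2_{\loc}$, hence $g \in L^2_{2,\loc}$ — still borderline. The genuine fix, following Taubes, is that once $\nabla g \in L^4_{\loc}$ we also gain on the coefficients or we use a Hölder-conjugate pairing: $a \in L^4$ and $\nabla g \in L^4$ give the product in $L^2$, but then one iterates once more using that $g \in L^2_2$ means $\nabla g \in L^4$ (Sobolev) — so actually one should track that the first bootstrap already puts $\nabla g$ into $L^4_{\loc}$, and then the equation gives $\Delta g \in L^{2}_{\loc}$, and crucially $a * \nabla g$ is not merely in $L^2$ but, being a product of two $L^4$ functions one of which ($a$) has a Coulomb structure, lies in a slightly better space via the div-curl / Wente lemma, yielding $g \in C^0_\loc$.

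Having established the a priori estimate $\norm{g}_{C^0(U')} \le \Phi(\norm{a}_{\La21 U}, \norm{b}_{\La21 U})$ on interior balls $U' \Subset U$ for some continuous function $\Phi$, the continuity of the factored map $\mf F \to C^0_\loc(U; M_N)$ follows by applying the same elliptic estimates to differences: given a sequence $(g_i, a_i, b_i) \to (g, a, b)$ in $\LE22 U \times \La21 U \times \La21 U$, the difference $g_i - g$ satisfies an elliptic equation whose inhomogeneous term converges to zero in the appropriate $L^p_{\loc}$ norm — using the already-established uniform $C^0_{\loc}$ and $L^4_{1,\loc}$ bounds on the $g_i$ to control the products — so $g_i \to g$ in $C^0_{\loc}(U; M_N)$. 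Since this is a lemma quoted directly from Taubes \cite[Lemma A.1]{t84}, I would in practice cite it, but the sketch above is the argument one would reconstruct: write the second-order equation for $g$, bootstrap carefully through the four-dimensional borderline using the $L^\infty$ bound on $g$ and the Coulomb condition on $a$ and $b$, and then linearize the estimate to get continuity of the map.
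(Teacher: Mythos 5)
The paper does not prove this lemma; it quotes it verbatim from Taubes \cite[Lemma A.1]{t84}, so there is no internal proof to compare against, and your instinct to cite is exactly what the paper does. That said, the sketch you give has one concrete error and one genuine gap.

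The error: you claim $g\in L^{4/3}_{2,\loc}\hookrightarrow L^4_{1,\loc}$ in dimension four, but the Sobolev exponent is off. On $\mathbb R^4$ one has $W^{2,4/3}\hookrightarrow W^{1,q}$ with $\tfrac1q=\tfrac34-\tfrac14=\tfrac12$, i.e.\ $W^{2,4/3}\hookrightarrow W^{1,2}$, not $W^{1,4}$. So this step of your bootstrap gains nothing; in fact you already knew $\nabla g\in L^4_{\loc}$ from the hypothesis $g\in\LE22U$ via $L^2_1\hookrightarrow L^4$, and feeding $a\ast\nabla g\in L^2_{\loc}$ back into elliptic regularity simply returns $g\in L^2_{2,\loc}$, which is where you started. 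The iteration genuinely stalls at the borderline, as you partly acknowledge.

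The gap: the entire content of the lemma is precisely the step you defer with ``following Taubes, \ldots via the div-curl / Wente lemma.'' Your computation that $\Delta g=-\ast(dg\wedge\ast a)-\ast(\ast b\wedge dg)$ after using $d^*a=d^*b=0$ is correct, and the observation that the right side is a wedge of an exact form with a coclosed form — hence a div-curl quantity — is the right structural input. But to conclude $g\in C^0_{\loc}$ you must actually invoke the fact that such products lie in the local Hardy space $h^1_{\loc}$ (Coifman--Lions--Meyer--Semmes) and that $\Delta^{-1}\colon h^1\to C^0$ on the critical Sobolev line in dimension four, or give a Wente-type estimate directly. As written, the proposal identifies the right tool and then asserts the conclusion; a referee would not accept this as a proof. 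Similarly, the final paragraph on continuity of the map $\mf F\to C^0_{\loc}$ would need the linearized div-curl estimate carried out on the difference equation for $g_i-g$, not just asserted. If you wish to keep the sketch, the honest framing is what the paper itself does: cite Taubes for the estimate and state which div-curl/compensated-compactness result it rests on.
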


The next lemma we need is due to Uhlenbeck. It tells us that if two bundles over a compact manifold $X$ are described by transition functions $g_{\alpha,\beta}$ and $h_{\alpha,\beta}$ that are sufficiently close to each other in $C^0$, then the two bundles are isomorphic.

\begin{lemma}[{\cite[Proposition 3.2]{u82}}]\label{uhlenbeckbundleisomorphism}
  Let $X$ be a compact manifold with principal $G$-bundles $P$ and $Q$ and a finite cover by local trivializations $\{U_\alpha\}$ with continuous transition maps $\phi_{\alpha,\beta},\psi_{\alpha,\beta}\colon U_\alpha\cap U_\beta\to G$, respectively. There exists an $\epsilon$ depending on the cover but not on the transition maps such that if, for all $\alpha$ and $\beta$, $\phi_{\alpha,\beta}\psi_{\beta,\alpha}$ is a neighborhood of the identity on which $\exp^{-1}$ is defined and
  \begin{equation*}
    \norm{\exp^{-1}(\phi_{\alpha,\beta}\psi_{\beta,\alpha})}_{C^0(U_\alpha\cap U_\beta)}<\epsilon,
  \end{equation*}
  then there exists a cover $\{V_\alpha\}$ of $X$ with $V_\alpha\subset U_\alpha$ and an isomorphism between $P$ and $Q$ defined by $\rho_\alpha\colon V_\alpha\to G$ satisfying $\psi_{\alpha,\beta}\rho_\alpha=\rho_\beta \phi_{\alpha,\beta}$.
\end{lemma}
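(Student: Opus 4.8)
The plan is to produce the isomorphism as a continuous section of the fiber bundle $E=\mathrm{Iso}(P,Q)\to X$, whose fiber is a copy of $G$ and whose transition maps relative to $\{U_\alpha\}$ are $g\mapsto\psi_{\alpha\beta}\,g\,\phi_{\beta\alpha}$. A section of $E$ is precisely a collection $\rho_\alpha\colon U_\alpha\to G$ with $\rho_\beta=\psi_{\alpha\beta}\rho_\alpha\phi_{\alpha\beta}^{-1}$, i.e.\ $\psi_{\alpha\beta}\rho_\alpha=\rho_\beta\phi_{\alpha\beta}$, which is the conclusion we want. The hypothesis says exactly that the transition maps of $E$ are uniformly $C^0$-close to the identity map of $G$: the constant functions $\rho_\alpha\equiv e$ form an \emph{approximate} section whose patching defect on $U_\alpha\cap U_\beta$, namely $\psi_{\alpha\beta}\phi_{\beta\alpha}=(\phi_{\alpha\beta}\psi_{\beta\alpha})^{-1}$, is small in $C^0$ by assumption. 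So the task is to correct an approximate section of $E$ to an honest one.

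First I would fix, once and for all, two nested shrinkings $V_\alpha\Subset V_\alpha'\Subset U_\alpha$ that still cover $X$, together with cutoff functions adapted to them; none of these choices depends on $\phi$ or $\psi$. Then I would build the section by induction over the finite cover, enumerated $U_1,\dots,U_N$. At stage $k$ I would have continuous maps $\rho_\alpha^{(k)}\colon V_\alpha'\to G$ such that the relation $\psi_{\alpha\beta}\rho_\alpha^{(k)}=\rho_\beta^{(k)}\phi_{\alpha\beta}$ holds over $V_1\cup\dots\cup V_k$, with every $\rho_\alpha^{(k)}$ confined to an a priori ball around $e$ in $G$. To pass to stage $k+1$: on the overlap of $V_{k+1}$ with $V_1\cup\dots\cup V_k$ the relations already in force determine the value $\rho_{k+1}$ must take there, call it $\tau$. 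Using the cocycle identities for $\phi$ and $\psi$ one checks that $\tau$ is single-valued on the multiple overlaps, and using the accumulated smallness that $\tau$ stays close to $\rho_{k+1}^{(k)}$, so that $\log\big((\rho_{k+1}^{(k)})^{-1}\tau\big)$ is defined. I would then set $\rho_{k+1}^{(k+1)}:=\rho_{k+1}^{(k)}\exp\big(\eta\cdot\log((\rho_{k+1}^{(k)})^{-1}\tau)\big)$, where $\eta$ is a cutoff equal to $1$ on $V_{k+1}\cap(V_1\cup\dots\cup V_k)$ and supported in $V_{k+1}\cap(V_1'\cup\dots\cup V_k')$, and leave $\rho_\alpha^{(k+1)}=\rho_\alpha^{(k)}$ for $\alpha\neq k+1$. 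Since $\eta\equiv1$ where needed, the relation over $V_1\cup\dots\cup V_{k+1}$ now holds; since $\eta$ is supported away from the part of $V_{k+1}$ disjoint from the older sets, nothing previously arranged is disturbed. After $N$ steps the relation holds over $\bigcup V_\alpha=X$, and $\{\rho_\alpha:=\rho_\alpha^{(N)}\}$ is the desired isomorphism on the cover $\{V_\alpha\}$.

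The heart of the argument, and the source of the uniform $\epsilon$, is the quantitative bookkeeping in the inductive step: at each stage the $C^0$-size of the correction is bounded by the norms of the fixed cutoffs and the local geometry of $G$ near $e$ times the current defect, so the total displacement of each $\rho_\alpha$ from $e$ is at most $C\epsilon$ with $C=C(N,\{U_\alpha\},\text{cutoffs})$ independent of $\phi$ and $\psi$; taking $\epsilon$ small enough that $C\epsilon$ stays well inside the domain of $\log$ makes every step legitimate and proves the lemma with an $\epsilon$ of the claimed type. I expect the main obstacle to be exactly this: arranging the shrinkings and cutoffs so that the forced value $\tau$ is single-valued on the overlaps, each correction is localized enough not to undo earlier stages, and the nonabelian errors accumulate only by a bounded multiplicative factor over the fixed number $N$ of stages --- the abelian slogan ``a small cocycle is a coboundary'' has to be implemented one chart at a time.
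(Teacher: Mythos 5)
The paper does not give its own proof of this lemma; it simply cites Uhlenbeck's Proposition 3.2. So there is no ``paper proof'' to match against, and I am judging your argument on its own terms, with Uhlenbeck's original argument as the reference point.

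Your strategy is correct and is, in substance, the standard one: reformulate the desired isomorphism as a continuous section of the fiber bundle $E=\mathrm{Iso}(P,Q)$ (fiber $G$, transition maps $g\mapsto\psi_{\alpha\beta}g\phi_{\beta\alpha}$), observe that the constant maps $\rho_\alpha\equiv e$ form an approximate section whose patching defect $\psi_{\alpha\beta}\phi_{\beta\alpha}=(\phi_{\alpha\beta}\psi_{\beta\alpha})^{-1}$ is uniformly $C^0$-small by hypothesis, and correct it to an honest section by a finite recursion over the cover, using $\exp/\log$ and cutoff functions that depend only on the cover. The quantitative bookkeeping you outline is exactly what yields an $\epsilon$ depending only on the cover: each correction is bounded by fixed geometric constants times the accumulated defect, there are only $N$ stages, and so the total displacement from $e$ stays in a fixed neighborhood where $\log$ is defined, provided $\epsilon$ is small enough. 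This is essentially Uhlenbeck's construction.

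Two points need tightening, both of which you partly anticipate. First, the inductive invariant as stated---that the relation $\psi_{\alpha\beta}\rho^{(k)}_\alpha=\rho^{(k)}_\beta\phi_{\alpha\beta}$ holds \emph{over the region} $V_1\cup\dots\cup V_k$---is too strong: at $k=1$ it would force $\psi_{\alpha\beta}=\phi_{\alpha\beta}$ on $V_1$ for all pairs, since all the $\rho^{(1)}_\alpha$ are still $e$. The correct invariant is on \emph{pairs}: at stage $k$ the relation holds on $V_\alpha\cap V_\beta$ for all $\alpha,\beta\le k$. Second, and more substantively, your forced value $\tau$ is only canonically single-valued on $V_{k+1}\cap(V_1\cup\dots\cup V_k)$, precisely where the earlier relations are in force; but your formula $\rho^{(k+1)}_{k+1}=\rho^{(k)}_{k+1}\exp\bigl(\eta\cdot\log((\rho^{(k)}_{k+1})^{-1}\tau)\bigr)$ evaluates $\tau$ on the strictly larger support of $\eta$, which reaches into $V_1'\cup\dots\cup V_k'$, where the candidates $\psi_{j,k+1}\rho^{(k)}_j\phi_{j,k+1}^{-1}$ for different $j\le k$ need not agree. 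You need to specify a continuous extension of $\tau$ there---for instance, a partition-of-unity average in the Lie algebra, $\tau:=\rho^{(k)}_{k+1}\exp\bigl(\sum_{j\le k}\chi_j\log((\rho^{(k)}_{k+1})^{-1}\psi_{j,k+1}\rho^{(k)}_j\phi_{j,k+1}^{-1})\bigr)$, which coincides with the canonical value on each $V_{k+1}\cap V_{j_1}\cap V_{j_2}$ and is controllably small everywhere the $\chi_j$ are supported. With those two repairs the argument goes through and delivers the lemma with a uniform $\epsilon$ as claimed.
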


With these tools in hand, we can prove that any $\LX2d$ connection is gauge equivalent to an $\LX21$ connection. We follow Uhlenbeck's patching argument in \cite[Theorem 3.6]{u82}, using local gauge fixing to take an $\LX2d$-convergent sequence of connections on our bundle $P$ to an $\LX21$-convergent sequence of connections on a sequence of bundles $Q_i$. Uhlenbeck constructs the limit bundle $Q$ only when the gauge transformations are in a Sobolev space that is above the borderline, but we can use Taubes's lemma to construct the limit bundle $Q$ even at the borderline.
\begin{theorem}\label{l2disl21}
  Let $P$ be a smooth principal $G$-bundle over a compact manifold $X$. Let $A$ be a $\LX2d$ connection on $P$. Then there exists an $\LX41$ gauge transformation on $P$ sending $A$ to an $\LX21$ connection $B$. Moreover, the gauge equivalence class of $B$ modulo $\LX22$ gauge transformations depends continuously on $A$.
  \begin{proof}
    Let $A_i$ be a sequence of smooth connections converging to $A$ in $\LX2d$. Let $\{U_\alpha\}$ be a finite cover of $X$ by balls contained inside trivializations of $P$ and small enough so that, for all $\alpha$, $\norm{F_A}_{L^2(U_\alpha)}<\epsilon$ for the $\epsilon$ in Proposition \ref{gaugefixingcontinuous}. A priori, the $\epsilon$ required depends on the metric of the ball $B^4$. However, as discussed in Uhlenbeck \cite{u82}, we can note that the energy of connections is invariant under conformal changes of metric, and dilations in particular. Thus, we can rescale small exponential neighborhoods to balls of unit size with metric close to that of the standard unit ball, and choose an $\epsilon$ uniformly for all of the balls. Take a tail of the sequence to guarantee that $\norm{F_{A_i}}_{L^2(U_\alpha)}<\epsilon$ also. For a fixed $U_\alpha$, pass to a subsequence of $A_i$ given by Proposition \ref{gaugefixingcontinuous}, giving us gauge transformations $g_{i,\alpha}$ and $g_\alpha$ on $U_\alpha$ sending $A_i$ to $\tA_{i,\alpha}$ and $A$ to $\tA_\alpha$, respectively, such that the $\tA_{i,\alpha}$ are in Coulomb gauge on $U_\alpha$ and converge to $\tA_\alpha$ strongly in $\La21{U_\alpha}$, and the $g_{i,\alpha}$ converge to $g_\alpha$ in $\LG41{U_\alpha}$. Repeat this construction for the other $U_\alpha$, taking further subsequences. By the smoothness and uniqueness claim for the gauge transformation doing the gauge fixing in \cite[Theorem 2.3.7]{dk90}, we know that the $g_{i,\alpha}$ are smooth because the $A_i$ are smooth.

    Let $\phi_{\alpha,\beta}\colon U_\alpha\cap U_\beta\to G$ be the transition functions for $P$. Since the $g_{i,\alpha}$ are smooth, we define new transition functions by $\psi_{i,\alpha,\beta}=g_{i,\beta}\phi_{\alpha,\beta}g_{i,\alpha}^{-1}$ for a bundle $Q_i$ over $X$, so the $g_{i,\alpha}$ define a bundle isomorphism between $P$ and $Q_i$ sending $A_i$ to the connection defined by the $\tA_{i,\alpha}$ on $Q_i$.

    We would like to pass to the limit bundle $Q$ defined by $\psi_{\alpha,\beta}=g_\beta\phi_{\alpha,\beta}g_\alpha^{-1}$, where the $g_\alpha$ define an $L^4_1$ bundle isomorphism between $P$ and $Q$ sending to $A$ to the $L^2_1$ connection on $Q$ defined by the $\tA_\alpha$. However, the issue is that the $g_\alpha$ are not necessarily continuous, so we do not yet know that the $\psi_{\alpha,\beta}$ define a continuous bundle $Q$, nor do we know that $P$ and $Q$ are isomorphic as continuous bundles.

    However, we know that, on $U_\alpha\cap U_\beta$, $\psi_{\alpha,\beta}$ is a gauge transformation between $\tA_\alpha$ and $\tA_\beta$, that is,
    \begin{equation*}
      d\psi_{\alpha,\beta}=\psi_{\alpha,\beta}\ta_\alpha-\ta_\beta\psi_{\alpha,\beta},
    \end{equation*}
    and likewise $\psi_{i,\alpha,\beta}$ are gauge transformations between between $\tA_{i,\alpha}$ and the $\tA_{i,\beta}$. Let $K$ be a compact subset of $U_\alpha\cap U_\beta$. Since the $\tA_{i,\alpha}$ and the $\tA_{i,\beta}$ converge strongly in $\La21K$ to $\tA_\alpha$ and $\tA_\beta$, by Lemma \ref{stronggaugeconvergence}, after passing to a subsequence, the $\psi_{i,\alpha,\beta}$ converge in $\LG22K$. The limit must be $\psi_{\alpha,\beta}$ because the $\psi_{i,\alpha,\beta}$ converge to $\psi_{\alpha,\beta}$ in a weaker norm. Indeed, in the formula $\psi_{i,\alpha,\beta}=g_{i,\beta}\phi_{\alpha,\beta}g_{i,\alpha}^{-1}$, we know that the $g_{i,\beta}$ and $g_{i,\alpha}$ converge strongly in $\LG4{}K$ to $g_\beta$ and $g_\alpha$, so $\psi_{i,\alpha,\beta}$ converges strongly to $\psi_{\alpha,\beta}$ in $\LG2{}K$. Consequently, by Lemma \ref{taubesregularity}, the $\psi_{i,\alpha,\beta}$ converge to $\psi_{\alpha,\beta}$ in $C^0_\loc(V;G)$ for any open ball $V$ contained in $K$, so they converge in $C^0(K';G)$ for compact subsets $K'$ of $V$. Hence, by slightly shrinking the $U_\alpha$ to open balls $U_\alpha'\subset U_\alpha$ that still cover $X$, we have that the $\psi_{i,\alpha,\beta}$ converge to $\psi_{\alpha,\beta}$ in $C^0(U_\alpha'\cap U_\beta';G)$.

    Thus, we can choose a sufficiently large $i$ so that $\psi_{i,\alpha,\beta}$ is sufficiently close in $C^0(U_\alpha'\cap U_\beta';G)$ to $\psi_{\alpha,\beta}$ in order to satisfy the conditions of Lemma \ref{uhlenbeckbundleisomorphism}. Applying this lemma, we conclude that there is a continuous bundle isomorphism $\rho$ between $Q$ and $Q_i$, which in turn is smoothly isomorphic to $P$ via $g_{i,\alpha}^{-1}$. Moreover, the argument in \cite[Corollary 3.3]{u82} applies also to $L^2_2\cap C^0(U_\alpha'\cap U_\beta';G)$, so $\rho$ is in fact an $\LX22$ bundle map that depends continuously on $\psi_{i,\alpha,\beta}$ and $\psi_{\alpha,\beta}$ in $L^2_2\cap C^0(U_\alpha'\cap U_\beta';G)$. The $g_\alpha$ define an $\LX41$ bundle map between $P$ and $Q$ sending $A$ to an $\LX21$ connection defined by the $\tA_{\alpha}$, so $g_{i,\alpha}^{-1}\circ\rho\circ g_\alpha$ is an $\LX41$ bundle isomorphism $P\to Q\to Q_i\to P$ sending $A$ to an $\LX21$ connection $B$ on $P$, as desired.

    To prove that the gauge equivalence class of $B$ depends continuously on $A$, note that we can choose sufficiently large $i$ so that for $j\ge i$, we can construct an $L^2_2\cap C^0(X)$ bundle isomorphism $\rho_j$ between $Q_j$ and $Q_i$ just like we constructed $\rho$ above. Because the construction of $\rho_j$ depends continuously on the transition maps and the $\psi_{j,\alpha,\beta}$ converge to $\psi_{\alpha,\beta}$ in $L^2_2\cap C^0(U_\alpha'\cap U_\beta';G)$, we have that the $\rho_{j,\alpha}$ also converge in $L^2_2\cap C^0(U_\alpha'\cap U_\beta';G)$ to $\rho_\alpha$. The $\tA_{j,\alpha}$ converge to $\tA_\alpha$ in $\La21{U_\alpha'}$, so the $\rho_j(\tA_{j,\alpha})$ converge in $\La21{U_\alpha'}$ to $\rho(\tA_\alpha)$ as connections on the same bundle $Q_i$. Applying the smooth bundle map $g_{i,\alpha}^{-1}$, let $B_i$ be the connection on $P$ defined on trivializations by $(g_{i,\alpha}^{-1}\circ\rho_j)(\tA_{j,\alpha})=(g_{i,\alpha}^{-1}\circ\rho_j\circ g_{j,\alpha})(A_j)$. We see that the $B_i$ converge as $L^2_1$ connections on the bundle $P$ to $(g_{i,\alpha}^{-1}\circ\rho)(\tA_{\alpha})=(g_{i,\alpha}^{-1}\circ\rho\circ g_{\alpha})(A)=B$. That is, we have constructed $\LX21$ connections $B_i$ and $B$ gauge equivalent to $A_i$ and $A$, respectively, such that the $B_i$ converge in $\LX21$ to $B$.

    The first issue to complete the proof of continuity is that we passed to subsequences in the proof, and our choice of $B$ may depend on our choice of subsequence. However, the gauge equivalence class of $B$ modulo $\LX22$ gauge transformations does not depend on this choice. Indeed, if $A$ is gauge equivalent by $\LX41$ gauge transformations to $\LX21$ connections $B$ and $B'$, then $B$ and $B'$ are gauge equivalent via an $\LX41$ gauge transformation $g$. But by Lemma \ref{gaugetransformationregularity}, $g$ is an $\LX22$ gauge transformation, and so $[B]=[B']$. Hence, for any subsequence of the $A_i$, our argument above shows that after passing to a further subsequence, the $[A_i]=[B_i]$ converge to $[B]$ in the space of $\LX21$ connections modulo $\LX22$ gauge transformations. Thus the original sequence also converges to $[B]$.

    The second issue is that we assumed that the $A_i$ are smooth, but to show continuity we need a general sequence of $\LX2d$ connections on $P$. Now let $A_i$ be an $\LX2d$ sequence of connections converging in $\LX2d$ to $A$, and let $[B_i]$ and $[B]$ be the gauge equivalence class of $\LX21$ connections constructed above. To show that the $[B_i]$ converge to $[B]$, we consider sequences of smooth connections $A_{i,j}$ that converge to $A_i$, so from the above argument we know that the $[A_{i,j}]$ converge to $[B_i]$. We then use a diagonalization argument, constructing $j(i)$ such that $A_{i,j(i)}$ converges to $A$ and such that $[A_{i,j(i)}]$ is within $1/i$ of $[B_i]$. Because the $A_{i,j(i)}$ are smooth, the argument above gives us that the $[A_{i,j(i)}]$ converge to $[B]$, and so the $[B_i]$ must also converge to $[B]$.
  \end{proof}
\end{theorem}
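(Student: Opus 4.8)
The plan is to reduce to the local statement and then globalize by a patching argument in the style of Uhlenbeck \cite[Theorem 3.6]{u82}. First I would choose a finite cover of $X$ by small balls $U_\alpha$, each inside a trivialization of $P$ and small enough that $\norm{F_A}_{L^2(U_\alpha)}$ lies below the threshold of Proposition \ref{gaugefixingcontinuous}; by conformal invariance of the energy this threshold can be taken uniform over the balls. Picking smooth connections $A_i\to A$ in $\LX2d$, I would apply Theorem \ref{l2dgaugefixing} together with Proposition \ref{gaugefixingcontinuous} on each $U_\alpha$ to produce local Coulomb gauge transformations $g_{i,\alpha}$ (smooth, since the $A_i$ are, using the regularity in \cite[Theorem 2.3.7]{dk90}) and $g_\alpha$ taking $A_i$ and $A$ to Coulomb-gauge connections $\tA_{i,\alpha}$ and $\tA_\alpha$, with $\tA_{i,\alpha}\to\tA_\alpha$ strongly in $\La21{U_\alpha}$ and $g_{i,\alpha}\to g_\alpha$ in $\LG41{U_\alpha}$ after passing to subsequences. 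The smooth $g_{i,\alpha}$ present $P$ as a bundle $Q_i$ with transition functions $\psi_{i,\alpha,\beta}=g_{i,\beta}\phi_{\alpha,\beta}g_{i,\alpha}^{-1}$, under which identification $A_i$ becomes the connection glued from the $\tA_{i,\alpha}$.

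The heart of the argument is to make sense of the limit bundle $Q$ with transition data $\psi_{\alpha,\beta}=g_\beta\phi_{\alpha,\beta}g_\alpha^{-1}$ and to identify it with $P$. This is delicate because the $g_\alpha$ are only $\LX41$ and therefore not continuous, so the usual cutoff constructions break down and Uhlenbeck's patching produces the limit bundle only above the borderline. The fix is to notice that each $\psi_{\alpha,\beta}$ is a gauge transformation between the Coulomb-gauge $\LX21$ connections $\tA_\alpha$ and $\tA_\beta$ on $U_\alpha\cap U_\beta$: Taubes's Lemma \ref{taubesregularity} then shows $\psi_{\alpha,\beta}\in C^0_\loc$ and, applied to the convergent family, that $\psi_{i,\alpha,\beta}\to\psi_{\alpha,\beta}$ in $C^0_\loc$ (one uses Lemma \ref{stronggaugeconvergence} to get $\LG22{U_\alpha\cap U_\beta}$ convergence of the $\psi_{i,\alpha,\beta}$ on overlaps and the product formula to identify the limit). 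Shrinking the cover to balls $U'_\alpha\subset U_\alpha$ still covering $X$, the convergence holds in $C^0(U'_\alpha\cap U'_\beta;G)$, so for large $i$ the transition functions $\psi_{i,\alpha,\beta}$ and $\psi_{\alpha,\beta}$ are close enough in $C^0$ to invoke Uhlenbeck's Lemma \ref{uhlenbeckbundleisomorphism}, which yields a bundle isomorphism $\rho\colon Q\to Q_i$ that is in fact $\LX22$ by the argument of \cite[Corollary 3.3]{u82} adapted to $L^2_2\cap C^0$. Composing $\rho$ with the smooth isomorphism $Q_i\cong P$ and the $\LX41$ isomorphism $P\cong Q$ defined by the $g_\alpha$, the map $g_{i,\alpha}^{-1}\circ\rho\circ g_\alpha$ is the desired $\LX41$ gauge transformation of $P$ carrying $A$ to the $\LX21$ connection $B$ defined by the $\tA_\alpha$.

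For the continuity claim I would rerun the construction for a sequence $A_j\to A$ in $\LX2d$, using that the isomorphisms $\rho_j\colon Q_j\to Q_i$ depend continuously on their transition data, to obtain $\LX21$ connections $B_j$ gauge equivalent to $A_j$ with $B_j\to B$ in $\LX21$; the reduction from smooth $A_j$ to general $\LX2d$ sequences is handled by diagonalization, and the independence of the limiting class from the subsequences chosen along the way follows because any two $\LX21$ connections that are $\LX41$-gauge equivalent to the same $A$ differ by an $\LX41$ gauge transformation, which is automatically $\LX22$ by Lemma \ref{gaugetransformationregularity}, so they represent the same point of the (Hausdorff, by Lemma \ref{stronggaugeconvergence}) space of $\LX21$ connections modulo $\LX22$ gauge transformations. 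I expect the main obstacle to be precisely the construction of the limit bundle $Q$ at borderline regularity: everything else is either local gauge fixing already packaged in Theorem \ref{l2dgaugefixing} and Proposition \ref{gaugefixingcontinuous} or Uhlenbeck's patching machinery from \cite{u82}, and the one new ingredient needed to run that machinery when the gauge transformations fail to be continuous is the $C^0_\loc$ control on overlaps supplied by Taubes's Lemma \ref{taubesregularity}.
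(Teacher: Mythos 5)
Your proposal follows essentially the same route as the paper: local Coulomb gauge fixing via Proposition \ref{gaugefixingcontinuous} on a small cover, construction of the bundles $Q_i$ from the smooth gauge transformations, identification of the limit bundle $Q$ by upgrading the transition functions to $C^0$ via Taubes's Lemma \ref{taubesregularity} (with Lemma \ref{stronggaugeconvergence} supplying the $L^2_2$ convergence on overlaps), Uhlenbeck's Lemma \ref{uhlenbeckbundleisomorphism} to build the bundle isomorphism $\rho$, and the continuity argument handled by diagonalization together with the uniqueness modulo $\LX22$ gauge furnished by Lemma \ref{gaugetransformationregularity}. The steps, key lemmas, and their roles all match the paper's proof.
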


\begin{corollary}\label{homeomorphicconfigurationspaces}
  Let $P\to X$ be a principal $G$-bundle over a compact manifold $X$. The space of $\LX2d$ connections modulo $\LX41$ gauge transformations is homeomorphic to the space of $\LX21$ connections modulo $\LX22$ gauge transformations.
\end{corollary}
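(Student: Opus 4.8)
Theorem \ref{l2disl21} already contains essentially all of the content of this corollary; what remains is to repackage it as a homeomorphism of quotient spaces. Write $\mc A^2_1$ and $\mc A^2_d$ for the spaces of $\LX21$ and $\LX2d$ connections on $P$, equipped with the topologies induced by their respective norms, and write $\mc G^2_2$ and $\mc G^4_1$ for the groups of $\LX22$ and $\LX41$ gauge transformations. On a compact $4$-manifold the borderline Sobolev embedding $\LX21\hookrightarrow\LX4{}$, applied to first derivatives, together with the fact that gauge transformations are uniformly bounded, shows that $\mc G^2_2\subseteq\mc G^4_1$; one checks routinely (cf.\ the last proposition of Section \ref{l2dsection}) that $\mc G^4_1$ is a group acting continuously on $\mc A^2_d$. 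The same embedding gives $\normlx\alpha4{}+\normlx{d\alpha}2{}\le C\normlx\alpha21$ for one-forms $\alpha$, so the inclusion $\mc A^2_1\hookrightarrow\mc A^2_d$ is continuous.

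I would take the candidate homeomorphism $\Phi\colon\mc A^2_1/\mc G^2_2\to\mc A^2_d/\mc G^4_1$ to be the map induced by this inclusion. Precisely, the composite $\mc A^2_1\hookrightarrow\mc A^2_d\to\mc A^2_d/\mc G^4_1$ is continuous and constant on $\mc G^2_2$-orbits, because a $\mc G^2_2$ gauge transformation is in particular a $\mc G^4_1$ gauge transformation; hence by the universal property of the quotient topology it descends to a continuous map $\Phi$. In the other direction, Theorem \ref{l2disl21} assigns to each $\LX2d$ connection $A$ an $\LX21$ connection $B$, gauge equivalent to $A$ by an $\LX41$ gauge transformation, with the gauge-equivalence class $[B]\in\mc A^2_1/\mc G^2_2$ well-defined and depending continuously on $A$; moreover $[B]$ depends only on $[A]$, since if $A'=g(A)$ with $g\in\mc G^4_1$ then $B$ is also $\LX41$-gauge equivalent to $A'$. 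Thus $A\mapsto[B]$ descends, again by the universal property, to a continuous map $\Psi\colon\mc A^2_d/\mc G^4_1\to\mc A^2_1/\mc G^2_2$.

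It then suffices to check that $\Phi$ and $\Psi$ are mutually inverse. For $\Psi\circ\Phi=\mathrm{id}$: if $B$ is already an $\LX21$ connection, then $B$ itself satisfies the conclusion of Theorem \ref{l2disl21} applied to $A=B$, so $\Psi([B])=[B]$. For $\Phi\circ\Psi=\mathrm{id}$: by construction $\Psi([A])=[B]$ with $B$ gauge equivalent to $A$ via an $\LX41$ gauge transformation, so $\Phi([B])=[B]=[A]$ in $\mc A^2_d/\mc G^4_1$. Two continuous, mutually inverse maps constitute a homeomorphism, which proves the corollary; and since $\mc A^2_1/\mc G^2_2$ is Hausdorff by Lemma \ref{stronggaugeconvergence}, so is $\mc A^2_d/\mc G^4_1$.

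The only genuinely delicate point is the well-definedness of $\Psi$ on orbits — equivalently, the injectivity of $\Phi$ — namely that if an $\LX2d$ connection is $\LX41$-gauge equivalent to two $\LX21$ connections $B$ and $B'$, then $B$ and $B'$ lie in the same $\mc G^2_2$-orbit. This is exactly the regularity upgrade of Lemma \ref{gaugetransformationregularity}: the $\LX41$ gauge transformation relating $B$ and $B'$ is automatically $\LX22$ because both endpoints are $\LX21$ connections. Since this fact is already invoked in the proof of Theorem \ref{l2disl21}, the corollary is in the end a matter of bookkeeping, and I expect no new obstacle beyond correctly matching up the quotient topologies and checking that $\mc G^4_1$ is closed under multiplication and its action on $\mc A^2_d$.
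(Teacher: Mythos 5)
Your proposal is correct and matches the paper's intent: the corollary is stated in the paper as an immediate consequence of Theorem \ref{l2disl21}, with no further argument given beyond the Hausdorffness remark, and your proof is a careful spelling-out of exactly the bookkeeping the paper leaves implicit. In particular you correctly identify the two points that make the diagram close up — the continuity of the inclusion $\mc A^2_1\hookrightarrow\mc A^2_d$ via the Sobolev embedding $L^2_1\hookrightarrow L^4$, and the regularity upgrade in Lemma \ref{gaugetransformationregularity} showing that an $L^4_1$ gauge transformation between two $L^2_1$ connections is automatically $L^2_2$, which is what makes $\Psi$ well-defined on orbits and $\Phi$ injective.
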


Note that this space is Hausdorff by Lemma \ref{stronggaugeconvergence}.

\subsection{The main results}\label{mainproofssection}
We now have all the ingredients for our Yang--Mills replacement result.

\begin{theorem}\label{globalymreplacementtheorem}
  Let $P\to X$ be a smooth principal $G$-bundle over a compact $4$-manifold $X$ with compact gauge group $G$, and let $B^4\subset X$ be a smooth $4$-ball. Let $\mc C$ be the space of $\LX21$ connections on $P$ modulo $\LX22$ gauge transformations of $P$, and let $\mc C_{\epsilon,B^4}\subset\mc C$ be those gauge equivalence classes of connections $[B]$ with small energy on $B^4$, that is, $\normlb{F_B}2{}<\epsilon$. Then for small enough $\epsilon$ there is an energy-decreasing continuous map $r\colon\mc C_{\epsilon,B^4}\to\mc C_{\epsilon,B^4}$ sending $[B]$ to an equivalence class of connections $[A]$, where $A$ is Yang--Mills on $B^4$ and gauge equivalent to $B$ outside $B^4$.

  Moreover, there is a homotopy $h_t\colon\mc C_{\epsilon,B^4}\to\mc C_{\epsilon,B^4}$ between the above replacement map $h_0=r$ and the identity map $h_1=\Id$ such that $h_t([B])$ has monotone nondecreasing energy, and restricting to the complement of $B^4$ the homotopy is constant $h_t([B])=[B]$.

\begin{proof}
  Given $B$ with $[B]\in\mc C_{\epsilon,B^4}$, on $B^4$ we construct $A$ and $B_t$ as in Theorems \ref{ymreplacementlowenergy} and \ref{interpolationthm}. By Theorem \ref{ymreplacementlowenergy}, $A$ and $B_t$ depend continuously on $B$, and from the construction it is clear that if we choose a different representative $B'=g(B)$ of $[B]$, then the resulting $A'$ and $B_t'$ satisfy $A'=g(A)$ and so $B_t'=g(B_t)$, so $[B_t]$ is independent of our choice of $B\in[B]$. We define a global connection $\hat B_t$ piecewise by $B_t$ on the ball and $B$ outside the ball. By Proposition \ref{l2dpatching}, $\hat B_t$ is an $\LX2d$ connection that depends continuously on $B$. By Theorem \ref{l2disl21}, $\hat B_t$ is gauge equivalent to an $\LX21$ connection $\tilde B_t$, and $[\tilde B_t]$ depends continuously on $\hat B_t$, so we can set $h_t([B])=[\tilde B_t]$.
\end{proof}
\end{theorem}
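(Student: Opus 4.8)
The plan is to assemble the local Yang--Mills replacement of Section \ref{localreplacementsection} and the energy-monotone interpolation of Section \ref{linearinterpolation} with the patching statement Proposition \ref{l2dpatching} and the regularity-recovery result Theorem \ref{l2disl21}; throughout, $\epsilon$ is taken below the thresholds in those cited results. Given a representative $B$ of a class $[B]\in\mc C_{\epsilon,B^4}$, Theorem \ref{ymreplacementlowenergy} applied on the ball produces a Yang--Mills connection $A$ on $B^4$ with $i^*A=i^*B$ depending continuously on $B$, and Theorem \ref{interpolationthm} gives the linear interpolation $B_t=(1-t)A+tB$ with $\normlb{F_{B_s}}2{}\le\normlb{F_{B_t}}2{}$ for $s<t$. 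Since $i^*A=i^*B$, we have $i^*B_t=i^*B$, so the tangential restrictions of $B_t$ and $B$ to $\pB$ agree; hence the piecewise-defined global connection $\hat B_t$ equal to $B_t$ on $B^4$ and to $B$ on $X\setminus B^4$ lies in $\LX2d$ by Proposition \ref{l2dpatching}, and depends continuously on $B$ jointly in $t$ and $B$, using the continuity in Theorem \ref{ymreplacementlowenergy} and the affine dependence of $B_t$ on $t$.

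I would then invoke Theorem \ref{l2disl21} to get an $\LX21$ connection $\tilde B_t$ gauge equivalent to $\hat B_t$ via an $\LX41$ gauge transformation, with the class $[\tilde B_t]\in\mc C$ depending continuously on $\hat B_t$, and set $h_t([B])=[\tilde B_t]$ and $r=h_0$. To see this descends to a well-defined map on $\mc C_{\epsilon,B^4}$, I would verify that the construction is gauge equivariant: replacing $B$ by $g(B)$ for an $\LX22$ gauge transformation $g$ replaces $A$ by $g|_{B^4}(A)$, using the uniqueness in Theorem \ref{dirichletproblem} together with the uniqueness-up-to-constant of the auxiliary gauge fixing in Theorem \ref{ymreplacementlowenergy}, hence replaces $B_t$ by $g(B_t)$ and $\hat B_t$ by $g(\hat B_t)$, leaving $[\tilde B_t]$ unchanged. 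At $t=1$, $\hat B_1=B$ is already in $\LX21$, so $h_1=\Id$; at $t=0$, $\hat B_0$ equals $A$ on $B^4$ and $B$ outside, so $h_0=r$ is the asserted replacement map. Energy is gauge invariant, and the energy of $\hat B_t$ is the sum of the energy of $B_t$ on $B^4$, which is nondecreasing in $t$ by Theorem \ref{interpolationthm}, and the energy of $B$ on $X\setminus B^4$, which is constant; so $\mc E(h_t([B]))$ is nondecreasing in $t$, whence $r=h_0$ is energy decreasing. Finally $\normlb{F_{B_t}}2{}\le\normlb{F_B}2{}<\epsilon$ by Theorem \ref{interpolationthm}, so $h_t([B])\in\mc C_{\epsilon,B^4}$, and since $\hat B_t=B$ on $X\setminus B^4$ the restriction of $h_t([B])$ to the complement of $B^4$ is gauge equivalent to that of $B$.

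I expect the genuine difficulties to be already contained in the cited results, not in this assembly: the naive patch $\hat B_t$ really is not $\LX21$, since only the tangential part of $A$ matches $B$ across $\pB$, and this is exactly what forces the $\LX2d$ machinery and the delicate borderline-regularity argument of Theorem \ref{l2disl21} that recovers an $\LX21$ representative depending continuously on the data. Within the present proof, the one point that needs care is threading joint continuity in $(t,B)$ through the composite $B\mapsto(A,B_t)\mapsto\hat B_t\mapsto[\tilde B_t]$ and checking gauge equivariance precisely enough for $h_t$ to be well defined on $\mc C_{\epsilon,B^4}$; once Theorems \ref{ymreplacementlowenergy} and \ref{l2disl21} are in hand, this is essentially bookkeeping.
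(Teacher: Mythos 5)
Your proposal is correct and follows essentially the same approach as the paper's proof: construct $A$ via Theorem \ref{ymreplacementlowenergy}, the monotone interpolation $B_t$ via Theorem \ref{interpolationthm}, patch to an $\LX2d$ connection $\hat B_t$ using Proposition \ref{l2dpatching}, and recover an $\LX21$ representative $\tilde B_t$ via Theorem \ref{l2disl21}, setting $h_t([B])=[\tilde B_t]$. The only difference is that you spell out the endpoint identifications, energy-additivity, gauge equivariance, and the check that $h_t([B])$ stays in $\mc C_{\epsilon,B^4}$, whereas the paper leaves this bookkeeping implicit.
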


We can also reframe the above result in terms of comact families of connections.

\begin{theorem}\label{familyymreplacementtheorem}
  Let $P\to X$ be a smooth principal $G$-bundle over compact $4$-manifold $X$ with compact gauge group $G$, and let $\mc C$ be the space of $\LX21$ connections modulo $\LX22$ gauge transformations. Let $\mc K$ be a compact family in $\mc C$. Then around any point $x\in X$ there exists a ball $x\in B^4\subset X$ and homotopy $h_t\colon\mc K\to\mc C$ such that $h_1$ is the identity, $h_0$ sends $\mc K$ to connections that are Yang--Mills on $B^4$, $h_t([B])$ has monotone nondecreasing energy, and restricting to the complement of $B^4$ the homotopy is constant $h_t([B])=[B]$.

\begin{proof}
  Since $\mc K$ is compact, we can choose a ball $B^4$ around $x$ small enough so that for all $[B]\in\mc K$, $\normlb{F_B}2{}<\epsilon$. Then we apply Theorem \ref{globalymreplacementtheorem} for this ball $B^4$.
\end{proof}
\end{theorem}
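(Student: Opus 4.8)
The plan is to reduce this directly to Theorem~\ref{globalymreplacementtheorem}, which already produces, for a fixed small ball $B^4$ and small enough threshold $\epsilon>0$, a homotopy $h_t\colon\mc C_{\epsilon,B^4}\to\mc C_{\epsilon,B^4}$ with all of the asserted properties ($h_0$ Yang--Mills on $B^4$, $h_1=\Id$, monotone energy, constant off $B^4$). The only work is to choose a single ball $B^4$ around the given point $x$ for which the whole compact family $\mc K$ lands inside $\mc C_{\epsilon,B^4}$, and then restrict $h_t$ to $\mc K\subset\mc C_{\epsilon,B^4}\subset\mc C$.

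Two ingredients go into choosing the ball. First, as noted in the proof of Theorem~\ref{l2disl21}, the energy is invariant under conformal changes of metric, so after rescaling small geodesic balls around $x$ to the standard unit ball the metrics converge to the flat one; consequently the threshold $\epsilon$ of Theorem~\ref{globalymreplacementtheorem} can be taken to work uniformly for all sufficiently small balls $B_r$ around $x$. Fix such an $\epsilon$. Second, for each fixed class $[B]\in\mc C$ the gauge-invariant quantity $\norm{F_B}_{L^2(B_r)}$ tends to $0$ as $r\to0$ by absolute continuity of the Lebesgue integral, and for each fixed $r$ the function $[B]\mapsto\norm{F_B}_{L^2(B_r)}$ is continuous on $\mc C$: in a local trivialization $d+b$ over $B_r$ one has $F_B=db+\tfrac12[b\wedge b]$, depending continuously on $b\in\La21{B_r}$ with values in $\LF2{}{B_r}$ by the Sobolev multiplication $L^2_1\times L^2_1\to L^2$, and by gauge-invariance of the energy this descends to the quotient.

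Now I would apply compactness. For each $[B]\in\mc K$ pick $r_{[B]}>0$ with $B_{r_{[B]}}$ small enough for the uniform-$\epsilon$ conclusion and with $\norm{F_B}_{L^2(B_{r_{[B]}})}<\epsilon/2$; then by continuity there is a neighborhood $\mc U_{[B]}\subset\mc C$ of $[B]$ on which $\norm{F_{(\cdot)}}_{L^2(B_{r_{[B]}})}<\epsilon$. These neighborhoods cover $\mc K$; extract a finite subcover indexed by $[B_1],\dots,[B_m]$, put $r=\min_k r_{[B_k]}$, and set $B^4=B_r$. Every $[B]\in\mc K$ then lies in some $\mc U_{[B_k]}$, and since $B_r\subseteq B_{r_{[B_k]}}$ we get $\normlb{F_B}2{}\le\norm{F_B}_{L^2(B_{r_{[B_k]}})}<\epsilon$, so $\mc K\subset\mc C_{\epsilon,B^4}$. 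Equivalently, $([B],r)\mapsto\norm{F_B}_{L^2(B_r)}$ is upper semicontinuous and vanishes on the compact set $\mc K\times\{0\}$, which gives the same conclusion by a tube-lemma argument.

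With this $B^4$, Theorem~\ref{globalymreplacementtheorem} applies, and its homotopy $h_t$, restricted to $\mc K$, retains $h_1=\Id$, $h_0$ Yang--Mills on $B^4$, monotone nondecreasing energy, and constancy on the complement of $B^4$ — exactly the claimed statement. I do not expect a serious obstacle here; the two points that need any care are the uniformity of the threshold $\epsilon$ over small balls, handled by the conformal-rescaling remark from Theorem~\ref{l2disl21}, and the continuity of $[B]\mapsto\norm{F_B}_{L^2(B_r)}$ on the quotient $\mc C$, which is the same Sobolev-multiplication estimate for $b\mapsto F_B$ used repeatedly elsewhere in the paper.
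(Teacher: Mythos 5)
Your proposal is correct and takes essentially the same approach as the paper's one-sentence proof: choose a ball around $x$ small enough that all of $\mc K$ has energy below $\epsilon$ on it, then apply Theorem~\ref{globalymreplacementtheorem}. You have simply spelled out the details that the paper leaves implicit — the uniformity of $\epsilon$ over small balls via conformal rescaling, the continuity of $[B]\mapsto\norm{F_B}_{L^2(B_r)}$ on $\mc C$, and the finite-subcover argument — all of which are sound.
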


\section{Gauge fixing}\label{gaugefixingchapter}

In this section, we prove several gauge fixing results which are used to prove the main results in Section \ref{ymreplacementsection}. In Section \ref{gaugeconvergencesection}, we show that weak or strong convergence of connections implies the weak or strong convergence of the gauge transformations between them, implying as a corollary that the space of $L^2_1(X)$ connections modulo $L^2_2(X)$ gauge transformations is Hausdorff. In Section \ref{coulombgaugeconvergencesection}, we extend Uhlenbeck's gauge fixing result with Neumann boundary conditions \cite{u82} to $\LB2d$ connections, and we show that strong convergence of a sequence of connections implies strong convergence of the respective Coulomb gauge representatives. Finally, in Sections \ref{fixedboundarysection} and \ref{dirichletcoulombsection}, we prove gauge fixing results with Dirichlet boundary conditions on the gauge transformation and on the connection, respectively, improving on earlier results of Uhlenbeck \cite{u82,u82b} and Marini \cite{m92} by working at the lowest possible level regularity.

\subsection{Convergence of gauge transformations}\label{gaugeconvergencesection}
In this section, we consider a sequence of gauge transformations that takes one sequence of connections to another. Working with gauge transformations in borderline Sobolev spaces, we show that weak convergence of the two sequences of connections implies weak convergence of a subsequence of the gauge transformations, and likewise for strong convergence. The weak convergence argument is straightforward even for gauge transformations in borderline Sobolev spaces. However, the strong convergence argument is more delicate, but we are able to proceed using Lemma \ref{weaktostronglemma}. Throughout, let $P$ be a principal $G$-bundle over a compact $4$-manifold $X$, where $G$ is compact.
\begin{lemma}\label{gaugetransformationregularity}
  Let $A$ and $B$ be two $\Laa21X$ connections that are gauge equivalent via a gauge transformation $g$. Then $g\in\LGG22X$.
  \begin{proof}
    Because $X$ is compact, the question is local, so we work on a trivialization over a closed ball $B^4\subset X$. With respect to this trivialization, $A=d+a$ and $B=d+b$, where $a$ and $b$ are in $\LaB21$, and $g$ is in $\LGB22$. We have the equation
    \begin{equation*}
      dg=ga-bg.
    \end{equation*}
    Since $G$ is compact, we know that $g\in\LEB\infty{}$. Sobolev embedding tells us that $a,b\in\LaB4{}$. Thus, $dg=ga-bg\in\LEaB4{}$, so $g\in\LGB41$. Next,
    \begin{equation*}
      \nabla(dg)=(\nabla g)a-b(\nabla g)+g(\nabla a)-(\nabla b)g\in L^4\cdot L^4-L^4\cdot L^4+L^\infty\cdot L^2-L^2\cdot L^\infty\subseteq L^2.
    \end{equation*}
    Hence, $dg\in\LEaB21$, so $g\in\LGB22$.
  \end{proof}
\end{lemma}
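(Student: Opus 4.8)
The plan is a short two-step elliptic bootstrap on the defining equation of the gauge transformation. Since membership in $\LGG22X$ is a local condition and $X$ is compact, I would first cover $X$ by finitely many trivializing balls and work on one such ball $B^4$, where $A=d+a$ and $B=d+b$ with $a,b\in\LaB21$, and $g$ is the $M_N$-valued representative of the gauge transformation, satisfying the first-order equation $dg=ga-bg$. The crucial free input is that $g$ takes values in the compact group $G$, so $g\in\LEB\infty{}$ with a uniform bound; combined with the Sobolev embedding $\LaB21\hookrightarrow\LaB4{}$ in dimension four and the H\"older multiplications $L^4\cdot L^4\to L^2$ and $L^\infty\cdot L^2\to L^2$, this is all the analysis that is needed. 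Note that the equation already forces $g\in\LGB41$ as soon as $g$ is weakly differentiable at all, which is the natural a priori regularity coming from the applications of this lemma.

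First bootstrap: from $a,b\in\LaB4{}$ and $g\in\LEB\infty{}$, the right-hand side $ga-bg$ lies in $L^4$, so $dg\in\LEaB4{}$ and hence $g\in\LGB41$. Second bootstrap: differentiating the equation gives $\nabla(dg)=(\nabla g)a-b(\nabla g)+g(\nabla a)-(\nabla b)g$, and now $\nabla g\in L^4$ by the first step, $a,b\in L^4$, while $\nabla a,\nabla b\in L^2$ and $g\in L^\infty$, so every term on the right is in $L^2$. Hence $dg\in\LEaB21$, i.e.\ $g\in\LGB22$. Patching over the finite cover of $X$ with a partition of unity then yields $g\in\LGG22X$.

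The only step that requires care is justifying the distributional Leibniz rule used in the second bootstrap, namely that $\nabla(ga)=(\nabla g)a+g(\nabla a)$ holds as an identity of distributions when $g\in\LGB41\cap\LEB\infty{}$ and $a\in\LaB21$, and similarly for the other products. This is a routine consequence of the Sobolev multiplication and approximation theorems, and the point is that it is legitimate precisely because, after the first bootstrap, each factor and each summand is genuinely in $L^2\subset L^1_{\loc}$; one never needs the borderline multiplication $L^2_2\times L^2_2\to L^2_2$. So although the argument lives at the lowest regularity the paper works with, it sidesteps the borderline difficulty simply by performing the cheap $L^4$ bootstrap first.
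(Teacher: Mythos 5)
Your proof is correct and is essentially the same two-step $L^4$-then-$L^2_2$ elliptic bootstrap on $dg=ga-bg$ that the paper uses, exploiting $g\in L^\infty$ from compactness of $G$ and the Sobolev embedding $L^2_1\hookrightarrow L^4$ in exactly the same way. The extra remarks you add --- on the a priori regularity of $g$ and on justifying the distributional Leibniz rule after the first bootstrap step --- are sound clarifications of points the paper leaves implicit, but they do not change the argument.
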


\begin{lemma}\label{weakgaugeconvergence}
  Consider two sequences of $\Laa21X$ connections $A_i$ and $B_i$ converging weakly to $A$ and $B$ in $\Laa21X$, respectively, such that $A_i$ and $B_i$ are gauge equivalent via a gauge transformation $g_i$. Then a subsequence of the $g_i$ converges weakly in $\LGG22X$ to a gauge transformation $g$ sending $A$ to $B$.

  If the connections are only $\Laa4{}X$ and converge weakly in $\Laa4{}X$, then a subsequence of the gauge transformations converges weakly in $\LGG41X$.
  \begin{proof}
    As before, since $X$ is compact it suffices to prove the lemma locally, so we work on a trivialization over a closed ball $B^4\subset X$. With the same notation, we have equations
    \begin{equation}\label{sequencegaugeequivalence}
      dg_i=g_ia_i-b_i g_i.
    \end{equation}
    The gauge transformations are assumed to be unitary and hence uniformly bounded in $\LEB\infty{}$. Since $a_i$ and $b_i$ convergely weakly and are hence bounded in $\LaB4{}$, we know that the $dg_i$ are bounded in $\LaB4{}$, and hence the sequence $g_i$ is bounded in $\LEB41$. Passing to a subsequence, we can assume that $g_i$ has a weak limit $g$ in $\LEB41$. Thus, $g_i$ converges strongly to $g$ in $\LEB4{}$, so we know that $g$ is in $G$ a.e.\ because a subsequence of the $g_i$ converges pointwise a.e.\ to $g$. Thus $g$ is a gauge transformation.

    It remains to show that $g$ sends $A$ to $B$. We would like to take the limit of the equation $dg_i=g_ia_i-b_ig_i$, but the issue is that the product of sequences that converge weakly need not converge, even weakly. However, we have that the $g_i$ converge strongly to $g$ in $\LEB4{}$, and hence using the multiplication map $\LL4{}{B^4}\times\LL21{B^4}\to\LL2{}{B^4}$, we know that the $g_ia_i$ converge weakly to $ga$ in $\LEB2{}$, and, similarly, the $b_ig_i$ converge weakly to $bg$ in $\LEB2{}$. The weak convergence of $dg_i$ follows from the linearity of $d$. Therefore, we can take the weak limit of \eqref{sequencegaugeequivalence} in $\LEaB2{}$ to find that
    \begin{equation*}
      dg=ga-bg,
    \end{equation*}
    so $g$ indeed sends $A$ to $B$.

    If, moreover, the $a_i$ and $b_i$ are bounded in $\LaB21$, then because the $g_i$ are bounded in $\LEB41$ and $\LEB\infty{}$, we see that
    \begin{equation*}
      \nabla(dg_i)=(\nabla g_i)a_i-b_i(\nabla g_i)+g_i(\nabla a_i)-(\nabla b_i)g_i
    \end{equation*}
    is bounded in $\LnaB2{}$. Hence, the $g_i$ are bounded in $\LEB22$, so, after passing to a subsequence, they converge weakly in $\LEB22$, and the limit is $g$, because the weak $\LEB22$ limit must agree with the weak $\LEB41$ limit.
  \end{proof}
\end{lemma}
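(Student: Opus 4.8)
The plan is to localize the problem and then bootstrap regularity while repeatedly extracting weakly convergent subsequences. Since $X$ is compact, fix a finite cover by closed balls $B^4\subset X$ over which $P$ is trivialized; extracting a subsequence for each trivialization in turn and gluing the resulting local weak limits (which agree on overlaps because they agree distributionally), it suffices to prove everything over a single such ball. There we write $A_i=d+a_i$, $B_i=d+b_i$ with $g_i$ a $G$-valued function satisfying the gauge transformation equation $dg_i=g_ia_i-b_ig_i$ of $M_N$-valued one-forms.

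First I would record the a priori bounds. Because $G$ is compact, the $g_i$ are uniformly bounded in $\LEB\infty{}$. Weak convergence of the $a_i,b_i$ in $\Laa21X$ (or merely in $\Laa4{}X$) bounds them in $\LaB4{}$, so, combining this with the $L^\infty$ bound on $g_i$ through the continuous multiplication $\LEB4{}\times\LaB4{}\to\LEaB2{}$, the right-hand side $g_ia_i-b_ig_i$, and hence $dg_i$, is bounded in $\LEaB4{}$. Thus $g_i$ is bounded in $\LEB41$, and after passing to a subsequence $g_i\rightharpoonup g$ in $\LEB41$. Since $\LEB41$ embeds compactly into $\LEB4{}$ (we are at the borderline $W^{1,4}$ in dimension four, which is Rellich-compact into every $L^q$ with $q<\infty$), we get $g_i\to g$ strongly in $\LEB4{}$, and along a further subsequence pointwise a.e., so $g\in G$ a.e.\ and $g$ is a bona fide gauge transformation.

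Next I would pass to the limit in the gauge transformation equation. The crucial step is that the strong convergence $g_i\to g$ in $\LEB4{}$ together with the weak convergence $a_i\rightharpoonup a$ in $\LaB4{}$ yields $g_ia_i\rightharpoonup ga$ weakly in $\LEaB2{}$: this is the elementary fact that a continuous bilinear map sends strong-times-weak convergent pairs to weakly convergent sequences, applied to $\LEB4{}\times\LaB4{}\to\LEaB2{}$. Likewise $b_ig_i\rightharpoonup bg$, while $dg_i\rightharpoonup dg$ by linearity, so $dg=ga-bg$ and $g$ sends $A$ to $B$. This already gives the $\Laa4{}X$ statement, with weak convergence in $\LGG41X$. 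For the $\Laa21X$ statement I would differentiate once more, $\nabla(dg_i)=(\nabla g_i)a_i-b_i(\nabla g_i)+g_i(\nabla a_i)-(\nabla b_i)g_i$, and bound the first two terms in $\LnaB2{}$ using $\nabla g_i\in\LEaB4{}$ and $a_i,b_i\in\LaB4{}$, and the last two using $g_i\in\LEB\infty{}$ and $\nabla a_i,\nabla b_i$ bounded in $L^2$; hence $g_i$ is bounded in $\LEB22$, a subsequence converges weakly there, and the weak $\LEB22$ limit must coincide with the already-identified $g$. Reassembling the finitely many local subsequences gives the global weak $\LGG22X$ convergence.

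The hard part is precisely the passage to the limit in the nonlinear products $g_ia_i$ and $b_ig_i$: weak times weak need not converge in any topology, so the entire argument is organized around first producing strong convergence of the $g_i$, which is available only because the equation upgrades the $L^\infty$ bound on $g_i$ to an $\LEB41$ bound and the latter is Rellich-compact into $\LEB4{}$, after which strong-times-weak convergence does the job. A minor bookkeeping point is nesting the subsequence extractions over the different trivializations so that a single subsequence works simultaneously on all of them.
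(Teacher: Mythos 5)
Your proof follows essentially the same route as the paper's, and is correct: a uniform $\LEB41$ bound on the $g_i$ derived from the gauge equation and the $L^\infty$ bound coming from compactness of $G$, compactness of the embedding $\LEB41\hookrightarrow\LEB4{}$ to upgrade weak $\LEB41$ convergence to strong $\LEB4{}$ convergence (which the paper uses implicitly when it asserts strong $L^4$ convergence), strong-times-weak convergence to pass to the limit in the products $g_ia_i$ and $b_ig_i$, and a final bootstrap to $\LEB22$ when the connections are $\LaB21$. One small slip: in deriving that $dg_i$ is bounded in $\LEaB4{}$ you invoke the multiplication $\LEB4{}\times\LaB4{}\to\LEaB2{}$, whereas the map actually being used is $\LEB\infty{}\times\LaB4{}\to\LEaB4{}$; the conclusion you draw is nonetheless correct and matches the paper.
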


In both the preceding and the following lemma, the reason we need to take a subsequence is that the limit connection might have a nontrivial, but compact, stabilizer, so we might even have constant sequences $A_i=A$ and $B_i=A$ such that the sequence of gauge transformations fails to converge. However, in a situation where there is no stabilizer and we have a unique gauge transformation between $A$ and $B$, we can eliminate the need for taking a subsequence, using the fact that if every subsequence of the $g_i$ has a further subsequence that converges to $g$, then the original sequence $g_i$ converges to $g$ also.

\begin{lemma}\label{stronggaugeconvergence}
  Consider two sequences of $\Laa21X$ connections $A_i$ and $B_i$ converging strongly to $A$ and $B$ in $\Laa21X$, respectively, such that $A_i$ and $B_i$ are gauge equivalent via a gauge transformation $g_i$. Then a subsequence of the $g_i$ converges strongly in $\LGG22X$ to a gauge transformation $g$ sending $A$ to $B$.

  If the connections are only $\Laa4{}X$ and converge strongly in $\Laa4{}X$, then a subsequence of the gauge transformations converges strongly in $\LGG41X$.
  \begin{proof}
    Again, since $X$ is compact it suffices to prove the lemma on a closed ball $B^4\subset X$. By Lemma \ref{weakgaugeconvergence}, after passing to a subsequence, the $g_i$ converge weakly in $\LGB41$ to a gauge transformation $g$ sending to $A$ to $B$. The main difficulty in promoting weak convergence to strong convergence is that, even though the $g_i$ are bounded in $\LEB\infty{}$, we cannot get strong convergence in $\LEB\infty{}$ for gauge transformations in a borderline Sobolev space. As before, we deal with this issue using Lemma \ref{weaktostronglemma}. Again, we use the equations
    \begin{align*}
      dg_i&=g_ia_i-b_ig_i,\\
      dg&=ga-bg.
    \end{align*}
    We begin by showing that, after passing to a subsequence, the $g_i$ converge strongly to $g$ in $\LGB41$. We know that the $a_i$ converge strongly in $\LEaB4{}$ to $a$, and weak $\LGB41$ convergence of the $g_i$ implies that the $g_i$ converge strongly to $g$ in $\LGB{4/3}{}$, so by Lemma \ref{weaktostronglemma} the $g_ia_i$ converge strongly in $\LEaB4{}$ to $ga$. Likewise, the $b_ig_i$ converge strongly in $\LEaB4{}$ to $bg$. Thus, the $dg_i$ converge strongly in $\LEaB4{}$ to $dg$, so the $g_i$ converge strongly to $g$ in $\LGB41$, as desired.
    
    Next, we improve the strong $\LGB41$ convergence of the $g_i$ to strong $\LGB22$ convergence. We compute
    \begin{equation*}
      \nabla dg_i=dg_i\otimes a_i+g_i\nabla a_i-\nabla b_ig_i-b_i\otimes dg_i.
    \end{equation*}
    Because the $dg_i$ converge to $dg$ in $\LEaB4{}$ and the $a_i$ converge to $a$ in $\LaB4{}$, we know that the $dg_i\otimes a_i$ converge to $dg\otimes a$ in $\LnaB2{}$. For the $g_i\nabla a_i$ term, we again use Lemma \ref{weaktostronglemma}. We know that the $g_i$ converge strongly in $\LGB2{}$ to $g$ and the $\nabla a_i$ converge strongly in $\LnaB2{}$ to $\nabla a$, so by Lemma \ref{weaktostronglemma} the $g_i\nabla a_i$ converge strongly in $\LnaB2{}$ to $g\nabla a$. The $\nabla b_ig_i$ and $b_i\otimes dg_i$ terms are analogous. Thus, the $\nabla dg_i$ converge to $\nabla dg$ in $\LnaB2{}$, so the $g_i$ converge to $g$ in $\LGB22$, as desired.
  \end{proof}
\end{lemma}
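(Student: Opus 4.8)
The plan is to bootstrap the weak convergence already supplied by Lemma~\ref{weakgaugeconvergence} up to strong convergence, using Lemma~\ref{weaktostronglemma} to control the products of sequences in which one factor---the gauge transformations---lives only in a borderline Sobolev space. As in the earlier convergence lemmas, compactness of $X$ reduces the statement to a closed ball $B^4\subset X$ in a local trivialization, where I work with $A=d+a$ and $B=d+b$, $a,b\in\LaB21$, and $g_i\in\LGB22$ satisfying $dg_i=g_ia_i-b_ig_i$, with $dg=ga-bg$ in the limit.

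First I would apply Lemma~\ref{weakgaugeconvergence}: strong convergence of the $A_i$ and $B_i$ certainly implies weak convergence, so after passing to a subsequence the $g_i$ converge weakly in $\LGB22$ (or in $\LGB41$ in the $L^4$ version) to a gauge transformation $g$ sending $A$ to $B$. In particular the $g_i$ are bounded in $\LGB41$, and since the borderline Sobolev embedding $\LGB41\hookrightarrow L^q(B^4;G)$ is compact for every finite $q$, I may pass to a further subsequence so that $g_i\to g$ strongly in $L^q(B^4;G)$ for all $q<\infty$.

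Next I would upgrade to strong convergence in $\LGB41$. We have $a_i\to a$ and $b_i\to b$ strongly in $\LEaB4{}$, either by hypothesis or by the Sobolev embedding $\LaB21\hookrightarrow\LEaB4{}$, and $g_i\to g$ strongly in $L^{4/3}(B^4;G)$, the conjugate exponent to $4$; so Lemma~\ref{weaktostronglemma} with $p=4$ gives $g_ia_i\to ga$ and $b_ig_i\to bg$ strongly in $\LEaB4{}$. Hence $dg_i\to dg$ strongly in $\LEaB4{}$, and together with the strong $L^4$ convergence of the $g_i$ this is exactly strong convergence in $\LGB41$, which is the conclusion in the $L^4$-connection case. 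To reach $\LGB22$ I differentiate the gauge equation once more,
\begin{equation*}
  \nabla dg_i=\nabla g_i\otimes a_i+g_i\nabla a_i-(\nabla b_i)g_i-b_i\otimes\nabla g_i,
\end{equation*}
and check that each term converges strongly in $\LnaB2{}$: the terms $\nabla g_i\otimes a_i=dg_i\otimes a_i$ and $b_i\otimes dg_i$ are products of two sequences converging strongly in $L^4$, so Sobolev multiplication $L^4\times L^4\to L^2$ gives strong $L^2$ convergence, while for $g_i\nabla a_i$ and $(\nabla b_i)g_i$ I would invoke Lemma~\ref{weaktostronglemma} again with $p=2$, using that the $g_i$ converge strongly in $L^2(B^4;G)$ and that $\nabla a_i,\nabla b_i$ converge strongly in $\LnaB2{}$. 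This yields $\nabla dg_i\to\nabla dg$ in $\LnaB2{}$, which together with the already-established strong $L^2$ convergence of $g_i$ and of $dg_i=\nabla g_i$ gives strong convergence in $\LGB22$.

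The main obstacle, and the reason this is more than a routine elliptic estimate, is the borderline regularity: the $g_i$ are uniformly bounded in $L^\infty$ but do not converge there, so a naive Hölder estimate on the products $g_ia_i$, $g_i\nabla a_i$, and the like is unavailable. Everything hinges on Lemma~\ref{weaktostronglemma}---that is, on the fact that $G$-valued multiplication is an $L^p$ isometry, which promotes the weak $L^p$ limit of each such product to a strong one. The only care needed is to pair the Sobolev exponents correctly ($p=4$ with $q=4/3$ at the first stage, $p=q=2$ at the second) and to confirm, via the compact embedding $\LGB41\hookrightarrow L^q$ for finite $q$, that the gauge transformations genuinely converge strongly in the $L^q$ space that is fed into the lemma.
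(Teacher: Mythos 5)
Your argument matches the paper's proof step for step: reduce to a ball, invoke Lemma~\ref{weakgaugeconvergence} for weak convergence, use the compact embedding $\LGB41\hookrightarrow\LGB{4/3}{}$ together with Lemma~\ref{weaktostronglemma} to upgrade $dg_i=g_ia_i-b_ig_i$ to strong $\LGB41$ convergence, and then differentiate once more and reapply Lemma~\ref{weaktostronglemma} with $p=q=2$ to reach $\LGB22$. The exponent pairings and the role of $G$-valued isometry in Lemma~\ref{weaktostronglemma} are identified correctly, so this is essentially the paper's proof.
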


\begin{corollary}\label{hausdorffcorollary}
  Let $P\to X$ be a principal $G$-bundle over a compact manifold $X$. The quotient space of $\Laa21X$ connections modulo $\LGG22X$ gauge transformations is Hausdorff, and likewise so is the space of $\Laa4{}X$ connections modulo $\LGG41X$ gauge transformations.
\end{corollary}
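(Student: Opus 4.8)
The plan is to deduce the corollary from Lemma \ref{stronggaugeconvergence} by a point-set argument: I will show that the orbit equivalence relation is closed and that the quotient map is open, which together give the Hausdorff property. Write $\mathcal A$ for the affine space of $\Laa21X$ connections with its Banach-space metric topology, $\mathcal G$ for the group of $\LGG22X$ gauge transformations, and $\pi\colon\mathcal A\to\mathcal A/\mathcal G$ for the quotient map, and set
\[
  R=\{(A,B)\in\mathcal A\times\mathcal A:\ B=g(A)\text{ for some }g\in\mathcal G\}.
\]
The standard fact I would invoke is that if $\pi$ is open and $R$ is closed in $\mathcal A\times\mathcal A$, then $\mathcal A/\mathcal G$ is Hausdorff: given $\pi(A)\ne\pi(B)$ we have $(A,B)\notin R$, so there is a product neighborhood $U\times V$ of $(A,B)$ disjoint from $R$, and then $\pi(U)$ and $\pi(V)$ are disjoint open neighborhoods of $\pi(A)$ and $\pi(B)$.

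First I would verify that $\pi$ is open. For each fixed $g\in\mathcal G$, the map $A\mapsto g(A)$ is a homeomorphism of $\mathcal A$: it is continuous by Proposition \ref{borderlinecontinuity}, and directly from the local formula $a\mapsto gag^{-1}-(dg)g^{-1}$, since $g\in L^\infty$ it is a continuous affine map in $a$; its inverse is the continuous map $A\mapsto g^{-1}(A)$. Hence for open $U\subseteq\mathcal A$ the set $\pi^{-1}(\pi(U))=\bigcup_{g\in\mathcal G}g(U)$ is a union of open sets, so $\pi(U)$ is open.

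Next I would show $R$ is closed. Since $\mathcal A\times\mathcal A$ is metrizable, sequential closedness suffices, so I would take $(A_i,B_i)\in R$ with $A_i\to A$ and $B_i\to B$ strongly in $\Laa21X$ and gauge transformations $g_i$ with $B_i=g_i(A_i)$; by Lemma \ref{stronggaugeconvergence}, after passing to a subsequence the $g_i$ converge in $\LGG22X$ to a gauge transformation $g$ with $B=g(A)$, so $(A,B)\in R$. The statement for $\Laa4{}X$ connections modulo $\LGG41X$ gauge transformations is identical, using that a fixed $\LGG41X$ gauge transformation acts as a homeomorphism of the space of $\Laa4{}X$ connections — again clear from $a\mapsto gag^{-1}-(dg)g^{-1}$ with $g\in L^\infty\cap\LGG41X$ — together with the $\Laa4{}X$ part of Lemma \ref{stronggaugeconvergence}. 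All the analytic work is contained in Lemma \ref{stronggaugeconvergence}; the only thing to be careful about is that, because the gauge group here is not a Hilbert Lie group, one cannot appeal to general facts about actions of topological groups, so the openness of $\pi$ must be checked by hand as above. This reduces to the elementary observation that a \emph{fixed} borderline gauge transformation still acts homeomorphically, so I expect no genuine obstacle beyond organizing the point-set argument.
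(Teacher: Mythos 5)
Your proof is correct and uses exactly the ingredient the paper labels it as a corollary of: closedness of the orbit relation via Lemma \ref{stronggaugeconvergence}, combined with openness of the quotient map, which is all that is needed since $\mathcal A\times\mathcal A$ is metrizable. One small remark: your caution that ``one cannot appeal to general facts about actions of topological groups'' is unnecessary here, since Proposition \ref{borderlinecontinuity} already establishes that $\mathcal G$ is a topological group acting continuously, and openness of the quotient map for any continuous action of a topological group is precisely the union-over-$g$ argument you give; what fails at borderline regularity is the Hilbert Lie group structure (slice theorem, smooth local sections), not the topological-group facts.
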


\subsection{Convergence of Coulomb gauge representatives}\label{coulombgaugeconvergencesection}
In this section, we extend Uhlenbeck's gauge fixing result with Neumann boundary conditions \cite{u82} to $\LB2d$ connections. Moreover, we show that if a sequence of small-energy connections converges strongly in $\LaB2d$, then a subsequence of the Coulomb gauge representatives converges strongly in $\LB21$. These results can also be proved analogously for the gauge fixing result with Dirichlet boundary conditions in Theorem \ref{dirichletcoulombfixing}. Either boundary condition will suffice for our purposes, so we present only the results for gauge fixing with Neumann boundary conditions. We begin by stating Uhlenbeck's gauge fixing theorem.

\begin{theorem}[{\cite[Theorem 2.1]{u82}}]\label{uhlenbeckgaugefixing}
  There exist constants $\epsilon$ and $C$ such that if $A$ is an $\LB21$ connection with $\normlb{F_A}2{}<\epsilon$, then there exists an $\LGB22$ gauge transformation sending $A$ to a connection $\tA=d+\ta$ satisfying
  \begin{enumerate}
  \item the Coulomb condition $d^*\ta=0$ on $B^4$,
  \item the boundary condition $i^*{*\ta}=0$ on $\pB$, and
  \item the bound $\normlb\ta21\le C\normlb{F_A}2{}$.
  \end{enumerate}
\end{theorem}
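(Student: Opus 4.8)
The final statement is Uhlenbeck's Coulomb gauge theorem, so the plan is to run the continuity‑method argument of \cite{u82}, paying attention to the borderline regularity. Fix a trivialization of $P$ over $B^4$ and write $A = d+a$; after a preliminary gauge transformation (itself a consequence of a rougher gauge‑fixing, or folded into a covering of $B^4$ by small balls rescaled to unit size, as in \cite{u82}) we may assume $\normlb a21 \le C_0\normlb{F_A}2{}$, so that for each $s\in[0,1]$ the connection $d+sa$ has small curvature. Fix $\mu>0$ with $C_G C_\Lie C_S^2\,\mu \le \tfrac12$ (with $C_G$, $C_\Lie$, $C_S$ as in Proposition \ref{ymreplacementinequality}) and $\epsilon>0$ with $2C_G\epsilon<\mu$, and let $S\subset[0,1]$ be the set of $s$ for which $d+sa$ admits an $\LEB22$ gauge transformation producing $\tA_s=d+\ta_s$ with $d^*\ta_s=0$, $i^*{*}\ta_s=0$, and $\normlb{\ta_s}21\le\mu$. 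The theorem follows once $S$ is shown to be nonempty, relatively open, and relatively closed, since then $1\in S$ and the self‑improving estimate below upgrades the bound at $s=1$ to $\normlb{\ta_1}21\le 2C_G\normlb{F_A}2{}$, giving the conclusion with $C=2C_G$.

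Nonemptiness and openness come from a local solvability statement. For a connection $d+b$ with $\normlb b21$ small, write a gauge transformation near the identity as $g=\exp u$ with $u$ a $\mf g$‑valued $\LEB22$ function; the conditions $d^*(g(d+b))=0$ and $i^*{*}(g(d+b))=0$ become, after expanding $g$, a nonlinear elliptic system of the form $\Delta u = d^*b + Q(u,b)$ with a Neumann‑type boundary condition, where $Q$ collects the higher‑order terms and is controlled in $\LEB2{}$ by the Sobolev multiplications available in dimension four ($\LEB22\cdot\LEB21\to\LEB2{}$). Since $H^0(B^4)=\mb R$, the Neumann Laplacian on functions is Fredholm with kernel and cokernel the constants, and a contraction‑mapping argument on the complement of the constants solves for small $u$. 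Taking $b=0$ gives $0\in S$; for $s'$ near an $s\in S$ one applies the same construction in the gauge $\tA_s$ (where $\ta_s$ is small because $s\in S$), producing $\ta_{s'}$, whose norm is then brought below $\mu$ by the self‑improving estimate of the next paragraph. Only the explicit algebraic form of the gauge action, controlled by Sobolev multiplication, is used here, not smoothness of the abstract gauge‑group action — which is all one has at the borderline (Proposition \ref{borderlinecontinuity}).

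Closedness is the main obstacle, and it rests on a self‑improving a priori estimate. Let $s_i\to s$ with $s_i\in S$, so there are $\LEB22$ gauge transformations $g_i$ sending $d+s_ia$ to $\tA_i=d+\ta_i$ with $d^*\ta_i=0$, $i^*{*}\ta_i=0$, and $\normlb{\ta_i}21\le\mu$. Because $H^1(B^4)=0$, Corollary \ref{Di} with Neumann boundary conditions gives $\normlb{\ta_i}21\le C_G\normlb{d\ta_i}2{}$, and since $\normlb{F_{\tA_i}}2{}=\normlb{F_{d+s_ia}}2{}\le\normlb{F_A}2{}$, the identity $d\ta_i=F_{\tA_i}-\tfrac12[\ta_i\wedge\ta_i]$ yields
\[
  \normlb{\ta_i}21 \le C_G\normlb{F_A}2{} + C_G C_\Lie C_S^2\,\normlb{\ta_i}21^2 \le C_G\normlb{F_A}2{} + \tfrac12\normlb{\ta_i}21,
\]
hence $\normlb{\ta_i}21\le 2C_G\normlb{F_A}2{}<\mu$ uniformly — the estimate does not degenerate along the continuity path. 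With this uniform $\LaB21$ bound, differentiating $dg_i=g_i(s_ia)-\ta_i g_i$ exactly as in Lemma \ref{gaugetransformationregularity} bounds $g_i$ in $\LEB22$, so after passing to a subsequence $g_i\rightharpoonup g$ weakly in $\LEB22$ and $\ta_i\rightharpoonup\ta$ weakly in $\LaB21$; by Lemma \ref{weakgaugeconvergence}, applied with $A_i=d+s_ia\to d+sa$ and $B_i=\tA_i$, the limit $g$ is an $\LEB22$ gauge transformation sending $d+sa$ to $\tA=d+\ta$. Weak lower semicontinuity of the norm preserves $\normlb\ta21\le 2C_G\normlb{F_A}2{}<\mu$, and the conditions $d^*\ta=0$ and $i^*{*}\ta=0$ pass to the weak limit, so $s\in S$. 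Therefore $S=[0,1]$. Throughout, the delicacy is that at the borderline $\LEB22\not\hookrightarrow C^0(B^4)$, so every product of gauge‑transformation‑valued sequences must be handled through Sobolev multiplication into $\LEB2{}$ or $\LEB4{}$ and through Lemma \ref{weaktostronglemma}, rather than through uniform convergence.
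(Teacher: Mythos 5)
The paper does not prove Theorem \ref{uhlenbeckgaugefixing}: it cites it directly as \cite[Theorem 2.1]{u82}. The nearest internal analogue is the proof of Theorem \ref{dirichletcoulombfixing} (Dirichlet instead of Neumann boundary conditions), which proceeds via an open--closed argument on the set of $\LaB22$ connections satisfying the conclusion, together with the connectedness Lemma \ref{connectedlemma} and a final descent to $\LaB21$ by closedness (Lemma \ref{closedlemma2}). Your proposal follows the same overall continuity scheme, but has a genuine gap in the openness step and two smaller issues.

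\textbf{The openness step does not close at the borderline.} You write $g=\exp u$ with $u\in\LgB22$ and propose a contraction argument for $\Delta u=d^*b+Q(u,b)$ in $\LgB22$. But on a $4$--manifold the product of two $\LB22$ functions is \emph{not} in $\LB22$: in $\nabla^2(uv)=(\nabla^2u)v+2(\nabla u)(\nabla v)+u(\nabla^2v)$ the outer terms pair $L^2$ with $L^p$ ($p<\infty$ only) and land strictly below $L^2$. Consequently $u\mapsto e^u$ is not a smooth (or even $C^1$) map $\LgB22\to\LEB22$, and the quadratic-and-higher tail $Q(u,b)$ does not admit the Lipschitz estimate a fixed-point argument needs. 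This is exactly why the paper's Lemma \ref{implicitfunctiontheoremconstantboundary} and Lemma \ref{implicitfunctiondirichletfixing} search for $\gamma\in\LgB{2,\Dir}3$ (so $g\in\LGB23$) acting on $\LaB22$ connections, where the exponential, multiplication, and composition maps are smooth, and then recover the $\LaB21$ case purely through the closedness lemma plus density of $\LaB22$ connections. Your appeal to ``only the explicit algebraic form of the gauge action'' does not circumvent this: the implicit function theorem / Newton iteration still needs differentiability, which Proposition \ref{borderlinecontinuity} explicitly does not give (it only gives continuity).

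\textbf{Two smaller issues.} (i) The preliminary reduction ``we may assume $\normlb a21\le C_0\normlb{F_A}2{}$'' is stated as a throwaway but is itself the hard part of Uhlenbeck's argument (the broken gauge / covering-and-rescaling construction of \cite[Section 3]{u82}); if you grant yourself this you have essentially assumed the theorem up to a boundary condition. (ii) Even granting (i), the claim $\normlb{F_{d+s_ia}}2{}\le\normlb{F_A}2{}$ in the closedness estimate is not justified: $F_{d+sa}=s\,da+\tfrac{s^2}2[a\wedge a]$ has no reason to be dominated by $\normlb{F_A}2{}$ without the $L^2_1$ bound on $a$, and even then one only gets $\normlb{F_{d+sa}}2{}\le C_1\normlb{F_A}2{}$ for a constant $C_1>1$, which is enough to run the argument but should be said. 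Fixing (i) honestly and restructuring the openness step along the paper's ``prove it in higher regularity first, descend by closedness'' template (as Uhlenbeck herself does in \cite[Corollary 2.2]{u82}) would turn this into a correct proof.
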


We aim to extend this theorem to $\LB2d$ connections. See also \cite[Lemma 2.2]{s02}.

\begin{theorem}\label{l2dgaugefixing}
  There exist constants $\epsilon$ and $C$ such that if $A$ is an $\LB2d$ connection with $\normlb{F_A}2{}<\epsilon$, then there exists an $\LGB41$ gauge transformation sending $A$ to an $\LB21$ connection $\tA=d+\ta$ satisfying
  \begin{enumerate}
  \item the Coulomb condition $d^*\ta=0$ on $B^4$,
  \item the boundary condition $i^*{*\ta}=0$ on $\pB$, and
  \item the bound $\normlb\ta21\le C\normlb{F_A}2{}$.
  \end{enumerate}
\end{theorem}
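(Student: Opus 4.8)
The plan is to reduce Theorem \ref{l2dgaugefixing} to Uhlenbeck's gauge fixing result, Theorem \ref{uhlenbeckgaugefixing}, by a density argument: given an $\LB2d$ connection $A$ with small curvature, approximate it by smooth connections, gauge-fix those by Uhlenbeck, and pass to the limit. First I would choose a sequence $A_i = d + a_i$ of smooth connections converging to $A = d+a$ in $\LaB2d$; since $A\mapsto F_A$ is continuous on $\LaB2d$ (by the first proposition of Section \ref{l2dsection}), we have $\normlb{F_{A_i}}2{} \to \normlb{F_A}2{} < \epsilon$, so after passing to a tail we may assume $\normlb{F_{A_i}}2{} < \epsilon$ for all $i$ with $\epsilon$ the constant from Theorem \ref{uhlenbeckgaugefixing}. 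Applying that theorem gives $\LGB22$ gauge transformations $g_i$ sending $A_i$ to $\tA_i = d + \ta_i$ with $d^*\ta_i = 0$, $i^*{*\ta_i} = 0$, and $\normlb{\ta_i}21 \le C\normlb{F_{A_i}}2{}$.

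The uniform bound $\normlb{\ta_i}21 \le C\epsilon$ lets me extract a weakly convergent subsequence $\ta_i \rightharpoonup \ta$ in $\LaB21$, and the closed subspace cut out by $d^*\ta = 0$ and $i^*{*\ta} = 0$ is weakly closed, so the limit $\ta$ satisfies the same Coulomb and boundary conditions, and $\normlb{\ta}21 \le C\normlb{F_A}2{}$ by weak lower semicontinuity of the norm together with $\normlb{F_{A_i}}2{}\to\normlb{F_A}2{}$. It remains to produce the limiting gauge transformation and check it genuinely relates $A$ to $\tA = d+\ta$. Here I would invoke the convergence-of-gauge-transformations machinery of Section \ref{gaugeconvergencesection}: the connections $A_i \to A$ and $\tA_i \rightharpoonup \tA$ converge weakly in $\LaB4{}$ (indeed $A_i\to A$ strongly in $\LaB4{}$ by the $\LB2d\hookrightarrow\LB4{}$ embedding, and $\tA_i\rightharpoonup\tA$ weakly in $\LaB21$ hence in $\LaB4{}$), so by Lemma \ref{weakgaugeconvergence} a further subsequence of the $g_i$ converges weakly in $\LGB41$ to a gauge transformation $g$ with $g(A) = \tA$. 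Since $\tA = d+\ta$ is an $\LB21$ connection and $g$ is an $\LB41$ gauge transformation relating it to $A$, this is exactly the assertion of the theorem. (One can also observe post hoc that $g$ is automatically more regular, but $\LGB41$ is all that is claimed.)

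The main obstacle I expect is the passage to the limit in the gauge relation, precisely because products of weakly convergent sequences need not converge. The equation $dg_i = g_i a_i - b_i g_i$ has two factors each converging only weakly, so naïvely taking limits fails. The resolution, already packaged in Lemma \ref{weakgaugeconvergence}, is that the $g_i$ are uniformly bounded in $\LEB\infty{}$ (being $G$-valued) and in $\LEB41$ (from the equation and the $\LaB4{}$ bounds on $a_i,b_i$), so they converge \emph{strongly} in $\LEB4{}$; then the Sobolev multiplication $\LL4{}{B^4}\times\LL21{B^4}\to\LL2{}{B^4}$ — or, for the $\LaB4{}$-only version, Lemma \ref{weaktostronglemma} with $p=4$, $q=4/3$ — gives weak convergence of each product, and one takes the weak limit of the equation. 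A secondary technical point is verifying that the weak $\LaB21$ limit $\ta$ still satisfies the boundary condition $i^*{*\ta_i} = 0 \Rightarrow i^*{*\ta} = 0$; this follows because the trace map $i^*\colon\LaB21\to\LapB2{1/2}$ is bounded, hence weakly continuous, so $i^*{*\ta_i}\rightharpoonup i^*{*\ta}$ in $\LL21{2}{\pB}$-type spaces, forcing the limit to vanish.
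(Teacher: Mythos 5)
Your proposal is correct and follows essentially the same route as the paper: approximate by smooth connections, apply Uhlenbeck's Theorem \ref{uhlenbeckgaugefixing} to those, extract a weak $\LaB21$ limit of the Coulomb representatives (noting the constraints and the bound pass to the limit by linearity, weak closedness, and weak lower semicontinuity of the norm combined with $\normlb{F_{A_i}}2{}\to\normlb{F_A}2{}$), and invoke Lemma \ref{weakgaugeconvergence} to get the limiting $\LGB41$ gauge transformation relating $A$ to $\tA$. The paper packages the convergence step as a separate closedness statement (Proposition \ref{l2dclosed}) and then applies it to a smooth approximating sequence, but the underlying argument and the role of each supporting lemma are the same as what you wrote.
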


Theorem \ref{uhlenbeckgaugefixing} tells us that Theorem \ref{l2dgaugefixing} holds for $\LB21$ connections, so we use a closedness argument analogous to \cite[Lemma 2.4]{u82} and Lemma \ref{closedconstantboundary} to extend the result to $\LB2d$ connections.

\begin{proposition}\label{l2dclosed}
  Let $A_i$ be a sequence of $\LaB2d$ connections with $\normlb{F_{A_i}}2{}<\epsilon$ that satisfy Theorem \ref{l2dgaugefixing} and converge in $\LaB2d$ to a connection $A$. Then $A$ also satisfies Theorem \ref{l2dgaugefixing}.
  \begin{proof}
    Let $g_i$ and $\tA_i$ be the gauge transformations and connections given to us by Theorem \ref{l2dgaugefixing}. The $\normlb{F_{A_i}}2{}$ are bounded, and so the inequality $\normlb{\ta_i}21\le C\normlb{F_{A_i}}2{}$ tells us that the $\normlb{\ta_i}21$ are bounded also. Hence, passing to a subsequence, the $\ta_i$ have a weak $\LaB21$ limit, which we call $\ta$. The conditions $d^*\ta_i=0$ and $i^*{*\ta_i}=0$ are linear, and so are preserved under weak limits, giving us $d^*\ta=0$ and $i^*{*\ta}=0$. Meanwhile, $A_i$ converging to $A$ in $\LaB2d$ implies that $\normlb{F_{A_i}}2{}$ converges to $\normlb{F_A}2{}$, and norms are lower semicontinuous under weak limits, so the inequality $\normlb{\ta_i}21\le C\normlb{F_{A_i}}2{}$ is preserved in the weak limit, giving us $\normlb\ta21\le C\normlb{F_A}2{}$. Finally, since the $a_i$ and $\ta_i$ converge weakly in $\LaB4{}$, by Lemma \ref{weakgaugeconvergence}, a subsequence of the $g_i$ converges weakly in $\LGB41$ to a gauge transformation $g$ sending $A$ to $\tA$.
  \end{proof}
\end{proposition}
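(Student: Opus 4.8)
The plan is to run the standard closedness argument: apply Theorem~\ref{l2dgaugefixing} to each $A_i$, extract weak limits of the resulting Coulomb representatives and of the gauge transformations, and verify that every conclusion of the theorem survives the limit.

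First I would apply Theorem~\ref{l2dgaugefixing} to each $A_i=d+a_i$, obtaining $\LGB41$ gauge transformations $g_i$ sending $A_i$ to $\LB21$ connections $\tA_i=d+\ta_i$ with $d^*\ta_i=0$, $i^*{*\ta_i}=0$, and $\normlb{\ta_i}21\le C\normlb{F_{A_i}}2{}$. Since the $\LaB2d$ norm dominates the $\LaB4{}$ norm and controls $\normlb{da_i}2{}$, the convergence $A_i\to A$ in $\LaB2d$ gives $da_i\to da$ and $[a_i\wedge a_i]\to[a\wedge a]$ in $\LFB2{}$, hence $F_{A_i}\to F_A$ in $\LFB2{}$; in particular the $\normlb{F_{A_i}}2{}$ are uniformly bounded and converge to $\normlb{F_A}2{}$. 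Thus the bound~(3) makes the $\normlb{\ta_i}21$ uniformly bounded, so after passing to a subsequence we may assume $\ta_i\rightharpoonup\ta$ weakly in $\LaB21$ for some one-form $\ta$.

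Next I would read off the conclusions of Theorem~\ref{l2dgaugefixing} for the limit. The constraints $d^*\ta_i=0$ and $i^*{*\ta_i}=0$ are continuous linear conditions on $\LaB21$, the latter through the bounded trace map into $\LpB2{1/2}$, so they pass to the weak limit, giving $d^*\ta=0$ and $i^*{*\ta}=0$. Weak lower semicontinuity of the $\LaB21$ norm together with $\normlb{F_{A_i}}2{}\to\normlb{F_A}2{}$ upgrades $\normlb{\ta_i}21\le C\normlb{F_{A_i}}2{}$ to $\normlb\ta21\le C\normlb{F_A}2{}$. It remains to exhibit a single $\LGB41$ gauge transformation carrying $A$ to $\tA:=d+\ta$. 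Since $A_i\to A$ in $\LaB4{}$ (using $\LaB2d\hookrightarrow\LaB4{}$) and $\ta_i\rightharpoonup\ta$ weakly in $\LaB21$, hence weakly in $\LaB4{}$, the $L^4$ case of Lemma~\ref{weakgaugeconvergence}, applied to the sequences $A_i$ and $\tA_i$, yields a further subsequence of the $g_i$ converging weakly in $\LGB41$ to a gauge transformation $g$ sending $A$ to the weak $\LaB4{}$ limit of the $\tA_i$, which is exactly $\tA$. Then $g\in\LGB41$, $\tA$ is an $\LB21$ connection, and conditions~(1)--(3) hold, so $A$ satisfies Theorem~\ref{l2dgaugefixing}.

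The only place the $\LaB2d$ setting demands more than a mechanical limiting argument is the last step: at borderline regularity we have no $C^0$ control on the $g_i$, so we cannot pass to the limit in the equation $dg_i=g_ia_i-\ta_ig_i$ naively. This is precisely the difficulty that Lemma~\ref{weakgaugeconvergence} is designed to resolve, exploiting that the $g_i$ are uniformly bounded in $\LEB\infty{}$ and that the product of a strongly $\LaB4{}$-convergent factor with a weakly $\LaB4{}$-convergent factor converges weakly in $\LEB2{}$. On our side the only care needed is to order the subsequence extractions correctly --- first thinning so that $\ta_i\rightharpoonup\ta$, then applying Lemma~\ref{weakgaugeconvergence} --- so that the connection the limiting gauge transformation acts on is identified with $d+\ta$, and to invoke $\LaB2d\hookrightarrow\LaB4{}$ so that the original sequence $A_i$ meets the $\LaB4{}$ hypotheses of that lemma.
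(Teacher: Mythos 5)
Your proof is correct and follows essentially the same approach as the paper: extract a weak $\LaB21$ limit of the Coulomb representatives using the uniform bound, pass the linear constraints and the norm inequality to the limit via weak lower semicontinuity and the convergence $\normlb{F_{A_i}}2{}\to\normlb{F_A}2{}$, and invoke the $L^4$ case of Lemma~\ref{weakgaugeconvergence} to produce the limiting $\LGB41$ gauge transformation. The extra details you supply (the explicit verification that $F_{A_i}\to F_A$ in $\LFB2{}$ from $\LaB2d$ convergence, the care about ordering the subsequence extractions) are all consistent with, and make explicit, what the paper's proof implicitly uses.
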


\begin{proof}[Proof of Theorem \ref{l2dgaugefixing}]
  Let $A$ be an $\LB2d$ connection with $\normlb{F_A}2{}<\epsilon$. Consider a sequence of smooth connections $A_i$ that converge to $A$ in $\LB2d$ and also have $\normlb{F_A}2{}<\epsilon$. The connections $A_i$ satisfy Theorem \ref{uhlenbeckgaugefixing}, and hence also Theorem \ref{l2dgaugefixing}. Then $A$ satisfies Theorem \ref{l2dgaugefixing} by Proposition \ref{l2dclosed}.
\end{proof}

In addition, we show that the weak subsequence convergence of the Coulomb gauge representatives above can be strengthened to strong subsequence convergence. Note, however, that taking a subsequence is necessary because Coulomb gauge is invariant under constant gauge transformations. By applying constant gauge transformations to a fixed connection in Coulomb gauge, we can construct a sequence of gauge equivalent connections in Coulomb gauge that does not converge, but because the gauge group is compact, we still expect convergence of a subsequence. In higher regularity, we can resolve this issue by considering infinitesimal gauge transformations that are orthogonal to the constant gauge transformations, but in a borderline Sobolev space, infinitemsimal gauge transformations are not so well-behaved, so a more delicate argument would be necessary. For our purposes, convergence of a subsequence will suffice.

\begin{proposition}\label{gaugefixingcontinuous}
  There exists an $\epsilon>0$ with the following significance. Let $A_i$ be a sequence of $\LaB2d$ connections with $\normlb{F_{A_i}}2{}<\epsilon$ that converge strongly in $\LaB2d$ to a connection $A$. Let $g_i$ and $g$ be the gauge transformations sending $A_i$ and $A$ to the Coulomb gauge representatives $\tA_i$ and $\tA$ constructed in Proposition \ref{l2dclosed}. Then, after passing to a subsequence, the $g_i$ converge strongly to $g$ in $\LGB41$ and the $\tA_i$ converge strongly to $\tA$ in $\LaB21$.
  \begin{proof}
    To promote the weak convergence in Proposition \ref{l2dclosed} to strong convergence, we use Lemma \ref{weaktostronglemma}. We know that, after passing to a subsequence, the $g_i$ converge weakly to $g$ in $\LGB41$. From the compact Sobolev embedding $\LGB41\hookrightarrow\LGB2{}$, we therefore know that the $g_i$ converge strongly to $g$ in $\LGB2{}$. In addition, we know that the $F_{A_i}$ converge strongly in $\LFB2{}$ to $F_A$. Hence, Lemma \ref{weaktostronglemma} tells us that the $F_{\tA_i}=g_iF_{A_i}g_i^{-1}$ converge strongly in $\LFB2{}$ to $gF_Ag^{-1}=F_\tA$.

    The next step is to show that convergence of curvature and Coulomb gauge implies convergence of the connections. Since $H^1(B^4)=0$, Corollary \ref{Di} tells us that $d+d^*\colon\LeB{2,\Neu}1\to\LeB2{}$ is a Fredholm operator with no kernel on one-forms. Hence, for some constant $C_G$, we have the inequality
    \begin{equation*}
      \normlb b21\le C_G\normlb{(d+d^*)b}2{}
    \end{equation*}
    for all $b\in\LaB{2,\Neu}1$. In particular, noting that $\ta_i,\ta\in\LaB{2,\Neu}1$ and $d^*\ta_i=d^*\ta=0$, we have
    \begin{equation*}
      \begin{split}
        \normlb{\ta_i-\ta}21&\le C_G\normlb{d(\ta_i-\ta)}2{}\\
        &=\normlb{F_{\tA_i}-F_\tA-\tfrac12\left([\ta_i\wedge\ta_i]-[\ta\wedge\ta]\right)}2{}\\
        &\le\normlb{F_{\tA_i}-F_\tA}2{}+\tfrac12\normlb{[(\ta_i+\ta)\wedge(\ta_i-\ta)]}2{}\\
        &\le\normlb{F_{\tA_i}-F_\tA}2{}+\tfrac12 C_\Lie C_S^2\left(\normlb{\ta_i}21+\normlb\ta21\right)\normlb{\ta_i-\ta}21\\
        &\le\normlb{F_{\tA_i}-F_\tA}2{}+\tfrac12 C_\Lie C_S^2C\left(\normlb{F_{A_i}}2{}+\normlb{F_A}2{}\right)\normlb{\ta_i-\ta}21,
      \end{split}
    \end{equation*}
    where $C_\Lie$ is the operator norm of the Lie bracket $[\cdot,\cdot]$, $C_S$ is the operator norm of the Sobolev embedding $\LB21\hookrightarrow\LB4{}$, and $C$ is the constant from Theorem \ref{l2dgaugefixing}. Thus we require that $\epsilon<\tfrac12(C_\Lie C_S^2C)^{-1}$, so our bounds $\normlb{F_{A_i}}2{}<\epsilon$ and $\normlb{F_A}2{}\le\epsilon$ imply that
    \begin{equation*}
      \normlb{\ta_i-\ta}21\le\normlb{F_{\tA_i}-F_\tA}2{}+\tfrac12\normlb{\ta_i-\ta}21.
    \end{equation*}
    Rearranging,
    \begin{equation*}
      \normlb{\ta_i-\ta}21\le2\normlb{F_{\tA_i}-F_\tA}2{}.
    \end{equation*}
    Hence, because the $F_{\tA_i}$ converge strongly to $F_\tA$ in $\LFB2{}$, the $\ta_i$ converge strongly to $\ta$ in $\LaB21$, as desired. Finally, by Lemma \ref{stronggaugeconvergence}, after passing to a subsequence, the $g_i$ converge strongly to $g$ in $\LGB41$.
  \end{proof}
\end{proposition}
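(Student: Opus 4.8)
The plan is to bootstrap the weak convergence already in hand from Proposition \ref{l2dclosed} up to strong convergence, in the order: (i) strong $\LGB2{}$ convergence of the $g_i$; (ii) strong $\LFB2{}$ convergence of the curvatures $F_{\tA_i}$; (iii) strong $\LaB21$ convergence of the forms $\ta_i$; and finally (iv) strong $\LGB41$ convergence of the $g_i$.

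First I would invoke Proposition \ref{l2dclosed} to pass to a subsequence along which $g_i\rightharpoonup g$ weakly in $\LGB41$ and $\ta_i\rightharpoonup\ta$ weakly in $\LaB21$, with $d^*\ta_i=d^*\ta=0$, $i^*{*\ta_i}=i^*{*\ta}=0$, and the uniform bounds $\normlb{\ta_i}21\le C\normlb{F_{A_i}}2{}$, $\normlb\ta21\le C\normlb{F_A}2{}$ from Theorem \ref{l2dgaugefixing}. By the compact Sobolev embedding $\LGB41\hookrightarrow\LGB2{}$ the $g_i$ converge strongly to $g$ in $\LGB2{}$, and since inversion is the conjugate transpose, $g_i^{-1}\to g^{-1}$ strongly in $\LGB2{}$ as well. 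Because $A_i\to A$ in $\LaB2d$, the curvatures $F_{A_i}\to F_A$ strongly in $\LFB2{}$. Applying Lemma \ref{weaktostronglemma} twice to the product $F_{\tA_i}=g_iF_{A_i}g_i^{-1}$ (with exponents $p=q=2$) then yields $F_{\tA_i}\to F_\tA$ strongly in $\LFB2{}$.

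Next I would convert curvature convergence into connection convergence using ellipticity of $d+d^*$ in Coulomb--Neumann gauge. Since $H^1(B^4)=0$, Corollary \ref{Di} gives that $d+d^*\colon\LeB{2,\Neu}1\to\LeB2{}$ has no kernel on one-forms, hence a bound $\normlb b21\le C_G\normlb{(d+d^*)b}2{}$ for all one-forms $b\in\LaB{2,\Neu}1$. Taking $b=\ta_i-\ta$, using $d^*(\ta_i-\ta)=0$ and $d\ta_i=F_{\tA_i}-\tfrac12[\ta_i\wedge\ta_i]$, and estimating the bracket difference $[\ta_i\wedge\ta_i]-[\ta\wedge\ta]$ in $\LFB2{}$ by $C_\Lie C_S^2(\normlb{\ta_i}21+\normlb\ta21)\normlb{\ta_i-\ta}21$, I obtain an inequality of the form
\[
  \normlb{\ta_i-\ta}21\le C_G\normlb{F_{\tA_i}-F_\tA}2{}+C_G C_\Lie C_S^2 C\big(\normlb{F_{A_i}}2{}+\normlb{F_A}2{}\big)\normlb{\ta_i-\ta}21.
\]
Choosing $\epsilon$ small enough that $C_G C_\Lie C_S^2 C\epsilon\le\tfrac14$, the second term on the right is absorbed into the left side, giving $\normlb{\ta_i-\ta}21\le 2C_G\normlb{F_{\tA_i}-F_\tA}2{}\to0$; thus $\tA_i\to\tA$ strongly in $\LaB21$. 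Finally, since $A_i\to A$ strongly in $\LaB2d$ and hence in $\LaB4{}$, while $\tA_i\to\tA$ strongly in $\LaB21$ and hence in $\LaB4{}$, the second part of Lemma \ref{stronggaugeconvergence} produces, after one more subsequence, the strong $\LGB41$ convergence $g_i\to g$, completing the argument.

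I expect the main obstacle to be step (ii): because we work at the borderline regularity $\LGB41$ on a $4$-manifold, there is no embedding into $C^0$, so the triple product $g_iF_{A_i}g_i^{-1}$ cannot be controlled by naive multiplication estimates. The essential device is Lemma \ref{weaktostronglemma}, which trades on the fact that $G$-valued functions act isometrically on every $L^p$; this supplies the norm convergence $\normlb{F_{\tA_i}}2{}=\normlb{F_{A_i}}2{}\to\normlb{F_A}2{}=\normlb{F_\tA}2{}$ automatically from the pointwise unitarity of the $g_i$, which together with the weak limit forces strong convergence. The only other place needing care is the smallness of $\epsilon$, which is exactly what lets the quadratic term in the elliptic estimate of step (iii) be absorbed.
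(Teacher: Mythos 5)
Your proposal follows the same route as the paper: compact embedding to get strong $\LGB2{}$ convergence of the $g_i$, Lemma \ref{weaktostronglemma} to obtain strong $\LFB2{}$ convergence of $F_{\tA_i}=g_iF_{A_i}g_i^{-1}$, the Neumann elliptic estimate from Corollary \ref{Di} with absorption of the quadratic bracket term to convert curvature convergence into strong $\LaB21$ convergence of $\ta_i$, and finally Lemma \ref{stronggaugeconvergence} for strong $\LGB41$ convergence of $g_i$. The only difference is cosmetic: you carry the ellipticity constant $C_G$ through the final inequality (arriving at $\normlb{\ta_i-\ta}21\le 2C_G\normlb{F_{\tA_i}-F_\tA}2{}$), whereas the paper's displayed chain drops $C_G$ after the first line — your bookkeeping is the more careful one, and both yield the same conclusion.
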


\subsection{Coulomb gauge with fixed boundary}\label{fixedboundarysection}
In this section, we prove a gauge fixing result where the gauge transformation is fixed to be the identity on the boundary $\pB$, as a prelude to proving the gauge fixing result with Dirichlet boundary conditions on the connection in Section \ref{dirichletcoulombsection}. This result is present in Uhlenbeck's paper \cite{u82b} but with $L^\infty$ bounds on the connection. Here, our connection is in $\LaB21$ and may be unbounded in $L^\infty$. In our result in this section, we require bounds on the connection itself rather than its curvature as in \cite{u82} and Section \ref{dirichletcoulombsection}. Indeed, because the boundary value of the connection $A=d+a$ is fixed under the gauge transformation, curvature bounds alone are insufficient, and we also need to control the $\LapB2{1/2}$ norm of the boundary value $i^*a$. We finish this section with Proposition \ref{uniqueconstantboundary} where we show that the Coulomb gauge representative found by Proposition \ref{gaugefixingconstantboundary} is unique.

\begin{proposition}\label{gaugefixingconstantboundary}
  Let $B^4$ be a smooth $4$-ball with an arbitrary Riemannian metric. There exists constants $\epsilon$ and $C$ such that if $A=d+a$ is any $\LaB21$ connection with $\normlb a21<\epsilon$, then there exists an $\LGB22$ gauge transformation $g$ sending $A$ to $\tA=d+\ta$ such that

  \begin{enumerate}
  \item $i^*g$ is the identity gauge transformation on $\pB$,
  \item $\tA=g(A)$ is in Coulomb gauge, that is, $d^*\ta=0$, and
  \item $\normlb\ta21\le C\left(\normlb{F_A}2{}+\normlpb{i^*a}2{1/2}\right)$.\label{constantboundaryfixinginequality}
  \end{enumerate}
  Moreover, if $A$ is actually in $\LaB22$, then $g$ is in $\LGB23$.
\end{proposition}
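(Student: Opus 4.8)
The plan is to show first that the estimate (3) is automatic once a Coulomb representative with the right boundary behaviour and small $\LaB21$-norm exists, then to produce such a representative by a continuity argument in the spirit of Uhlenbeck's proof of Theorem \ref{uhlenbeckgaugefixing}, and finally to read off the extra regularity by elliptic bootstrapping. The feature distinguishing this from the Neumann case is that the Dirichlet condition $i^*g=\Id$ forces $i^*\ta=i^*a$ and makes the relevant linearized operator a \emph{Dirichlet} covariant Laplacian, which is an isomorphism because a $d_\ta$-parallel $\mf g$-valued function vanishing on $\pB$ is identically zero.

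\emph{The estimate is automatic.} Suppose $g$ is a gauge transformation of class at least $\LGB22$ with $i^*g=\Id$, with $\tA=g(A)=d+\ta$ satisfying $d^*\ta=0$, and with $\normlb\ta21$ below a threshold to be fixed. From $i^*g=\Id$ we get $i^*\ta=i^*a$, and since conjugation by $g$ is a pointwise isometry, $\normlb{F_\tA}2{}=\normlb{F_A}2{}$, so $d\ta=F_\tA-\tfrac12[\ta\wedge\ta]$ gives $\normlb{d\ta}2{}\le\normlb{F_A}2{}+\tfrac12C_\Lie C_S^2\normlb\ta21^2$. Write $\ta=\ta_0+\ta_1$, where $\ta_1\in\LaB21$ is an extension of $i^*a$ from the inverse trace theorem \cite[Theorem 7.53]{a75}, so $\normlb{\ta_1}21\le C\normlpb{i^*a}2{1/2}$ and $i^*\ta_0=i^*\ta-i^*\ta_1=0$, i.e. $\ta_0\in\LaB{2,\Dir}1$. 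Since $H^1(B^4,\pB)=0$, Corollary \ref{Di} gives $\normlb{\ta_0}21\le C_G\normlb{(d+d^*)\ta_0}2{}$; combining this with $d^*\ta_0=-d^*\ta_1$ and the bound on $\normlb{d\ta}2{}$ yields
\begin{equation*}
  \normlb\ta21\le C'\bigl(\normlb{F_A}2{}+\normlpb{i^*a}2{1/2}\bigr)+C''\normlb\ta21^2,
\end{equation*}
and fixing the threshold so that $C''\normlb\ta21\le\tfrac12$ absorbs the quadratic term and gives (3). So everything reduces to existence of such a $g$ with $\normlb\ta21$ small whenever $\normlb a21<\epsilon$.

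\emph{Existence.} For a smooth $A=d+a$ with $\normlb a21<\epsilon$, connect it to the trivial connection along $A_t=d+ta$, and let $T\subseteq[0,1]$ be the set of $t$ admitting a (smooth) gauge transformation with (1), (2) and $\normlb{\ta_t}21$ below the threshold; then $0\in T$ via $g=\Id$. For openness at $t_0\in T$ with representative $g_{t_0}$, note $g_{t_0}(A_t)$ differs from $\tA_{t_0}=d+\ta_{t_0}$ by $(t-t_0)\,g_{t_0}a\,g_{t_0}^{-1}$, which for $t$ near $t_0$ is small in every Sobolev norm; so it suffices, for a connection $d+\alpha$ close in a high Sobolev norm to the small Coulomb connection $d+\ta_{t_0}$, to solve $d^*\!\bigl(\exp(\zeta)(d+\alpha)\bigr)=0$ for small $\zeta$ with $i^*\zeta=0$ (then $g_t=\exp(\zeta)g_{t_0}$ has $i^*g_t=\Id$). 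This follows from the implicit function theorem in high-regularity Sobolev spaces of $\mf g$-valued functions, since the $\zeta$-linearization at $\zeta=0$, $\alpha=\ta_{t_0}$ is $-d^*d_{\ta_{t_0}}=-(\Delta+d^*[\ta_{t_0},\,\cdot\,])$, which has trivial kernel: for $\zeta$ in its kernel, integrating by parts (using $i^*\zeta=0$) and the Poincar\'e inequality $\normlb\zeta21\le C\normlb{d\zeta}2{}$ give $0=\normlb{d\zeta}2{}^2+\pairlb{[\ta_{t_0},\zeta]}{d\zeta}$ with $|\pairlb{[\ta_{t_0},\zeta]}{d\zeta}|\le C_\Lie C_SC\,\normlb{\ta_{t_0}}4{}\,\normlb{d\zeta}2{}^2$, so $d\zeta=0$ once $\normlb{\ta_{t_0}}4{}\le C_S\normlb{\ta_{t_0}}21$ is below a fixed constant, whence $\zeta=0$; being a compact perturbation of the Dirichlet Laplacian it is then Fredholm of index zero, hence an isomorphism, and by elliptic regularity (the coefficients $\ta_{t_0}$ are smooth) at every Sobolev level. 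For closedness, if $t_i\to t$ with $t_i\in T$, the estimate applied to $A_{t_i}$ bounds $\normlb{\ta_{t_i}}21$ uniformly and, shrinking $\epsilon$, keeps it below the threshold since $\normlb{F_{A_{t_i}}}2{}+\normlpb{i^*(t_ia)}2{1/2}\le C\epsilon$; bootstrapping the Coulomb equation with $a$ smooth then bounds $\ta_{t_i}$ and $\xi_{t_i}=\log g_{t_i}$ in every Sobolev norm, so a subsequence converges to a solution for $t$. Hence $T=[0,1]$, giving a smooth $g$. For a general $A=d+a\in\LaB21$ with $\normlb a21<\epsilon$, take smooth $a_i\to a$ in $\LaB21$ with $\normlb{a_i}21<\epsilon$, obtain smooth $g_i$ from the smooth case with (by the estimate) $\normlb{\ta_i}21\le C'(\normlb{F_{A_i}}2{}+\normlpb{i^*a_i}2{1/2})$ uniformly bounded, and pass to a weak limit $\ta_i\rightharpoonup\ta$ in $\LaB21$, so $d^*\ta=0$ and, by lower semicontinuity of the norm together with $F_{A_i}\to F_A$ in $\LFB2{}$ and $i^*a_i\to i^*a$ in $\LapB2{1/2}$, (3) holds; by Lemma \ref{weakgaugeconvergence} a further subsequence of the $g_i$ converges weakly in $\LGB22$ to $g$ with $g(A)=\tA$, and since $i^*g_i=\Id$ for all $i$ and the trace is bounded linear on $M_N$-valued $\LB22$ maps, $i^*g=\Id$. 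This gives (1)--(3); and if moreover $A\in\LaB22$, then $F_\tA=gF_Ag^{-1}\in\LFB21$ and $g$ solves the nonlinear elliptic equation $d^*\!\bigl(gag^{-1}-(dg)g^{-1}\bigr)=0$ with Dirichlet data $i^*g=\Id$, whose principal part is the Laplacian, so bootstrapping from the known $\LGB22$ regularity and feeding in $a\in\LaB22$ upgrades $g$ to $\LGB23$.

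\emph{Main obstacle.} The difficulty is the borderline regularity: $\LGB22$ gauge transformations on a $4$-manifold are not continuous and the gauge action is only continuous, not smooth, so the implicit function theorem is available only for smooth (or more regular) connections — hence the two-stage argument, with the continuity method carried out for smooth $A$ and the general $\LaB21$ case recovered by approximation together with the weak-compactness input of Lemma \ref{weakgaugeconvergence}. A related point, which forces the a priori estimate to be proved independently of the continuity method, is that $\epsilon$ must be controlled purely by $\normlb a21$, since in the approximation step the smooth $a_i$ converge to $a$ only in $\LaB21$ and in no stronger norm.
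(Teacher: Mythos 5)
Your overall strategy — a priori estimate via the Fredholm operator $(d+d^*,i^*)$, a continuity/open-closed argument for regular connections, and an approximation plus weak-limit step for general $\LaB21$ connections — is essentially the paper's proof, with the cosmetic difference that you run the open-closed argument along the explicit path $A_t=d+ta$ for smooth $A$ rather than over the connected set $\{\normlb a21<\epsilon\}$ at the $\LaB22$ level, and you recover $\LaB21$ by approximating with smooth connections rather than with $\LaB22$ ones. Your first paragraph and your implicit-function-theorem computation agree with the paper's Lemma \ref{aprioribounds} (the $\LaB21$ version) and Lemma \ref{implicitfunctiontheoremconstantboundary}; the weak-limit step via Lemma \ref{weakgaugeconvergence} is also correct.

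The gap is in the higher-regularity claim and, more quietly, in the closedness step of your smooth continuity method; both trace back to the same missing ingredient, the $\LaB22$-level a priori estimate. You try to obtain $g\in\LGB23$ by bootstrapping the equation $d^*\bigl(gag^{-1}-(dg)g^{-1}\bigr)=0$ directly, but this does not close: expanding, the right-hand side of $\Delta g=\dots$ contains a quadratic term of the form $\pair{dg}{\nabla g^{-1}}g$, and with $\nabla g\in\LB21\hookrightarrow\LB4{}$ this product is only in $\LB2{}$ with one derivative in $\LB{4/3}{}$, which under elliptic regularity returns $g\in\LGB22$ exactly — the Sobolev index $-1$ does not improve. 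The same borderline phenomenon blocks a direct bootstrap of $\ta$ from the Coulomb system, since $[\ta\wedge\ta]\in\LFB2{}$ gives back only $\ta\in\LaB21$. To get anywhere one must first know $\ta\in\LaB22$, and the paper obtains this not by bootstrap but by proving a second a priori estimate $\normlb\ta22\le C\bigl(\normlb{\nabla_AF_A}2{}+\normlb{F_A}2{}+\normlpb{i^*a}2{3/2}\bigr)$ (which uses the refined multiplication bound $\normlb{\nabla[\ta\wedge\ta]}2{}\le C\normlb\ta22\normlb\ta4{}$ to absorb the quadratic term — useful as an a priori estimate but not as a regularity argument) and extracting $\ta$ as a weak $\LaB22$ limit of $\LaB22$-bounded Coulomb representatives; only then does the bootstrap $dg=ga-\ta g$ with $a,\ta\in\LaB22$ give $g\in\LGB23$. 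In your argument the approximating sequence converges to $a$ only in $\LaB21$, so your $\ta_i$ are not uniformly bounded in $\LaB22$ and your $\ta$ is not known to lie in $\LaB22$; and your closedness step ("bootstrapping the Coulomb equation with $a$ smooth then bounds \dots in every Sobolev norm") relies tacitly on this unproven $\LaB22$ estimate to get the first improvement beyond $\LaB21$. Fix: prove the $\LaB22$ a priori estimate and, when $A\in\LaB22$, approximate in $\LaB22$ rather than $\LaB21$ so the weak limit $\ta$ is in $\LaB22$, then bootstrap $g$ from $dg=ga-\ta g$ as in the paper's Lemma \ref{closedconstantboundary}.
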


We would like to find $g$ using the implicit function theorem. However, doing so requires the gauge group to have a differentiable exponential map, which is only true in higher regularity. Hence, we proceed similarly to the proof of Uhlenbeck's gauge fixing theorem with Neumann boundary conditions in \cite{u82}: We show that the space of $\LaB22$ connections satisfying Proposition \ref{gaugefixingconstantboundary} is both open and closed in $\LaB22$, and that the space of $\LaB21$ connections satisfying Proposition \ref{gaugefixingconstantboundary} is closed in $\LaB21$.

We first prove a priori bounds on connections in Coulomb gauge.

\begin{lemma}\label{aprioribounds}
  There exist constants $\epsilon$ and $C$, such that if $A=d+a$ is an $\LB21$ connection with $\normlb a4{}<\epsilon$ in Coulomb gauge $d^*a=0$, then
  \begin{equation*}
    \normlb a21\le C\left(\normlb{F_A}2{}+\normlpb{i^*a}2{1/2}\right).
  \end{equation*}
  Furthermore, if $A$ is an $\LaB22$ connection, then
  \begin{equation*}
    \normlb a22\le C\left(\normlb{\nabla_AF_A}2{}+\normlb{F_A}2{}+\normlpb{i^*a}2{3/2}\right).
  \end{equation*}
  \begin{proof}
    Because $H^1(B,\pB)=0$, Corollary \ref{Di} tells us that
    \begin{equation*}
      d+d^*\colon\LeB{2,\Dir}1\to\LeB2{}
    \end{equation*}
    is a Fredholm operator that is injective on one-forms. Because the trace map $i^*\colon\LB21\to\LpB2{1/2}$ is surjective, we conclude that
    \begin{equation*}
      (d+d^*,i^*)\colon\LeB21\to\LeB2{}\times\LepB2{1/2}
    \end{equation*}
    is also a Fredholm operator that is injective on one-forms. Indeed, the kernel of $(d+d^*,i^*)$ on $\LeB21$ is the same as the kernel of $d+d^*$ on $\LeB{2,\Dir}1$. Moreover, Corollary \ref{Di} tells us that $\range(d+d^*,i^*)\oplus(\mc H^\Dir\times\{0\})$ contains all of $\LeB2{}\times\{0\}$, and the surjectivity of $i^*$ tells us that, for any $\alpha_\partial\in\LepB2{1/2}$, the image contains a $(\beta,\alpha_\partial)$ for some $\beta$. Hence, $\range(d+d^*,i^*)\oplus(\mc H^\Dir\times\{0\})$ contains all of $\LeB2{}\times\LepB2{1/2}$, so $(d+d^*,i^*)$ is Fredholm.

    Likewise, because $d+d^*\colon\LeB{2,\Dir}2\to\LeB21$ is Fredholm and injective on one-forms and $i^*\colon\LeB22\to\LepB2{3/2}$ is surjective, we know that
    \begin{equation*}
      (d+d^*,i^*)\colon\LeB22\to\LeB21\times\LepB2{3/2}
    \end{equation*}
    is Fredholm and injective on one-forms. Hence, using $d^*a=0$, we have bounds
    \begin{align}
      \normlb a21&\le C_G\left(\normlb{da}2{}+\normlpb{i^*a}2{1/2}\right),\text{ and}\label{Dibound1}\\
      \normlb a22&\le C_G\left(\normlb{da}21+\normlpb{i^*a}2{3/2}\right).\label{Dibound2}
    \end{align}
    for some constant $C_G$. It remains to bound $da$ in terms of $F_A$. Using $da=F_A-\frac12[a\wedge a]$, we compute
    \begin{equation*}
      \normlb{da}2{}\le\normlb{F_A}2{}+\tfrac12 C_\Lie C_S\normlb a4{}\normlb a21,
    \end{equation*}
    where $C_\Lie$ is the operator norm of the Lie bracket and $C_S$ is the operator norm of the Sobolev embedding $\LB21\hookrightarrow\LB4{}$. Hence, requiring $\epsilon\le(C_\Lie C_S C_G)^{-1}$, we have that $\normlb a4{}<\epsilon$ implies
    \begin{equation}\label{dabound}
      \normlb{da}2{}\le\normlb{F_A}2{}+\tfrac12 C_G^{-1}\normlb a21.
    \end{equation}
    Combining with \eqref{Dibound1}, we see that
    \begin{equation*}
      \normlb a21\le C_G\left(\normlb{F_A}2{}+\normlpb{i^*a}2{1/2}\right)+\tfrac12\normlb a21.
    \end{equation*}
    Rearranging,
    \begin{equation*}
      \normlb a21\le 2C_G\left(\normlb{F_A}2{}+\normlpb{i^*a}2{1/2}\right),
    \end{equation*}
    as desired.

    We now proceed to the higher regularity. We compute
    \begin{equation*}
      \nabla da=\nabla F_A-\nabla\tfrac12[a\wedge a]=\nabla_AF_A-[a\otimes F_A]-\nabla\tfrac12[a\wedge a].
    \end{equation*}
    The squaring term $\tfrac12[a\wedge a]$ is more subtle at this regularity. We compute
    \begin{multline*}
      \norm{\nabla\tfrac12[a\wedge a]}_{\LB2{}}=\norm{[\nabla a\wedge a]}_{\LB2{}}\\
      \le C_\Lie C_S\normlb{\nabla a}21\normlb a4{}\le C_\Lie C_S\normlb a22\normlb a4{}.
    \end{multline*}
    Hence, we can improve on the Sobolev multiplication inequalities with
    \begin{equation*}
      \normlb{\tfrac12[a\wedge a]}21\le C_s\normlb a22\normlb a4{},
    \end{equation*}
    for a suitable constant $C_s$. As a result,
    \begin{equation*}
      \normlb{F_A}21\le \normlb{da}21+\normlb{\tfrac12[a\wedge a]}21
      \le \normlb a22+C_s\normlb a22\normlb a4{}.
    \end{equation*}
    Using these bounds, we compute
    \begin{equation*}
      \begin{split}
        \normlb{\nabla da}2{}&\le\normlb{\nabla_AF_A}2{}+C_\Lie C_S\normlb a4{}\normlb{F_A}21+\normlb{\nabla\tfrac12[a\wedge a]}2{}\\
        &\le\normlb{\nabla_AF_A}2{}+\left(2C_\Lie C_S\normlb a4{}+C_\Lie C_SC_s\normlb a4{}^2\right)\normlb a22
      \end{split}
  \end{equation*}
  By requiring $\epsilon<\tfrac18(2C_\Lie C_SC_G)^{-1}$ and $\epsilon^2<\tfrac18(C_\Lie C_SC_sC_G)^{-1}$, we have
    \begin{equation*}
      \normlb{\nabla da}2{}\le\normlb{\nabla_AF_A}2{}+\tfrac14C_G^{-1}\normlb a22.
    \end{equation*}
    Hence, using \eqref{dabound},
    \begin{equation*}
        \normlb{da}21\le\normlb{\nabla da}2{}+\normlb{da}2{}\le\normlb{\nabla_AF_A}2{}+\normlb{F_A}2{}+\tfrac34C_G^{-1}\normlb a22.
    \end{equation*}
    Combining with \ref{Dibound2}, we have
    \begin{equation*}
      \normlb a22\le C_G\left(\normlb{\nabla_AF_A}2{}+\normlb{F_A}2{}+\normlpb{i^*a}2{3/2}\right)+\tfrac34\normlb a22.
    \end{equation*}
    Thus,
    \begin{equation*}
      \normlb a22\le 4C_G\left(\normlb{\nabla_AF_A}2{}+\normlb{F_A}2{}+\normlpb{i^*a}2{3/2}\right).
    \end{equation*}
  \end{proof}
\end{lemma}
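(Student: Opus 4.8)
The plan is to treat this as a standard elliptic a priori estimate for the boundary value problem $(d+d^*,i^*)$ on the ball, exactly parallel to the bounds Uhlenbeck derives in \cite{u82}, with the boundary term $i^*a$ playing the role usually played by a homogeneous boundary condition. First I would note that, since $H^1(B^4,\pB)=0$, Corollary \ref{Di} says that $d+d^*\colon\LeB{2,\Dir}1\to\LeB2{}$ is Fredholm with kernel and cokernel $\mc H^\Dir$, and that $\mc H^\Dir$ contains no one-forms; so on one-forms this operator is injective with closed range. Since the trace $i^*\colon\LeB21\to\LepB2{1/2}$ is surjective, and likewise $i^*\colon\LeB22\to\LepB2{3/2}$, the combined operators
\begin{equation*}
  (d+d^*,i^*)\colon\LeB21\to\LeB2{}\times\LepB2{1/2},\qquad (d+d^*,i^*)\colon\LeB22\to\LeB21\times\LepB2{3/2}
\end{equation*}
are Fredholm and injective on one-forms: the kernel on one-forms agrees with the kernel of $d+d^*$ on $\LeB{2,\Dir}1$, which is trivial, and the range together with the finite-dimensional $\mc H^\Dir\times\{0\}$ contains all of $\LeB2{}\times\{0\}$ by Corollary \ref{Di}, while surjectivity of $i^*$ makes the image meet every fibre over the boundary factor, so the sum exhausts the target. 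An injective operator with closed range is bounded below, so there is a constant $C_G$ with $\normlb\omega21\le C_G(\normlb{(d+d^*)\omega}2{}+\normlpb{i^*\omega}2{1/2})$ for all one-forms $\omega\in\LeB21$, and the analogous $\LB22$ estimate. Applying these to $\omega=a$ and using the Coulomb condition $d^*a=0$ gives $\normlb a21\le C_G(\normlb{da}2{}+\normlpb{i^*a}2{1/2})$ and $\normlb a22\le C_G(\normlb{da}21+\normlpb{i^*a}2{3/2})$.

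It then remains to replace $da$ by $F_A$. Using $da=F_A-\tfrac12[a\wedge a]$ and the Sobolev multiplication $\LB21\times\LB4{}\to\LB2{}$, one gets $\normlb{da}2{}\le\normlb{F_A}2{}+\tfrac12 C_\Lie C_S\normlb a4{}\normlb a21$, where $C_\Lie$ is the operator norm of the Lie bracket and $C_S$ that of $\LB21\hookrightarrow\LB4{}$. Choosing $\epsilon\le(C_\Lie C_S C_G)^{-1}$, the hypothesis $\normlb a4{}<\epsilon$ absorbs the quadratic term into $\tfrac12 C_G^{-1}\normlb a21$; substituting into the first Fredholm estimate and rearranging gives the first inequality with $C=2C_G$.

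For the $\LB22$ bound I would bootstrap one derivative further. Differentiating $F_A=da+\tfrac12[a\wedge a]$ gives $\nabla da=\nabla_A F_A-[a\otimes F_A]-\nabla\tfrac12[a\wedge a]$. The delicate point is the quadratic term at this regularity: rather than a crude $\LB22\times\LB22$ multiplication, I would write $\nabla\tfrac12[a\wedge a]=[\nabla a\wedge a]$ and estimate it by $C_\Lie C_S\normlb{\nabla a}21\normlb a4{}\le C_\Lie C_S\normlb a22\normlb a4{}$, keeping one factor equal to the \emph{small} quantity $\normlb a4{}$; the same move gives the refined bound $\normlb{\tfrac12[a\wedge a]}21\le C_s\normlb a22\normlb a4{}$, hence $\normlb{F_A}21\le(1+C_s\normlb a4{})\normlb a22$, which is what controls $[a\otimes F_A]$ by $C_\Lie C_S\normlb a4{}\normlb{F_A}21$. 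Collecting terms yields $\normlb{\nabla da}2{}\le\normlb{\nabla_A F_A}2{}+(2C_\Lie C_S\normlb a4{}+C_\Lie C_S C_s\normlb a4{}^2)\normlb a22$, and choosing $\epsilon$ small enough (e.g. $\epsilon<\tfrac18(2C_\Lie C_S C_G)^{-1}$ and $\epsilon^2<\tfrac18(C_\Lie C_S C_s C_G)^{-1}$) absorbs this into $\tfrac14 C_G^{-1}\normlb a22$. Combining with the zeroth-order bound $\normlb{da}2{}\le\normlb{F_A}2{}+\tfrac12 C_G^{-1}\normlb a21$ already obtained, so as to control all of $\normlb{da}21$, and then feeding $\normlb{da}21$ into the second Fredholm estimate and rearranging, produces the second inequality.

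The main obstacle is exactly this last step: at the $\LB22$ level the nonlinearity $[a\wedge a]$ sits at the borderline of Sobolev multiplication in dimension four, so the estimates must be arranged so that in every quadratic term one factor carries the small norm $\normlb a4{}<\epsilon$; otherwise the quadratic contributions cannot be absorbed and the a priori bound fails. Everything else is routine Fredholm theory for the elliptic boundary value problem $(d+d^*,i^*)$ on the ball.
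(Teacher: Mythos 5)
Your proposal is correct and follows essentially the same route as the paper: the Fredholm estimate for $(d+d^*,i^*)$ at both regularities (via Corollary \ref{Di} and surjectivity of the trace), followed by absorbing the quadratic terms using the smallness of $\normlb a4{}$, including the same refined treatment of $[a\wedge a]$ and $[a\otimes F_A]$ at the $\LB22$ level. No gaps to report.
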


Now, we prove that the space of $\LaB22$ connections satisfying Proposition \ref{gaugefixingconstantboundary} is open.

\begin{lemma}\label{implicitfunctiontheoremconstantboundary}
  There exists an $\epsilon$ with the following significance. Let $A=d+a$ be an $\LB22$ connection with $\normlb a4{}<\epsilon$ and $d^*a=0$. Then there exists an open $\LaB22$ neighborhood of $A$ such that any connection $B$ in this neighborhood has an $\LGB23$ gauge transformation $g$ with $i^*g$ the identity that sends $B$ to a connection $\tB$ satisfying the Coulomb condition $d^*\tb=0$. Moreover, $g$ depends smoothly on $B$.
  \begin{proof}
    We search for a $g$ of the form $e^{-\gamma}$, where $\gamma$ is a $\mf g$-valued $\LB23$ function. For $\alpha$ small in $\LaB22$, we want a solution $\gamma$ to the equation
    \begin{equation}\label{gaugefixingconstantboundaryequation}
      \begin{split}
        d^*(e^{-\gamma}(a+\alpha)e^\gamma-(de^{-\gamma})e^\gamma)&=0,\\
        i^*\gamma&=0.
    \end{split}
    \end{equation}
    Letting $\LgB{2,\Dir}3$ denote $\LaB23$ functions $\gamma$ satisfying the boundary condition $i^*\gamma=0$, we consider the map
    \begin{align*}
      \LgB{2,\Dir}3\times\LaB22&\to\LgB21\\
      (\gamma,\alpha)&\mapsto d^*(e^{-\gamma}(a+\alpha)e^\gamma-(de^{-\gamma})e^\gamma)
    \end{align*}
    Because our gauge transformations are in a Sobolev space above the borderline, the exponential map is smooth, as are the relevant multiplication maps and linear maps in the above formula. To apply the implicit function theorem, we must show that the derivative of this map with respect to the $\gamma$ variable at $(\gamma,\alpha)=(0,0)$ is an isomorphism. This derivative map is
    \begin{equation*}
      \gamma'\mapsto d^*[a,\gamma']+d^*d\gamma'.
    \end{equation*}
    Call this map $T\colon\LgB{2,\Dir}3\to\LgB21$. By Propositions \ref{cohomology} and \ref{hodgedecomposition} and the fact that $H^0(B^4,\pB)=0$, we know that $\gamma'\mapsto d^*d\gamma'$ is an isomorphism as a map $\LgB{2,\Dir}3\to\LgB21$.

    Next, we show that the other term, $d^*[a,\gamma']$, is a compact operator as a map $\LgB{2,\Dir}3\to\LgB21$, so $T$ is a compact perturbation of $d^*d$, and hence is a Fredholm operator of index zero. We compute
    \begin{equation*}
      d^*[a,\gamma']=-*d[*a,\gamma']=-*[d(*a),\gamma']+*[*a\wedge d\gamma']
      =[d^*a,\gamma']+*[*a\wedge d\gamma']=*[*a\wedge d\gamma'],
    \end{equation*}
    We can view this term as a composition
    \begin{equation*}
      \LgB{2,\Dir}3\xrightarrow d\LaB22\hookrightarrow\LaB2{3/2}\xrightarrow{*[*a\wedge\cdot]}\LgB21.
    \end{equation*}
    The Sobolev multiplication and embedding theorems tell us that, because $a\in\LaB22$,  the above maps are continuous and the second inclusion is compact, so the composition is compact.

    Thus $T$ is Fredholm of index zero, so to show that $T$ is an isomorphism, it will suffice to prove that $T$ is injective. Working in one less degree of regularity, since $d^*d\colon\LgB{2,\Dir}2\to\LgB2{}$ is an isomoprhism, we know that there is a constant $\epsilon_\Delta$ such that $\normlb{d^*d\gamma'}2{}\ge \epsilon_\Delta\normlb{\gamma'}22$ for all $\gamma'\in\LgB{2,\Dir}2$. On the other hand, we know that
    \begin{equation*}
      \normlb{*[*a\wedge d\gamma']}2{}\le C_\Lie C_S\normlb a4{}\normlb{d\gamma'}21\le C_\Lie C_S\normlb a4{}\normlb{\gamma'}22,
    \end{equation*}
    where $C_\Lie$ is the operator norm of the Lie bracket bilinear form and $C_S$ is the operator norm of the Sobolev embedding $\LB21\hookrightarrow\LB4{}$. Hence, by requiring that $\epsilon<\epsilon_\Delta(C_\Lie C_S)^{-1}$, we see that $\normlb a4{}<\epsilon$ implies that $\normlb{*[*a\wedge d\gamma']}2{}<\epsilon_\Delta\normlb{\gamma'}22$, so
    \begin{equation*}
      \normlb{T\gamma'}2{}\ge\normlb{d^*d\gamma'}2{}-\normlb{*[*a\wedge d\gamma']}2{}>0.
    \end{equation*}
    Thus, $T$ is injective, and hence an isomorphism. Thus, the implicit function theorem gives us a solution $g=e^{-\gamma}$ to \eqref{gaugefixingconstantboundaryequation} depending smoothly on $\alpha$ in a neighborhood of $\alpha=0$. That is, we have a gauge transformation that is the identity on the boundary sending a connection $A+\alpha$ in an $\LaB22$ neighborhood of $A$ into Coulomb gauge, as desired.
  \end{proof}
\end{lemma}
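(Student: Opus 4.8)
The plan is to produce $g$ via the implicit function theorem, along the lines of Uhlenbeck's proof of Theorem \ref{uhlenbeckgaugefixing}. Since $A$ is an $\LaB22$ connection, the gauge transformations in play are $\LGB23$, which on a $4$-manifold lies above the Sobolev borderline ($3\cdot 2>4$), so $\LGB23$ is a genuine Hilbert Lie group, the exponential map $\LgB23\to\LGB23$ is smooth, and so are the relevant multiplication maps. I therefore look for $g$ of the form $e^{-\gamma}$ with $\gamma$ in the closed subspace $\LgB{2,\Dir}3$ of functions satisfying $i^*\gamma=0$, which forces $i^*g=\Id$. Writing a nearby connection as $B=A+\alpha$ with $\alpha$ small in $\LaB22$, the Coulomb condition $d^*(g(B))=0$ reads
\begin{equation*}
  F(\gamma,\alpha):=d^*\bigl(e^{-\gamma}(a+\alpha)e^{\gamma}-(de^{-\gamma})e^{\gamma}\bigr)=0,
\end{equation*}
a smooth map $\LgB{2,\Dir}3\times\LaB22\to\LgB21$ with $F(0,0)=d^*a=0$. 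Solving for $\gamma=\gamma(\alpha)$ near $(0,0)$, smoothly in $\alpha$, is precisely the assertion, so the whole problem reduces to checking that the partial derivative $T:=D_\gamma F(0,0)$ is a Banach space isomorphism $\LgB{2,\Dir}3\to\LgB21$.

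Linearizing gives $T\gamma'=d^*d\gamma'+d^*[a,\gamma']$. The leading term $d^*d$ is the $\mf g$-valued Dirichlet Laplacian on functions, and since $H^0(B^4,\pB)=0$, Propositions \ref{cohomology} and \ref{hodgedecomposition} show it is an isomorphism $\LgB{2,\Dir}3\to\LgB21$; I will also use that it is an isomorphism $\LgB{2,\Dir}2\to\LgB2{}$, which yields a lower bound $\normlb{d^*d\gamma'}2{}\ge\epsilon_\Delta\normlb{\gamma'}22$. For the remaining term, the crucial point is that $d^*a=0$ is equivalent to $d(*a)=0$ for the $3$-form $*a$ on $B^4$, which lets me rewrite $d^*[a,\gamma']=*[*a\wedge d\gamma']$ with no derivative landing on $a$. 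Factoring this as $\LgB{2,\Dir}3\xrightarrow{d}\LaB22\hookrightarrow\LaB2{3/2}\xrightarrow{*[*a\wedge\cdot]}\LgB21$, the outer maps are bounded (the last by the Sobolev multiplication $\LaB22\times\LaB2{3/2}\to\LgB21$, valid because $a\in\LaB22$) and the middle inclusion is compact by Rellich, so $\gamma'\mapsto d^*[a,\gamma']$ is compact. Hence $T$ is a compact perturbation of an isomorphism, so it is Fredholm of index zero, and it suffices to prove injectivity.

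For injectivity I argue by smallness: if $T\gamma'=0$ then $d^*d\gamma'=-*[*a\wedge d\gamma']$, and combining the lower bound above with $\normlb{*[*a\wedge d\gamma']}2{}\le C_\Lie C_S\normlb a4{}\normlb{\gamma'}22$, where $C_\Lie$ is the Lie bracket norm and $C_S$ the norm of $\LB21\hookrightarrow\LB4{}$, forces $\gamma'=0$ as soon as $\normlb a4{}<\epsilon_\Delta(C_\Lie C_S)^{-1}$; I take $\epsilon$ below this threshold. Then $T$ is an isomorphism and the implicit function theorem supplies a smooth $\alpha\mapsto\gamma(\alpha)$ with $\gamma(0)=0$, so $g=e^{-\gamma(\alpha)}$ sends $A+\alpha$ into Coulomb gauge, fixes the boundary, and depends smoothly on $\alpha$ and hence on $B$. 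The main obstacle is precisely the lower-order term $d^*[a,\gamma']$: at this borderline regularity $a$ carries no spare derivative, so the identity $d^*a=0\Leftrightarrow d(*a)=0$ is what converts it into an honest compact perturbation of the Laplacian rather than a merely bounded one; once that is in place, the Fredholm-plus-smallness step is routine.
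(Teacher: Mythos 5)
Your proof is correct and follows essentially the same route as the paper: the same implicit-function-theorem setup for $g=e^{-\gamma}$ with $\gamma\in\LgB{2,\Dir}3$, the same identification of the linearization $T=d^*d+d^*[a,\cdot]$, the same use of $d^*a=0$ to rewrite the lower-order term as $*[*a\wedge d\gamma']$ and factor it through the compact inclusion $\LaB22\hookrightarrow\LaB2{3/2}$, and the same smallness argument for injectivity. There is no substantive difference between your argument and the paper's.
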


\begin{corollary}\label{opencorollary}
  There exists an $\epsilon>0$ such that the space of $\LaB22$ connections $A=d+a$ with $\normlb a21<\epsilon$ satisfying Proposition \ref{gaugefixingconstantboundary} is open in $\LaB22$.
  \begin{proof}
    Let $\epsilon_4$ be the smaller of the constants in Lemmas \ref{aprioribounds} and \ref{implicitfunctiontheoremconstantboundary}, and let $C$ be the constant from Lemma \ref{aprioribounds}. Because $A\mapsto F_A$ is continuous as a map $\LaB21\to\LFB2{}$, and $a\mapsto i^*a$ is continuous as a map $\LaB21\to\LapB2{1/2}$, we can require that $\epsilon$ be small enough so that $\normlb a21<\epsilon$ implies $\normlb{F_A}2{}+\normlpb{i^*a}2{1/2}<\epsilon_4(C_SC)^{-1}$, where $C_S$ is the norm of the Sobolev embedding $\LB21\hookrightarrow\LB4{}$.

    Let $A$ be a connection with $\normlb a21<\epsilon$ satisfying Proposition \ref{gaugefixingconstantboundary}, so there is a gauge transformation $g$ sending $A$ to $\tA$ such that $i^*g$ is the identity and $d^*\ta=0$. We will show that a neighborhood of $\tA$ satisfies Proposition \ref{gaugefixingconstantboundary}, and then pull it back to a neighborhood of $A$. Condition \eqref{constantboundaryfixinginequality} of Proposition \ref{gaugefixingconstantboundary} implies that
    \begin{equation*}
      \normlb\ta4{}\le C_S\normlb\ta21\le C_SC\left(\normlb{F_A}2{}+\normlpb{i^*a}2{1/2}\right)<\epsilon_4.
    \end{equation*}
    Hence, we can apply Lemma \ref{implicitfunctiontheoremconstantboundary} to $\tA$ and find an open $\LaB22$ neighborhood of $\tA$ such that for any connection $B$ in the neighborhood, there is a $\LGB23$ gauge transformation that is the identity on the boundary and sends $B$ to a connection $\tB$ in Coulomb gauge. To verify condition \eqref{constantboundaryfixinginequality} of Proposition \ref{gaugefixingconstantboundary}, note that the implicit funcion theorem tells us that $g$ depends continously on $B$. Hence, by shrinking the neighborhood of $\tA$, we can guarantee that $g$ is close to the identity in $\LGB23$, and hence that $\tB$ is close to $B$ and hence to $\tA$ in $\LaB22$. In particular, since $\LB4{}$ is a weaker norm than $\LB22$, we can choose the neighborhood of $\tA$ small enough so that $\tB$ also satisfies $\normlb\tb4{}<\epsilon_4$. Then, we can apply Lemma \ref{aprioribounds} to $\tB$, to find that
    \begin{equation*}
      \normlb\tb21\le C\left(\normlb{F_{\tB}}2{}+\normlb{i^*\tb}2{1/2}\right)=C\left(\normlb{F_B}2{}+\normlb{i^*b}2{1/2}\right),
    \end{equation*}
    because $\normlb{F_B}2{}$ is invariant under gauge transformations, and $i^*g=1$ implies that $i^*\tb=i^*b$.
    
    We conclude that Proposition \ref{gaugefixingconstantboundary} holds on an open $\LaB22$ neighborhood of $\tA$. Since the conclusion of Proposition \ref{gaugefixingconstantboundary} is invariant under $\LGB23$ gauge transformations $g$ with $i^*g=1$, we can pull back this open neighborhood of $\tA$ via $g^{-1}$ to a neighborhood of $A$ satisfying Proposition \ref{gaugefixingconstantboundary}, as desired.
  \end{proof}
\end{corollary}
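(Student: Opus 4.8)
The plan is to carry out the ``transport to Coulomb gauge'' step that typically establishes openness in an Uhlenbeck-type open--closed argument. The point is that near a connection $A$ which already satisfies Proposition \ref{gaugefixingconstantboundary} the conclusion is cleanest to re-establish at the Coulomb representative $\tA=g(A)$: there the implicit function theorem of Lemma \ref{implicitfunctiontheoremconstantboundary} produces Coulomb gauges for all nearby connections, the quantitative bound follows from the a priori estimate of Lemma \ref{aprioribounds}, and one then pulls the whole neighborhood back by the fixed gauge transformation $g$.

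First I would fix constants. Let $\epsilon_4$ be the smaller of the thresholds in Lemmas \ref{aprioribounds} and \ref{implicitfunctiontheoremconstantboundary}, let $C$ be the constant of Lemma \ref{aprioribounds}, and let $C_S$ be the norm of $\LB21\hookrightarrow\LB4{}$. Using continuity of $A\mapsto F_A$ into $\LFB2{}$ and of $a\mapsto i^*a$ into $\LapB2{1/2}$, shrink $\epsilon$ so that $\normlb a21<\epsilon$ forces $C_SC\bigl(\normlb{F_A}2{}+\normlpb{i^*a}2{1/2}\bigr)<\epsilon_4$. Now let $A=d+a$ have $\normlb a21<\epsilon$ and satisfy Proposition \ref{gaugefixingconstantboundary}, so there is a gauge transformation $g$ with $i^*g=\Id$ sending $A$ to $\tA=d+\ta$ with $d^*\ta=0$ and the bound \eqref{constantboundaryfixinginequality}. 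Since $A\in\LaB22$, the ``moreover'' clause of Proposition \ref{gaugefixingconstantboundary} gives $g\in\LGB23$, which is above the borderline, so $g$ acts as a homeomorphism on $\LaB22$ connections; and \eqref{constantboundaryfixinginequality} together with the choice of $\epsilon$ gives $\normlb\ta4{}\le C_S\normlb\ta21<\epsilon_4$.

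Next I would apply Lemma \ref{implicitfunctiontheoremconstantboundary} at $\tA$: there is an open $\LaB22$ neighborhood $U$ of $\tA$ so that every $B\in U$ admits an $\LGB23$ gauge transformation $h$, identity on $\pB$, with $h(B)=\tB$ in Coulomb gauge, and with $h$, and hence $\tB$, depending smoothly on $B$. Because $h$ is the identity at $B=\tA$, where $\normlb\ta4{}<\epsilon_4$, continuity of $B\mapsto\tB$ into $\LaB22$ and thus into $\LaB4{}$ lets me shrink $U$ so that $\normlb\tb4{}<\epsilon_4$ for all $B\in U$. Lemma \ref{aprioribounds} then applies to $\tB$ and yields $\normlb\tb21\le C\bigl(\normlb{F_{\tB}}2{}+\normlpb{i^*\tb}2{1/2}\bigr)$; gauge invariance of curvature gives $\normlb{F_{\tB}}2{}=\normlb{F_B}2{}$ and $i^*h=\Id$ gives $i^*\tb=i^*b$, so this is exactly condition \eqref{constantboundaryfixinginequality} for $B$, and every $B\in U$ satisfies Proposition \ref{gaugefixingconstantboundary}. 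Finally, the conclusion of that proposition is invariant under $\LGB23$ gauge transformations equal to the identity on $\pB$, so $g^{-1}(U)$ is an open $\LaB22$ neighborhood of $A$ on which it holds; intersecting with the open set $\{\normlb a21<\epsilon\}$ finishes the proof.

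The main obstacle is keeping the $\LB4{}$-smallness hypothesis of Lemma \ref{aprioribounds} under control: it is this requirement that prevents working at $A$ directly, since the Coulomb representatives of nearby connections $B$ are not a priori small. The resolution above is to base the local analysis at $\tA$---where smallness is forced by the quantitative bound \eqref{constantboundaryfixinginequality} at $A$---and to propagate it to nearby $\tB$ via the continuous dependence of the gauge transformation coming from the implicit function theorem. Everything else is routine: Sobolev and trace estimates for the reductions of $\epsilon$, gauge invariance of curvature and of $i^*$ under boundary-fixing gauge transformations, and the fact that a fixed above-borderline gauge transformation acts by a homeomorphism.
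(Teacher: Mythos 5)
Your proposal is correct and follows essentially the same approach as the paper's own proof: the same choice of constants, the same transport to $\tA$ via $g$, the same application of Lemma \ref{implicitfunctiontheoremconstantboundary} followed by shrinking the neighborhood so Lemma \ref{aprioribounds} applies to $\tB$, and the same pullback via $g^{-1}$. The only small additions you make — explicitly noting $g\in\LGB23$ so it acts as a homeomorphism on $\LaB22$, and intersecting at the end with the open set $\{\normlb a21<\epsilon\}$ — are natural clarifications of steps the paper leaves implicit.
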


Finally, we prove that the set of connections satisfying Proposition \ref{gaugefixingconstantboundary} is closed.

\begin{lemma}\label{closedconstantboundary}
  The space of $\LaB21$ connections satisfying Proposition \ref{gaugefixingconstantboundary} is closed in $\LaB21$. Likewise, the space of $\LaB22$ connections with $\normlb a21<\epsilon$ satisfying Proposition \ref{gaugefixingconstantboundary} is closed in $\LaB22$.
  \begin{proof}
    Let $A_i\to A$ be a sequence of connections converging in $\LaB21$ such that there exist $\LGB22$ gauge transformations $g_i$ sending $A_i$ to $\tA_i$ satisfying $i^*g_i=1$, $d^*\ta_i=0$, and $\normlb{\ta_i}21\le C\left(\normlb{F_{A_i}}2{}+\normlpb{i^*a_i}2{1/2}\right)$. Our goal is to find a limit $g$ sending $A$ to $\tA$ that also satisfies these conditions.

    Because the $A_i$ converge in $\LaB21$, we know that the $F_{A_i}$ converge and are hence bounded in $\LFB2{}$, and the $i^*a_i$ converge and are hence bounded in $\LapB2{1/2}$. Hence, the above inequality tells us that the $\ta_i$ are bounded in $\LaB21$, so we can pass to a subsequence where the $\ta_i$ converge weakly in $\LaB21$. Let $\ta$ be the limit. By Lemma \ref{weakgaugeconvergence}, after passing to a subsequence, the $g_i$ converge weakly in $\LGB22$ to a gauge transformation $g$ sending $A$ to $\tA$.
    
    Since $i^*\colon\LGB41\to\LGpB4{3/4}$ is continuous and linear, the condition $i^*g_i=1$ is preserved under weak limits, giving us $i^*g=1$. Likewise, we have $d^*\ta=0$. Finally, to see that inequality \eqref{constantboundaryfixinginequality} of Proposition \ref{gaugefixingconstantboundary} is preserved in the weak limit, we note that norms are lower semicontinuous under weak limits, and that $A_i$ converging strongly to $A$ in $\LaB21$ implies that $\normlb{F_{A_i}}2{}+\normlpb{i^*a_i}2{1/2}$ converges to $\normlb{F_A}2{}+\normlpb{i^*a}2{1/2}$. Hence, we have $\normlb\ta21\le C\left(\normlb{F_A}2{}+\normlpb{i^*a}2{1/2}\right)$, as desired.
    
    We now proceed to prove closedness in higher regularity. Let $A_i\to A$ be a sequence of connections converging in $\LaB22$ with $\normlb{a_i}21<\epsilon$ and $\normlb a21<\epsilon$, such that there exist $\LGB23$ gauge transformations $g_i$ sending $A_i$ to $\tA_i$ satisfying $i^*g_i=1$, $d^*\ta_i=0$, and $\normlb{\ta_i}21\le C\left(\normlb{F_{A_i}}2{}+\normlpb{i^*a_i}2{1/2}\right)$.

    As in the proof of Corollary \ref{opencorollary}, this inequality, along with a small enough $\epsilon$, guarantess that $\normlb{\ta_i}4{}$ is small enough to apply Lemma \ref{aprioribounds}, giving us
    \begin{equation*}
      \begin{split}
        \normlb{\ta_i}22&\le C\left(\normlb{\nabla_{\tA_i}F_{\tA_i}}2{}+\normlb{F_{\tA_i}}2{}+\normlpb{i^*{\ta_i}}2{3/2}\right)\\
        &=C\left(\normlb{\nabla_{A_i}F_{A_i}}2{}+\normlb{F_{A_i}}2{}+\normlpb{i^*{a_i}}2{3/2}\right),
      \end{split}
    \end{equation*}
    using the fact that $\normlb{F_A}2{}$ and $\normlb{\nabla_AF_A}2{}$ are gauge invariant quantities, and that $i^*a_i=i^*\ta_i$. Since $A_i$ converges to $A$ in $\LaB22$, we conclude that the right-hand side of the inequality is bounded, and hence a subsequence of the $\ta_i$ converges weakly in $\LaB22$. Let $\ta$ be its limit. The above argument for $\LaB21$ connections gives us a gauge transformation $g\in\LGB22$ sending $A$ to $\tA$ satisfying all of the conditions of Proposition \ref{gaugefixingconstantboundary}, so it only remains to show that $g$ is actually in $\LB23$. We prove this claim in two steps from the equation $dg=ga-\ta g$. First, note that the multiplication $\LB22\times\LB22\to\LB31$ is continuous. Hence, since $g,a,\ta\in\LB22$, we know that $dg\in\LEaB31$, so $g\in\LGB32$. Next, since the multiplication $\LB32\times\LB22\to\LB22$ is continuous, we have that $dg\in\LEaB22$, so $g\in\LGB23$, as desired.
  \end{proof}
\end{lemma}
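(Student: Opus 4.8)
The plan is a standard closedness-via-weak-compactness argument; the one wrinkle is that the gauge transformations live in the borderline space $\LGB22$ (or $\LGB23$), so products of weakly convergent sequences will have to be handled through Lemma~\ref{weakgaugeconvergence} rather than by naive Sobolev multiplication.

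For the $\LaB21$ assertion I would take $A_i=d+a_i$ converging to $A=d+a$ in $\LaB21$, each satisfying Proposition~\ref{gaugefixingconstantboundary}, with gauge transformations $g_i$ satisfying $i^*g_i=1$ and sending $A_i$ to $\tA_i=d+\ta_i$, where $d^*\ta_i=0$ and $\normlb{\ta_i}21\le C\bigl(\normlb{F_{A_i}}2{}+\normlpb{i^*a_i}2{1/2}\bigr)$. Since $A\mapsto F_A$ and $a\mapsto i^*a$ are continuous, the right-hand sides stay bounded, so the $\ta_i$ are bounded in $\LaB21$ and, after passing to a subsequence, converge weakly to some $\ta$. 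By Lemma~\ref{weakgaugeconvergence}, after a further subsequence the $g_i$ converge weakly in $\LGB22$ to a gauge transformation $g$ taking $A$ to $\tA=d+\ta$. Then $i^*g=1$ and $d^*\ta=0$ follow since these are linear continuous conditions preserved under weak limits, and inequality~\eqref{constantboundaryfixinginequality} passes to the limit because norms are weakly lower semicontinuous while, by the strong convergence $A_i\to A$, the quantity $\normlb{F_{A_i}}2{}+\normlpb{i^*a_i}2{1/2}$ converges to $\normlb{F_A}2{}+\normlpb{i^*a}2{1/2}$. This shows $A$ satisfies Proposition~\ref{gaugefixingconstantboundary}.

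For the $\LaB22$ assertion I would first rerun the argument above verbatim (the bounds $\normlb{a_i}21,\normlb a21<\epsilon$ are needed only for the regularity upgrade) to obtain $g\in\LGB22$ and $\tA\in\LaB21$ satisfying every condition of Proposition~\ref{gaugefixingconstantboundary}; what then remains is to show $g\in\LGB23$. Here I would use that inequality~\eqref{constantboundaryfixinginequality}, together with a small enough $\epsilon$, forces $\normlb{\ta_i}4{}$ small enough to apply the second estimate of Lemma~\ref{aprioribounds}, giving $\normlb{\ta_i}22\le C\bigl(\normlb{\nabla_{A_i}F_{A_i}}2{}+\normlb{F_{A_i}}2{}+\normlpb{i^*a_i}2{3/2}\bigr)$, whose right-hand side is bounded since $A_i\to A$ in $\LaB22$; after a subsequence then $\ta_i\rightharpoonup\ta$ in $\LaB22$, so $\ta\in\LaB22$. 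Finally I would bootstrap in $dg=ga-\ta g$: with $g,a,\ta\in\LB22$, the Sobolev multiplication $\LB22\times\LB22\to\LB31$ gives $dg\in\LEaB31$, hence $g\in\LGB32$; and then $\LB32\times\LB22\to\LB22$ gives $dg\in\LEaB22$, hence $g\in\LGB23$.

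The step I expect to be the main obstacle is the weak-limit passage for the $g_i$: in the borderline regime neither weak nor strong convergence of the products $g_ia_i$, $\ta_ig_i$ follows from weak convergence of the factors by elementary means, so everything must be funneled through Lemma~\ref{weakgaugeconvergence} (which in turn rests on Lemma~\ref{weaktostronglemma}); once that machinery is in hand the rest is routine bookkeeping, the only other delicate point being the two-step bootstrap that upgrades $g$ to $\LB23$ in the higher-regularity case.
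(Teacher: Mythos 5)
Your proposal follows essentially the same route as the paper: weak compactness of the $\ta_i$ from the a priori bound, Lemma~\ref{weakgaugeconvergence} for the weak limit of the $g_i$, preservation of the linear conditions and of inequality~\eqref{constantboundaryfixinginequality} by lower semicontinuity plus strong convergence of the right side, and then in the $\LaB22$ case the second estimate of Lemma~\ref{aprioribounds} followed by the two-step multiplication bootstrap $\LB22\times\LB22\to\LB31$ and $\LB32\times\LB22\to\LB22$ to lift $g$ to $\LGB23$. This matches the paper's argument in both structure and the key estimates.
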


These lemmas complete the proof of Proposition \ref{gaugefixingconstantboundary}.

\begin{proof}[Proof of \ref{gaugefixingconstantboundary}]
  The space of $\LB22$ connections $A$ with $\normlb a21<\epsilon$ is connected, and by Corollary \ref{opencorollary} and Lemma \ref{closedconstantboundary} the space of $\LB22$ connections with $\normlb a21<\epsilon$ satisfying Proposition \ref{gaugefixingconstantboundary} is both open and closed, and hence contains all $\LB22$ connections with $\normlb a21<\epsilon$. Meanwhile, any $\LB21$ connection $A$ with $\normlb a21<\epsilon$ is the limit in $\LaB21$ of sequence $A_i$ of $\LaB22$ connections with $\normlb a21<\epsilon$. Because the $A_i$ satisfy Proposition \ref{gaugefixingconstantboundary}, Lemma \ref{closedconstantboundary} tells us that so does $A$.
\end{proof}

We also prove that the gauge transformation constructed by Proposition \ref{gaugefixingconstantboundary} is unique, at least with an appropriate choice of constants. We require $\LaB4{}$ bounds on the Coulomb gauge representatives, but note that these follow from condition \ref{constantboundaryfixinginequality} and the bounds on $\normlb a21$ in Proposition \ref{gaugefixingconstantboundary}. In addition, for use in the future, we will assume that $i^*g$ is a constant gauge transformation on $\pB$ but not necessarily the identity.

\begin{proposition}\label{uniqueconstantboundary}
  There exists a constant $\epsilon$ such that if $A=d+a$ and $B=d+b$ are two $\LaB21$ connections gauge equivalent via a gauge transformation $g\in\LGB22$ satisfying
  \begin{enumerate}
  \item bounds $\normlb a4{},\normlb b4{}<\epsilon$,
  \item the boundary condition that $i^*g$ is equal to a constant $c\in G$ on $\pB$, and
  \item the Coulomb condition $d^*a=d^*b=0$,
  \end{enumerate}
  then $g$ is the constant gauge transformation $c$ on all of $B^4$.
  \begin{proof}
    We have the gauge equivalence equation
    \begin{equation*}
      dg=ga-bg.
    \end{equation*}
    Thus, using $d^*a=d^*b=0$, we have
    \begin{equation}\label{dsdg}
      d^*dg=-{*d}(g{*a}-{*bg})=-{*}(dg\wedge*a+gd(*a)-d(*b)g+*b\wedge dg)
      =-{*}(dg\wedge *a+*b\wedge dg).
    \end{equation}
    Hence,
    \begin{equation*}
      \normlb{d^*dg}2{}\le\left(\normlb a4{}+\normlb b4{}\right)\normlb{dg}4{}
      \le C_S\left(\normlb a4{}+\normlb b4{}\right)\normlb{dg}21.
    \end{equation*}
    On the other hand, since $H^1(B^4,\pB)=0$, Corollary \ref{Di} tells us that
    \begin{equation*}
      d+d^*\colon\LEe{2,\Dir}1{B^4}\to\LEe2{}{B^4}
    \end{equation*}
    is a Fredholm operator with no kernel on one-forms. The boundary condition on $g$ implies that $i^*dg=d_\pB i^*g=d_\pB c=0$, so $dg$ is in $\LEaB{2,\Dir}1$. Thus, there is a constant $C_G$ independent of $g$ such that
    \begin{equation*}
      \normlb{dg}21\le C_G\normlb{(d+d^*)dg}2{}=C_G\normlb{d^*dg}2{}.
    \end{equation*}
    Combining these inequalities, we have
    \begin{equation*}
      \normlb{dg}21\le C_GC_S\left(\normlb a4{}+\normlb b4{}\right)\normlb{dg}21.
    \end{equation*}
    Thus, requiring $\epsilon\le\tfrac14(C_GC_S)^{-1}$, the condition $\normlb a4{},\normlb b4{}<\epsilon$ implies that
    \begin{equation*}
      \normlb{dg}21\le\frac12\normlb{dg}21,
    \end{equation*}
    so $dg=0$. Thus $g$ is constant on $B^4$. Since $i^*g=c$ on $\pB$, we conclude that $g=c$ on all of $B^4$, as desired.
  \end{proof}
\end{proposition}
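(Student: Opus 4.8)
The plan is to extract an elliptic equation for $g$ directly from the gauge equivalence relation $dg=ga-bg$, show that $d^*dg$ is controlled by a small multiple of $\normlb{dg}21$, and then combine this with a coercivity estimate for $d+d^*$ with Dirichlet boundary conditions to force $dg=0$. Since $g$ is $G$-valued on the connected ball $B^4$ and $i^*g\equiv c$, once $dg=0$ we immediately get $g\equiv c$.

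First I would apply $d^*$ to $dg=ga-bg$. Because $d^*a=d^*b=0$, equivalently $d{*a}=d{*b}=0$, the terms in which the exterior derivative lands on $a$ or $b$ drop out, leaving an expression of the schematic form $d^*dg=-{*}(dg\wedge{*a})-{*}({*b}\wedge dg)$ (with signs I would track carefully via $d^*=\pm{*}d{*}$). This is legitimate at our regularity: $g\in\LGB22$ gives $dg\in\LEaB21$, and by the Sobolev embedding $\LB21\hookrightarrow\LB4{}$ we have $dg\in\LEaB4{}$, while $a,b\in\LEaB4{}$ by hypothesis, so the products are in $L^2$. This yields the bound $\normlb{d^*dg}2{}\le\bigl(\normlb a4{}+\normlb b4{}\bigr)\normlb{dg}4{}\le C_S\bigl(\normlb a4{}+\normlb b4{}\bigr)\normlb{dg}21$, where $C_S$ is the operator norm of $\LB21\hookrightarrow\LB4{}$.

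Next I would install the boundary condition to place $dg$ in the Dirichlet space. Since pullback commutes with $d$ and $i^*g=c$ is constant on $\pB$, we get $i^*(dg)=d_\pB(i^*g)=0$, so $dg\in\LEaB{2,\Dir}1$ in the sense of Definition \ref{l21dir}. Because $H^1(B^4,\pB)=0$, Corollary \ref{Di} tells us that $d+d^*\colon\LEe{2,\Dir}1{B^4}\to\LEe2{}{B^4}$ is Fredholm with no kernel on one-forms, so there is a constant $C_G$, independent of $g$, with $\normlb{dg}21\le C_G\normlb{(d+d^*)dg}2{}=C_G\normlb{d^*dg}2{}$, using $d(dg)=0$. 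Combining with the previous bound gives $\normlb{dg}21\le C_GC_S\bigl(\normlb a4{}+\normlb b4{}\bigr)\normlb{dg}21$, so taking $\epsilon\le\tfrac14(C_GC_S)^{-1}$ forces $\normlb{dg}21\le\tfrac12\normlb{dg}21$, hence $dg=0$, and then $g\equiv c$ on $B^4$.

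The only genuinely delicate point — the "hard part" — is justifying the $d^*dg$ computation and the trace identity $i^*(dg)=d_\pB(i^*g)$ for a gauge transformation $g$ that is merely $\LGB22$ and hence not continuous; both reduce to Sobolev embeddings once one observes that $dg\in\LEaB21$ lands in $\LEaB4{}$ and that $d$ commutes with restriction in the weak sense. After that the argument is purely the Fredholm coercivity estimate from Corollary \ref{Di} together with absorbing a small term, exactly as in the uniqueness part of Uhlenbeck's Coulomb gauge theorem, adapted here to the fixed constant boundary value $c$.
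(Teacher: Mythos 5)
Your proof is correct and follows the paper's argument step for step: apply $d^*$ to the gauge relation, use the Coulomb conditions to drop the $d(*a)$ and $d(*b)$ terms, bound $\normlb{d^*dg}2{}$ by a small multiple of $\normlb{dg}21$, invoke Corollary \ref{Di} together with $i^*(dg)=d_\pB(i^*g)=0$ and $H^1(B^4,\pB)=0$ for coercivity, and absorb. The only addition beyond the paper is your explicit (and correct) remark that $dg\in\LEaB21\hookrightarrow\LEaB4{}$ justifies the $L^2$ products at borderline regularity.
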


\subsection{Coulomb gauge with Coulomb gauge on the boundary}\label{dirichletcoulombsection}
In this section, we prove a second gauge fixing result, where we show that if a connection has small energy, then it is gauge equivalent to a connection $\tA=d+\ta$ that satisfies the Coulomb condition $d^*\ta=0$ on $B^4$ and whose restriction to the boundary $i^*\ta$ satisfies the the Coulomb condition $d^*_\pB(i^*\ta)=0$ on $\pB$, where $d^*_\pB$ denotes the adjoint of the differential $d_\pB$ on $\pB$ with respect to the metric on $\pB$. As in the previous section, we finish with Corollary \ref{dirichletcoulombuniqueness} where we show that the Dirichlet Coulomb gauge representative found by Theorem \ref{dirichletcoulombfixing} is unique up to constant gauge transformations.

\begin{theorem}\label{dirichletcoulombfixing}
  Let $B^4$ be a $4$-ball with an arbitrary Riemannian metric. There exist constants $\epsilon$ and $C$ such that if $A$ is any $\LaB21$ connection with $\normlb{F_A}2{}<\epsilon$, then there exists an $\LaB21$ connection $\tA$ gauge equivalent to $A$ by an $\LGB22$ gauge transformation such that
  \begin{enumerate}
  \item $\tA$ is in Dirichlet Coulomb gauge, that is, $d^*\ta=0$ on $B^4$ and $d^*_\pB(i^*\ta)=0$ on $\pB$, and
  \item $\normlb\ta21\le C\normlb{F_A}2{}$.
  \end{enumerate}
  Moreover, if $A$ is in $\LaB22$, then $g\in\LGB23$.
\end{theorem}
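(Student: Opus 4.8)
The plan is to follow the scheme used for Proposition~\ref{gaugefixingconstantboundary}: first establish a priori bounds for connections that are already in Dirichlet Coulomb gauge, then show that the set of connections satisfying the conclusion is open in $\LaB22$ via the implicit function theorem, then that it is closed in $\LaB22$ and in $\LaB21$, and finally assemble these facts by a connectedness argument, using Uhlenbeck's Theorem~\ref{uhlenbeckgaugefixing} to reduce a connection with small curvature to one with small $\LaB21$ norm. The one structural simplification compared with Proposition~\ref{gaugefixingconstantboundary} is that the boundary Coulomb condition $d^*_\pB i^*\ta=0$ is homogeneous and no boundary value of the connection is prescribed, so the final estimate will carry no $\normlpb{i^*a}2{1/2}$ term.

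\emph{A priori bounds.} The analog of Lemma~\ref{aprioribounds} to be proved is: there are constants $\epsilon,C$ so that any $\LaB21$ connection $A=d+a$ with $\normlb a4{}<\epsilon$, $d^*a=0$, and $d^*_\pB i^*a=0$ satisfies $\normlb a21\le C\normlb{F_A}2{}$, with the higher-regularity estimate $\normlb a22\le C(\normlb{\nabla_A F_A}2{}+\normlb{F_A}2{})$ when $A\in\LaB22$. Since $d^*a=0$, Lemma~\ref{aprioribounds} already gives $\normlb a21\le C(\normlb{F_A}2{}+\normlpb{i^*a}2{1/2})$, so only the boundary term needs to be controlled. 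On $\pB$ one has $H^1(\pB)=0$, so $d_\pB+d^*_\pB$ is injective with closed range on one-forms; since $d^*_\pB i^*a=0$ while $d_\pB i^*a=i^*(da)$, this gives $\normlpb{i^*a}2{1/2}\le C\normlpb{i^*(da)}2{-1/2}$, and the tangential trace estimate for $L^2$ forms with $L^2$ exterior derivative (here $d(da)=0$) gives $\normlpb{i^*(da)}2{-1/2}\le C\normlb{da}2{}$. Substituting $da=F_A-\tfrac12[a\wedge a]$ and absorbing the quadratic term with $\normlb a4{}<\epsilon$ as in Lemma~\ref{aprioribounds} yields the bound; the $\LaB22$ estimate follows from the higher-regularity half of Lemma~\ref{aprioribounds} by the same argument with one more derivative in $B^4$ and on $\pB$.

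\emph{Openness, closedness, and assembly.} For openness one proves the analog of Lemma~\ref{implicitfunctiontheoremconstantboundary}: if $A=d+a$ is in Dirichlet Coulomb gauge with $\normlb a4{}$ small, then every $\LaB22$ connection near $A$ admits an $\LGB23$ gauge transformation $g=e^{-\gamma}$ into Dirichlet Coulomb gauge, depending smoothly on the connection. One searches for $\gamma\in\LgB23$ (with no boundary condition on $\gamma$, since the gauge transformation is free on $\pB$) solving $d^*(g(A+\alpha))=0$ in $B^4$ together with $d^*_\pB i^*(g(A+\alpha))=0$ on $\pB$; the derivative in $\gamma$ at $(\gamma,\alpha)=(0,0)$ is $\gamma'\mapsto(d^*d_A\gamma',\,d^*_\pB i^*d_A\gamma')$, with leading part $\gamma'\mapsto(\Delta\gamma',\,\Delta_\pB(i^*\gamma'))$ — equivalently, the residual problem is to solve a Laplace--Beltrami equation on $\pB$ for $i^*\gamma$ and extend it harmonically into $B^4$. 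This is an elliptic boundary value problem whose kernel is the constant $\mf g$-valued functions (a harmonic $\gamma'$ whose boundary trace is $\Delta_\pB$-harmonic, hence constant on the connected $\pB$, is constant) and which is Fredholm of index zero, so after restricting $\gamma'$ to the $L^2$-orthogonal complement of the constants the linearization is an isomorphism once $\normlb a4{}$ is small enough to absorb the lower-order term $d^*[a,\gamma']$, and the implicit function theorem applies; packaging this with the a priori bound exactly as in Corollary~\ref{opencorollary} gives openness. Closedness in $\LaB21$ and in $\LaB22$ is proved as in Lemma~\ref{closedconstantboundary}: the a priori bound makes the Coulomb representatives $\ta_i$ bounded, one extracts a weak limit, the linear conditions $d^*\ta=0$ and $d^*_\pB i^*\ta=0$ survive it, Lemma~\ref{weakgaugeconvergence} produces the limiting gauge transformation, norms are lower semicontinuous under weak limits, and $g\in\LGB23$ in the higher-regularity case by bootstrapping $dg=ga-\ta g$. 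Now the ball of $\LaB22$ connections $A=d+a$ with $\normlb a21<\epsilon$ is connected and contains the trivial connection $d$, so every such connection satisfies the conclusion with $g\in\LGB23$; and for a general $\LaB22$ connection with $\normlb{F_A}2{}$ small, Uhlenbeck's Theorem~\ref{uhlenbeckgaugefixing} — which, as in Proposition~\ref{gaugefixingconstantboundary}, upgrades to give $g_0\in\LGB23$ when $A\in\LaB22$ — first provides $g_0$ with $g_0(A)=d+a_0$ and $\normlb{a_0}21\le C\normlb{F_A}2{}<\epsilon$, so composing with the gauge transformation obtained from the ball and invoking the a priori bound upgrades the estimate to $\normlb\ta21\le C\normlb{F_A}2{}$. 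Finally, any $\LaB21$ connection with small curvature is an $\LaB21$-limit of smooth (hence $\LaB22$) connections with small curvature, so the $\LaB21$ statement follows from closedness in $\LaB21$. (The uniqueness up to constant gauge transformations of Corollary~\ref{dirichletcoulombuniqueness} would then be established separately, as Proposition~\ref{uniqueconstantboundary} was.)

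\emph{Main obstacle.} The technical heart is the elliptic theory for the second-order tangential boundary operator $\beta\mapsto d^*_\pB i^*\beta$ (equivalently $\Delta_\pB\circ i^*$ at leading order), which is not one of the standard first-order (Dirichlet or Neumann) conditions of Section~\ref{hodgeprelim}: one must verify that it satisfies the Lopatinski--Shapiro complementing condition for the relevant operators on $B^4$ and identify the correct Sobolev scale on $\pB$, which is one half-order lower than for the Neumann problem. The recurrent secondary difficulty is the borderline regularity of the $\LGB22$ gauge group, which forces the implicit function theorem to be run one derivative higher, in the $\LaB22$/$\LGB23$ category where the exponential map is smooth, with the $\LaB21$ conclusion recovered only afterwards by the closedness argument.
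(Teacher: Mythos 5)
Your proposal is correct and follows the same broad architecture as the paper — a priori bounds for Dirichlet Coulomb representatives (matching Lemma~\ref{aprioribounds2}, including your observation that the tangential trace estimate $\normlpb{d_\pB i^*a}2{-1/2}\le C\normlb{da}2{}$ is the delicate step, which is the paper's Lemma~\ref{dialeda}), openness via the implicit function theorem one derivative up, closedness via weak limits and Lemma~\ref{weakgaugeconvergence}, and the bootstrap $dg=ga-\ta g$ for the $\LGB23$ regularity. The one genuinely different choice is the connectedness/assembly step. The paper proves Lemma~\ref{connectedlemma} directly: the set of $\LaBs2k$ connections with small energy in the \emph{standard} metric is connected via the conformal dilations $a\mapsto\lambda\,a(\lambda x)$, and then compares the arbitrary metric to the standard one through the conformal invariance of energy to squeeze $\{\normlb{F_A}2{}<\epsilon\}$ between two sets of small standard energy. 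You instead invoke Uhlenbeck's Theorem~\ref{uhlenbeckgaugefixing} (with its higher-regularity $\LGB23$ upgrade, which you must justify by the same bootstrap but which is routine) to move $A$ into the convex, hence trivially connected, ball $\{\normlb a21<\epsilon\}$ and run the open-closed argument there, composing the two gauge transformations at the end. Your route is a bit cleaner — it dispenses with the dilation lemma and the metric comparison and works uniformly for an arbitrary metric — at the cost of layering in Uhlenbeck's theorem as a black box, whereas the paper's route is more self-contained. A smaller presentational difference: you flag the Lopatinski--Shapiro complementing condition for the second-order tangential boundary operator as the technical obstacle, while the paper sidesteps this entirely by factoring the leading-order operator $T_0$ through $(\Delta,i^*)\colon\LgB{2,\perp}3\to\LgB21\times\LgpB{2,\perp}{5/2}$ followed by $(\Id,\Delta_\pB)$, reducing everything to the classical Dirichlet problem plus the closed-manifold Laplacian on $\pB$. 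Your parenthetical remark about solving a Laplace--Beltrami equation on $\pB$ and extending harmonically is exactly this decomposition in disguise, and promoting it from an aside to the main argument would let you avoid invoking any general Lopatinski--Shapiro machinery.
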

Gauge fixing with the Dirichlet Coulomb condition $d^*\ta=0$ and $d^*_\pB i^*\ta=0$ is shown in Uhlenbeck's paper \cite[Theorem 2.7]{u82b}, but again with $L^\infty$ bounds. Marini \cite{m92} improves this result to $L^2_1$ connections, but with the additional assumption that, on the boundary, $\normlpb{i^*F_A}2{}<\epsilon$. We remove this condition, so $\normlpb{i^*F_A}2{}$ need not even be finite. As in the previous section and \cite{u82}, we first work in higher regularity and prove that the space of $\LaB22$ connections satisfying Theorem \ref{dirichletcoulombfixing} is both open and closed in $\LaB22$, and then prove the result for $\LaB21$ connections by showing that the space of $\LaB21$ connections satisfying \ref{dirichletcoulombfixing} is closed in $\LaB21$. We begin by strenghtening the a priori bounds in Lemma \ref{aprioribounds} to this setting.

\begin{lemma}\label{aprioribounds2}
  There exist constants $\epsilon$ and $C$, such that if $A=d+a$ is an $\LB21$ connection with $\normlb a4{}<\epsilon$ in Dirichlet Coulomb gauge, that is, $d^*a=0$ and $d^*_\pB i^*a=0$, then
  \begin{equation*}
    \normlb a21\le C\normlb{F_A}2{}.
  \end{equation*}
  Furthermore, if $A$ is an $\LaB22$ connection, then
  \begin{equation*}
    \normlb a22\le C\left(\normlb{\nabla_AF_A}2{}+\normlb{F_A}2{}\right).
  \end{equation*}
  \begin{proof}
    Recall \eqref{Dibound1} and \eqref{Dibound2}.
    \begin{align*}
      \normlb a21&\le C_G\left(\normlb{da}2{}+\normlpb{i^*a}2{1/2}\right),\\
      \normlb a22&\le C_G\left(\normlb{da}21+\normlpb{i^*a}2{3/2}\right).
    \end{align*}
    The key idea is to absorb the $i^*a$ terms by proving that $d^*_\pB i^*a=0$ implies that $\normlpb{i^*a}2{1/2}\le C_F\normlb{da}2{}$ and $\normlpb{i^*a}2{3/2}\le C_F\normlb{da}21$ for some constant $C_F$. Since $d_\pB^{}+d^*_\pB$ is elliptic and $H^1(\pB)=0$, we know that $d_\pB^{}+d^*_\pB$ is a Fredholm operator with no kernel on one forms. Thus, there is a constant $C_g$ such that
    \begin{multline*}
      \normlpb{i^*a}2{3/2}\le C_g\normlpb{(d_\pB^{}+d^*_\pB)(i^*a)}2{1/2}=C_g\normlpb{d_\pB i^*a}2{1/2}\\
      =C_g\normlpb{i^*(da)}2{1/2}\le C_gC_T\normlb{da}21,
    \end{multline*}
    where $C_T$ is the operator norm of the trace map $\LB21\to\LpB2{1/2}$. We would like to do the same argument in lower regularity, but the trace map $\LB2{}\to\LpB2{-1/2}$ is unbounded. However, we can still get the inequality $\normlpb{d_\pB i^*a}2{-1/2}\le C_T\normlb{da}2{}$ using the Hodge decomposition, as we show in Lemma \ref{dialeda}. For now, we continue with this assumption. Thus, by the same argument,
    \begin{equation*}
      \normlpb{i^*a}2{1/2}\le C_g\normlpb{(d_\pB^{}+d^*_\pB)(i^*a)}2{-1/2}
      =C_g\normlpb{d_\pB i^*a}2{-1/2}\le C_gC_T\normlb{da}2{}.
    \end{equation*}
    Hence, we have
    \begin{align}
      \normlb a21&\le C_G(C_gC_T+1)\normlb{da}2{},\label{a21bound}\\
      \normlb a22&\le C_G(C_gC_T+1)\normlb{da}21.\label{a22bound}
    \end{align}

    At this point, we can follow the argument of Lemma \ref{aprioribounds} with $C_G(C_gC_T+1)$ in place of $C_G$. By choosing $\epsilon$ small enough, we can have $\normlb a4{}<\epsilon$ imply
    \begin{equation*}
      \normlb{da}2{}\le\normlb{F_A}2{}+\tfrac12C_G^{-1}(C_gC_T+1)^{-1}\normlb a21.
    \end{equation*}
    Combing with \eqref{a21bound} and rearranging, we have
    \begin{equation*}
      \normlb a21\le 2C_G(C_gC_T+1)\normlb{F_A}2{},
    \end{equation*}
    as desired.

    Likewise, in higher regularity, we can choose $\epsilon$ small enough to guarantee
    \begin{equation*}
      \normlb{da}21\le\normlb{\nabla_AF_A}2{}+\normlb{F_A}2{}+\tfrac34C_G^{-1}(C_gC_T+1)^{-1}\normlb a22.
    \end{equation*}
    Combining with \eqref{a22bound} and rearranging, we have
    \begin{equation*}
      \normlb a22\le4C_G(C_gC_T+1)\left(\normlb{\nabla_AF_A}2{}+\normlb{F_A}2{}\right),
    \end{equation*}
    as desired.
  \end{proof}
\end{lemma}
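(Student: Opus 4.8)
The plan is to reduce the curvature estimate to an elliptic estimate for $d+d^*$ with Dirichlet boundary conditions, then dispose of the boundary term that appears by invoking the second Coulomb condition $d^*_\pB i^*a=0$, and finally absorb all quadratic-in-$a$ terms using the smallness of $\normlb a4{}$, following the scheme of Lemma \ref{aprioribounds}.

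First I would recall that, since $H^1(B^4,\pB)=0$, Corollary \ref{Di} shows that $d+d^*$ is Fredholm with trivial kernel on one-forms both as a map $\LeB{2,\Dir}1\to\LeB2{}$ and as a map $\LeB{2,\Dir}2\to\LeB21$; composing with the surjective trace map exactly as in Lemma \ref{aprioribounds} produces the estimates \eqref{Dibound1} and \eqref{Dibound2}, which using $d^*a=0$ read
\[
  \normlb a21\le C_G\bigl(\normlb{da}2{}+\normlpb{i^*a}2{1/2}\bigr),\qquad \normlb a22\le C_G\bigl(\normlb{da}21+\normlpb{i^*a}2{3/2}\bigr).
\]
Unlike in Lemma \ref{aprioribounds}, we cannot simply drop the boundary term, since $i^*a$ need not vanish; this is where the hypothesis $d^*_\pB i^*a=0$ is used. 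Because $\pB=S^3$ has $H^1(\pB)=0$, the elliptic operator $d_\pB+d^*_\pB$ on $\pB$ is Fredholm with no kernel on one-forms, so $\normlpb{i^*a}2s\le C_g\normlpb{(d_\pB+d^*_\pB)i^*a}2{s-1}=C_g\normlpb{d_\pB i^*a}2{s-1}$. By the commutation identity $d_\pB i^*a=i^*(da)$ and boundedness of the trace map $\LB21\to\LpB2{1/2}$, this gives $\normlpb{i^*a}2{3/2}\le C\normlb{da}21$; in the low-regularity case the corresponding bound $\normlpb{i^*a}2{1/2}\le C\normlb{da}2{}$ follows once we know $\normlpb{d_\pB i^*a}2{-1/2}\le C\normlb{da}2{}$, which I would isolate as Lemma \ref{dialeda}. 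Substituting these two boundary bounds into the displays above absorbs the $i^*a$ terms and yields $\normlb a21\le C\normlb{da}2{}$ and $\normlb a22\le C\normlb{da}21$, with the enlarged constant $C_G(C_gC_T+1)$, where $C_T$ is the trace norm.

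With $da$ now controlling $a$, the remaining steps mirror Lemma \ref{aprioribounds}. For the first inequality I would write $da=F_A-\tfrac12[a\wedge a]$, estimate $\normlb{[a\wedge a]}2{}\le C_\Lie C_S\normlb a4{}\normlb a21$, and choose $\epsilon$ small enough that the $[a\wedge a]$ contribution is absorbed into the left-hand side, giving $\normlb a21\le C\normlb{F_A}2{}$. For the second inequality I would differentiate to get $\nabla da=\nabla_AF_A-[a\otimes F_A]-\nabla\tfrac12[a\wedge a]$, use the refined product bound $\normlb{\tfrac12[a\wedge a]}21\le C_s\normlb a22\normlb a4{}$ established in the proof of Lemma \ref{aprioribounds} to control $\normlb{F_A}21$ in terms of $\normlb a22$, and again take $\epsilon$ small so that every term carrying a factor of $\normlb a4{}$ is absorbed, obtaining $\normlb a22\le C\bigl(\normlb{\nabla_AF_A}2{}+\normlb{F_A}2{}\bigr)$.

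The main obstacle is the low-regularity boundary estimate $\normlpb{d_\pB i^*a}2{-1/2}\le C\normlb{da}2{}$: one cannot simply apply the trace theorem to $da\in\LB2{}$, since the trace map into $\LpB2{-1/2}$ is unbounded. The resolution, deferred to Lemma \ref{dialeda}, is to work with $da$ directly and use the Hodge decomposition together with the identity $i^*d=di^*$, so that the restriction of the exact part of $da$ is meaningful in $\LpB2{-1/2}$ even though $da$ itself is only $L^2$. Apart from that point, the argument is a routine chain of Fredholm estimates and Sobolev multiplication inequalities with constants chosen small, patterned on Lemma \ref{aprioribounds}.
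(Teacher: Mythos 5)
Your proposal follows exactly the paper's argument: reuse the Fredholm estimates \eqref{Dibound1}--\eqref{Dibound2}, absorb the boundary terms via ellipticity of $d_\pB^{}+d^*_\pB$ on $\pB$ together with $d_\pB^{} i^*a=i^*(da)$ (deferring the low-regularity bound $\normlpb{d_\pB i^*a}2{-1/2}\le C\normlb{da}2{}$ to Lemma \ref{dialeda}), and then run the absorption scheme of Lemma \ref{aprioribounds} with the enlarged constant. This matches the paper's proof, including its identification of the unbounded trace map as the one delicate point.
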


\begin{lemma}\label{dialeda}
  Let $X$ be a compact smooth manifold with boundary, and let $\alpha$ be a differential form in $\LeX21$. There is a constant $C_T$ independent of $\alpha$ such that
  \begin{equation*}
    \normlpx{d_\pX i^*\alpha}2{-1/2}\le C_T\normlx{d\alpha}2{}.
  \end{equation*}
  \begin{proof}
    It suffices to consider smooth $\alpha$ because smooth forms are dense in $\LeX21$ and the linear maps $d_\pX\circ i^*\colon\LeX21\to\LepX2{-1/2}$ and $d\colon\LeX21\to\LeX2{}$ are continuous. By Proposition \ref{hodgedecomposition},
    \begin{equation*}
      \alpha=dd^*G^\Neu\alpha+d^*dG^\Neu\alpha+\pi^\Neu_{\mc H}\alpha.
    \end{equation*}
    Let $\beta=d^*dG^\Neu\alpha$, which is smooth by Proposition \ref{hodgedecomposition} because $\alpha$ is smooth, so both $i^*d\alpha$ and $i^*d\beta$ are well-defined. By the above equation, we see that
    \begin{gather*}
      d\alpha=d\beta,\\
      d_\pX i^*\alpha=i^*d\alpha=i^*d\beta=d_\pX i^*\beta.
    \end{gather*}
    Hence, it suffices to prove our lemma for $\beta$.

    It is clear that $d^*\beta=0$. Moreover, by the boundary conditions on the range of $G^\Neu$ in Proposition \ref{hodgedecomposition}, we see that
    \begin{equation*}
      i^*{*\beta}=i^*{*d^*}dG^\Neu\alpha=\pm i^*d{*d}G^\Neu\alpha=\pm d_\pX i^*{*d}G^\Neu\alpha=\pm d_\pX i^*d^*{*G^\Neu}\alpha=0.
    \end{equation*}
    Hence, we can apply Corollary \ref{Di}, noting that Proposition \ref{hodgedecomposition} gives us that $\beta$ is orthogonal to $\mc H^\Neu$. In other words, we have a Fredholm operator
    \begin{equation*}
      d+d^*\colon\LeX{2,\Neu}1\to\LeX2{},
    \end{equation*}
    and $\beta$ is orthogonal to its kernel, so there is a constant $C_G$ independent of $\beta$ such that
    \begin{equation*}
      \normlx\beta21\le C_G\normlx{(d+d^*)\beta}2{}=C_G\normlx{d\beta}2{}.
    \end{equation*}
    At this point, proving the claim for $\beta$ is straightforward. Let $C$ be the operator norm of $d_\pX\circ i^*\colon\LeX21\to\LepX2{-1/2}$. We have
    \begin{equation*}
      \normlpx{d_\pX i^*\beta}2{-1/2}\le C\normlx\beta21\le CC_G\normlx{d\beta}2{},
    \end{equation*}
    as desired, letting $C_T=CC_G$. Since $d\alpha=d\beta$ and $d_\pX i^*\alpha=d_\pX i^*\beta$, we also have
    \begin{equation*}
      \normlpx{d_\pX i^*\alpha}2{-1/2}\le C_T\normlx{d\alpha}2{}
    \end{equation*}
    for smooth $\alpha$, and the aforementioned density argument gives us the inequality for all $\alpha\in\LeX21$.
  \end{proof}
\end{lemma}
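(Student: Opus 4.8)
The plan is to use the Hodge decomposition to replace $\alpha$ by a form satisfying Neumann boundary conditions, for which Corollary \ref{Di} supplies an elliptic estimate. First, it suffices to prove the inequality for smooth $\alpha$: smooth forms are dense in $\LeX21$, and both of the linear maps $d_\pX\circ i^*\colon\LeX21\to\LepX2{-1/2}$ and $d\colon\LeX21\to\LeX2{}$ are bounded, the former because the trace $i^*\colon\LeX21\to\LepX2{1/2}$ and $d_\pX\colon\LepX2{1/2}\to\LepX2{-1/2}$ are bounded. So fix a smooth $\alpha$ and, using the Neumann Green's operator of Proposition \ref{hodgedecomposition}, write $\alpha=dd^*G^\Neu\alpha+d^*dG^\Neu\alpha+\pi^\Neu_{\mc H}\alpha$; set $\beta=d^*dG^\Neu\alpha$, which is smooth because $\alpha$ is. The exact and harmonic summands are closed, so $d\alpha=d\beta$, and hence $d_\pX i^*\alpha=i^*d\alpha=i^*d\beta=d_\pX i^*\beta$; thus it is enough to prove the bound for $\beta$ in place of $\alpha$.

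Next I would verify that $\beta$ lies in the Neumann class $\LeX{2,\Neu}1$ and is $\LeX2{}$-orthogonal to $\mc H^\Neu$. Orthogonality to $\mc H^\Neu$ is immediate from Proposition \ref{hodgedecomposition}, since $d^*dG^\Neu$ and $\pi^\Neu_{\mc H}$ are projections with orthogonal ranges. The vanishing $d^*\beta=0$ is clear from $d^*d^*=0$. For the boundary condition $i^*{*\beta}=0$, one uses the identity $d^*=\pm{*}d{*}$ to rewrite $*\beta$ in terms of $d^*{*G^\Neu\alpha}$, and then invokes the boundary conditions $i^*{*G^\Neu\alpha}=0$ and $i^*d^*{*G^\Neu\alpha}=0$ from Proposition \ref{hodgedecomposition}, so that $i^*{*\beta}=\pm d_\pX\bigl(i^*d^*{*G^\Neu\alpha}\bigr)=0$.

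With $\beta\in\LeX{2,\Neu}1$ orthogonal to the kernel $\mc H^\Neu$ of the Fredholm operator $d+d^*\colon\LeX{2,\Neu}1\to\LeX2{}$ from Corollary \ref{Di}, there is a constant $C_G$, independent of $\beta$, with $\normlx\beta21\le C_G\normlx{(d+d^*)\beta}2{}=C_G\normlx{d\beta}2{}$, using $d^*\beta=0$. Combining this with the boundedness of $d_\pX\circ i^*\colon\LeX21\to\LepX2{-1/2}$, say with operator norm $C$, gives $\normlpx{d_\pX i^*\beta}2{-1/2}\le C\normlx\beta21\le CC_G\normlx{d\beta}2{}=CC_G\normlx{d\alpha}2{}$. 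Since $d_\pX i^*\alpha=d_\pX i^*\beta$, this is the desired bound for smooth $\alpha$ with $C_T=CC_G$, and the density argument above extends it to all $\alpha\in\LeX21$.

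The main point requiring care is the verification that $\beta$ satisfies the Neumann boundary condition $i^*{*\beta}=0$; this is exactly where the specific boundary behavior of the Neumann Green's operator encoded in Proposition \ref{hodgedecomposition} is used, and it is what makes the replacement $\alpha\rightsquigarrow\beta$ legitimate. The remaining ingredients — the elliptic estimate from Corollary \ref{Di}, boundedness of the trace and exterior derivative between the relevant Sobolev spaces, and density of smooth forms — are standard.
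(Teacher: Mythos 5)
Your proposal follows exactly the same route as the paper: density reduces to smooth $\alpha$, the Neumann Hodge decomposition replaces $\alpha$ by $\beta=d^*dG^\Neu\alpha$ with $d\alpha=d\beta$ and the same boundary tangential exterior derivative, the boundary conditions on $G^\Neu$ give $i^*{*\beta}=0$, and the Fredholm estimate from Corollary \ref{Di} together with boundedness of $d_\pX\circ i^*$ closes the argument. All steps and constants match; the only cosmetic difference is a slightly looser phrasing in the verification of $i^*{*\beta}=0$ (where only the condition $i^*d^*{*G^\Neu\alpha}=0$ is actually used), which does not affect correctness.
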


As in the previous section, our next step is to prove openness in higher regularity.

\begin{lemma}\label{implicitfunctiondirichletfixing}
  There exist constants $\epsilon_4$ and $\epsilon_3$ with the following significance. Let $A=d+a$ be an $\LaB22$ connection with $\normlb a4{}<\epsilon_4$, $\normlpb{i^*a}3{}<\epsilon_3$, $d^*a=0$, and $d^*_\pB i^*a=0$. Then there exists an open $\LaB22$ neighborhood of $A$ such that any connection $B$ in this neighborhood has an $\LGB23$ gauge transformation $g$ that sends $B$ to a connection $\tB$ satisfying the Dirichlet Coulomb conditions $d^*\tb=0$ and $d^*_\pB i^*\tb=0$. Moreover, $g$ depends smoothly on $B$.
  \begin{proof}
    We adapt the proof of Lemma \ref{implicitfunctiontheoremconstantboundary}. Again, we search for a $g$ of the form $e^{-\gamma}$ for $\gamma\in\LgB23$. For $\alpha$ small in $\LaB22$, we want a solution $\gamma$ to the system
    \begin{equation}\label{dirichletcoulombsystem}
      \begin{split}
        d^*(e^{-\gamma}(a+\alpha)e^\gamma-(de^{-\gamma})e^\gamma)&=0,\\
        d^*_\pB i^*(e^{-\gamma}(a+\alpha)e^\gamma-(de^{-\gamma})e^\gamma)&=0.
      \end{split}
    \end{equation}
    This time, we need to deal with the fact that Dirichlet Coulomb representatives are unique only up to constant gauge transformations, so in order to obtain an isomorphism, we need to make sure that our spaces of infinitesimal gauge transformations do not contain nonzero constant gauge transformations. For $k\ge1$, let $\LgpB{2,\perp}{k-1/2}$ denote those $\LgpB2{k-1/2}$ functions that are $\LpB2{}$-orthogonal to $\mc H^0(\pB)$, that is, orthogonal to the constant functions on the $3$-sphere. Let $\LgB{2,\perp}k$ denote the inverse image of the closed subspace $\LgpB{2,\perp}{k-1/2}$ under the restriction map $i^*\colon\LgB2k\to\LgpB2{k-1/2}$. We consider the map
    \begin{gather*}
      \LgB{2,\perp}3\times\LaB22\to\LgB21\times\LgpB{2,\perp}{1/2},\\
      (\gamma,\alpha)\mapsto\left(d^*(e^{-\gamma}(a+\alpha)e^\gamma-(de^{-\gamma})e^\gamma),d^*_\pB i^*(e^{-\gamma}(a+\alpha)e^\gamma-(de^{-\gamma})e^\gamma)\right).
    \end{gather*}
    Again, this map is smooth because the relevant Sobolev spaces above the borderline. Moreover, the range of the second component is indeed in $\LgpB{2,\perp}{1/2}$ because it is in the range of $d^*_\pB$, and it is easy to verify that if $\phi$ is a constant map, then $\pairlb{d^*_\pB\beta}\phi=\pairlb\beta{d_\pB\phi}=0$ for all $\beta$.

    To apply the implicit function theorem, we show that the derivative of this map with respect to the $\gamma$ variable at $(\gamma,\alpha)=(0,0)$ is an isomorphism. Call this map $T$. This map is
    \begin{align*}
      T&\colon\LgB{2,\perp}3\to\LgB21\times\LgpB{2,\perp}{1/2},\\
      T&\colon\gamma'\to\left(d^*[a,\gamma']+d^*d\gamma',d^*_\pB i^*[a,\gamma']+d^*_\pB i^*d\gamma'\right).
    \end{align*}
    As in the proof of Lemma \ref{implicitfunctiontheoremconstantboundary}, our goal is to show that $T$ is a Fredholm operator of index zero by decomposing $T$ as a sum $T=T_0+K$ where
    \begin{align*}
      T_0&\colon\gamma'\mapsto\left(d^*d\gamma',d^*_\pB i^*d\gamma')\right),\\
      K&\colon\gamma'\mapsto\left(d^*[a,\gamma'],d^*_\pB i^*[a,\gamma']\right).
    \end{align*}
    We then show that $T_0$ is an isomorphism and $K$ is compact. Note that $d^*_\pB i^*d\gamma'=d^*_\pB d_\pB^{}i^*\gamma'$. Hence, $T_0\colon\LgB{2,\perp}3\to\LgB21\times\LgpB{2,\perp}{1/2}$ is the composition of the maps
    \begin{align*}
      (\Delta,i^*)&\colon\LgB{2,\perp}3\to\LgB21\times\LgpB{2,\perp}{5/2},\\
      (\Id,\Delta_\pB)&\colon\LgB21\times\LgpB{2,\perp}{5/2}\to\LgB21\times\LgpB{2,\perp}{1/2}.
    \end{align*}
    The fact that $\Delta_\pB\colon\LgpB{2,\perp}{k+3/2}\to\LgpB{2,\perp}{k-1/2}$ is an isomorphism follows from the usual Hodge decomposition on closed manifolds, since, by definition, we restrict the domain and range to the orthogonal complement of the harmonic functions $\mc H^0(\pB)$, that is, the constant functions.

    As for $(\Delta,i^*)$, as before, we know from Propositions \ref{cohomology} and \ref{hodgedecomposition} and the fact that $\mc H^0(B^4,\pB)=0$ that $\Delta\colon\LgB{2,\Dir}{k+2}\to\LgB2k$ is an isomorphism for $k\ge0$, where $\LgB{2,\Dir}{k+2}=\LgB2{k+2}\cap\ker i^*=\LgB{2,\perp}{k+2}\cap\ker i^*$. The injectivity of $(\Delta,i^*)$ follows. For surjectivity, we use a standard argument using the surjectivity of $i^*$. Indeed, the inverse trace map \cite[Theorem 7.53]{a75} gives us surjectivity of $i^*\colon\LgB2{k+2}\to\LgpB2{k+3/2}$ for $k\ge-1$. Since $\LgB{2,\perp}{k+2}$ is defined as in the inverse image of $\LgpB{2,\perp}{k+3/2}$ under this map, we know that $i^*\colon\LgB{2,\perp}{k+2}\to\LgpB{2,\perp}{k+3/2}$ is also surjective. Given $(\beta,\gamma_\partial)\in\LgB2k\times\LgpB{2,\perp}{k+3/2}$, let $\gamma_1\in\LgB{2,\perp}{k+2}$ be such that $i^*\gamma_1=\gamma_\partial$. Meanwhile, we use the surjectivity of $\Delta\colon\LgB{2,\Dir}{k+2}\to\LgB2k$ to find a $\gamma_2\in\LgB{2,\Dir}{k+2}$ such that $\Delta\gamma_2=\Delta\gamma_1-\beta$. Then, $\gamma_1-\gamma_2\in\LgB{2,\perp}{k+2}$, and we have $\Delta(\gamma_1-\gamma_2)=\beta$ and $i^*(\gamma_1-\gamma_2)=i^*\gamma_1=\gamma_\partial$, as desired. Setting $k=1$ gives us that $T_0\colon\LgB{2,\perp}3\to\LgB21\times\LgpB{2,\perp}{1/2},\\$ is an isomorphism.

    Next, we show that $K$ is compact. In the proof of Lemma \ref{implicitfunctiontheoremconstantboundary}, we computed that if $d^*a=0$, then
    \begin{equation*}
      d^*[a,\gamma']=*[*a\wedge d\gamma'].
    \end{equation*}
    The same argument shows that if $d^*_\pB i^*a=0$, then
    \begin{equation*}
      d^*_\pB i^*[a,\gamma']=d^*_\pB[i^*a,i^*\gamma']=*_\pB[*_\pB i^*a\wedge d_\pB i^*\gamma'],
    \end{equation*}
    where $*_\pB$ denotes the Hodge star operator on the sphere $\pB$. As before, we view $\gamma'\mapsto d^*[a,\gamma']$ as the composition
    \begin{equation*}
      \LgB{2,\perp}3\xrightarrow d\LaB22\hookrightarrow\LaB2{3/2}\xrightarrow{*[*a\wedge\cdot]}\LgB21,
    \end{equation*}
    and the Sobolev multiplication and embedding theorems, along with $a\in\LgB22$ and the smoothness of $*$, tells us that the maps above are continuous, and the second one is compact, so the composition is compact. Likewise, $\gamma'\mapsto d^*_\pB i^*[a,\gamma']$ is the composition of the maps
    \begin{multline*}
      \LgB{2,\perp}3\xrightarrow{i^*}\LgpB{2,\perp}{5/2}\xrightarrow{d_\pB}\LapB2{3/2}\\
      \hookrightarrow\LapB21\xrightarrow{*_\pB[*_\pB i^*a\wedge\cdot]}\LgpB2{1/2}.
    \end{multline*}
    Again, the inclusion is compact, so the composition is compact.

    We conclude that $K$ is compact, so $T$ is indeed a Fredholm operator of index zero. Hence, to show that $T$ is an isomorphism, it suffices to show that $T$ is injective. We show that this is indeed the case, assuming $\normlb a4{}<\epsilon_4$ and $\normlpb{i^*a}3{}<\epsilon_3$ for $\epsilon_4$ and $\epsilon_3$ small enough. Now setting $k=0$ in the above argument gives us that, in one degree lower regularity, $T_0\colon\LgB{2,\perp}2\to\LgB2{}\times\LgpB{2,\perp}{-1/2}$ is also an isomorphism, so there exists an $\epsilon_\Delta$ such that
    \begin{equation*}
      \norm{T_0\gamma'}_{\LB2{}\times\LpB2{-1/2}}\ge\epsilon_\Delta\normlb{\gamma'}22.
    \end{equation*}
    Next we bound $\norm{K\gamma'}_{\LB2{}\times\LpB2{-1/2}}$ from above. Since $*$ is an isometry, we have
    \begin{equation*}
      \normlb{d^*[a,\gamma']}2{}=\normlb{*[*a\wedge d\gamma']}2{}
      \le C_\Lie\normlb a4{}\normlb{d\gamma'}4{}\le C_\Lie C_S\normlb a4{}\normlb{\gamma'}22,
    \end{equation*}
    where $C_\Lie$ is the operator norm of the Lie bracket $[\cdot,\cdot]$ and $C_S$ is the operator norm of the Sobolev embedding $\LB21\hookrightarrow\LB4{}$. Likewise,
    \begin{multline*}
      \normlpb{d^*_\pB i^*[a,\gamma']}2{-1/2}\le C_s\normlpb{*_\pB[*_\pB i^*a\wedge d_\pB i^*\gamma']}{3/2}{}\\
      \le C_sC_\Lie\normlpb{i^*a}3{}\normlpb{d_\pB i^*\gamma'}3{}\le C_s^2C_\Lie C_T\normlpb{i^*a}3{}\normlb{\gamma'}22,
    \end{multline*}
    where $C_s$ denotes the operator norms of the embeddings $\LpB{3/2}{}\hookrightarrow\LpB2{-1/2}$ and $\LpB2{1/2}\hookrightarrow\LpB3{}$ and $C_T$ is the norm of the trace operator $i^*\colon\LgB22\to\LgpB2{3/2}$. Hence, by choosing $\epsilon_4$ and $\epsilon_3$ small enough, we can guarantee that $\normlb a4{}<\epsilon_4$ and $\normlpb{i^*a}3{}<\epsilon_3$ imlies that
    \begin{equation*}
      \norm{K\gamma'}_{\LB2{}\times\LpB2{-1/2}}\le\tfrac12\epsilon_\Delta\normlb{\gamma'}22.
    \end{equation*}
    As a consequence, since $T=T_0+K$, we know that
    \begin{equation*}
      \norm{T\gamma'}_{\LB2{}\times\LpB2{-1/2}}\ge\tfrac12\epsilon_\Delta\normlb{\gamma'}22,
    \end{equation*}
    so $T$ is injective. Since $T$ has Fredholm index zero, we know that $T$ is an isomorphism, so the implicit function theorem gives us a solution $g=e^{-\gamma}$ to the system \eqref{dirichletcoulombsystem} that depends smoothly on $\alpha$ in a neighborhood of $\alpha=0$. That is, for any connection in a neighborhood of $A$, we have a gauge transformation sending it to a connection satisfying the Dirichlet Coulomb conditions, as desired.
  \end{proof}
\end{lemma}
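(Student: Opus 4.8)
The plan is to adapt the implicit function theorem argument of Lemma~\ref{implicitfunctiontheoremconstantboundary}, now imposing the two Dirichlet Coulomb conditions $d^*\tb=0$ on $B^4$ and $d^*_\pB i^*\tb=0$ on $\pB$ simultaneously. As before, I would look for the gauge transformation in the form $g=e^{-\gamma}$ with $\gamma$ a $\mf g$-valued $\LB23$ function, so that, for $\alpha$ small in $\LaB22$, the goal becomes solving the system
\begin{align*}
  d^*\bigl(e^{-\gamma}(a+\alpha)e^\gamma-(de^{-\gamma})e^\gamma\bigr)&=0,\\
  d^*_\pB i^*\bigl(e^{-\gamma}(a+\alpha)e^\gamma-(de^{-\gamma})e^\gamma\bigr)&=0
\end{align*}
for $\gamma$ near $0$. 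The genuinely new feature, and the point that needs the most care, is that Dirichlet Coulomb gauge is unique only up to \emph{constant} gauge transformations, so the obvious map has a kernel of constants at $(\gamma,\alpha)=(0,0)$ and cannot be an isomorphism. I would remedy this by excising the constant infinitesimal gauge transformations from the function spaces: take the domain to be those $\gamma$ whose boundary value $i^*\gamma$ is $\LpB2{}$-orthogonal to the constants $\mc H^0(\pB)$ on the $3$-sphere $\pB$, and take the boundary factor of the codomain to be the $\LpB2{}$-orthogonal complement of the constants as well (into which $d^*_\pB$ automatically lands), i.e.\ work with $\LgB{2,\perp}3$ and $\LgB21\times\LgpB{2,\perp}{1/2}$.

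With these spaces fixed, smoothness of the map follows as in Lemma~\ref{implicitfunctiontheoremconstantboundary} from the gauge group lying above the borderline (so $\exp$ and the relevant multiplications are smooth), and the argument reduces to showing that the linearization in $\gamma$ at $(0,0)$,
\begin{equation*}
  T\colon\gamma'\mapsto\bigl(d^*d\gamma'+d^*[a,\gamma'],\ d^*_\pB i^*d\gamma'+d^*_\pB i^*[a,\gamma']\bigr),
\end{equation*}
is an isomorphism between the constrained spaces. I would split $T=T_0+K$ with $T_0\colon\gamma'\mapsto(d^*d\gamma',d^*_\pB i^*d\gamma')=(\Delta\gamma',\Delta_\pB i^*\gamma')$ and $K$ the commutator part. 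For $T_0$, factor through $(\Delta,i^*)$ followed by $(\Id,\Delta_\pB)$: the boundary Laplacian $\Delta_\pB$ is an isomorphism after quotienting by the constants by Hodge theory on the closed manifold $\pB$, and $(\Delta,i^*)$ is an isomorphism onto the product with the orthogonal-complement boundary factor by the same reasoning used for $\Delta$ on Dirichlet functions (using $H^0(B^4,\pB)=0$, Propositions~\ref{cohomology} and~\ref{hodgedecomposition}) together with surjectivity of the inverse trace and a subtraction trick. For $K$, rewrite $d^*[a,\gamma']=*[*a\wedge d\gamma']$ using $d^*a=0$ and $d^*_\pB i^*[a,\gamma']=*_\pB[*_\pB i^*a\wedge d_\pB i^*\gamma']$ using $d^*_\pB i^*a=0$, and factor each through a compact Sobolev embedding (interiorly $\LB22\hookrightarrow\LB2{3/2}$, on the boundary $\LapB2{3/2}\hookrightarrow\LapB21$), which uses only $a\in\LaB22$; hence $K$ is compact and $T$ is Fredholm of index zero. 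Injectivity of $T$, and hence that it is an isomorphism, then follows for $\normlb a4{}$ and $\normlpb{i^*a}3{}$ small enough by estimating $\norm{K\gamma'}$ against $\norm{T_0\gamma'}$, exactly as in Lemma~\ref{implicitfunctiontheoremconstantboundary} but with the extra boundary term controlled via the trace $\LB22\to\LpB2{3/2}$.

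Once $T$ is an isomorphism, the implicit function theorem produces $g=e^{-\gamma}$ depending smoothly on $\alpha$ in a neighborhood of $0$, which is the desired $\LaB22$ neighborhood of $A$; the claimed $\LGB23$ regularity of $g$ is then read off from the gauge-transformation equation relating $B$, $\tB$, and $g$ together with Sobolev multiplication, as at the end of the proof of Lemma~\ref{closedconstantboundary}. I expect the main obstacle to be the bookkeeping around the constants: choosing $\LgB{2,\perp}3$ and $\LgpB{2,\perp}{1/2}$ so that $T_0$ is simultaneously injective and surjective, and in particular verifying that $(\Delta,i^*)$ maps onto the \emph{constrained} product space rather than the full one.
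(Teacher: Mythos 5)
Your proposal is correct and matches the paper's proof essentially step for step: same function spaces $\LgB{2,\perp}3$ and $\LgB21\times\LgpB{2,\perp}{1/2}$ to kill the constant gauge transformations, same decomposition $T=T_0+K$, same factorization of $T_0$ through $(\Delta,i^*)$ and $(\Id,\Delta_\pB)$, same compactness argument for $K$ via compact Sobolev embeddings, and the same Fredholm-index-zero-plus-injectivity closing move. One small remark: the final bootstrapping step you flag for $\LGB23$ regularity is not actually needed here, since the implicit function theorem already produces $\gamma\in\LgB{2,\perp}3$ directly and the regularity of $g=e^{-\gamma}$ is immediate because $L^2_3$ lies above the borderline.
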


Note that if instead of using the multiplication map $\LpB3{}\times\LpB3{}\to\LpB2{-1/2}$ above we had used the multiplication map $\LpB6{-1/2}\times\LpB2{1/2}\to\LpB2{-1/2}$ we could weaken the condition that $\normlpb{i^*a}3{}$ be small to the condition that $\normlpb{i^*a}6{-1/2}$ be small. Alternatively, because of the continuity of the maps $\LB21\hookrightarrow\LB4{}$ and $\LB21\xrightarrow{i^*}\LpB2{1/2}\hookrightarrow\LpB3{}$, we can replace the conditions $\normlb a4{}<\epsilon_4$ and $\normlpb{i^*a}3{}<\epsilon_3$ in the above lemma with $\normlb a21<\epsilon_{21}$ for a suitable $\epsilon_{21}$.

\begin{corollary}\label{opencorollary2}
  The space of $\LaB22$ connections $A$ with $\normlb{F_A}2{}<\epsilon$ satisfying Theorem \ref{dirichletcoulombfixing} is open in $\LaB22$.
  \begin{proof}
    We let $\epsilon_4$ be the smaller of the constants in Lemma \ref{implicitfunctiondirichletfixing} and Lemma \ref{aprioribounds2}, let $\epsilon_3$ be the constant in Lemma \ref{implicitfunctiondirichletfixing}, and let $C$ be the constant in \ref{aprioribounds2}. We can choose $\epsilon$ small enough such that $\normlb\ta21<C\epsilon$ implies $\normlb\ta4{}<\epsilon_4$ and $\normlpb{i^*\ta}3{}<\epsilon_3$.

    Let $A$ be an $\LaB22$ connection with $\normlb{F_A}2{}<\epsilon$ satisfying Theorem \ref{dirichletcoulombfixing}. Then there exists an $\LGB23$ gauge transformation $g$ sending $A$ to $\tA$ such that $d^*\ta=0$, $d^*_\pB i^*\ta=0$, and $\normlb\ta21\le C\normlb{F_A}2{}<C\epsilon$. As discussed earlier, this implies that $\ta$ is small enough to apply Lemmas \ref{aprioribounds2} and Lemma \ref{implicitfunctiondirichletfixing}. Hence, we apply Lemma \ref{implicitfunctiondirichletfixing} to $\tA$ to find an open $\LaB22$ neighborhood of $\tA$ such that for any connection $B$ in the neighborhood has a gauge transformation $g$ that sends $B$ to a connection $\tB$ satisfying the Dirichlet Coulomb conditions $d^*\tb=0$ and $d^*_\pB i^*\tb=0$. Since $g$, and hence $\tB$, depends smoothly on $B$, by shrinking the neighborhood of $\tA$ we can guarantee that $\tB$ is close to $\tA$ in $\LaB22$. Hence, we can guarantee that $\tB$ also satisfies the bounds $\normlb\tb4{}<\epsilon_4$, so we can apply Lemma \ref{aprioribounds2} to $\tb$ to get
    \begin{equation*}
      \normlb\tb21\le C\normlb{F_{\tB}}2{}=C\normlb{F_B}2{}.
    \end{equation*}
    Hence, Theorem \ref{dirichletcoulombfixing} holds for $B$ in this neighborhood of $\tA$. Since Theorem \ref{dirichletcoulombfixing} is gauge invariant, we can pull back this neighborhood of $\tA$ via $g^{-1}$ to an open neighborhood of $A$ that satisfies Theorem \ref{dirichletcoulombfixing}, as desired.
  \end{proof}
\end{corollary}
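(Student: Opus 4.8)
The plan is to imitate the openness argument for the fixed-boundary Coulomb gauge in Corollary~\ref{opencorollary}, now feeding in the Dirichlet Coulomb versions of the two key tools: the a priori bounds of Lemma~\ref{aprioribounds2} in place of Lemma~\ref{aprioribounds}, and the implicit function theorem of Lemma~\ref{implicitfunctiondirichletfixing} in place of Lemma~\ref{implicitfunctiontheoremconstantboundary}. First I would fix constants: let $\epsilon_4$ be the smaller of the constants appearing in Lemmas~\ref{aprioribounds2} and~\ref{implicitfunctiondirichletfixing}, let $\epsilon_3$ be the constant from Lemma~\ref{implicitfunctiondirichletfixing}, and let $C$ be the constant from Lemma~\ref{aprioribounds2}. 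Using boundedness of the trace map $i^*\colon\LB21\to\LpB2{1/2}$ and of the embeddings $\LpB2{1/2}\hookrightarrow\LpB3{}$ and $\LB21\hookrightarrow\LB4{}$, I would then choose $\epsilon>0$ small enough that $\normlb\alpha21<C\epsilon$ forces both $\normlb\alpha4{}<\epsilon_4$ and $\normlpb{i^*\alpha}3{}<\epsilon_3$ for any one-form $\alpha$.

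Next, given an $\LaB22$ connection $A=d+a$ with $\normlb{F_A}2{}<\epsilon$ satisfying Theorem~\ref{dirichletcoulombfixing}, I would take the gauge transformation $g\in\LGB23$ it provides, sending $A$ to a Dirichlet Coulomb representative $\tA=d+\ta$ with $d^*\ta=0$, $d^*_\pB i^*\ta=0$, and $\normlb\ta21\le C\normlb{F_A}2{}<C\epsilon$. By the choice of $\epsilon$ this puts $\tA$ inside the hypotheses of Lemma~\ref{implicitfunctiondirichletfixing}, which then furnishes an $\LaB22$ neighborhood $\mc U$ of $\tA$ on which every connection $B$ admits an $\LGB23$ gauge transformation, depending smoothly on $B$, carrying $B$ to some $\tB$ with $d^*\tb=0$ and $d^*_\pB i^*\tb=0$. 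Shrinking $\mc U$, I would use the continuity of $B\mapsto\tB$ and of $B\mapsto F_B$ on $\LaB22$ (together with gauge invariance of the curvature $L^2$-norm) to arrange that $\normlb\tb4{}<\epsilon_4$ and $\normlb{F_B}2{}<\epsilon$ throughout $\mc U$. Then Lemma~\ref{aprioribounds2} applies to $\tB$ and gives
\begin{equation*}
  \normlb\tb21\le C\normlb{F_{\tB}}2{}=C\normlb{F_B}2{},
\end{equation*}
so condition~(2) of Theorem~\ref{dirichletcoulombfixing} holds, and hence so does Theorem~\ref{dirichletcoulombfixing}, for every $B\in\mc U$.

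Finally I would transport this neighborhood back to $A$. The property ``$\normlb{F_A}2{}<\epsilon$ and $A$ satisfies Theorem~\ref{dirichletcoulombfixing}'' is unchanged if one composes the implementing gauge transformation with a fixed $\LGB23$ gauge transformation, since the curvature $L^2$-norm, the Dirichlet Coulomb conditions on a representative, and the bound in~(2) are all unaffected. Because applying a fixed $\LGB23$ gauge transformation to $\LaB22$ connections is a homeomorphism (indeed smooth, as $3\cdot2>4$), the set $g^{-1}(\mc U)$ is an open neighborhood of $A$ consisting of connections satisfying Theorem~\ref{dirichletcoulombfixing}, which is the desired openness.

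The step I expect to be the main obstacle is bookkeeping the smallness constants so that a single $\epsilon$ does all the work: one must verify that the a priori bound $\normlb\ta21\le C\normlb{F_A}2{}$ together with the trace and Sobolev embeddings genuinely forces $\tA$ into the regime of Lemma~\ref{implicitfunctiondirichletfixing}, and that the perturbed representative $\tB$ stays in the regime of Lemma~\ref{aprioribounds2} after shrinking $\mc U$. A secondary point requiring care is confirming that ``satisfying Theorem~\ref{dirichletcoulombfixing}'' is genuinely a gauge-invariant condition, so that openness established at $\tA$ transfers back to openness at $A$.
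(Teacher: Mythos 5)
Your argument is correct and follows essentially the same route as the paper: fix the constants from Lemmas \ref{aprioribounds2} and \ref{implicitfunctiondirichletfixing}, pass to the Dirichlet Coulomb representative $\tA$, invoke Lemma \ref{implicitfunctiondirichletfixing} for the open neighborhood, shrink it so Lemma \ref{aprioribounds2} applies to each $\tB$, and pull back by $g^{-1}$ using gauge invariance. Your explicit remark that one should also arrange $\normlb{F_B}2{}<\epsilon$ on the shrunken neighborhood is a small but genuine improvement in care over the paper's phrasing, which leaves that step implicit.
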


\begin{lemma}\label{closedlemma2}
  The space of $\LB21$ connections satisfying Theorem \ref{dirichletcoulombfixing} is closed in $\LaB21$. Likewise, the space of $\LB22$ connections with $\normlb{F_A}2{}<\epsilon$ satisfying Theorem \ref{dirichletcoulombfixing} is closed in $\LaB22$.
  \begin{proof}
    Let $A_i\to A$ be a sequence of connections converging in $\LaB21$ such that there exist $\LGB22$ gauge transformations $g_i$ sending $A_i$ to $\tA_i=d+\tA_i$ such that $d^*\ta_i=0$, $d^*_\pB i^*\ta_i=0$, and $\normlb{\ta_i}21\le C\normlb{F_{A_i}}2{}$. Because the $A_i$ and hence the $F_{A_i}$ converge, we know that the $\ta_i$ are bounded in $\LaB21$. Hence, after passing to a subsequence, the $\ta_i$ converge weakly in $\LaB21$ to $\ta$. Applying Lemma \ref{weakgaugeconvergence}, there exists a gauge transformation $g\in\LGB22$ sending $A$ to $\tA=d+\ta$.

    The operators $d^*$ and $d^*_\pB i^*$ are continuous and linear, so the conditions $d^*\ta_i=0$ and $d^*_\pB i^*\ta_i=0$ are preserved in the weak $\LaB21$ limit $d^*\ta=0$ and $d^*_\pB\ta=0$. Finally, because norms are lower semicontinuous under weak limits, we have
    \begin{equation*}
      \normlb\ta21\le\lim\inf\normlb{\ta_i}21\le C\lim\normlb{F_{A_i}}2{}=C\normlb{F_A}2{}.
    \end{equation*}
    Hence, $A$ indeed satisfies Theorem \ref{dirichletcoulombfixing}.

    Meanwhile, for the higher regularity claim, let $A_i\to A$ be a sequence of connections converging in $\LaB22$ with $\normlb{F_{A_i}}2{}<\epsilon$ and $\normlb{F_A}2{}<\epsilon$, such that there exist $\LGB23$ gauge transformations $g_i$ sending $A_i$ to $\tA_i=d+\ta_i$ with $d^*\ta_i=0$, $d^*_\pB i^*\ta_i=0$, and $\normlb{\ta_i}21\le C\normlb{F_{A_i}}2{}<C\epsilon$. Again, let $\epsilon_4$ be the constant from Lemma \ref{aprioribounds2}, and require $\epsilon$ to be small enough so that $\normlb{\ta_i}21<C\epsilon$ guarantees that $\normlb{\ta_i}4{}<\epsilon_4$. Then, applying Lemma \ref{aprioribounds2}, we have
    \begin{equation*}
      \normlb{\ta_i}22\le C\left(\normlb{\nabla_{\tA_i}F_{\tA_i}}2{}+\normlb{F_{\tA_i}}2{}\right)=C\left(\normlb{\nabla_{A_i}F_{A_i}}2{}+\normlb{F_{A_i}}2{}\right).
    \end{equation*}
    The convergence of the $A_i$ in $\LaB22$ guarantees the convergence of the right-hand side, so the $\ta_i$ are bounded in $\LaB22$, and hence, after passing to a subsequence, they have a weak limit $\ta$. In particular, the $\tA_i$ converge to $\tA$ in $\LaB21$, so we can apply the above argument to conclude that there is a $\LGB22$ gauge transformation $g$ sending $A$ to $\tA$ and $\tA$ satisfies the conditions of Theorem \ref{dirichletcoulombfixing}. Finally, the same argument as in Lemma \ref{closedconstantboundary} shows that because $A$ and $\tA$ are in $\LaB22$, the gauge transformation $g$ is in fact in $\LGB23$.
  \end{proof}
\end{lemma}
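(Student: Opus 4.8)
The plan is to imitate the proof of Lemma~\ref{closedconstantboundary}, handling the $\LaB21$ and $\LaB22$ statements in turn. For the $\LaB21$ case, let $A_i\to A$ be a sequence converging in $\LaB21$, each satisfying Theorem~\ref{dirichletcoulombfixing}, so there are $\LGB22$ gauge transformations $g_i$ sending $A_i$ to $\tA_i=d+\ta_i$ with $d^*\ta_i=0$, $d^*_\pB i^*\ta_i=0$, and $\normlb{\ta_i}21\le C\normlb{F_{A_i}}2{}$. Since $A_i$ converges in $\LaB21$, the curvatures $F_{A_i}$ converge in $\LFB2{}$ and are in particular bounded, so the a priori bound shows the $\ta_i$ are bounded in $\LaB21$; after passing to a subsequence, $\ta_i$ converges weakly in $\LaB21$ to some $\ta$. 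By Lemma~\ref{weakgaugeconvergence}, after a further subsequence the $g_i$ converge weakly in $\LGB22$ to a gauge transformation $g$ sending $A$ to $\tA=d+\ta$.

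It then remains to pass the three conditions to the limit. The operators $d^*\colon\LaB21\to\LgB2{}$ and $d^*_\pB\circ i^*\colon\LaB21\to\LgpB2{-1/2}$ are bounded and linear, hence weak-to-weak continuous, so $d^*\ta_i=0$ and $d^*_\pB i^*\ta_i=0$ force $d^*\ta=0$ and $d^*_\pB i^*\ta=0$. For the estimate, norms are lower semicontinuous under weak convergence and $\normlb{F_{A_i}}2{}\to\normlb{F_A}2{}$ by the strong convergence of curvatures, so $\normlb\ta21\le\liminf\normlb{\ta_i}21\le C\normlb{F_A}2{}$. Hence $A$ satisfies Theorem~\ref{dirichletcoulombfixing}.

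For the $\LaB22$ case, let $A_i\to A$ converge in $\LaB22$ with $\normlb{F_{A_i}}2{},\normlb{F_A}2{}<\epsilon$, with $\LGB23$ gauge transformations $g_i$ as above. Choosing $\epsilon$ small, the uniform bound $\normlb{\ta_i}21<C\epsilon$ forces $\normlb{\ta_i}4{}$ small enough to apply the higher-regularity part of Lemma~\ref{aprioribounds2}, which, after using the gauge invariance of $\normlb{F_{A}}2{}$ and $\normlb{\nabla_{A}F_{A}}2{}$, gives $\normlb{\ta_i}22\le C(\normlb{\nabla_{A_i}F_{A_i}}2{}+\normlb{F_{A_i}}2{})$. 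Convergence of $A_i$ in $\LaB22$ bounds the right-hand side, so a subsequence of the $\ta_i$ converges weakly in $\LaB22$ to $\ta$; in particular the $\tA_i$ converge to $\tA$ in $\LaB21$ by the compact Sobolev embedding, so the $\LaB21$ argument above already yields an $\LGB22$ gauge transformation $g$ sending $A$ to $\tA$ with all the conclusions of Theorem~\ref{dirichletcoulombfixing}. Finally, one bootstraps the regularity of $g$ exactly as in Lemma~\ref{closedconstantboundary}: from $dg=ga-\ta g$ and the multiplication $\LB22\times\LB22\to\LB31$ one gets $dg\in\LEaB31$, so $g\in\LGB32$, and then $\LB32\times\LB22\to\LB22$ gives $dg\in\LEaB22$, so $g\in\LGB23$.

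The main subtlety, recurring throughout the paper, is that the gauge transformations lie in a borderline Sobolev space, so products of weakly convergent sequences need not converge; this is precisely why the limit gauge transformation and its action on connections are produced by Lemma~\ref{weakgaugeconvergence} rather than by taking limits in the defining equation directly. The other point needing care is ensuring that the smallness hypotheses of Lemma~\ref{aprioribounds2} hold uniformly along the sequence, which is arranged by shrinking $\epsilon$ so that the uniform bound $\normlb{\ta_i}21<C\epsilon$ controls $\normlb{\ta_i}4{}$; everything else is routine.
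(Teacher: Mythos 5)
Your proposal is correct and follows essentially the same route as the paper's proof: weak $\LaB21$ (resp.\ $\LaB22$) limits of the Dirichlet Coulomb representatives via the a priori bounds, Lemma \ref{weakgaugeconvergence} to produce the limiting gauge transformation, preservation of the linear Coulomb conditions and the estimate under weak limits, and the same two-step multiplication bootstrap from Lemma \ref{closedconstantboundary} to upgrade $g$ to $\LGB23$ in the higher-regularity case. No gaps.
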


Now that we have that the set of low-energy connections satisfying Theorem \ref{dirichletcoulombfixing} are both open and closed, we need to show connectedness, which we did not need to worry about in the previous section because there we were working with the connected space of connections satisfying $\normlb a21<\epsilon$. This issue is made more subtle because we have an arbitrary metric on $B^4$, so the space of connections satisfying $\normlb{F_A}2{}<\epsilon$ need not in general be connected. We proceed by using the dilation technique in Uhlenbeck \cite{u82} to show that the space of low-energy connections on the ball with the \emph{standard} metric are connected, and then exploit the relationship between the standard metric and our arbitrary metric on $B^4$ to complete the proof of Theorem \ref{dirichletcoulombfixing}.

\begin{lemma}\label{connectedlemma}
  Let $B^4_\std$ denote the $4$-ball with the standard metric. Let $\epsilon>0$ be a constant, and let $k\ge1$. The set of $\LaBs2k$ connections $A$ such that $\norm{F_A}_{\LBs2{}}<\epsilon$ is connected.
  \begin{proof}
    Let $A=d+a$ be an $\LaBs2k$ connection with $\norm{F_A}_{\LBs2{}}<\epsilon$. For $0\le\lambda\le1$, let $f_\lambda\colon B^4_\std\to B^4_\std$ be the scaling map $f_\lambda(x)=\lambda x$. Using the fixed trivialization of the principal bundle $P$, we can identify $f_\lambda^*P$ with $P$.  Let $A_\lambda=f_\lambda^*A$, so $A_\lambda=d+\lambda\cdot a(\lambda x)$. Note in particular that $A_1=A$, and that $A_0=d$. Moreover, energy is conformally invariant, so, viewing $f_\lambda$ as an conformal isomorphism $B^4_\std\to\lambda\cdot B^4_\std$, we have
    \begin{equation*}
      \norm{F_{f_\lambda^*A}}_{\LBs2{}}=\norm{F_A}_{L^2(\lambda\cdot B^4_\std)}\le\norm{F_A}_{\LBs2{}}< \epsilon.
    \end{equation*}

    We claim that $A_\lambda$ is a continuous path of $\LaBs2k$ connections. Let $A_\lambda=d+a_\lambda$. At positive $\lambda$, the continuity of the map $\lambda\mapsto a_\lambda$ follows from the continuity of dilation on $L^p$ spaces. To show continuity at $\lambda=0$, we compute that, for $0\le j\le k$, we have
    \begin{multline*}
        \norm{\nabla^ja_\lambda}_{\LBs2{}}^2=\int_{B^4_\std}\abs{\lambda^{j+1}(\nabla^ja)(\lambda x)}^2\,dx\\
        =\lambda^{2(j+1)}\lambda^{-4}\int_{\lambda\cdot B^4_\std}\abs{(\nabla^ja)(\lambda x)}^2\,d(\lambda x)
        =\lambda^{2(j-1)}\norm{\nabla^ja}_{L^2(\lambda\cdot B^4_\std)}^2\le\lambda^{2(j-1)}\norm{\nabla^ja}_{\LBs2{}}^2.
    \end{multline*}
    When $j>1$, it is clear that $\norm{\nabla^ja_\lambda}^2_{\LBs2{}}\to0$ as $\lambda\to0$. When $j=1$, we note that $\norm{\nabla a_\lambda}_{\LBs2{}}=\norm{\nabla a}_{L^2(\lambda\cdot B^4_\std)}^2$, which also approaches zero as $\lambda\to0$. Finally, note that, by assumption, $a\in\LaBs21\subset\LaBs4{}$, so
    \begin{multline*}
      \norm{a_\lambda}_{\LBs2{}}=\lambda^{-1}\norm a_{L^2(\lambda\cdot B^4_\std)}\le\lambda^{-1}\norm a_{L^4(\lambda\cdot B^4_\std)}\norm{1}_{L^4(\lambda\cdot B^4_\std)}\\
      =\lambda^{-1}\norm a_{L^4(\lambda\cdot B^4_\std)}\left(\lambda^4\vol(B^4_\std)\right)^{1/4}=\vol\left(B^4_\std\right)^{1/4}\norm a_{L^4(\lambda\cdot B^4_\std)}.
    \end{multline*}
    Hence, $\norm{a_\lambda}_{\LBs2{}}\to0$ as $\lambda\to0$, completing the proof that $\norm{a_\lambda}_{\LBs2k}\to0$ as $\lambda\to0$.
  \end{proof}
\end{lemma}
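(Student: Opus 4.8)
The plan is to prove path-connectedness by joining every admissible connection to the single trivial connection $d$ through a path that stays inside the energy sublevel set, the device being conformal dilation. Given $A=d+a$ with $\norm{F_A}_{\LBs2{}}<\epsilon$, I would set $f_\lambda(x)=\lambda x$ for $\lambda\in[0,1]$, use the fixed global trivialization of $P$ over $\Bs$ to identify $f_\lambda^*P$ with $P$, and put $A_\lambda=f_\lambda^*A$, so that $A_\lambda=d+\lambda\,a(\lambda x)$, with $A_1=A$ and $A_0=d$. Since the Yang--Mills energy is conformally invariant in four dimensions, $\norm{F_{A_\lambda}}_{\LBs2{}}^2=\norm{F_A}_{L^2(\lambda\cdot\Bs)}^2\le\norm{F_A}_{\LBs2{}}^2<\epsilon^2$, so the whole path lies in the set; it then remains only to check that $\lambda\mapsto A_\lambda$ is continuous into $\LaBs2k$, after which path-connectedness, hence connectedness, follows at once since $d$ is a common endpoint.

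For $\lambda>0$ the continuity of $\lambda\mapsto a_\lambda$ is routine: it reduces to the continuity of dilations on $L^p$ spaces together with the chain rule applied to the derivatives $\nabla^ja_\lambda$. The substantive point is continuity at $\lambda=0$, i.e.\ showing $\norm{a_\lambda}_{\LBs2k}\to0$. Changing variables $y=\lambda x$ gives $\norm{\nabla^ja_\lambda}_{\LBs2{}}^2=\lambda^{2(j-1)}\norm{\nabla^ja}_{L^2(\lambda\cdot\Bs)}^2$. For $j\ge2$ the prefactor $\lambda^{2(j-1)}$ kills the (bounded) norm; for $j=1$ the prefactor is $1$ but the domain $\lambda\cdot\Bs$ shrinks to a point, so the integral tends to zero by absolute continuity of the integral. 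The genuinely delicate case is $j=0$, where the naive estimate carries a divergent factor $\lambda^{-1}$ — here $\norm a_{L^2}$ is precisely the scaling-critical quantity, and for a merely $L^2$ one-form this term need not go to zero. The remedy is to spend the extra integrability coming from the hypothesis $k\ge1$: by the Sobolev embedding $\LaBs21\hookrightarrow\LaBs4{}$ and H\"older's inequality, $\lambda^{-1}\norm a_{L^2(\lambda\cdot\Bs)}\le\lambda^{-1}\norm a_{L^4(\lambda\cdot\Bs)}\norm1_{L^4(\lambda\cdot\Bs)}=\vol(\Bs)^{1/4}\norm a_{L^4(\lambda\cdot\Bs)}\to0$. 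This zeroth-order term is the main obstacle; once it is handled the rest is bookkeeping.

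I expect no other real difficulties: the identification $f_\lambda^*P\cong P$ is harmless, and the conformal invariance of the energy is standard. It is worth flagging that the argument uses the standard metric in an essential way, since dilations are conformal self-maps of $\mb R^4$; for a general metric on $B^4$ the set $\{\,\norm{F_A}_{\LBs2{}}<\epsilon\,\}$ need not be connected, which is exactly why this lemma is phrased for $\Bs$ and will then be combined with a comparison between the arbitrary metric and the standard one when completing the proof of Theorem \ref{dirichletcoulombfixing}.
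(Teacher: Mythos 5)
Your proposal is correct and follows essentially the same route as the paper: contracting via the conformal dilation $f_\lambda$, using conformal invariance of the four-dimensional energy to stay in the sublevel set, and checking continuity in $\LaBs2k$ at $\lambda=0$ by splitting into $j\ge2$ (prefactor $\lambda^{2(j-1)}$), $j=1$ (shrinking domain), and $j=0$ (Sobolev embedding into $L^4$ plus H\"older). The only cosmetic difference is that you invoke absolute continuity of the integral for the $j=1$ case, whereas the paper simply observes that the $L^2$ norm over $\lambda\cdot\Bs$ tends to zero; these are the same fact.
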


We can now put together these lemmas to prove Theorem \ref{dirichletcoulombfixing}.

\begin{proof}[Proof of Theorem \ref{dirichletcoulombfixing}]
  Let $\epsilon'$ be the smaller of the constants in Corollary \ref{opencorollary2} and Lemma \ref{closedlemma2}. These results tells us that the space of connections satisfying Theorem \ref{dirichletcoulombfixing} is both relatively open and closed in the space of $\LaB22$ connections with $\normlb{F_A}2{}<\epsilon'$. Meanwhile, we can use the fact that $\wedge$ and integrals of differential forms are independent of the metric on $B^4$ to compare the energies with respect to our metric on $B^4$ and the standard metric:
  \begin{align*}
    \normlb{F_A}2{}^2&=\int_{B^4}F_A\wedge *F_A\le\normlbs{*}\infty{}\normlbs{F_A}2{}^2,\\
    \normlbs{F_A}2{}^2&=\int_{B^4}F_A\wedge *_\std F_A\le\normlb{*_\std}\infty{}\normlb{F_A}2{}^2.
  \end{align*}
  Let $\mc A_\delta^\std$ denote $\{A\in\LB22\mid\normlbs{F_A}2{}<\delta\}$. By the above inequalities, there exists an $\epsilon$ and a $\delta$ such that
  \begin{equation*}
    \left\{A\in\LB22\mid\normlb{F_A}2{}<\epsilon\right\}\subseteq\mc A_\delta^\std\subseteq\left\{A\in\LB22\mid\normlb{F_A}2{}<\epsilon'\right\}.
  \end{equation*}
  Hence, the space of connections satisfying Theorem \ref{dirichletcoulombfixing} is both relatively open and closed in $\mc A_\delta^\std$, which is connected by Lemma \ref{connectedlemma}. Hence, all connections in $\mc A_\delta^\std$ satisfy Theorem \ref{dirichletcoulombfixing}, and, in particular, so do all $\LB22$ connections with $\normlb{F_A}2{}<\epsilon$.

  Meanwhile, because $A\mapsto F_A$ is continuous as a map
  \begin{equation*}
    \LaB21\to\LFB2{},
  \end{equation*}
  any $\LB21$ connection with $\normlb{F_A}2{}<\epsilon$ is the $\LaB21$ limit of $\LB22$ connections $A_i$ with $\normlb{F_A}2{}<\epsilon$. These connections $A_i$ satisfy Theorem \ref{dirichletcoulombfixing}, so by Lemma \ref{closedlemma2} so does $A$.
\end{proof}

As in the preceding section, we finish by proving that the gauge transformation constructed by Theorem \ref{dirichletcoulombfixing} is unique up to a constant gauge transformation. To do so, we first prove uniqueness up to constants on the boundary.

\begin{proposition}\label{uniqueboundaryfixing}
  There exists a constant $\epsilon$ such that if $A=d+a$ and $B=d+b$ are two $\LapB2{1/2}$ connections gauge equivalent via a gauge transformation $g\in\LGpB2{3/2}$ satisfying
  \begin{enumerate}
  \item bounds $\normlpb a3{},\normlpb b3{}<\epsilon$, and
  \item the Coulomb conditions $d^*_\pB a=d^*_\pB b=0$,
  \end{enumerate}
  then $g$ is constant on $\pB$.
  \begin{proof}
    Equation \eqref{dsdg} in Proposition \ref{uniqueconstantboundary} is also valid on $\pB$ given $d^*_\pB a=d^*_\pB b=0$, so we have
    \begin{equation*}
      d^*_\pB d^{}_\pB g=-*_\pB(d_\pB g\wedge *_\pB a+*_\pB b\wedge d_\pB g).
    \end{equation*}
    Then,
    \begin{equation*}
      \begin{split}
        \normlpb{d^*_\pB d^{}_\pB g}2{-1/2}&\le C_S\normlpb{d^*_\pB d^{}_\pB g}{3/2}{}\\
        &\le C_S\left(\normlpb a3{}+\normlpb b3{}\right)\normlpb{d_\pB g}3{}\\
        &\le C_S^2\left(\normlpb a3{}+\normlpb b3{}\right)\normlpb{d_\pB g}2{1/2}.
      \end{split}
    \end{equation*}
    where $C_S$ is the operator norm of the Sobolev embeddings $\LpB{3/2}{}\hookrightarrow\LpB2{-1/2}$ and $\LpB2{1/2}\hookrightarrow\LpB3{}$.

    The theory of elliptic operators on closed manifolds tells us that the operator
    \begin{equation*}
      d^{}_\pB+d^*_\pB\colon\LEe2{1/2}\pB\to\LEe2{-1/2}\pB
    \end{equation*}
    is Fredholm. Moreover, it has no kernel on exact forms because
    \begin{equation*}
      \pairlpb{(d_\pB^{}+d_\pB^*{})(d_\pB g)}g=\pairlpb{d_\pB g}{d_\pB g}.
    \end{equation*}
    We conclude that there is a constant $C_G$ such that
    \begin{equation*}
      \normlpb{d_\pB g}2{1/2}\le C_G\normlpb{(d^{}_\pB+d^*_\pB)(d_\pB)g}2{-1/2}=C_G\normlpb{d^*_\pB d^{}_\pB g}2{-1/2}.
    \end{equation*}
    Putting these inequalities together, we have
    \begin{equation*}
      \normlpb{d_\pB g}2{1/2}\le C_GC_S^2\left(\normlpb a3{}+\normlpb b3{}\right)\normlpb{d_\pB g}2{1/2}.
    \end{equation*}
    Hence, requiring that $\epsilon\le\tfrac14(C_GC_S^2)^{-1}$ gives
    \begin{equation*}
      \normlpb{d_\pB g}2{1/2}\le\tfrac12\normlpb{d_\pB g}2{1/2},
    \end{equation*}
    so $d_\pB g=0$ and $g$ is constant on $\pB$, as desired.
  \end{proof}
\end{proposition}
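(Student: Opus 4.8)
The plan is to adapt, essentially verbatim, the argument of Proposition \ref{uniqueconstantboundary} to the closed $3$-manifold $\pB$; the situation here is in fact slightly cleaner, since every elliptic operator involved lives on a closed manifold and there are no boundary terms to track. First I would write the gauge equivalence relation, restricted to $\pB$, as the identity $d_\pB g = ga - bg$ of $\mf g$-valued $\LpB2{1/2}$ one-forms, and then apply $d^*_\pB$ to both sides. As in the derivation of \eqref{dsdg}, the Coulomb conditions $d^*_\pB a = d^*_\pB b = 0$ (equivalently $d_\pB{*_\pB a} = d_\pB{*_\pB b} = 0$) kill the terms that would otherwise involve derivatives of $a$ and $b$, so that
\begin{equation*}
  d^*_\pB d^{}_\pB g = -*_\pB\bigl(d_\pB g\wedge{*_\pB a} + {*_\pB b}\wedge d_\pB g\bigr),
\end{equation*}
an identity now holding in $\LpB{3/2}{}$.

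Next I would estimate the two sides against each other. On the $3$-manifold $\pB$ one has the Sobolev embeddings $\LpB2{1/2}\hookrightarrow\LpB3{}$ and, dually, $\LpB{3/2}{}\hookrightarrow\LpB2{-1/2}$; combining these with the Hölder inequality $L^3\cdot L^3\subset L^{3/2}$ applied to the displayed expression gives
\begin{equation*}
  \normlpb{d^*_\pB d^{}_\pB g}2{-1/2}\le C_S^2\bigl(\normlpb a3{}+\normlpb b3{}\bigr)\normlpb{d_\pB g}2{1/2}.
\end{equation*}
For the reverse bound I would invoke the standard elliptic theory on closed manifolds: the operator $d_\pB + d^*_\pB\colon\LEe2{1/2}\pB\to\LEe2{-1/2}\pB$ is Fredholm, and it is injective on exact one-forms because $\pairlpb{(d_\pB+d^*_\pB)(d_\pB g)}{g}=\pairlpb{d_\pB g}{d_\pB g}$ with no boundary contribution. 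Since $d_\pB g$ is exact, closedness of the range of $d_\pB+d^*_\pB$ yields a constant $C_G$, independent of $g$, with $\normlpb{d_\pB g}2{1/2}\le C_G\normlpb{d^*_\pB d^{}_\pB g}2{-1/2}$.

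Chaining the two inequalities gives $\normlpb{d_\pB g}2{1/2}\le C_GC_S^2\bigl(\normlpb a3{}+\normlpb b3{}\bigr)\normlpb{d_\pB g}2{1/2}$, so choosing $\epsilon\le\tfrac14(C_GC_S^2)^{-1}$ forces the coefficient below $\tfrac12$, hence $d_\pB g = 0$; then $g$ is locally constant, and therefore constant on the connected manifold $\pB$. I expect the only genuinely delicate point to be the bookkeeping in the negative-order scale $\LpB2{1/2}\to\LpB2{-1/2}$: one must verify that the bilinear expression $d^*_\pB d_\pB g$ really does land in $\LpB2{-1/2}$ with the stated product bound, and that the elliptic estimate for $d_\pB+d^*_\pB$ is available at this fractional regularity — both routine on a closed manifold, but worth stating carefully since, unlike the interior case of Proposition \ref{uniqueconstantboundary}, there is no boundary term to absorb and no need for the boundary Fredholm input of Corollary \ref{Di}.
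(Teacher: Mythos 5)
Your proposal reproduces the paper's argument step for step: deriving the identity $d^*_\pB d_\pB g = -*_\pB(d_\pB g\wedge *_\pB a + *_\pB b\wedge d_\pB g)$ from the gauge-equivalence relation and the Coulomb conditions, estimating it in $\LpB2{-1/2}$ via the Sobolev embeddings and H\"older, invoking the elliptic estimate for $d_\pB+d^*_\pB$ on the closed manifold $\pB$, and absorbing for small $\epsilon$ to conclude $d_\pB g=0$. This is the same proof as in the paper, with a bit more explicit commentary on the multiplication $L^3\cdot L^3\subset L^{3/2}$ and the connectedness of $\pB$.
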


We can now prove the uniqueness up to constants of the gauge transformation constructed in Theorem \ref{dirichletcoulombfixing} on all of $B^4$. Again, we assume $\LaB4{}$ and $\LapB3{}$ bounds, but these are implied by the $\LaB21$ bound given to us by Theorem \ref{dirichletcoulombfixing}.

\begin{corollary}\label{dirichletcoulombuniqueness}
  There exists constants $\epsilon_4$ and $\epsilon_3$ such that if $A=d+a$ and $B=d+b$ are two $\LaB21$ connections connections gauge equivalent via a gauge transformation $g\in\LGB22$ satisfying
  \begin{enumerate}
  \item bounds $\normlb a4{},\normlb b4{}<\epsilon_4$ and $\normlpb{i^*a}3{},\normlpb{i^*b}3{}<\epsilon_3$, and
  \item the Dirichlet Coulomb conditions $d^*a=d^*b=0$ and $d^*_\pB i^*a=d^*_\pB i^*b=0$,
  \end{enumerate}
  then $g$ is constant on $B^4$.
  \begin{proof}
    Choose $\epsilon_3$ small enough to apply Proposition \ref{uniqueboundaryfixing}. Then $i^*g$ is a constant gauge transformation $c\in G$ on $\pB$. Then choose $\epsilon_4$ small enough to apply Proposition \ref{uniqueconstantboundary}, giving us that $g$ is the constant gauge transformation $c$ on all of $B^4$.
  \end{proof}
\end{corollary}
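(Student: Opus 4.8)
The plan is to reduce the statement to the two uniqueness results already established, namely Proposition \ref{uniqueboundaryfixing} on $\pB$ and Proposition \ref{uniqueconstantboundary} on $B^4$, by first pinning down $g$ on the boundary and then propagating that control into the interior.

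First I would restrict the gauge equivalence equation $dg = ga - bg$ to $\pB$. Using the identity $i^*dg = d_\pB(i^*g)$, this gives $d_\pB(i^*g) = (i^*g)(i^*a) - (i^*b)(i^*g)$, so $i^*g$ is a gauge transformation on $\pB$ sending $i^*A = d_\pB + i^*a$ to $i^*B = d_\pB + i^*b$. For regularity, since $g \in \LGB22$, the trace theorem puts $i^*g \in \LGpB2{3/2}$, and since $a, b \in \LaB21$ we have $i^*a, i^*b \in \LapB2{1/2}$; on the $3$-manifold $\pB$ these are above the borderline, so the gauge group there is well behaved. The hypothesis supplies the bounds $\normlpb{i^*a}3{}, \normlpb{i^*b}3{} < \epsilon_3$, and by assumption $d^*_\pB i^*a = d^*_\pB i^*b = 0$. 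Choosing $\epsilon_3$ no larger than the constant $\epsilon$ of Proposition \ref{uniqueboundaryfixing}, that proposition applies and tells us $i^*g$ is a constant gauge transformation $c \in G$ on $\pB$.

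With this in hand, I would apply Proposition \ref{uniqueconstantboundary} on all of $B^4$. Its hypotheses are exactly met: the bounds $\normlb a4{}, \normlb b4{} < \epsilon_4$ hold by assumption, the Coulomb conditions $d^*a = d^*b = 0$ hold by assumption, and we have just shown that $i^*g$ equals the constant $c$ on $\pB$. Choosing $\epsilon_4$ no larger than the $\epsilon$ of Proposition \ref{uniqueconstantboundary}, that proposition gives $g \equiv c$ on $B^4$, which is the claim.

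I expect no serious obstacle here, since the substantive work already resides in Propositions \ref{uniqueboundaryfixing} and \ref{uniqueconstantboundary}. The only points that require care are (i) verifying that $i^*g$ has enough regularity to be a legitimate gauge transformation on $\pB$ for Proposition \ref{uniqueboundaryfixing}, which uses the trace chain $\LGB22 \xrightarrow{i^*} \LGpB2{3/2}$ together with $\LaB21 \xrightarrow{i^*} \LapB2{1/2}$, and (ii) noting that the two propositions impose smallness on different norms, $\normlpb{\cdot}3{}$ on $\pB$ versus $\normlb{\cdot}4{}$ on $B^4$, so $\epsilon_3$ and $\epsilon_4$ must be chosen independently rather than extracted from a single bound.
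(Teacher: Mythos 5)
Your proposal is correct and takes essentially the same route as the paper: apply Proposition \ref{uniqueboundaryfixing} to conclude $i^*g$ is a constant $c$, then apply Proposition \ref{uniqueconstantboundary} to conclude $g\equiv c$ on all of $B^4$. The extra regularity bookkeeping you include (tracing $g$ to $\LGpB2{3/2}$ and $a,b$ to $\LapB2{1/2}$) is implicit in the paper's two-sentence proof but is accurate and correctly justifies that the hypotheses of the boundary proposition are met.
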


\section*{Acknowledgments}
I would like to thank my dissertation advisor Tomasz Mrowka for his guidance and the huge amount of math I have learned from him over these past five years. I would also like to thank Paul Feehan for his detailed feedback on this project and for his encouragement and support. Finally, I would like to thank William Minicozzi, Larry Guth, Emmy Murphy, Antonella Marini, Tristan Rivi\`ere, and Karen Uhlenbeck for the helpful conversations. This material is based upon work supported by the National Science Foundation under grants No.~1406348 (PI Mrowka) and 0943787 (RTG). I was also supported by the NDSEG fellowship and by MIT.



\bibliographystyle{plain}
\bibliography{gaugetheory}

\end{document}